\title[Systems of transversal sections near critical energy levels]{Systems of transversal sections near critical energy levels of Hamiltonian systems in $\R^4$}
\author{Naiara V. de Paulo}
\address[Naiara V. de Paulo]{Universidade de S\~ao Paulo,  Instituto de Matem\'atica e Estat\'istica -- Departamento de Matem\'atica, Rua do Mat\~ao, 1010 - Cidade Universit\'aria - S\~ao Paulo SP, Brazil 05508-090.}
\email{naiaravp@ime.usp.br}
\author{Pedro A. S. Salom\~ao}
\address[Pedro A. S. Salom\~ao]{Universidade de S\~ao Paulo,  Instituto de Matem\'atica e Estat\'istica -- Departamento de Matem\'atica, Rua do Mat\~ao, 1010 - Cidade Universit\'aria - S\~ao Paulo SP, Brazil 05508-090.}
\email{psalomao@ime.usp.br}
\newcommand{\tr}{\text{tr}}
\newcommand{\C}{\mathbb{C}}
\newcommand{\D}{\mathbb{D}}
\newcommand{\R}{\mathbb{R}}
\newcommand{\Z}{\mathbb{Z}}
\newcommand{\N}{\mathbb{N}}
\newcommand{\M}{\mathcal{M}}
\newcommand{\LL}{\mathcal{L}}
\newcommand{\J}{\mathcal{J}}
\newcommand{\T}{\mathcal{T}}
\renewcommand{\P}{\mathcal{P}}
\newcommand{\F}{\mathcal{F}}
\newcommand{\V}{\mathcal{V}}
\newcommand{\W}{\mathcal{W}}
\renewcommand{\sl}{\text{sl}}
\newcommand{\wind}{\text{wind}}
\newcommand{\U}{\mathcal U}
\newcounter{newcounter}[section]
\numberwithin{equation}{section}
\numberwithin{newcounter}{section}
\numberwithin{figure}{section}
\numberwithin{footnote}{section}
\newtheorem{corollary}[newcounter]{Corollary}
\newtheorem{definition}[newcounter]{Definition}
\newtheorem{lem}[newcounter]{Lemma}
\newtheorem{prop}[newcounter]{Proposition}
\newtheorem{remark}[newcounter]{Remark}
\newtheorem{theo}[newcounter]{Theorem}
\begin{document}

\begin{abstract}
In this article we study Hamiltonian flows associated to smooth functions $H:\R^4 \to \R$ restricted to energy levels close to critical levels. We assume the existence of a saddle-center equilibrium point $p_c$ in the zero energy level $H^{-1}(0)$. The Hamiltonian function near $p_c$ is assumed to satisfy Moser's normal form and $p_c$ is assumed to lie in a strictly convex singular subset $S_0$ of $H^{-1}(0)$. Then for all $E>0$ small, the energy level $H^{-1}(E)$ contains a subset $S_E$ near $S_0$, diffeomorphic to the closed $3$-ball, which admits a system of transversal sections $\F_E$, called a $2-3$ foliation. $\F_E$ is a singular foliation of $S_E$ and contains two periodic orbits $P_{2,E}\subset \partial S_E$ and $P_{3,E}\subset  S_E\setminus \partial S_E$ as binding orbits. $P_{2,E}$ is the Lyapunoff  orbit lying in the center manifold of $p_c$, has Conley-Zehnder index $2$ and spans two rigid planes in $\partial S_E$. $P_{3,E}$  has Conley-Zehnder index $3$ and spans a one parameter family of planes in $S_E \setminus \partial S_E$. A rigid cylinder connecting $P_{3,E}$ to $P_{2,E}$ completes $\F_E$. All regular leaves are transverse to the Hamiltonian vector field. The existence of a homoclinic orbit to $P_{2,E}$ in $S_E\setminus \partial S_E$ follows from this foliation.
\end{abstract}

\date{Received 22 Nov 2013}
\subjclass[2010]{(Primary) 53D35 (Secondary) 37J55 37J45}
\keywords{Hamiltonian dynamics, systems of transversal sections, pseudo-holomorphic curves, periodic orbits, homoclinic orbits}

\maketitle

\tableofcontents

\section{Introduction}\label{sec_introduction}

Pseudo-holomorphic curves have been used as a powerful tool to study global properties of Hamiltonian dynamics. In his pioneering work \cite{93}, H. Hofer showed that one can study Hamiltonian flows restricted to a contact type energy level by considering pseudo-holomorphic curves in its symplectization. For that, he introduced the notion of finite energy pseudo-holomorphic curve  and proved many cases of Weinstein conjecture in dimension 3. Later, in  a series of papers \cite{char1,char2,props1,props2,props3}, Hofer, Wysocki and Zehnder developed the foundations of the theory of pseudo-holomorphic curves in symplectizations. Deep dynamical results followed from this theory such as the construction of disk-like global surfaces of section for strictly convex energy levels in $\R^4$, see \cite{convex}, and the  existence of global systems of transversal sections for generic star-shaped energy levels in $\R^4$, see \cite{fols}. 

In this paper we use the theory of pseudo-holomorphic curves in symplectizations in order to investigate the existence of systems of transversal sections near critical energy levels of Hamiltonian systems in $\R^4$.

Remounting the works of Poincar\'e and Birkhoff in the restricted three body problem,  the use of global surfaces of section revealed to be an important tool to study dynamics of two degree of freedom Hamiltonian systems restricted to an energy level. Starting with the classical Poincar\'e-Birkhoff fixed point theorem, this type of two-dimensional reduction motivated a vast development of conservative surface dynamics and, in particular, led to various existence results of periodic orbits. The method of global surfaces of sections, however, has its own limitations since either they may not exist or their existence may be hard to  be proven. One recent way to overcome this difficulty is to look after the so called global systems of transversal sections introduced in \cite{fols}. These are singular foliations of the energy level where the singular set is formed by finitely many periodic orbits, called bindings, and the regular leaves are punctured surfaces transverse to the Hamiltonian vector field and asymptotic to the bindings at the punctures. It naturally generalizes the notion of global surfaces of section where, in this particular case, the regular leaves lie in an $S^1$-family of punctured surfaces and a first return map to every regular leaf is available.  In some situations, a global system of transversal sections  gives valuable information about dynamics since, as in the case of a global surface of section, it may force the existence of periodic orbits, homoclinics to a hyperbolic orbit etc. As mentioned above, results in \cite{fols} imply that the Hamiltonian flow on a generic star-shaped energy level in $\R^4$ admits a global system of transversal sections where the regular leaves are punctured spheres.

Here we assume that a smooth autonomous Hamiltonian system in $\R^4$  admits a saddle-center equilibrium point at the zero energy level. The linearized Hamiltonian vector field at such an equilibrium has a pair of real eigenvalues and a pair of purely imaginary eigenvalues. The Hamiltonian function near the saddle-center is assumed to have a nice normal form which will be referred  as Moser's normal form. For energies slightly below zero, the energy level near the saddle-center contains two connected components and, as the energy goes above zero, a $1$-handle neck parametrized by $[-1,1] \times S^2$ is attached to the energy level as the connected sum of the two previous components. The special separating $2$-sphere $\{0\} \times S^2$ contains the Lyapunoff periodic orbit as its equator, one for each positive value of energy, forming the center manifold of the saddle-center associated to the purely imaginary eigenvalues. The separating $2$-sphere has  two open hemispheres separated by the Lyapunoff orbit which correspond to transit trajectories that cross the neck from one side of the energy level to the other. Local trajectories which do not cross the separating $2$-sphere and escape from the neck for positive and negative times are called non-transit. The Lyapunoff orbit is hyperbolic within its energy level and its stable and unstable manifolds are invariant cylinders with local branches in both sides of the separating $2$-sphere. The intersection of these cylinders with the $2$-spheres $\{s\} \times S^2, s\neq 0$, are two simple closed curves separating the transit from the non-transit trajectories.

It is a classical problem to study the existence of Hamiltonian solutions which are doubly asymptotic to the Lyapunoff orbits, the so called homoclinics.  The existence of homoclinics depends on global assumptions of the Hamiltonian function which may force intersections of their stable and unstable manifolds. We refer to Conley's papers \cite{con1,con2,con3} (see also McGehee's thesis \cite{McG}) as an attempt to study homoclinics in the circular planar restricted three body problem for energies above the first Lagrange value. Using Moser's normal form, Conley considers relative surfaces of section and their transition maps representing transit, non-transit and global trajectories. The notion of global systems of transversal sections was not yet known and thus not considered.

As mentioned above, the existence of homoclinics to the Lyapunoff orbits depends on global assumptions. In this paper, we assume that the saddle-center  lies in  a strictly convex subset of the zero energy level so that this subset is homeomorphic to the $3$-sphere and  has the saddle-center as its unique singularity. It follows that for energies slightly above zero, the energy level contains a subset diffeomorphic to the $3$-ball and close to the singular subset, so that its boundary coincides with the separating $2$-sphere. Our main result states that for all small positive values of energy, this $3$-ball admits a system of transversal sections, called  $2-3$ foliation, and the Lyapunoff orbit is one of the two existing binding orbits. Both hemispheres of the separating $2$-sphere are regular leaves of this foliation. The other binding orbit  lies in the interior of the $3$-ball and spans a one parameter family of disk-like regular leaves. Such family of disks `breaks' into a cylinder connecting both bindings and into the hemispheres of the separating $2$-sphere. The organization of the flow induced by the $2-3$ foliation allows us to conclude that the Lyapunoff orbit admits at least one homoclinic in the interior of the $3$-ball.

The proof of the main result relies on the existence of finite energy foliations in the symplectization of the $3$-sphere equipped with a nondegenerate tight contact form. For small positive energies, we take a copy of the $3$-ball close to the strictly convex subset and patch them together along their boundaries to obtain a smooth $3$-sphere. Using Moser's coordinates near the saddle-center and the global convexity assumption, we show that the Hamiltonian flow on this $3$-sphere is the Reeb flow of a tight contact form.  Moreover, except for the Lyapunoff orbit, all other periodic orbits have Conley-Zehnder index at least $3$ and those periodic orbits which cross the separating $2$-sphere must have Conley-Zehnder index $>3$. Under a slight perturbation of the contact form, we may assume that the Reeb flow is nondegenerate. Hence, with the aid of suitable almost complex structures, we apply results of \cite{fols} in order to obtain a finite energy foliation in the symplectization of the $3$-sphere. This foliation projects down to a global system of transversal sections for the perturbed flow. Using the estimates of the Conley-Zehnder indices mentioned above, we conclude that it must be a $3-2-3$ foliation, i.e., in both sides of the separating $2$-sphere it restricts to a $2-3$ foliation. The last and hardest step in the proof consists of taking the limit of such finite energy foliations to the unperturbed Hamiltonian dynamics in order to obtain the desired singular foliation.

There are many classical Hamiltonian systems in $\R^4$ which fit our hypotheses. If the Hamiltonian function is of the form kinetic plus potential energy, then a saddle critical point of the potential function corresponds to a saddle-center equilibrium point. The global convexity assumption can be directly checked on the corresponding Hill's region. In fact, it depends on the positivity of an expression involving the derivatives of the potential function up to second order \cite{Sa1}. In some cases this computation is relatively simple. Here we show two examples in this class of Hamiltonians  satisfying our hypotheses, one of them being a perturbation of the well known H\'enon-Heiles Hamiltonian.

In the following subsections \S \ref{sec_saddle_center} and \S\ref{sec_strictly_convex} we establish the local and global hypotheses of our Hamiltonian system, respectively. After defining $2-3$ foliations in  \S\ref{sec_syst_transversal}, we  state our main theorem in \S\ref{sec_main_theorem}. Applications of the main theorem to Hamiltonian systems of the form kinetic plus potential energy are given in \S\ref{sec_applications}. In Section \ref{sec_proof_main}, we split the proof of the main theorem in many steps which are essentially stated in Propositions \ref{prop_step1}, \ref{prop_step2}, \ref{prop_step3}, \ref{prop_step4} and \ref{prop_step5}-i)-ii)-iii). These propositions are proved  in Sections \ref{sec_prop_step1} to \ref{sec_5-iii)}, respectively. There are three appendices, one containing the basics of the theory of pseudo-holomorphic curves in symplectizations, another containing a linking property of a homoclinic  to the saddle-center and, finally, the last appendix containing uniqueness and intersection properties of pseudo-holomorphic curves.

\subsection{Saddle-center equilibrium points}\label{sec_saddle_center}

Let $H:\R^4 \to \R$ be a smooth function. Consider the coordinates $(x_1,x_2,y_1,y_2)\in \R^4$ and let $\omega_0$ be the standard symplectic form $$\omega_0=\sum_{i=1}^2 dy_i \wedge dx_i.$$ The Hamiltonian vector field $X_H$ is determined by $i_{X_H} \omega_0 = -dH$ and is given by $$X_H = \sum_{i=1}^2 \partial_{y_i}H \partial_{x_i} - \partial_{x_i}H \partial_{y_i}.$$
We denote by $\{\psi_t, t\in \R\}$ the flow of $X_H$, assumed to be complete. Clearly, $\psi_t$ preserves the energy levels of $H$ since $dH \cdot X_H = -\omega_0(X_H,X_H)=0$.

\begin{definition}An equilibrium point of
the Hamiltonian flow is a point $p_c\in \R^4$ so that $$ dH(p_c)=0  \Leftrightarrow X_H(p_c)=0\Leftrightarrow \psi_t(p_c)=p_c,\forall t.$$ We say that the equilibrium point $p_c$ is a saddle-center if the matrix representing $DX_H(p_c)$  has a pair of real eigenvalues $\alpha$ and  $-\alpha$, and a pair of purely imaginary eigenvalues $\omega i$ and
$-\omega i$, where $\alpha, \omega
>0$.\end{definition}

Saddle-center equilibrium points appear quite often in literature. As an example, let $$H(x,y)=\frac{\left<y,y \right>}{2} + U_0(x),$$ where $\left< \cdot, \cdot\right>$ is a positive definite inner product on $\R^2$, $y=(y_1,y_2)\in \R^2,$ and $U_0$ is a smooth potential function in $x=(x_1,x_2)\in \R^2$. Assume $U_0$  admits a saddle-type critical point at $x_c=(x_{1c},x_{2c})\in \R^2$. Then $p_c=(x_{1c},x_{2c},0,0)\in \R^4$ is a saddle-center equilibrium point for $H$.

Throughout this article we assume that $H$ admits a saddle-center equilibrium point $p_c\in \R^4$ so that near $p_c$ we have the following  canonical coordinates which put the Hamiltonian $H$ in a very special normal form, possibly after adding a constant to $H$ and changing its sign.

\vspace{0.3cm}
\noindent
{\bf Hypothesis 1 (local coordinates).} {\it There exist neighborhoods $V,U\subset \R^4$ of $0$, $p_c,$ respectively, and a smooth symplectic diffeomorphism
$\varphi:(V,0) \to (U,p_c)$  so that, denoting
$(x_1,x_2,y_1,y_2)=\varphi(q_1,q_2,p_1,p_2)$, the symplectic form on $V$ is given by $\sum_{i=1,2} dp_i \wedge dq_i=\varphi^*\left(\sum_{i=1,2} dy_i \wedge dx_i \right)$ and the Hamiltonian $K := \varphi^*
H$   is given by
\begin{equation}\label{Kloc}K(q_1,q_2,p_1,p_2) = \bar K(I_1,I_2)= -\alpha I_1 + \omega I_2 +
R(I_1,I_2),
\end{equation}
where \begin{equation}\label{I1I2} I_1 = q_1p_1, I_2 = \frac{q_2^2+ p_2^2}{2} \mbox{ and } R(I_1,I_2)= O(I_1^2 + I_2^2).\end{equation} Here $\bar K$ is a smooth function on $I_1$ and $I_2$.}

\begin{remark}Throughout the paper we use the convention $$g(x)=O(f(x))\Leftrightarrow \exists k>0 \mbox{ {\rm s.t. }} |g(x)|\leq k|f(x)|, \forall |x| \mbox{ {\rm small}}.$$ \end{remark}

\begin{remark}\label{remreal} Moser's theorem \cite{moser} and a refinement of Russmann \cite{Russ} (see also Delatte's paper \cite{del}) show that if the Hamiltonian function is real-analytic near the saddle-center then one can find local real-analytic symplectic coordinates so that the Hamiltonian function takes the form \eqref{Kloc}. Hence Hypothesis 1 is automatically satisfied if the Hamiltonian function is real-analytic near the saddle-center. This normal form of the Hamiltonian function will be called Moser's normal form. \end{remark}

The flow of $\dot z = X_K \circ z$, $z=(q_1,q_2,p_1,p_2)\in V,$ is integrable with first integrals $I_1$ and $I_2$. Its solutions satisfy
\begin{equation}\label{SolHam}
\left\{\begin{aligned} & q_1(t)  =q_1(0)e^{ -\bar \alpha t},\\ & p_1(t) =p_1(0)e^{ \bar \alpha t},\\ & q_2(t) + ip_2(t) = (q_2(0) + ip_2(0))e^{-i \bar \omega t}. \end{aligned} \right.
\end{equation}
where $\bar \alpha = -\partial_{I_1}\bar K = \alpha -
\partial_{I_1} R$ and $\bar \omega= \partial_{I_2}\bar K = \omega +
\partial_{I_2} R$ are constant along the trajectories. We then observe a saddle and a center behavior of the flow in the planes $(q_1,p_1)$ and $(q_2,p_2)$, respectively. See Figure \ref{fig_selaecentro}.

\begin{figure}[ht!!]
  \centering
  \includegraphics[width=0.7\textwidth]{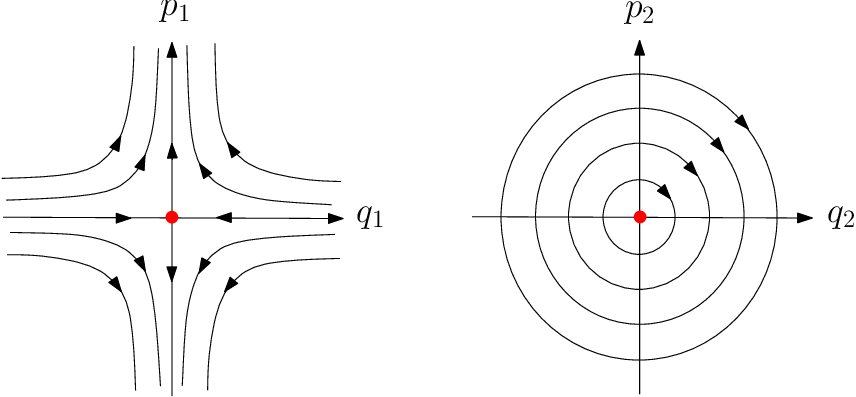}
  \caption{Local behavior of the flow near a saddle-center equilibrium point projected into the planes $(q_1,p_1)$ and $(q_2,p_2)$.}
  \label{fig_selaecentro}
\hfill
\end{figure}

In Figure \ref{fig_projecao}, we have a local description of the energy levels $H^{-1}(E)$, $|E|$ small, projected into the plane $(q_1,p_1)$, by means of the local Hamiltonian function $K$. We see that for $E<0$, $K^{-1}(E)$ contains  two components projected into the first and third quadrants. For $E=0$, these components touch each other at the origin and for $E>0$, $K^{-1}(E)$ contains only one component corresponding topologically to a connected sum of the two components in $E<0$.

\begin{figure}[ht!!]
  \centering
  \includegraphics[width=0.85\textwidth]{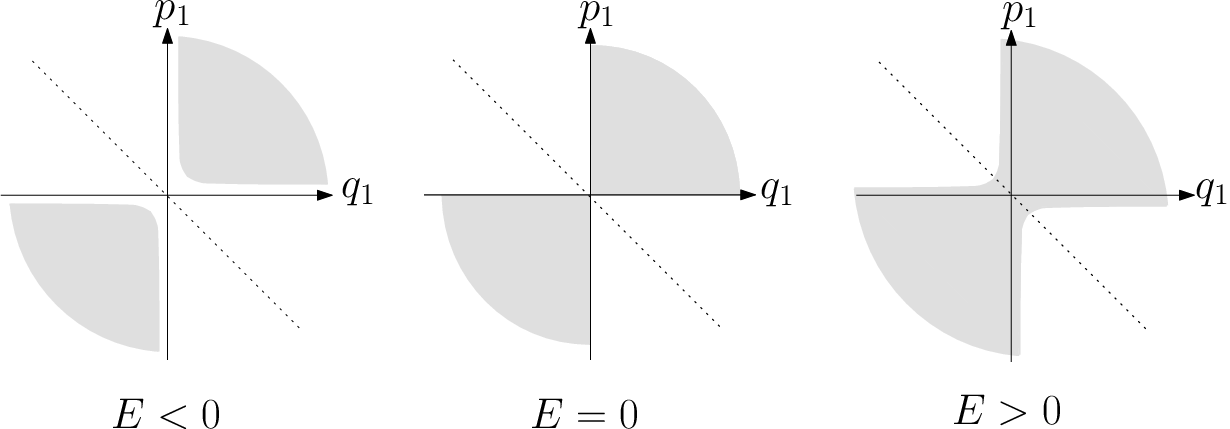}
  \caption{The projections of $K^{-1}(E)$ into the plane $(q_1,p_1)$ for $E<0$, $E=0$ and $E>0$.  }
  \label{fig_projecao}
\hfill
\end{figure}

\subsection{Strictly convex subsets of the critical energy level}\label{sec_strictly_convex} Let us start with the following lemma which will be useful in our construction.

\begin{lem}\label{2-sphere}For all $(E,\delta)\neq (0,0),$ with $E,\delta \geq 0$  sufficiently small, the set $$N^\delta_E:= \{q_1+p_1=\delta\}\cap K^{-1}(E) $$  is an embedded $2$-sphere in $K^{-1}(E).$
\end{lem}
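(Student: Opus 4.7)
The plan is to use the hyperplane $\{q_1+p_1=\delta\}$, parameterized by $(q_1,q_2,p_2)$ with $p_1=\delta-q_1$, as a $3$-dimensional ambient space, and to express $N_E^\delta$ as the zero set of a single scalar function on it. After the translation $u:=q_1-\delta/2$ (so that $I_1=q_1p_1=\delta^2/4-u^2$), the equation $K=E$ becomes
\[
F(u,q_2,p_2) \;:=\; \alpha u^2 + \omega I_2 + R\!\left(\tfrac{\delta^2}{4}-u^2,\, I_2\right) \;-\; c \;=\; 0,
\]
where $I_2=(q_2^2+p_2^2)/2$ and $c:=E+\alpha\delta^2/4$. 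The hypothesis $(E,\delta)\ne(0,0)$ with $E,\delta\ge 0$ guarantees $c>0$, and $c\to 0$ as $(E,\delta)\to(0,0)$.

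I would then check that $N_E^\delta=\{F=0\}$ is a smooth embedded $2$-submanifold. The partial derivatives of $F$ are
\[
\partial_u F = 2u\bigl(\alpha-\partial_{I_1}R\bigr), \quad \partial_{q_2}F = q_2\bigl(\omega+\partial_{I_2}R\bigr), \quad \partial_{p_2}F = p_2\bigl(\omega+\partial_{I_2}R\bigr),
\]
and since $R=O(I_1^2+I_2^2)$ forces $\partial_{I_j}R=O(|I_1|+|I_2|)$, both parenthesized factors stay bounded below by a positive constant on a small enough neighborhood $W\subset\R^3$ of the origin. The only critical point of $F$ in $W$ is therefore $(u,q_2,p_2)=0$, at which $F=R(\delta^2/4,0)-c=O(\delta^4)-c<0$ for small $(E,\delta)$. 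Thus $\{F=0\}$ is a regular, smooth, embedded $2$-manifold in $W$.

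To identify its topology, I would show that $F$ is strictly increasing along every ray from the origin inside $W$. Along $r\mapsto r(u_0,q_{20},p_{20})$ with $u_0^2+q_{20}^2+p_{20}^2=1$,
\[
\frac{d}{dr}F \;=\; 2r\bigl(\alpha u_0^2+\omega(q_{20}^2+p_{20}^2)/2\bigr) + O(r^3),
\]
which is bounded below by $r\cdot\min(\alpha,\omega/2)$ for $r$ small, uniformly in the direction. Since $F(0)<0$ and the quadratic term dominates $c$ once $r$ exceeds a constant multiple of $\sqrt{c}$ (still within $W$), each ray meets $\{F=0\}$ in exactly one point. Hence $N_E^\delta$ is a smooth, star-shaped, embedded $2$-sphere contained in $K^{-1}(E)\cap V$.

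The main technical obstacle is simply the bookkeeping required to keep the entire picture inside a neighborhood on which the expansions for $R$ and its derivatives are uniform. Concretely, one must choose $(E,\delta)$ small enough that the hypersurface $\{F=0\}$, whose geometric size is of order $\sqrt{c}$, sits well inside that neighborhood, both for the regularity step and for the monotonicity along rays. This is routine but fixes the admissible range of parameters.
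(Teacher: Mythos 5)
Your proof is correct, but it takes a genuinely different route from the paper's. The paper's argument is a rescaling argument: setting $k=\sqrt{E+\alpha\delta^2/4}>0$ and introducing variables $\bar q_1=\sqrt\alpha\,(q_1-\delta/2)/k$, $\bar q_2 = \sqrt{\omega/2}\,q_2/k$, $\bar p_2 = \sqrt{\omega/2}\,p_2/k$, the equation becomes $\bar q_1^2+\bar q_2^2+\bar p_2^2 + O(k^2) = 1$, which for small $k$ is visibly a $C^\infty$-small perturbation of the round unit sphere in the rescaled coordinates, hence an embedded $2$-sphere. You instead keep the unscaled coordinates and establish the topology by showing $\{F=0\}$ is a regular level set that is radially star-shaped about the origin, so each ray from $0$ hits it transversally in exactly one point and it is the graph of a positive function over the unit sphere of directions. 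Both are sound; the paper's rescaling is slicker and makes the smallness quantitatively transparent (the hypersurface shrinks at rate $k$, which is why the $O(k^2)$ error is harmless), whereas your version is more elementary and makes explicit the non-degeneracy of $F$ and the convexity/star-shapedness that underlies the sphere conclusion. One small caveat in your write-up: along a ray the error term in $dF/dr$ is really $O\bigl(r(\delta^2+r^2)\bigr)$ rather than $O(r^3)$, since $\partial_{I_j}R = O(|I_1|+|I_2|) = O(\delta^2+r^2)$; this does not affect your conclusion because $\delta$ is also small, but the bookkeeping should record it.
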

\begin{proof}Using $q_1+p_1=\delta$ and $K=E$ in \eqref{Kloc}, we obtain $$\alpha\left(q_1 - \frac{\delta}{2}\right)^2 + \omega \frac{q_2^2+p_2^2}{2} + O\left(q_1^2(q_1-\delta)^2+ \left(q_2^2+p_2^2\right)^2\right) = E+ \frac{\alpha \delta^2}{4}.$$ Let $k=\sqrt{E+\frac{\alpha \delta^2}{4}}>0$. Defining the rescaled variables $q_1 = \frac{k}{\sqrt{\alpha}}\bar q_1+\frac{\delta}{2},$ $q_2 = \frac{k\sqrt{2}}{\sqrt{\omega}} \bar q_2,p_2 = \frac{k\sqrt{2}}{\sqrt{\omega}}  \bar p_2$, with $|(\bar q_1,\bar q_2,\bar p_2)|\leq 2$, we get $$\bar q_1^2 + \bar q_2^2 + \bar p_2^2+O\left( k^4 \right) = 1.$$ This is an embedded $2$-sphere in coordinates $(\bar q_1,\bar q_2,\bar p_2)$ for all $E,\delta \geq 0$   sufficiently small, with $(E,\delta)\neq (0,0)$. %Thus it is also an embedded $2$-sphere  in coordinates $(q_1,q_2,p_2)$. %Here we used that if $|\bar z|$ is bounded and $E,\delta \geq 0$ are small then $|(q_1,q_2,p_1,p_2)|$ is small.
\end{proof}

A more detailed description of $N^\delta_E$ is the following. Since $\partial_{I_2}\bar K(0,0)=\omega \neq 0,$ one can write using the implicit function theorem \begin{equation}\label{eqI2}I_2 = I_2(I_1,E) = \frac{\alpha}{\omega} I_1 + \frac{E}{\omega} + O\left(I_1^2+E^2\right),\end{equation} for $|I_1|,|E|$ small. For a fixed $E\geq 0$ small, there exists $I_1^-(E)\leq 0$ so that $I_2 \geq 0 \Rightarrow I_1 \geq I_1^-(E)$. Fixing also $\delta \geq 0$ small, with $(E,\delta) \neq (0,0)$, we see that $\{q_1 + p_1 =\delta\}\cap K^{-1}(E)$ projects onto a bounded segment $I_{\delta,E}$ in the plane $(q_1,p_1)$. Its mid-point corresponds in $K^{-1}(E)$ to the circle $I_1 = I_1^+(\delta):=\frac{\delta^2}{4}$ and $I_2^+(\delta,E):=I_2(I_1^+(\delta),E)$. Each internal point $(q_1,p_1)$ of $I_{\delta,E}$ corresponds in $K^{-1}(E)$ to a circle $I_1 = q_1p_1$ and $I_2 = I_2(I_1,E)\leq I_2^+(\delta,E)$. These circles collapse at the extreme points of $I_{\delta,E}$, which correspond in $K^{-1}(E)$ to points $I_1 = I_1^-(E)$ and $I_2=0$, forming the embedded $2$-sphere $N_E^\delta$. See Figure \ref{fig_esferinha} below.

The embedded $2$-sphere $N^\delta_0\subset K^{-1}(0), \delta>0$ small, as in Lemma \ref{2-sphere}, is the boundary of a topological closed $3$-ball inside $K^{-1}(0)$ given by $$B_0^\delta:=\{0\leq q_1+p_1 \leq \delta\} \cap K^{-1}(0).$$ Note that the $3$-ball $B_0^\delta$ contains the saddle-center equilibrium point in its center.

Fixing $\delta>0$ small, we  assume that $\varphi(N^\delta_0)$ is also the boundary of an embedded closed $3$-ball $B_\delta \subset H^{-1}(0)$ containing only regular points of $H$ and so that in local coordinates  it projects in $\{q_1+p_1 \geq \delta\}$. In this case $\varphi(B^\delta_0)\cap B_\delta = \varphi(N^\delta_0)$. Let $$S_0:=\varphi(B_0^\delta) \cup B_\delta \subset H^{-1}(0).$$  It follows that $S_0$ is homeomorphic to the $3$-sphere and has $p_c$ as its unique singularity. See Figure \ref{fig_nreg}.

\begin{definition} The set $S_0$ constructed above is called a sphere-like singular subset of $H^{-1}(0)$ containing $p_c$ as its unique singularity. \end{definition}

\begin{figure}[ht!!]
  \centering
  \includegraphics[width=0.4\textwidth]{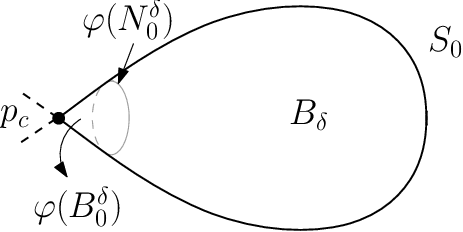}
  \caption{The sphere-like singular subset $S_0=\varphi(B_0^\delta) \cup B_\delta \subset H^{-1}(0).$ }
  \label{fig_nreg}
\hfill
\end{figure}

Let $\U \subset \R^4$ be a small neighborhood of $S_0$. It follows  that for all $E>0$ small we find an embedded closed $3$-ball  $S_E\subset H^{-1}(E)\cap \U$  so that
in  local coordinates  $S_E$ projects in $\{(q_1,p_1)\in \R^2: q_1 + p_1 \geq 0\}$, see Figures \ref{fig_se} and \ref{fig_esferinha}.
\begin{figure}[ht!!]
  \centering
  \includegraphics[width=0.42\textwidth]{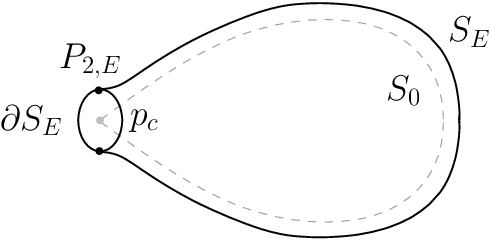}
  \caption{The embedded closed $3$-ball $S_E \subset H^{-1}(E),$ $E>0$ small.}
  \label{fig_se}
\hfill
\end{figure}
By Lemma \ref{2-sphere}, its boundary $\partial S_E\subset H^{-1}(E)$ is an embedded $2$-sphere  and contains the periodic orbit $P_{2,E} \subset H^{-1}(E)$ given in local coordinates by
$$ P_{2,E}:=\{q_1=p_1=0, I_2=I_2(0,E)\} \subset K^{-1}(E),$$ see \eqref{eqI2}. Note that $I_2(0,E) \to 0$ as $E \to 0^+$.

The one parameter family of periodic orbits $P_{2,E}, E>0$ small,  lies in the center manifold of $p_c$. We shall see later that $P_{2,E}$
is hyperbolic inside $H^{-1}(E)$ and its Conley-Zehnder index equals
$2$. $P_{2,E}$ is called the equator of $\partial S_E$. The stable and unstable manifolds of $P_{2,E}$ are locally given by $$\begin{aligned} W^s_{\rm E, loc} & := \{ p_1=0,I_2=I_2(0,E)\} \\ W^u_{\rm E, loc} & := \{ q_1=0,I_2=I_2(0,E)\},\end{aligned}$$ respectively. Both $W^s_{\rm E,loc}$ and $W^u_{\rm E,loc}$ are transverse to $\partial S_E$ inside $H^{-1}(E)$ and both have branches inside $\dot S_E:=S_E \setminus \partial S_E$.

The hemispheres of $\partial S_E$, defined in local coordinates by \begin{equation}\label{hemis} \begin{aligned} U_{1,E}:=\{q_1+p_1=0,q_1<0\}\cap K^{-1}(E),\\ U_{2,E}:=\{q_1+p_1=0,q_1>0\}\cap K^{-1}(E),\end{aligned}\end{equation}  are transverse to the Hamiltonian vector field, which points inside $S_E$ at $U_{1,E}$ and outside $S_E$ at $U_{2,E}$. See Figure \ref{fig_esferinha}.

\begin{figure}[ht!!]
  \centering
  \includegraphics[width=0.8\textwidth]{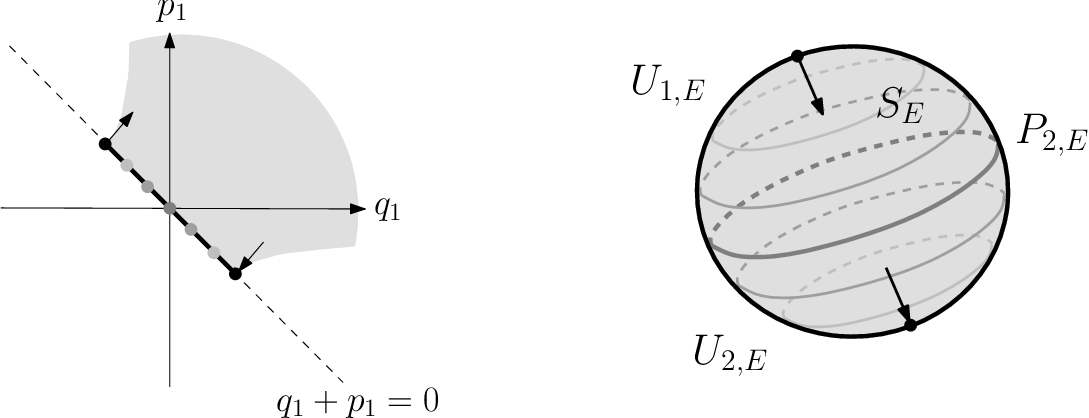}
  \caption{The embedded $2$-sphere $\partial S_E =\varphi(N_E^0)\subset H^{-1}(E),$  $E>0$, and its hemispheres $U_{1,E}$ and $U_{2,E}$. The arrows point in the direction of the Hamiltonian vector field. }
  \label{fig_esferinha}
\hfill
\end{figure}

The local stable and unstable manifolds of $p_c$ are given  by $$\begin{aligned} W^s_{\rm 0,loc}&:=\{p_1=q_2=p_2=0\} \\ W^u_{\rm 0,loc}&:=\{q_1=q_2=p_2=0\}.\end{aligned}$$ Both are one-dimensional and contain a branch inside $\dot S_0:=S_0 \setminus \{p_c\}$.

\begin{definition} We say that the sphere-like singular subset $S_0\subset H^{-1}(0)$ containing the saddle-center equilibrium point $p_c$ as its unique singularity is a strictly convex singular subset of $H^{-1}(0)$ if $S_0$ bounds a convex subset of $\R^4$ and the following local convexity assumption is satisfied
\begin{equation} \label{hessiano} H_{ww}(w)|_{T_w\dot S_0} \mbox{ is positive definite for all regular points } w\in
\dot S_0,\end{equation} where $H_{ww}$ denotes the Hessian of $H$. \end{definition}

\noindent
{\bf Hypothesis 2 (global convexity).} {\it The saddle-center equilibrium point $p_c$ lies in a strictly convex singular subset of the critical energy level $H^{-1}(0)$.}

\begin{remark}\label{remconvexity} In \cite{Sa1},  Hamiltonian functions satisfying Hypotheses 1 and 2 are presented. It is proved  that the local convexity assumption \eqref{hessiano}  implies that $S_0$ bounds a convex domain in $\R^4$, thus Hypothesis 2 could be relaxed by only requiring the local condition \eqref{hessiano}. In this case, all tangent hyperplanes $H_w:=w+T_w \dot S_0,w \in \dot S_0,$ are non-singular support hyperplanes of $\dot S_0$. This means that $H_w \cap S_0 = \{w\}$ and given any smooth curve $\gamma:(-\epsilon,\epsilon) \to \dot S_0,\epsilon>0,$ with $\gamma(0)=w$ and $0\neq \gamma'(0)\in T_w \dot S_0$, its second derivative $\gamma''(0)$ is transverse to $T_w\dot S_0$. One of the consequences of this geometric property is that given any point $w_0$ belonging to the bounded component of $\R^4 \setminus S_0$, each ray through $w_0$ intersects $S_0$ precisely at a single point and, at a regular point of $\dot S_0$, this intersection is transverse. It follows that $\dot S_0$ has contact type and the Hamiltonian flow restricted to $\dot S_0$ is equivalent to a Reeb flow. This will be discussed below. \end{remark}

\subsection{$2-3$ foliations}\label{sec_syst_transversal} We are interested in finding a particular system of transversal
sections for the Hamiltonian flow restricted to the closed $3$-ball $S_E,$ with $E>0$ small. We shall call
this system a $2-3$ foliation. The numbers correspond to the
Conley-Zehnder indices of the binding orbits.

\begin{definition}\label{feef2}Let $p_c$ be a saddle-center equilibrium point for the Hamiltonian function $H$ for which Hypothesis 1 is satisfied. Let $S_0\subset H^{-1}(0)$ be a sphere-like singular subset of $H^{-1}(0)$ containing $p_c$ as its unique singularity. Given $E>0$ small, let $S_E\subset H^{-1}(E)$ be the embedded closed $3$-ball near $S_0$ as explained above, so that in local coordinates it projects in $\{q_1 + p_1 \geq 0\}$. A $2-3$ foliation adapted to $S_E$ is a
singular foliation $\F_E$ of $S_E$ with the following properties:
\begin{itemize}
\item The singular set of $\F_E$ is formed by the hyperbolic periodic orbit $P_{2,E}\subset \partial S_E$ and by an unknotted periodic orbit $P_{3,E}\subset \dot S_E:= S_E \setminus \partial S_E$. These periodic orbits are called the binding orbits of $\F_E$ and their Conley-Zehnder indices are $CZ(P_{2,E})=2$ and $CZ(P_{3,E})=3$, see Appendix \ref{ap_basics} for definitions.
\item $\F_E$ contains the hemispheres $U_{1,E}$ and $U_{2,E}$ of $\partial S_E$ defined in local coordinates by \eqref{hemis}. Both hemispheres are called rigid planes. \item $\F_E$ contains an
embedded open cylinder $V_{E} \subset \dot S_E\setminus P_{3,E}$ and its closure has boundary $\partial V_{E} = P_{2,E} \cup P_{3,E}$. This cylinder is also called rigid. \item $\F_E$ contains a smooth one parameter family of embedded open disks $D_{\tau,E} \subset \dot S_E\setminus (V_{E} \cup P_{3,E}),\tau \in (0,1)$,  so that the closure of $D_{\tau,E}$ has boundary $\partial D_{\tau,E} = P_{3,E}, \forall \tau.$ This family foliates $\dot S_E \setminus (V_{E} \cup P_{3,E})$ and as $\tau \to 0^+$, $D_{\tau,E} \to V_{E}\cup P_{2,E} \cup U_{1,E}$ and as $\tau\to 1^-$, $D_{\tau,E} \to  V_{E}\cup P_{2,E} \cup U_{2,E}$.
\item The Hamiltonian vector field $X_H$ is transverse to all regular leaves $U_{1,E},$ $U_{2,E},$ $V_{E}$ and  $D_{\tau,E},\tau\in (0,1)$.
\end{itemize}
\end{definition}

Later on we shall give a better description of the asymptotic behavior of the leaves $D_{\tau,E} \in \F_E,\tau \in (0,1),$ and $V_{E}$ near the boundary $P_{3,E}$. We shall also describe in a more precise way the convergence of the one parameter family of disks $D_{\tau,E},\tau \in (0,1),$ to $V_{E}\cup P_{2,E} \cup U_{1,E}$ as $\tau \to 0^+$ and to $V_{E}\cup P_{2,E} \cup U_{2,E}$ as $\tau \to 1^-$.  See Figure \ref{fig2-3}.

\begin{figure}[ht!!]
  \centering
  \includegraphics[width=0.5\textwidth]{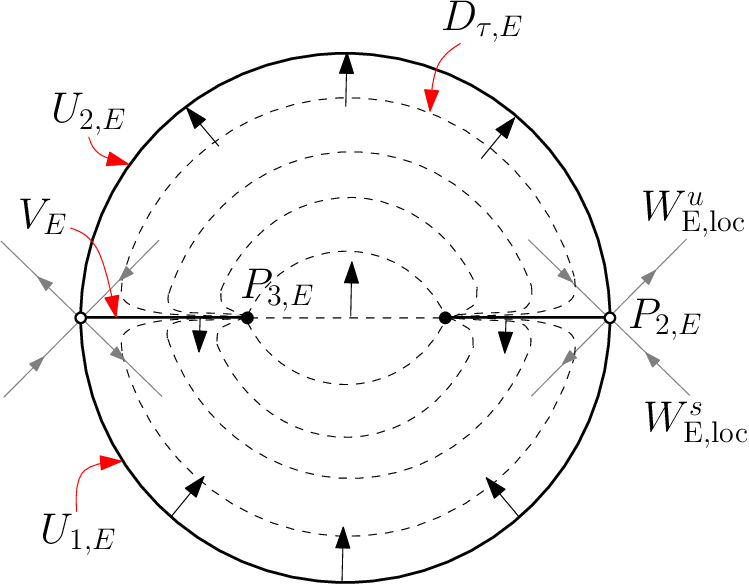}
  \caption{This figure represents a slice of a $2-3$ foliation adapted to $S_E$. The black dots represent the binding orbit $P_{3,E}$ with Conley-Zehnder index $3$ and the white dots represent the hyperbolic orbit $P_{2,E}$ with Conley-Zehnder index $2$. The disks $D_{\tau,E}, \tau \in (0,1),$ are represented by dashed curves. The rigid planes $U_{1,E}$ and $U_{2,E}$ and the rigid cylinder $V_{E}$ are represented by bold curves. The black arrows point in the direction of the Hamiltonian vector field. The local stable and unstable manifolds of $P_{2,E}$ are also represented in the figure. }
  \label{fig2-3}
\hfill
\end{figure}

\subsection{Main statement}\label{sec_main_theorem}
The main result of this article is the following theorem.

\begin{theo}\label{main1}Let $H:\R^4 \to \R$ be a smooth Hamiltonian function admitting a saddle-center equilibrium point $p_c \in H^{-1}(0)$. Assume the existence of suitable smooth canonical coordinates $(q_1,q_2,p_1,p_2)$ near $p_c$ so that $H$ has the special normal form \eqref{Kloc} as in Hypothesis 1. Assume that Hypothesis 2 is also satisfied, i.e., $p_c$ lies in a strictly convex singular subset $S_0$ of the critical level $H^{-1}(0)$ projecting in local coordinates in the first quadrant $\{q_1,p_1 \geq 0\}$. Let $S_E\subset H^{-1}(E),E>0$ small,
be the embedded closed $3$-ball near $S_0$, defined as above, and which in local coordinates projects in $\{q_1+p_1 \geq 0\}$. Then for all $E>0$ sufficiently small, there exists a $2-3$ foliation $\F_E$ adapted to $S_E$. In particular, there exists at least one homoclinic orbit to $P_{2,E}\subset \partial S_E$ contained in  $\dot S_E=S_E \setminus \partial S_E$.
\end{theo}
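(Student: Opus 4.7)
The plan is to construct the $2-3$ foliation via pseudo-holomorphic curve techniques in the symplectization of an auxiliary contact manifold, building on the disk-like global section theorem of Hofer, Wysocki and Zehnder for strictly convex $3$-spheres in $\R^4$ \cite{convex} and the finite energy foliation theory of \cite{fols}. First I would extend $S_E$ across its boundary by gluing on a smooth cap to produce a closed strictly convex sphere-like hypersurface $\Sigma_E\subset \R^4$ carrying a dynamically convex, nondegenerate Reeb flow that agrees with the Hamiltonian flow on $S_E$. The cap is chosen so that the hemispheres $U_{1,E}, U_{2,E}$ and the hyperbolic orbit $P_{2,E}\subset \partial S_E\subset \Sigma_E$ retain their prescribed roles as transverse hemispheres and piercing orbit, and so that no new short periodic orbits are introduced outside $S_E$.

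By the convex energy surface theorem of \cite{convex} applied to $\Sigma_E$, there is a disk-like open book decomposition with an unknotted binding orbit of Conley--Zehnder index $3$; a linking/homological argument exploiting that the cap is essentially an added $3$-ball with no periodic dynamics lets one locate this binding inside $\dot S_E$ and identify it with the desired $P_{3,E}$. The pseudo-holomorphic pages of this open book foliate $\Sigma_E \setminus P_{3,E}$ and are transverse to the Reeb vector field. The main task is then to understand how the pages meet $\partial S_E$ and accumulate on the hyperbolic equator $P_{2,E}$.

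I expect the hard step to be the SFT-type compactness analysis at $P_{2,E}$. As the page parameter $\tau$ varies, the pages sweep $\dot S_E$; when a page is pushed against $\partial S_E$, positivity of intersections combined with the hyperbolicity of $P_{2,E}$ forces a breaking of holomorphic curves along $P_{2,E}$, producing a holomorphic building consisting of (i) a rigid pseudo-holomorphic plane asymptotic to $P_{2,E}$, identified with one of the hemispheres $U_{i,E}$, glued to (ii) a rigid pseudo-holomorphic cylinder $V_E$ from $P_{3,E}$ to $P_{2,E}$. A Fredholm index computation using $\cz(P_{2,E})=2$ and $\cz(P_{3,E})=3$ makes the cylinder index zero and the boundary planes rigid, while dynamical convexity and automatic transversality in dimension four rule out bubbling and spurious components. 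Uniqueness of the bounding pieces, together with an intersection argument for the one-parameter family, pins down exactly the structure of Definition \ref{feef2}.

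Finally, the existence of a homoclinic orbit to $P_{2,E}$ in $\dot S_E$ is extracted from the foliation. The unstable branch $W^u_{\rm E, loc}\cap \dot S_E$ enters the region foliated by the disks $\{D_{\tau,E}\}$ and, by transversality to the pages, the first-return map to a representative page (or to the rigid cylinder $V_E$) is well defined along this branch. A standard dynamical argument, using that $P_{2,E}$ and $P_{3,E}$ are the only binding orbits and that $V_E$ is doubly asymptotic to both, then produces a trajectory whose $\alpha$- and $\omega$-limits both coincide with $P_{2,E}$, yielding the required homoclinic connection.
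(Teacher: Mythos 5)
Your opening move is not available: extending $S_E$ to a \emph{strictly convex} sphere-like hypersurface $\Sigma_E$ that still contains the hyperbolic equator $P_{2,E}\subset\partial S_E$ is impossible. By the theorem of Hofer--Wysocki--Zehnder used in \cite{convex}, strict convexity of a sphere-like energy surface in $\R^4$ forces every closed characteristic to have Conley--Zehnder index $\geq 3$, so in particular the Reeb flow would be \emph{dynamically} convex. But $\cz(P_{2,E})=2$ (this is precisely what makes the $2-3$ structure appear). The same obstruction rules out the weaker phrasing ``dynamically convex, nondegenerate Reeb flow containing $P_{2,E}$'': dynamical convexity and the presence of an index-$2$ orbit are mutually exclusive by definition. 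Consequently, the disk-like open book theorem of \cite{convex} cannot be invoked on $\Sigma_E$ at all; there is no globally strictly convex object in this problem once $E>0$.

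What the paper does instead is conceptually the reverse of your plan. Rather than build an open book first and then break it at $P_{2,E}$, the paper doubles the manifold: it glues a copy $S'_E$ of $S_E$ along $\partial S_E$ using the symmetry $T(q_1,q_2,p_1,p_2)=(-q_1,q_2,-p_1,p_2)$ (which preserves $K$ and $\omega_0$), producing a smooth tight contact $3$-sphere $W_E=S'_E\cup S_E$ carrying a \emph{weakly} convex contact form $\lambda_E$. This is Proposition~\ref{prop_step1}, which requires a nontrivial Liouville vector field interpolation near the saddle-center. Your assumption that the ``cap'' carries no periodic dynamics fails here: the reflected half $\dot S'_E$ is dynamically identical to $\dot S_E$ and contains the mirror binding $P'_{3,E}$, and this symmetric structure is essential. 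With weak convexity and the key linking input (Proposition~\ref{prop_step2}: $P_{2,E}$ is the unique index-$2$ orbit, and no index-$3$ orbit is linked to it), the paper applies the far more general foliation theorem of \cite{fols}---\emph{not} \cite{convex}---which is built to accommodate index-$2$ binding orbits, and directly produces a $3-2-3$ foliation after a nondegenerate perturbation; the $2-3$ foliation on $S_E$ is its restriction. The rigid planes at $P_{2,E}$ are not discovered through an SFT breaking analysis of open book pages: they are constructed explicitly (Proposition~\ref{prop_step3}) as solutions of an ODE for a judiciously chosen $J_E$, and the final step is a compactness argument (Proposition~\ref{prop_step5}) passing to the $n\to\infty$ limit of the perturbed foliations, not a degeneration of a pre-existing open book. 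The homoclinic orbit then follows from \cite[Proposition~7.5]{fols}, roughly along the lines you sketch.

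If you want to salvage the capping idea, you would need to (a) drop strict/dynamical convexity of the extended hypersurface and replace \cite{convex} by \cite{fols}; (b) control the Reeb dynamics on the cap so the foliation theorem does not seed binding orbits there---which in practice is exactly what the reflection trick accomplishes; and (c) prove the analogue of Proposition~\ref{prop_step2} (uniqueness and non-linking of the index-$2$ orbit), which is the technical heart of the argument and is entirely absent from your sketch.
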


%\begin{remark}Due to symmetry of the Hamiltonian function and the symplectic form under the map $(q_1,q_2,p_1,p_2)\mapsto (-q_1,q_2,-p_1,p_2)$, the case that the strictly convex singular subset $S_0$  projects near the saddle-center into $\{q_1+p_1 \leq 0\}$ instead of $\{q_1+p_1 \geq 0\}$ is completely analogous.  \end{remark}

\begin{corollary}Let $H:\R^4 \to \R$ be a smooth Hamiltonian function admitting a saddle-center equilibrium point $p_c \in H^{-1}(0)$ so that $H$ is real-analytic near $p_c$, see Remark \ref{remreal}. Assume that $p_c$ lies in a strictly convex singular subset $S_0 \subset H^{-1}(0)$ as in Theorem \ref{main1}.  Then for all $E>0$ sufficiently small, the energy level $H^{-1}(E)$ contains a $3$-ball $S_E$ near $S_0$ which admits an adapted $2-3$ foliation. The hyperbolic binding orbit $P_{2,E}\subset \partial S_E$ has a homoclinic orbit in  $\dot S_E=S_E \setminus \partial S_E$. \end{corollary}

\begin{remark}\label{remdouble} It might happen that $H^{-1}(0)$ contains a pair of strictly convex singular subsets $S_0, S'_0$ intersecting precisely at the saddle-center $p_c$. In this case, $H^{-1}(E), E>0$ small, also contains a subset $S'_E$ near $S'_0$, diffeomorphic to the closed $3$-ball,  so that $\partial S'_E = \partial S_E$. In local coordinates $(q_1,q_2,p_1,p_2)$, $S_E$ projects in $\{q_1+p_1\geq 0\}$ and  $S'_E$ projects in $\{q_1+p_1 \leq 0\}$. Using the symmetry of the Hamiltonian function in local coordinates given by $K(-q_1,q_2,-p_1,p_2)=K(q_1,q_2,p_1,p_2)$, one can also obtain a $2-3$ foliation $\F'_E$ adapted to $S'_E$. In this way, $S'_E \cup S_E$ is diffeomorphic to the $3$-sphere and it admits the singular foliation $\F'_E \cup \F_E$,  called a $3-2-3$ foliation adapted to $S'_E \cup S_E$, which has three periodic orbits as binding orbits: $P_{3,E}\subset \dot S_E, P_{3,E}'\subset \dot S'_E:= S'_E \setminus \partial  S'_E,$ both having Conley-Zehnder index $3$, and the hyperbolic orbit $P_{2,E}\subset \partial S'_E = \partial S_E$ with Conley-Zehnder index $2$. At least two homoclinic orbits to $P_{2,E}$ are obtained, one in each subset $\dot S_E$ and $\dot  S'_E$.\end{remark}

\begin{remark}We expect that the geometric convexity assumptions of $\dot S_0$ in Theorem \ref{main1}, given by Hypothesis 2, may be replaced by the weaker assumption that $\dot S_0$ is dynamically convex, i.e., all of its periodic orbits  have Conley-Zehnder index at least $3$.  \end{remark}

\begin{remark}Recent results on the circular planar restricted three body problem \cite{AFKP} show that for energies slightly above the first Lagrange value, the energy level has contact type. This allows the use of pseudo-holomorphic curves to study Hamiltonian dynamics restricted to such energy levels. Considering the Levi-Civita regularization near one of the primaries and working on the double covering space, one has a symmetric subset of the critical energy level in $\R^4$ which is homeomorphic to the $3$-sphere and has precisely two symmetric saddle-center equilibriums as singular points. Here the symmetry is given by the antipodal map and these two equilibriums correspond to the first Lagrange point. Since we consider the case where the singular subset has only one singularity, our results do not apply to that situation. Even if one considers the dynamics on the manifold quotiented by the antipodal symmetry, where the critical subset has only one singularity, one still has a different topological setup which is not covered by results in this paper.

 \end{remark}

\subsection{Applications}\label{sec_applications} It is not hard to find Hamiltonian functions satisfying the hypotheses of Theorem \ref{main1}. For instance, the real-analytic Hamiltonian  \begin{equation}\label{eq_Ham1} H(x_1,x_2,y_1,y_2)=\frac{y_1^2+y_2^2}{2} + \frac{x_1^2+kx_2^2}{2} + \frac{1}{2}\left(x_1^2+x_2^2\right)^2,  \end{equation} admits a saddle-center equilibrium point at $p_c = 0\in \R^4$ for all $k<0$. Moreover, it is proved in \cite{Sa1} that $p_c$ lies in a pair of strictly convex singular subsets $S_0, S_0'\subset H^{-1}(0)$. The projections of $S_0$ and $S_0'$ into the plane $(x_1,x_2)$ are topological disks $D_0\subset \{x_2 \geq 0\}$ and $D_0'\subset \{x_2 \leq 0\}$, respectively, whose boundaries are strictly convex and contain a singularity at $0\in \R^2$, see Figure \ref{fig_estconv}. Thus hypotheses of Theorem \ref{main1} are satisfied and we conclude that for all $E>0$ sufficiently small, there exists a pair of subsets $S_E,S_E'\subset H^{-1}(E)$, with $\partial S_E = \partial S_E'$, so that $S_E,S_E'$ admit $2-3$ foliations $\F_E,\F_E'$ and $W_E=S_E \cup S_E'$ admits a $3-2-3$ foliation. In this case $\partial S_E$ contains the periodic orbit $P_{2,E}=\{y_1^2+x_1^2+x_1^4=2E,x_2=y_2=0\}$ and, by the symmetry of $H$, its hemispheres  can be chosen to be $U_{1,E}=\{y_1^2+y_2^2+x_1^2+x_1^4=2E,x_2=0,y_2> 0\}$ and $U_{2,E}=\{y_1^2+y_2^2+x_1^2+x_1^4=2E,x_2=0,y_2< 0\}$. The binding orbits of $\F_E \cup \F_E'$ are $P_{2,E}$, $P_{3,E}$ and $P_{3,E}'$, with $P_{3,E}$ projecting inside $\{x_2 >0\}$ and $P_{3,E}'$ projecting inside $\{x_2 <0\}$. In this simple example the Liouville vector field $Y(w)=\frac{w}{2}$ is transverse to $H^{-1}(E),E>0,$ and possibly one might do many computations explicitly. There exists at least one homoclinic to $P_{2,E}$ in $\dot S_E$ and in $\dot S_E'$.

\begin{figure}[h!!]
  \centering
  \includegraphics[width=0.75\textwidth]{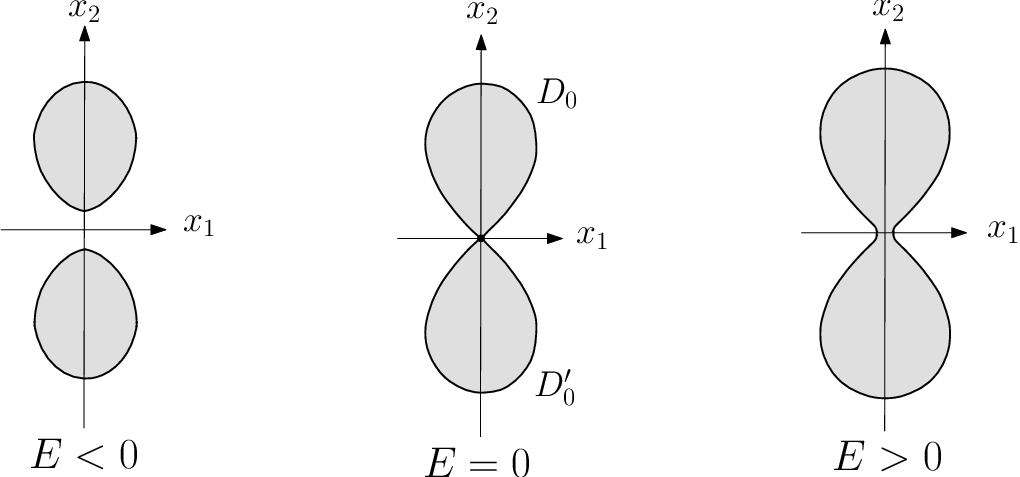}
  \caption{Hill's region of the Hamiltonian \eqref{eq_Ham1} for $E<0$, $E=0$ and $E>0$. Observe the symmetry under the involutions $(x_1,x_2)\mapsto (\pm x_1, \pm x_2)$. }
  \label{fig_estconv}
\hfill
\end{figure}

Another Hamiltonian function satisfying hypotheses of Theorem \ref{main1} is given by \begin{equation} \label{eq_Ham2}H(x_1,x_2,y_1,y_2)=\frac{y_1^2+y_2^2}{2}+ \frac{x_1^2+x_2^2}{2}+bx_1^2x_2-\frac{x_2^3}{3}, \end{equation} where $0<b<1$. The case $b=1$ corresponds to the well known H\'enon-Heiles Hamiltonian. It is proved in \cite{Sa1} that if $0<b<1$ then the saddle-center equilibrium point $p_c=(0,1,0,0)\in H^{-1}\left(\frac{1}{6}\right)$ lies in a strictly convex singular subset $S_0\subset H^{-1}\left(\frac{1}{6}\right)$. The projection of $S_0$ into the plane $(x_1,x_2)$ is a topological disk with strictly convex boundary containing a singularity at $(0,1)\in \R^2$, see Figure \ref{fig_ham2}. Thus for each $E=\frac{1}{6}+\varepsilon$, with $\varepsilon>0$ sufficiently small, $H^{-1}(E)$ contains a subset $S_E$, diffeomorphic to the closed $3$-ball and close to $S_0$, admitting a $2-3$ foliation $\F_E$. The boundary $\partial S_E$ is close to $p_c$ and contains a periodic orbit $P_{2,E}$. The existence of a homoclinic to $P_{2,E}$ in $\dot S_E$ follows from this foliation.

\begin{figure}[h!!]
  \centering
  \includegraphics[width=0.90\textwidth]{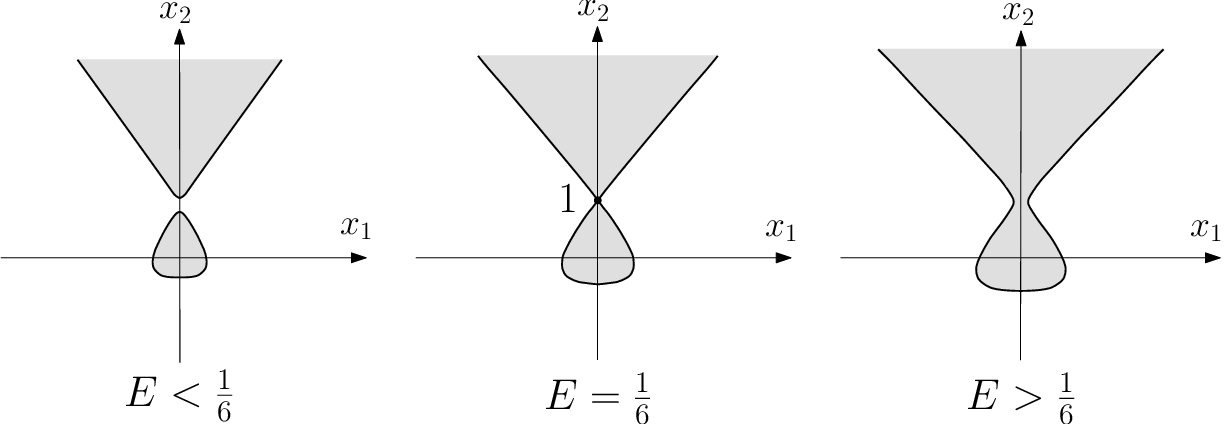}
  \caption{Hill's region of the Hamiltonian \eqref{eq_Ham2} for $E<\frac{1}{6}$, $E=\frac{1}{6}$ and $E>\frac{1}{6}$. Observe the symmetry under the involution $(x_1,x_2)\mapsto (- x_1,  x_2)$.}
  \label{fig_ham2}
\hfill
\end{figure}

\subsection{An open question}  One could try to obtain a system of transversal sections  adapted to the strictly convex  singular subset $S_0\subset H^{-1}(0)$. The expected picture is the following: there exists an unknotted periodic orbit $P_{3,0}\subset \dot S_0$, which has Conley-Zehnder index $3$ and is the binding of a singular foliation $\F_0$. $\F_0$ contains an open cylinder $V_{0}\subset \dot S_0$ so that its closure $D_\infty:= {\rm closure} (V_{0}\cup \{p_c\})\subset S_0$ is a topological embedded closed disk containing $p_c$ in its interior and whose boundary coincides with $P_{3,0}$. In the complement $S_0 \setminus  D_\infty $, $\F_0$ contains a smooth one parameter family of embedded open disks $D_{\tau,0}\subset \dot S_0,\tau \in (0,1)$, so that the closure of each $D_{\tau,0}$ has boundary $\partial D_{\tau,0} = P_{3,0}, \forall \tau,$ and converges to  $D_\infty$  as $\tau \to 0^+$ and as   $\tau \to 1^-$, through opposite sides of $D_\infty$. All regular leaves in $\dot S_0$ are transverse to the Hamiltonian vector field. In fact, this is a bifurcation picture for  foliations adapted to $S_E$, as one goes from the open book decomposition with disk-like pages for a component near $S_0$, also denoted $S_E$ for all $E<0$ small,  obtained in \cite{convex},  to the $2-3$ foliation adapted to $S_E$ for $E>0$, obtained in Theorem \ref{main1}. If such a singular foliation adapted to $S_0$ exists, then the disks $D_{\tau,0}, \tau \in (0,1),$ and the disk $V_{0} \cup \{p_c\}$ are disk-like global surfaces of section for the Hamiltonian flow restricted to $S_0$ in the following sense: every orbit in $\dot S_0 \setminus P_{3,E}$ hits one such disk infinitely many times forward in time, except the branch $\gamma^s$ of the stable manifold of $p_c$ in $\dot S_0$, which is one-dimensional. In local coordinates $(q_1,q_2,p_1,p_2)$, the local branch of $\gamma^s$ is given by $\{p_1=q_2=p_2=0,q_1 \geq 0\}$. Considering the continuation of the local branch of $\gamma^s$ to be the local branch of the unstable manifold of $p_c$ in $S_0$, given  by $\{q_1=q_2=p_2=0, p_1 \geq 0\}$, one can define a continuous area-preserving return map for all of these disks. It turns out that the return map and its positive iterates are smooth except perhaps at those points intersecting $\gamma^s$, where an infinite twist near each of such points appears after finitely many iterations. It follows that $\dot S_0$ contains either $2$ or infinitely many periodic orbits, where a homoclinic orbit to $p_c$, if it exists, is counted as a periodic orbit. In \cite{GS1}, the binding orbit $P_{3,0}$ is proved to exist and in \cite{GS2} such a foliation is proved to exist under very restrictive assumptions such as integrability or quasi-integrability of the flow. The general case, possibly assuming only dynamical convexity for $\dot S_0$, is still open.

\begin{figure}[ht!!]
  \centering
  \includegraphics[width=1\textwidth]{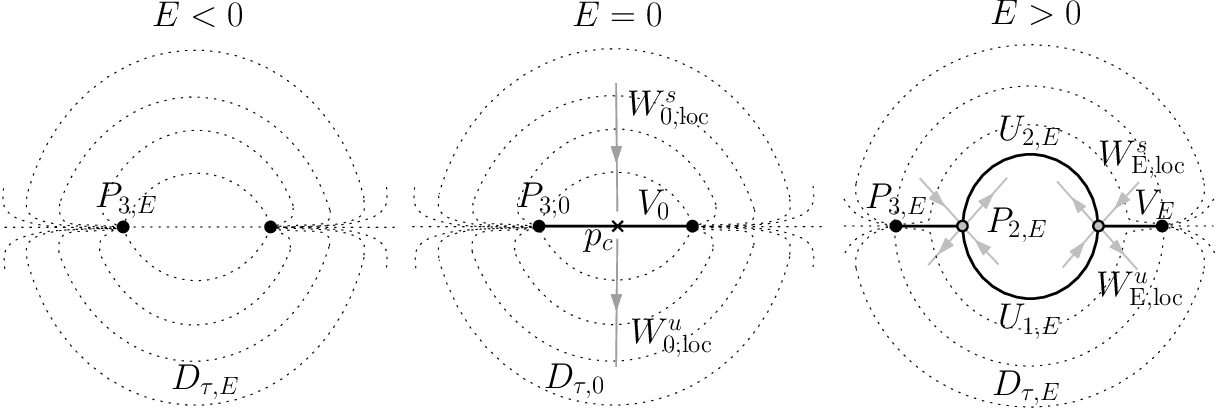}
  \caption{A bifurcation diagram of the foliation from $E<0$ to $E>0$. The expected foliation for $E=0$ contains a leaf passing through the singularity $p_c$ which is represented as a bold line. If $E<0$, this singularity is removed and if $E>0$ the singularity gives place to an embedded $2$-sphere $\partial  S_E$, formed by a pair of rigid planes and the hyperbolic orbit $P_{2,E}$ in the center manifold of $p_c$. Notice that for $E>0$ the foliation is the same as in Figure \ref{fig2-3}, but  represented outside $\partial S_E$.}
  \label{figopenquestion}
%\hfill
\end{figure}

\section{Proof of the main statement}\label{sec_proof_main}
The main steps of the proof of Theorem \ref{main1} are stated in Propositions \ref{prop_step1}, \ref{prop_step2}, \ref{prop_step3}, \ref{prop_step4} and \ref{prop_step5} below. In the following we sketch these steps: first we take a copy $S_E'$ of the $3$-ball $S_E \subset H^{-1}(E)$ and glue them together along their boundaries to form a smooth $3$-sphere $W_E=S_E \sqcup S_E'$ so that $\partial S_E\equiv \partial S_E'$ is the separating $2$-sphere containing the hyperbolic orbit $P_{2,E}$. For all $E>0$ small, we show that the Hamiltonian flow on $W_E$  coincides, up to a reparametrization, with the Reeb flow of a tight contact form $\lambda_E$ on $W_E$. Then we estimate the Conley-Zehnder indices of periodic orbits and prove that the closer a periodic orbit in $W_E \setminus P_{2,E}$ is to $P_{2,E}$, the higher its Conley-Zehnder index is. In particular, we show that periodic orbits crossing the separating $2$-sphere must have arbitrarily high Conley-Zehnder index, for small values of $E>0$. Moreover, we prove that all periodic orbits, except for $P_{2,E}$, have Conley-Zehnder index $\geq 3$. These properties also hold for the Reeb flow of any $\lambda_n$, $n$ large, for any sequence $\lambda_n\to \lambda_E$ in $C^\infty$ as $n\to \infty$. We may assume that the Reeb flow of $\lambda_n$ is nondegenerate and then we can apply results from \cite{fols} to obtain a global system of transversal sections for $\lambda_n$. For topological reasons, this must be a $3-2-3$ foliation. Such a foliation is the projection of a finite energy foliation $\tilde \F_n$ in the symplectization $\R \times W_E$ for suitable almost complex structures $\tilde J_n=(\lambda_n,J_n)\to \tilde J_E=(\lambda_E,J_E)$ in $C^\infty$ as $n\to \infty$, where $J_n$ and $J_E$ are compatible complex structures on the contact structures $\ker \lambda_n$ and $\ker \lambda_E$, respectively. Finally, we show that $\tilde \F_n$ converges to a finite energy foliation $\tilde \F_E$ for $\tilde J_E$ which projects down to a $3-2-3$ foliation on $W_E$. Its restriction to $S_E$ is a $2-3$ foliation. The compatible complex structure $J_E$ on $\ker \lambda_E$ is chosen so that the hemispheres $U_{1,E}$ and $U_{2,E}$ of $\partial S_E$ are regular leaves of the foliation.

We start with the construction of the smooth $3$-sphere $W_E$. Let $\varphi: (V \subset \R^4,0) \to (U \subset
\R^4,p_c)$ be the symplectomorphism given in Hypothesis 1. In local coordinates $(q_1,q_2,p_1,p_2) \in V$, $\partial S_E$ is the boundary of a topological embedded $3$-ball $B_{\leq E}:=\{q_1+p_1 =0,0\leq K \leq E\}=\bigcup_{0 \leq K\leq E} N^0_K$ containing the saddle-center at the origin in its interior, where $N^0_K$ is defined as in Lemma \ref{2-sphere}. Then $S_E \cup \varphi(B_{\leq E})$ is a topological embedded $3$-sphere in $\R^4$ which bounds  a closed topological $4$-ball $U_E$. Let $U_E'$ be a copy of $U_E$. The boundary of $U_E'$ is a topological $3$-sphere $\partial U_E' = S_E' \cup \varphi(B_{\leq E}')$, where $S_E'$ and $B_{\leq E}'$ are copies of $S_E$ and $B_{\leq E}$, respectively. In local coordinates, a point of $U_E$ is projected inside $\{q_1 + p_1 \geq 0\}$. Using the map $T:V \to V$, defined by \begin{equation}\label{eq_T}  T(q_1,q_2,p_1,p_2):=(-q_1,q_2,-p_1,p_2),\end{equation} we identify the points of $U_E'$, in local coordinates,  with points inside $\{q_1+p_1\leq 0\}$. Hence in these local coordinates, $B_{\leq E}'=B_{\leq E}$ and, therefore, $\partial  S_E'= \partial S_E$. Gluing $U_E'$ with $U_E$ by the identification $B_{\leq E}'\sim B_{\leq E}$ via the identity map, we obtain a smooth $4$-manifold $$\U_E:=U_E' \bigsqcup_{B_{\leq E}' \sim B_{\leq E}} U_E$$ with boundary $$W_E:=S_E' \bigsqcup_{\partial S_E \sim \partial S_E'} S_E$$  diffeomorphic to $S^3$. See Figure \ref{fig_construcaoWE}. There exists a natural extension of $T$ to $W_E$.

Since $T^* \omega_0=\omega_0$ and $K\circ T = K$, the symplectic form $\omega_0$ and the Hamiltonian function $H$ naturally descend to $\U_E$ both denoted by the same letters. In this way,  $p_c\in \U_E, \forall E>0$ small, and the Hamiltonian function $K$ defined in  \eqref{Kloc} is a local model for $H$ near $p_c\in \U_E$. Note that if $0<E'<E$ then we naturally have the inclusion $\U_{E'} \subset \U_E$. We denote by $S_0'\subset \U_E$ the corresponding copy of $S_0$ in $U_E'$. Note that $S_0'$ is a strictly convex singular subset of $H^{-1}(0)$ and intersects $S_0$ precisely at $p_c$. In local coordinates near $p_c$, $S_0'$ projects inside $\{q_1,p_1 \leq 0\}$. Let $$W_0:=S_0' \cup S_0.$$ $S_0$ and $S_0'$ are the boundary of closed subsets $U_0\subset U_E$ and $U_0'\subset U_E'$, respectively, both homeomorphic to the closed $4$-ball.

\begin{figure}[ht!!]
  \centering
  \includegraphics[width=.75\textwidth]{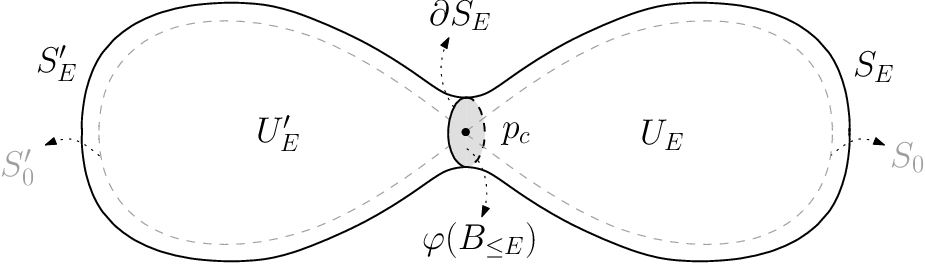}
  \caption{This figure represents the construction of the $4$-ball $\U_E=U_E' \sqcup U_E$, with boundary $W_E=S_E' \sqcup S_E$ diffeomorphic to the $3$-sphere, and $W_0=S_0' \cup S_0$.}
  \label{fig_construcaoWE}
\end{figure}

In Section \ref{sec_prop_step1} we prove the following proposition.

\begin{prop}\label{prop_step1}There exists $E^*>0$ such that if $0<E<E^*$, then there exists a Liouville vector field $Y_E$ defined in a neighborhood of $W_E$ in $\U_{E^*}$ which is transverse  to $W_E$. Moreover, $Y_E$ is invariant under the symmetry $T$ defined in local coordinates by \eqref{eq_T}, i.e., $T_* Y_E = Y_E$.  The $1$-form $i_{Y_E} \omega_0$ restricts to a tight contact form $\lambda_E$ on $W_E$ and its Reeb flow is a reparametrization of the Hamiltonian flow of $H$ restricted to $W_E$.  In local coordinates $(q_1,q_2,p_1,p_2)$, $\lambda_E$ coincides with the Liouville form $\lambda_0=\frac{1}{2}\sum_{i=1,2} p_i dq_i -q_i dp_i$ near $\partial S_E= \{q_1+p_1=0\} \cap K^{-1}(E)$.  There exists a Liouville vector field $\bar X_0$ defined in a neighborhood of $\dot W_0=\dot S_0 \cup \dot S_0':=W_0 \setminus \{p_c\}$ in $\U_{E^*}$, transverse to $\dot W_0$, so that given any small neighborhood $U_0 \subset \U_{E^*}$ of $p_c$, we have that $Y_E=\bar X_0$ outside $U_0$, for all $E>0$ sufficiently small. We denote by  $\bar \lambda_0=i_{\bar X_0} \omega_0|_{\dot W_0}$ the induced contact form on $\dot W_0$. \end{prop}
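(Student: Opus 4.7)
The strategy is to graft a local Liouville vector field $Y_{\mathrm{loc}}$ near $p_c$, provided by the normal form of Hypothesis 1, onto a global Liouville vector field $\bar X_0$ near $\dot W_0$, coming from the convexity of Hypothesis 2. The gluing is performed at the level of primitive $1$-forms via the Moser trick, which is exactly the device that preserves the Liouville condition (pointwise convex combinations do not). In the coordinates of Hypothesis 1, set $Y_{\mathrm{loc}} = \tfrac12(q_1\partial_{q_1} + p_1\partial_{p_1} + q_2\partial_{q_2} + p_2\partial_{p_2})$, so that $i_{Y_{\mathrm{loc}}}\omega_0 = \lambda_0 = \tfrac12\sum_i(p_i dq_i - q_i dp_i)$, manifestly invariant under the symmetry $T$ of \eqref{eq_T}. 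Using $Y_{\mathrm{loc}}(I_j)=I_j$ and \eqref{Kloc}, one computes $Y_{\mathrm{loc}}\cdot K = -\alpha I_1 + \omega I_2 + O(I_1^2+I_2^2)$; substituting $I_1=-q_1^2$ and \eqref{eqI2} along $\partial S_E$ reduces this to $E + O(\ldots)$, which is strictly positive for $E>0$ small, giving transversality of $Y_{\mathrm{loc}}$ to $W_E$ in a neighborhood of $\partial S_E$.

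For the global piece, pick a point $w_\star$ in the bounded component of $\R^4\setminus S_0$; the radial field $X_0(w)=\tfrac12(w-w_\star)$ is Liouville for $\omega_0$, and strict convexity of $S_0$ makes it transverse to $\dot S_0$ (the ray from $w_\star$ through any regular point enters the convex interior, and $T_w\dot S_0$ is transverse to this ray by \eqref{hessiano}). Extending $X_0$ by $T$, with $T(w_\star)$ in the interior of $S'_0$, produces a $T$-invariant Liouville field $\bar X_0$ on a neighborhood of $\dot W_0=\dot S_0\cup\dot S_0'$, transverse to $\dot W_0$ and, by openness of transversality, to $\dot S_E\cup\dot S_E'$ for all sufficiently small $E>0$. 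On the shell $V\setminus\{p_c\}$ both $\bar\lambda_0:=i_{\bar X_0}\omega_0$ and $\lambda_0$ are primitives of $\omega_0$, so $\lambda_0-\bar\lambda_0$ is closed; since the shell is homotopy equivalent to $S^3$, one has $H^1=0$, producing a smooth $g$ with $\lambda_0-\bar\lambda_0=dg$, made $T$-invariant by averaging with $g\circ T$. Fix any small neighborhood $U_0$ of $p_c$ and a smaller $U_1\subset U_0$ containing $\partial S_E$ for all $E$ small, and choose a $T$-invariant cutoff $\chi$ equal to $1$ on $U_1$ and $0$ outside $U_0$. Define
\[
\lambda_E := \bar\lambda_0 + d(\chi g),
\]
a primitive of $\omega_0$ equal to $\lambda_0$ on $U_1$ and to $\bar\lambda_0$ outside $U_0$; the dual vector field $Y_E$ thus coincides with $Y_{\mathrm{loc}}$ on $U_1$ and with $\bar X_0$ outside $U_0$.

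On the transition shell $U_0\setminus U_1$, a direct computation gives $Y_E=(1-\chi)\bar X_0 + \chi Y_{\mathrm{loc}} + g\,Z_\chi$ with $i_{Z_\chi}\omega_0 = d\chi$, and hence
\[
dH(Y_E) = (1-\chi)\,dH(\bar X_0) + \chi\,dH(Y_{\mathrm{loc}}) + g\cdot X_H(\chi).
\]
The first two terms form a convex combination bounded below by some $c>0$ on the compact set $W_0\cap(U_0\setminus U_1)$; since this shell has positive width, $\chi$ can be taken with $|d\chi|$ so small that $|g\cdot X_H(\chi)|<c/2$, yielding $dH(Y_E)>c/2$ on $W_0$ and hence on $W_E$ for all $E$ small, by continuity. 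The form $\lambda_E|_{W_E}$ is therefore contact; as $\ker(\omega_0|_{W_E})$ is spanned by $X_H$, its Reeb vector field is $X_H/dH(Y_E)$, so the Reeb flow is a positive reparametrization of the Hamiltonian flow. For tightness, the same primitive-interpolation trick applied in the interior extends $Y_E$ to a global Liouville vector field on the $4$-ball $\U_E$, making $(\U_E,\omega_0,Y_E)$ a Liouville filling of $(W_E,\lambda_E)$; since $W_E\cong S^3$, Gromov--Eliashberg's classification then forces $\lambda_E$ to induce the unique tight contact structure. The main obstacle I anticipate is precisely the transversality control on the transition shell: the correction term $g\cdot X_H(\chi)$ is not intrinsic to the convex combination and could in principle destroy transversality; the resolution is that fixing $U_0,U_1$ fixes the shell width, which bounds $|d\chi|$ and $g$ independently of $E$, while compactness of $W_0\cap(U_0\setminus U_1)$ supplies a uniform lower bound on the endpoint quantities that absorbs the correction for all $E$ small.
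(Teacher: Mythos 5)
Your overall strategy — use the normal form near $p_c$ to produce a local Liouville field, glue it to a global one coming from convexity via a cutoff applied at the level of primitives, and then control the correction term — is the same strategy the paper uses, but your choice of local field and the way you estimate the transition introduce a genuine gap.

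The core problem is that the radial field $Y_{\mathrm{loc}}=\tfrac12 z$ is \emph{not} transverse to $W_E$ throughout the fixed neighborhood $U_1$ of $p_c$; it is only transverse in a thin collar of $\partial S_E$, where $K=E>0$ provides the margin. Indeed, with $K=-\alpha I_1+\omega I_2 + R$ one has
\[
dK\cdot \tfrac12 z \;=\; I_1\partial_{I_1}K + I_2\partial_{I_2}K \;=\; (K-R) + I_1\partial_{I_1}R+I_2\partial_{I_2}R \;=\; E + O(|I|^2) \;=\; E + O(|z|^4).
\]
Your verification uses $\partial S_E$, where $|z|\sim\sqrt E$ and so $O(|z|^4)=O(E^2)$ is dominated by $E$; but $U_1$ is a \emph{fixed} neighborhood of $p_c$ that must contain $\partial S_E$ for all small $E$, hence points of $W_E\cap U_1$ with $|z|\sim\rho_1$ fixed. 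There $|z|^4\sim\rho_1^4\gg E$, and the sign of $dK\cdot\tfrac12 z$ is no longer controlled; one may check that it equals $-R+I_1\partial_{I_1}R+I_2\partial_{I_2}R$ on $K^{-1}(0)$, which can be negative. Consequently your claim that the convex combination $(1-\chi)\,dH(\bar X_0)+\chi\,dH(Y_{\mathrm{loc}})$ is \emph{bounded below by a positive $c$ on the compact set $W_0\cap(U_0\setminus U_1)$} is unjustified: it requires $dH(Y_{\mathrm{loc}})>0$ on that whole set, which is precisely what fails. (In fact already inside $U_1$, where $\chi\equiv 1$ and there is no correction term at all, $Y_E=Y_{\mathrm{loc}}$ need not be transverse to $W_E$.) The correction-term estimate is also optimistic: you cannot make $|d\chi|$ arbitrarily small with $U_0,U_1$ fixed, and even the "uniform lower bound $c$" you invoke scales like a high power of the shell radius, while $g\cdot X_H(\chi)$ scales like a lower power, so the comparison has the wrong sign as the shell shrinks.

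The paper sidesteps exactly this obstruction by using \emph{two} interpolation stages with a crucial off-center choice. First it interpolates $\widetilde Y_0=\varphi^{-1}_*X_0$ with the radial field $Y_0(z)=\tfrac12(z-z_0)$ centered at $z_0=\tfrac1{\sqrt2}(\delta_0,0,\delta_0,0)\neq 0$ on the diagonal; the shift $-\tfrac12 z_0$ contributes the term $(q_1+p_1)\tfrac{\delta_0\bar\alpha}{2\sqrt2}$ in $dK\cdot Y_0$ (equation \eqref{expr2}), which is linear in $\delta_0$ and dominates all correction terms, which are $O(\delta_0^2(q_1+p_1))$ (Lemmas \ref{termo1}, \ref{termo3}); this is what Proposition \ref{propYtransversal} quantifies. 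Only in a second, very thin stage $\{0\le q_1+p_1\le \epsilon_{\bar E}\}$, entirely inside the set where $K=E>0$ guarantees $I_1\partial_{I_1}K+I_2\partial_{I_2}K>0$ (inequality \eqref{prod6}), does the paper interpolate between $\tfrac12(z-z_0)$ and the required $\tfrac12 z$. Your single-stage construction with $Y_{\mathrm{loc}}=\tfrac12 z$ collapses these two stages into a region where neither the off-center positivity nor the $E>0$ margin is available, and that is where the argument breaks. To salvage the proposal one would need to replace $Y_{\mathrm{loc}}$ by $\tfrac12(z-z_0)$ in the main transition and perform the switch to $\tfrac12 z$ only in an $E$-thin collar of $\partial S_E$ — which is, in effect, the paper's proof.
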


It follows from Proposition \ref{prop_step1} that the Hamiltonian flow of $H$ restricted to $W_E$ can be studied using all the tools and results coming from the theory of pseudo-holomorphic curves in symplectizations developed for Reeb flows on the tight $3$-sphere. For definitions, see Appendix \ref{ap_basics}.

Let $P\subset W_E$ be an unknotted periodic orbit of $\lambda_E$. We say that a periodic orbit $P'\subset W_E$, $P'$ geometrically distinct from $P$, is linked to $P$ if $$0\neq [P'] \in H_1(W_E \setminus P,\Z)\simeq \Z.$$ Otherwise, we say that $P'$ is not linked to $P$.

In Section \ref{sec_step2} we prove the following proposition.

\begin{prop}\label{prop_step2}If $E>0$ is sufficiently small then the following holds: let $\lambda_n$ be a sequence of contact forms on $W_E$ so that $\lambda_n \to \lambda_E$ in $C^\infty$ as $n \to \infty$, where $\lambda_E$ is given by Proposition \ref{prop_step1}. If $n$ is sufficiently large then the Reeb flow of $\lambda_n$ on $W_E$ admits only one periodic orbit $P_{2,n}$ with Conley-Zehnder index $2$, which is unknotted, hyperbolic and converges to $P_{2,E}$ as $n \to \infty$. Moreover, all other periodic orbits of $\lambda_n$ have Conley-Zehnder index $\geq 3$ and the periodic orbits with Conley-Zehnder index equal to $3$ are not linked to $P_{2,n}$.
\end{prop}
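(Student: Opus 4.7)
My plan is to decompose the proposition into a static statement about $\lambda_E$ itself and a $C^\infty$-persistence argument for the nearby contact forms $\lambda_n$. For the baseline, I would target: for $E>0$ sufficiently small, $P_{2,E}$ is the only periodic orbit of $\lambda_E$ with $\cz \le 2$, every other orbit has $\cz \ge 3$, and those with $\cz = 3$ are unlinked with $P_{2,E}$ in $W_E$. The argument would proceed by contradiction along a sequence $E_k \to 0$: Proposition \ref{prop_step1} guarantees that $\lambda_{E_k}$ coincides with $\bar\lambda_0$ outside any prescribed neighborhood of $p_c$, so an offending sequence of periodic orbits $\gamma_k$, once I secure a uniform action bound, must subsequentially converge either to $p_c$ or to a periodic orbit $\gamma_0$ of $\bar\lambda_0$ in $\dot W_0 = \dot S_0 \cup \dot S'_0$.

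If $\gamma_k \to p_c$, then for large $k$ each $\gamma_k$ lies inside a small neighborhood of $p_c$ where the local normal form (\ref{Kloc}) makes the flow integrable; the only periodic orbits of $K$ in $K^{-1}(E_k)$ sitting in this neighborhood are the center-manifold orbits $\{q_1 = p_1 = 0,\ I_2 = I_2(0,E_k)\}$, i.e., the $P_{2,E_k}$, whose Conley-Zehnder index I compute to be $2$ directly from the linearization, with hyperbolicity read off from the eigenvalues $\pm\alpha$. This is a contradiction. If instead $\gamma_k \to \gamma_0 \subset \dot W_0$, then because each of $S_0, S'_0$ bounds a strictly convex subset of $\R^4$, the Hofer-Wysocki-Zehnder dynamical convexity result forces $\cz(\gamma_0) \ge 3$, and after a preliminary nondegenerate perturbation of $\bar\lambda_0$ (preserved by openness), stability of the Conley-Zehnder index under $C^\infty$-convergence of Reeb flows transfers this to $\cz(\gamma_k) \ge 3$ for large $k$, again a contradiction. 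For the linking claim, $P_{2,E_k}$ is a small circle on $\partial S_{E_k}$ shrinking to $p_c$ while $\gamma_0$ remains bounded away from $p_c$ inside, say, $\dot S_0$; since $P_{2,E_k}$ bounds the hemispheres $U_{1,E_k}, U_{2,E_k}$ inside $\partial S_{E_k}$, one can push $\gamma_0$ across such a hemisphere to exhibit it as null-homologous in $W_{E_k} \setminus P_{2,E_k}$, hence $\gamma_k$ is also unlinked for large $k$.

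For the extension from $\lambda_E$ to $\lambda_n \to \lambda_E$ in $C^\infty$, hyperbolicity of $P_{2,E}$ yields via the implicit function theorem a unique hyperbolic orbit $P_{2,n}$ of $\lambda_n$ near $P_{2,E}$ with $\cz(P_{2,n}) = 2$, and $P_{2,n} \to P_{2,E}$. To rule out additional $\cz \le 2$ orbits, suppose there is a sequence $\tilde\gamma_n$ of such orbits of $\lambda_n$ bounded away from $P_{2,E}$. A uniform action bound together with Arzelà-Ascoli extracts a limit $\tilde\gamma_\infty$ which must be a periodic orbit of $\lambda_E$, since $\lambda_E$ has no zeros on $W_E$; the baseline statement then forces $\tilde\gamma_\infty = P_{2,E}$, contradicting separation. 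The linking stability for $\cz = 3$ orbits of $\lambda_n$ follows by the same compactness-plus-continuity scheme, using that the set of orbits with linking number zero is closed under $C^0$-limits of loops in $W_E \setminus P_{2,n}$.

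The main obstacle I expect is securing the uniform action bound for Reeb orbits with bounded Conley-Zehnder index, both in the $E_k \to 0$ step and in the $\lambda_n \to \lambda_E$ step. On a genuinely dynamically convex tight $S^3$ such a bound follows from the Hofer-Wysocki-Zehnder growth estimate for the transverse rotation along Reeb trajectories, but here $\lambda_E$ is dynamically convex only \emph{away} from the equatorial orbit $P_{2,E}$, so one must combine the growth estimate on $\dot W_0$ with quantitative control of how much Conley-Zehnder index each traversal of a small neighborhood of $p_c$ contributes, read off from the local model (\ref{Kloc}). A secondary technical point is that the CZ-stability I invoke requires nondegeneracy of the limit orbit, handled by first passing to a nondegenerate perturbation of $\lambda_E$ (and of $\bar\lambda_0$) and transferring the conclusions back by openness of the properties being asserted.
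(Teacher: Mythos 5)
Your high-level decomposition (a baseline statement about $\lambda_E$ plus a $C^\infty$-persistence step) and your use of indirect compactness arguments match the paper's strategy, and you correctly identify the crux — quantifying the Conley--Zehnder contribution of each passage near the saddle-center. But your proposal does not actually close this gap, and your dichotomy is not exhaustive.

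Concretely, when you argue along $E_k\to 0$ you offer two outcomes: $\gamma_k\to p_c$ or $\gamma_k\to\gamma_0\subset\dot W_0$. You treat the first case by asserting that for large $k$ the whole orbit $\gamma_k$ sits inside a small neighborhood of $p_c$, so the integrable normal form forces $\gamma_k = P_{2,E_k}$. That reasoning is correct only for orbits that are \emph{entirely} contained near $p_c$. The missing third case is the one that carries all the difficulty: orbits of $\lambda_{E_k}$ that pass arbitrarily close to $p_c$ while also visiting a region uniformly far from $p_c$. Such a sequence does not converge to $p_c$, nor to a single periodic orbit of $\bar\lambda_0$ on $\dot W_0$ (its limit set would include $p_c$ together with stable/unstable arcs), so neither branch of your dichotomy applies. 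For such orbits one must show that the transverse linearized flow winds so much during the near-$p_c$ traversal that $CZ$ blows up. The paper does exactly this: it derives the normal-form ODE for the transverse angle and proves that along any arc traversing $B_\delta(0)$ near $p_c$ the argument increases by at least $\tfrac{\omega}{2}(t^+-t^-)-\pi$, where $t^+-t^-$ (the local sojourn time) tends to $\infty$ as the distance of closest approach tends to $0$; transporting this to the original coordinates gives a uniform lower bound that forces $CZ>M$ for orbits entering a small enough neighborhood of $p_c$. Only after this estimate can one conclude that every orbit with $CZ\le 2$ (other than $P_{2,E}$) stays uniformly away from $p_c$, so that the Ghomi-extension/strict-convexity argument and Arzelà--Ascoli apply on a compact region. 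Your final paragraph acknowledges that such an estimate is needed, but treats it as an anticipated obstacle rather than supplying the argument; without it the case analysis fails and the action bound you hope to secure is unavailable. The linking claim has the same missing ingredient: the paper derives "$CZ=3\Rightarrow$ unlinked with $P_{2,E}$" by observing that a linked orbit must cross $\partial S_E$, which for small $E$ lies inside the high-winding neighborhood of $p_c$, hence has large index — again the local winding lemma, not a $C^0$-limit argument about loops.
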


\begin{remark}Taking the constant sequence $\lambda_n = \lambda_E, \forall n,$ we see that if $E>0$ is sufficiently small then all conclusions of Proposition \ref{prop_step2} also hold for $\lambda_E$. \end{remark}

From now on we fix $E>0$ sufficiently small so that the conclusions of Propositions \ref{prop_step1} and \ref{prop_step2} hold.

Now we are in position to apply results from \cite{fols}. Motivated by Proposition \ref{prop_step2} we focus our attention on weakly convex contact forms as in the following definition.

\begin{definition}[Hofer-Wysocki-Zehnder]A tight contact form $\lambda$ on a $3$-manifold $M$ diffeomorphic to the $3$-sphere is called weakly convex if all of its periodic orbits have Conley-Zehnder index $\geq 2$.
\end{definition}

\begin{definition}Let $\lambda$ be a weakly convex tight contact form on a $3$-manifold $M$ diffeomorphic to the $3$-sphere. A global system of transversal
sections adapted to $\lambda$ is a singular foliation $\F$ of
$M$ with the following properties:
\begin{itemize}
\item The singular set of $\F$ is formed by a set $\P$ of finitely many unknotted  periodic orbits of the Reeb flow of $\lambda$, called the binding orbits of $\F$, so that all periodic orbits in $\P$ have Conley-Zehnder index either $2$ or $3$.
\item $\F \setminus \P$ consists of embedded punctured spheres foliating $M \setminus \bigcup_{P\in \P} P$, which are either planes or cylinders. The closure ${\rm cl}(F)$ of each leaf $F\in \F \setminus \P$ is an embedded sphere with either one or two open disks removed. Its boundary components are distinct elements of $\P$ and are called the asymptotic limits of $F$. One may have the following possibilities:
\begin{itemize}
\item[(i)] $F$ has only one asymptotic limit with Conley-Zehnder index $3$. These leaves appear in a one parameter family of leaves with the same asymptotic limit. This family is called a one parameter family of planes.
\item[(ii)] $F$ has one asymptotic limit with Conley-Zehnder index $3$ and one asymptotic limit with Conley-Zehnder index $2$. $F$ is called a rigid cylinder.
\item[(iii)] $F$ has only one asymptotic limit with Conley-Zehnder index $2$. $F$ is called a rigid plane.
\end{itemize}
\item The Reeb vector field $X_{\lambda}$ is transverse to each leaf in $\F \setminus \P$.
\end{itemize}
\end{definition}

As we shall see below in Theorem \ref{sfef}, the existence of a global system of transversal sections $\F$ adapted to a weakly convex tight contact form $\lambda$ on $M$, diffeomorphic to the $3$-sphere, depends on the choice of an extra structure. In fact, the contact structure $\xi=\ker \lambda$ is a trivial symplectic bundle over $M$ and the space of $d\lambda$-compatible complex structures $J:\xi \to \xi$, denoted by $\J(\lambda)$, is non-empty and contractible in the $C^\infty$-topology. For each $J\in \J(\lambda)$, the pair $(\lambda,J)$ determines a natural almost complex structure $\tilde J:TW \to TW$ in the symplectization $W:=\R \times M$ of $M$ and one can talk about pseudo-holomorphic punctured spheres $\tilde u=(a,u): (S^2 \setminus \Gamma,j) \to (W,\tilde J)$ satisfying $\tilde J(\tilde u) \circ  d\tilde u= d\tilde u \circ j$ and having finite energy $0<E(\tilde u)<\infty$. Here $(S^2,j)$ is the Riemann sphere and $\Gamma\subset S^2$ is a finite set of punctures. We may use the notation $\tilde J=(\lambda,J)$. See Appendix \ref{ap_basics} for details. A non-removable puncture $z\in \Gamma$ is either positive or negative, meaning that either $a \to +\infty$ or $a \to -\infty$ at $z$, respectively. If $\lambda$ is nondegenerate then  there exists a periodic orbit $P_z$ of the Reeb flow of $\lambda$ so that $\tilde u$ is asymptotic to $P_z$ at $z$. $P_z$ is called the asymptotic limit of $\tilde u$ at $z\in \Gamma$. Its Conley-Zehnder index is denoted by $CZ(P_z)$. We define the Conley-Zehnder index of $\tilde u$ by $$CZ(\tilde u)=\sum_{z\in \Gamma_+}CZ(P_z) - \sum_{z\in \Gamma_-} CZ(P_z),$$ where $\Gamma = \Gamma_+ \cup \Gamma_-$ is the decomposition of $\Gamma$ according to the sign of the punctures.

The global system of transversal sections $\F$ adapted to $\lambda$ is thus obtained as the projection onto $M$ of a stable finite energy  foliation $\tilde \F$ of the symplectization $W=\R \times M$ as in  Definition \ref{deffef} below. Each leaf of $\tilde \F$ is the image of an embedded finite energy pseudo-holomorphic punctured sphere associated to the pair $\tilde J=(\lambda,J)$.

\begin{definition}[Hofer-Wysocki-Zehnder]\label{deffef}Let $\lambda$ be a weakly convex tight contact form on a manifold $M$ diffeomorphic to the $3$-sphere, and $J \in \J(\lambda).$ A stable finite energy foliation for $\tilde J=(\lambda, J)$ is a smooth foliation $\tilde \F$ of $\R \times M$ with the following properties:

\begin{itemize}
\item Each leaf $\tilde F \in \tilde \F$ is the image of an embedded finite energy pseudo-holomorphic curve $\tilde u: S^2 \setminus \Gamma \to \R \times M$, where $\R \times M$ is equipped with the almost complex structure $\tilde J=(J,\lambda)$. The asymptotic limits of $\tilde F$ are defined to be the asymptotic limits of $\tilde u$ at its punctures.  We define ${\rm Fred}(\tilde F) := {\rm Fred}(\tilde u)$, where $${\rm Fred}(\tilde u):=CZ(\tilde u)-2 + \# \Gamma$$ is the Fredholm index of $\tilde u$. ${\rm Fred}(\tilde u)$ represents the local dimension of the space of unparametrized pseudo-holomorphic curves near $\tilde u$ with the same asymptotic limits. ${\rm Fred}(\tilde F)$ does not depend on the choice of $\tilde u$.
\item There exists a uniform constant $C>0$ such that $0<E(\tilde u) < C$ for all embedded finite energy curves $\tilde u:S^2 \setminus \Gamma \to \R \times M$ satisfying $\tilde u(S^2 \setminus \Gamma) = \tilde F \in \tilde \F$, where $E(\tilde u)$ denotes the energy of $\tilde u$.
\item For all $\tilde F_1,\tilde F_2\in \tilde \F$ and $c\in \R$, either $T_c(\tilde F_1) \cap \tilde F_2 = \emptyset$ or $T_c(\tilde F_1) = \tilde F_2$, where $T_c(a,z)=(a+c,z),\forall (a,z)\in \R\times M$.
\item  All the asymptotic limits of $\tilde F$ are  unknotted periodic orbits and have Conley-Zehnder index either $2$ or $3$.
\item If $\tilde u=(a,u): S^2 \setminus \Gamma \to \R \times M$ is an embedded finite energy curve such that $\tilde u(S^2 \setminus \Gamma) = \tilde F \in \tilde \F$, then $\tilde u$ has precisely one positive puncture. If $T_c(\tilde F) \cap \tilde F = \emptyset,\forall c \neq 0$, then ${\rm Fred}(\tilde u) \in \{1,2\}$ and $u$ is an embedding, transverse to the Reeb vector field. If $T_c(\tilde F) = \tilde F, \forall c,$ then ${\rm Fred}(\tilde u) = 0$ and $\tilde u$ is a cylinder over a periodic orbit.
\end{itemize}
See Appendix \ref{ap_basics} for missing definitions.
\end{definition}

It follows  that each leaf $\tilde F$ of a finite energy foliation must have one of the following types:
\begin{itemize}
\item $\tilde F$ has one positive puncture with Conley-Zehnder index $3$ and no negative punctures. In this case ${\rm Fred}(\tilde F)=2$.
\item $\tilde F$ has one positive puncture with Conley-Zehnder index $2$ and no negative punctures.  In this case ${\rm Fred}(\tilde F)=1$.
\item $\tilde F$ has one positive puncture with Conley-Zehnder index $3$ and one negative puncture with Conley-Zehnder index $2$. In this case ${\rm Fred}(\tilde F)=1$.
\item $\tilde F$ has a positive puncture and a negative puncture with the same asymptotic limit. It satisfies $T_c(\tilde F)=\tilde F,\forall c\in \R$, i.e., $\tilde F=\R \times P$, where $P\subset M$ is an unknotted periodic orbit. In this case ${\rm Fred}(\tilde F)=0$.
\end{itemize}

The existence of a stable finite energy foliation for generic almost complex structures $\tilde J=(\lambda,J)$ is given in Theorem \ref{sfef} below. Denote by $p:\R \times M \to M$ the projection onto the second factor.

\begin{theo}\cite[Theorem 1.6, Proposition 1.7 and Corollary 1.8]{fols}\label{sfef} Let $\lambda$ be a nondegenerate weakly convex tight contact form on a $3$-manifold $M$ diffeomorphic to the $3$-sphere. There exists a dense subset $\J_{\rm reg}(\lambda) \subset \J(\lambda),$ in the $C^\infty$-topology, such that if $J \in \J_{\rm reg}(\lambda)$ then the pair $\tilde J=(\lambda,J)$ admits a stable finite energy foliation $\tilde \F$. The projection $\F:= p(\tilde \F)$ is a global system of transversal sections adapted to $\lambda$. Denote by $\P$  the set of binding orbits of $\F$. We have
\begin{itemize}
\item[i)] The binding orbits of $\F$ with Conley-Zehnder index $2$ are hyperbolic and those having Conley-Zehnder index $3$ are either hyperbolic or elliptic.

\item[ii)] There exists at least one binding orbit with Conley-Zehnder index $3$ and if $P_3$ is one such binding orbit then $\F$ contains at least a one parameter family of planes asymptotic to $P_3$.

\item[iii)] If $\P =\{P\}$ contains only one periodic orbit $P$ then $P$ has Conley-Zehnder index $3$ and $\F \setminus \P$ consists of an $S^1$-family of planes asymptotic to $P$ which foliates $M \setminus P$. In this case we say that $P$ is the binding orbit of an open book decomposition with disk-like pages  adapted to $\lambda$. Moreover, each page is a global surface of section for the Reeb flow of $\lambda$.

\item[iv)] If $\P$ contains an orbit $P_2$ with Conley-Zehnder index $2$ then $\F$  contains a pair of rigid planes $U^1,U^2$ both asymptotic to $P_2$ so that $U:=U^1 \cup P_2 \cup U^2$ is a topological embedded $2$-sphere which separates $M$ in two disjoint components $M_1 \dot \cup M_2 = M \setminus U$. In each $M_i, i=1,2,$ there exists a binding orbit $P_{3,i}$ with Conley-Zehnder index $3$ and a rigid cylinder $V^i$ asymptotic to $P_{3,i}$ and $P_2$. If $M_i$ contains no binding orbit with Conley-Zehnder index $2$, then $P_{3,i}$ is the only binding orbit in $M_i$ and $\F$ contains a one parameter family of planes $D_\tau^i,\tau\in (0,1),$ all of them asymptotic to $P_{3,i}$, which foliates $M_i \setminus (P_{3,i} \cup V^i)$. $D_\tau^i$ converges to $U^1 \cup P_2 \cup V^i$ as $\tau \to 0^+$ and to $U^2 \cup P_2 \cup V^i$ as $\tau \to 1^-$. Moreover, $\lambda$ admits a homoclinic orbit to $P_2$ in $M_i$. In particular, if $\F$ contains precisely one binding orbit with Conley-Zehnder index $2$ then $\F$ contains precisely three binding orbits $P_3,P_2,P_3'$ with Conley-Zehnder indices $3$, $2$, $3$, respectively, and $\F$ is called a $3-2-3$ foliation adapted to $\lambda$.
\end{itemize}
\end{theo}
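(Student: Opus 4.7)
The plan is to follow the strategy of Hofer-Wysocki-Zehnder: construct a stable finite-energy foliation $\tilde\F$ of the symplectization $\R\times M$ with the prescribed asymptotic structure and then project to $M$. The first step is to identify $\J_{\rm reg}(\lambda)\subset\J(\lambda)$. A Sard-Smale argument on the universal moduli space of \emph{simple} finite-energy curves parametrized by $J\in\J(\lambda)$ yields a $C^\infty$-dense subset for which every somewhere-injective finite-energy curve is cut out transversally, so that its Fredholm index equals the local dimension of its moduli space. Since $\lambda$ is nondegenerate, asymptotic operators at the punctures are invertible, and embeddedness is an open-and-closed condition along one-parameter families by positivity of intersection in the $4$-dimensional target $\R\times M$.

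Next I would produce at least one embedded Fredholm-index-$2$ finite-energy plane $\tilde u$ with a positive puncture at an unknotted simple Reeb orbit $P_3$ of Conley-Zehnder index $3$. Following the Bishop family/bubbling-off analysis of \cite{char2,props1,props3}, one starts from a degenerating family of holomorphic discs with totally real boundary conditions in a standard model, bubbles off a finite-energy plane, and examines its asymptotic limit. Weak convexity forbids limits of Conley-Zehnder index $\le 1$, and the index formula for planes forces the asymptotic orbit to have $CZ\in\{2,3\}$; a dimension count combined with the genericity of $J$ rules out $CZ=2$ for a $2$-dimensional family, yielding $CZ(P_3)=3$. Applying positivity of intersection to the trivial cylinder $\R\times P_3$ and to $\tilde u$ shows $P_3$ is unknotted with self-linking number $-1$.

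With a starting plane in hand, I would study the connected component $\M(\tilde u)$ of the moduli space of unparametrized embedded index-$2$ planes asymptotic to $P_3$. Automatic transversality for embedded curves in symplectizations of $3$-manifolds makes $\M(\tilde u)/\R$ a smooth $1$-manifold, and SFT compactness describes its ends. Under the weak-convexity hypothesis the only admissible degenerations are either (i) convergence to the trivial cylinder $\R\times P_3$, closing the family into an $S^1$, or (ii) breaking into a two-storey building whose upper level is a Fredholm-index-$1$ rigid cylinder from $P_3$ to some $CZ=2$ orbit $P_2$ and whose lower level is a Fredholm-index-$1$ rigid plane asymptotic to $P_2$. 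Positivity of intersection shows that distinct curves in the compactified family are pairwise disjoint, and that the union foliates $(\R\times M)\setminus(\R\times\P)$, producing the stable finite-energy foliation $\tilde\F$. The projection $\F=p(\tilde\F)$ is then the desired global system of transversal sections, and its binding orbits have the advertised hyperbolicity/ellipticity because of the nondegenerate asymptotic operators and the sign conventions in the $CZ$-computation.

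The enumerated cases fall out of this structure. If $\P=\{P_3\}$, no breaking occurs and the $S^1$-family of planes descends to an open book with disc-like pages; each page is a global surface of section because the return map preserves a smooth area form and otherwise would force a second binding orbit. If $\P$ contains an orbit $P_2$ with $CZ=2$, the broken configurations at the two ends of each family yield the rigid planes $U^1,U^2$ and on each side a rigid cylinder $V^i$, and the topological sphere $U^1\cup P_2\cup U^2$ separates $M$. The homoclinic orbit to $P_2$ in $M_i$ comes from the observation that $D_\tau^i$ converges to $U^1\cup P_2\cup V^i$ as $\tau\to 0^+$ and to $U^2\cup P_2\cup V^i$ as $\tau\to 1^-$: the two ends approach the same rigid cylinder $V^i$ from opposite hemispheres of $U$, forcing $W^s(P_2)\pitchfork W^u(P_2)$ to intersect non-trivially inside $M_i$. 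The principal difficulty throughout is the SFT-compactness bookkeeping that rules out all breakings other than (i) and (ii); this uses weak convexity in an essential way and is where the hypothesis on $\J_{\rm reg}(\lambda)$ enters to avoid pathological multi-covers.
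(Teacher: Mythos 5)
The paper does not actually prove Theorem~\ref{sfef}: it is explicitly quoted as \cite[Theorem 1.6, Proposition 1.7 and Corollary 1.8]{fols}, and the text immediately following the statement reads ``Theorem~\ref{sfef} is proved in \cite{fols} under much weaker assumptions.'' So there is no in-paper proof to compare against; what you have written is a reconstruction of the Hofer--Wysocki--Zehnder argument from the cited references.

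As an outline of that external argument your sketch is broadly sound, but a few steps are asserted more strongly than they should be. First, the homoclinic claim is overstated: you write that the two ends of the family $D_\tau^i$ approaching $U^1\cup P_2\cup V^i$ and $U^2\cup P_2\cup V^i$ force $W^s(P_2)\pitchfork W^u(P_2)$ to intersect, but transversality is neither asserted in the theorem nor available; what the foliation actually gives (the argument is \cite[Proposition 7.5]{fols}) is that the branches of the local stable and unstable manifolds of $P_2$ that enter $M_i$ are confined by the transversal pages and the rigid cylinder, and an intersection argument forces a homoclinic connection without any transversality claim. Second, the deduction that each page of the open book is a global surface of section is reversed in your sketch: it is not that ``otherwise a second binding orbit would appear,'' but rather that transversality of the Reeb field to all pages, the unique asymptotic limit $P_3$, and positivity of the return time force every non-binding orbit to hit every page repeatedly; the return map being area-preserving is a consequence, not the cause. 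Third, when running SFT compactness you must also exclude breaking along multiply covered asymptotic limits, which is where the fine indexing/winding analysis (and the choice of $J\in\J_{\rm reg}$) actually bites; your sketch acknowledges ``the SFT-compactness bookkeeping'' as the principal difficulty but leaves it entirely implicit, which is where a genuine proof would have to supply the bulk of the work. Finally, your reasoning that $CZ=2$ binding orbits are hyperbolic is stated vaguely; the precise point is that in a nondegenerate symplectic $2\times 2$ linearization, even Conley--Zehnder index forces positive-hyperbolic type, while index~$3$ can be elliptic or negative-hyperbolic.
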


In \cite{fols}, a more general theorem is proved under much weaker assumptions. In fact, it only requires $\lambda$ to be nondegenerate and tight. In this case the global system of transversal sections may also have periodic orbits with Conley-Zehnder index $1$ as binding orbits. As mentioned before, we are only interested in the weakly convex case.

We start the construction of a global system of transversal sections adapted to $\lambda_E$ on $W_E$ by choosing a special complex structure $J_E\in \J(\lambda_E)$ as in the following proposition proved in Section \ref{sec_step3}.

\begin{prop}\label{prop_step3}If $E>0$ is sufficiently small, then there exists $J_E \in \J(\lambda_E)$ so that the almost complex structure $\tilde J_E=(\lambda_E,J_E)$ on the symplectization $\R \times W_E$ admits a pair of finite energy  planes $\tilde u_{1,E}=(a_{1,E},u_{1,E}),\tilde u_{2,E}=(a_{2,E},u_{2,E}):\C \to \R\times W_E$ and their projections $u_{1,E}(\C),u_{2,E}(\C)$ into $W_E$ are, respectively, the hemispheres $U_{1,E},U_{2,E} \subset \partial S_E$ defined in local coordinates by \eqref{hemis}, both asymptotic to the hyperbolic periodic orbit $P_{2,E}$.  \end{prop}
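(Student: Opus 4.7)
My plan is to construct the pair of finite energy planes explicitly using the rotational $S^1$-symmetry of the local normal form \eqref{Kloc}, and to choose $J_E$ adapted to this construction near $\partial S_E$, extending it arbitrarily elsewhere. Since $\J(\lambda_E)$ is nonempty and contractible in the $C^\infty$-topology, the extension step presents no obstruction, so all of the content is local near $\partial S_E$.

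Working in the coordinates supplied by Hypothesis 1, where Proposition \ref{prop_step1} guarantees $\lambda_E=\lambda_0=\tfrac{1}{2}\sum(p_i dq_i - q_i dp_i)$ near $\partial S_E$, I pass to symplectic polar coordinates $q_2+ip_2=\sqrt{2I_2}\,e^{-i\theta}$, so that $\lambda_0=\tfrac{1}{2}(p_1 dq_1 - q_1 dp_1)+I_2\,d\theta$. Because $K$ depends only on $I_1$ and $I_2$, rotation in $\theta$ generates a Hamiltonian $S^1$-action preserving $\lambda_0$, $K$, and each hemisphere. I parametrize $U_{1,E}$ by $(r,\theta)\in[0,r_{\max})\times S^1$ with $q_1=-r$, $p_1=r$, and $I_2=I_2(r,E)$ determined implicitly by $K=E$; the limit $r\to 0^+$ corresponds to $P_{2,E}$. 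A direct computation gives $\lambda_0|_{U_{1,E}}=I_2(r,E)\,d\theta$, so $\partial_r\in\xi=\ker\lambda_0$ while $\partial_\theta\notin\xi$, and projection along the Reeb direction yields an isomorphism $TU_{1,E}\to\xi|_{U_{1,E}}$. I then define $J$ on $\xi|_{U_{1,E}}$ by requiring $J\partial_r$ to be a positive multiple of $\pi_\xi(\partial_\theta)$, with the scaling uniquely fixed by $d\lambda_0|_\xi$-compatibility. The pulled-back complex structure on $U_{1,E}$ is conformally equivalent to the standard one on $\C$ with the puncture at $\infty$ identified with $r=0$; this produces a holomorphic parametrization $u_{1,E}:\C\to W_E$, and the symplectization component $a_{1,E}$ is obtained by integrating the closed 1-form $-u_{1,E}^*\lambda_E\circ j$ on the simply connected domain $\C$. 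The same procedure, carried out in the region $q_1>0$, yields $\tilde u_{2,E}$ on $U_{2,E}$.

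The step I expect to be the principal obstacle is verifying that the planes $\tilde u_{i,E}$ are genuine finite energy maps with the expected asymptotic behavior at the puncture at $\infty$. Convergence in the $C^0$-sense to $P_{2,E}$ holds by construction, but this must be upgraded to exponential convergence at a rate compatible with the asymptotic operator of the hyperbolic orbit, whose spectrum is governed by the eigenvalues $\pm\bar\alpha$ in the $(q_1,p_1)$-plane and by the elliptic eigenvalues $\pm\bar\omega i$ in the $(q_2,p_2)$-plane. I would establish this by an explicit calculation in the normal form, checking that the conformal rescaling between the $(r,\theta)$ coordinates and the cylindrical end coordinates on $\C$ has a logarithmic singularity at $r=0$ whose coefficient matches exactly the hyperbolic rate $\bar\alpha$; the energy bound $E(\tilde u_{i,E})<\infty$ then follows from Stokes' theorem applied to $\lambda_E$ together with compactness of the hemispheres. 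With the planes constructed and their asymptotic behavior controlled, I extend $J$ to a $d\lambda_E$-compatible complex structure $J_E$ on all of $\xi$ over $W_E$ using the contractibility of $\J(\lambda_E)$, completing the proof.
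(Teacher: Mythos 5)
Your overall strategy---pinning $J$ down along the hemispheres so that they become $J$-holomorphic submanifolds, then uniformizing and checking the asymptotics---is a genuine alternative to the paper's construction, but as written it has two concrete gaps. First, $d\lambda_E$-compatibility does \emph{not} determine the scaling in $J\partial_r = c\,\pi_\xi(\partial_\theta)$: in the basis $\{\partial_r,\pi_\xi(\partial_\theta)\}$ the bilinear form $d\lambda_E(\cdot,J\cdot)$ is diagonal with entries $c\,d\lambda_E(\partial_r,\partial_\theta)$ and $c^{-1}\,d\lambda_E(\partial_r,\partial_\theta)$, hence positive definite for \emph{every} $c$ of the correct sign; compatibility fixes only the sign, not $|c|$. (Also, with $q_2+ip_2=\sqrt{2I_2}\,e^{-i\theta}$ and $I_1=-r^2$ on $U_{1,E}$, one has $d\lambda_E(\partial_r,\partial_\theta)=I_2'(r)<0$, so the multiple should be negative rather than positive.) This free parameter matters: the conformal type of $(U_{1,E},j)$---$\C$ versus $\D$---is governed by whether $\int_{0^+}c(r)\,dr$ diverges, so the issue you flagged as the principal obstacle cannot be settled until $c(r)$ is actually pinned down. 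Second, the frame $\{\partial_r,\pi_\xi(\partial_\theta)\}$ degenerates along $P_{2,E}$: as $r\to0^+$ one has $\partial_\theta\to I_2\,X_{\lambda_E}$ so $\pi_\xi(\partial_\theta)\to0$ while $\partial_r$ stays nonzero. Your $J$ is therefore defined only on the open set $\xi|_{U_{1,E}\cup U_{2,E}}$, and before invoking contractibility of $\J(\lambda_E)$ you must verify that it extends smoothly across $P_{2,E}$ and that the one-sided limits from the two hemispheres agree.

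The paper avoids both difficulties at once by defining $J_E$ on a full three-dimensional neighborhood of $\partial S_E$ through the moving frame $\bar e_1 = \pi_\xi(j_1\nabla K)$, $\bar e_2 = \pi_\xi(j_2\nabla K)$---nondegenerate across $P_{2,E}$, and with the scaling built into the formula $J_E\bar e_1 = \bar e_2$---and then substituting the rotationally symmetric ansatz $u(s,t)=(-h(s),f(s)\cos2\pi t,h(s),-f(s)\sin2\pi t)$ into the Cauchy--Riemann system. This reduces everything to a scalar ODE $h'(s)=G(h(s))$, whose solution on $\R$, together with the removable singularity at $s=-\infty$, simultaneously produces the parametrization of $U_{1,E}$ by $\C$ and the exponential convergence to $P_{2,E}$. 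One small point in your favor: the first CR equation $d(u^*\lambda_E\circ j)=0$, needed to define $a_{1,E}$ by integration, does hold automatically in your set-up for any $c(r)$, since by $S^1$-equivariance $u^*\lambda_E\circ j$ is a function of $r$ times $dr$ and is therefore closed.
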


In order to apply Theorem \ref{sfef}, we may need to perturb  the contact form $\lambda_E$ and the complex structure $J_E$ to achieve the required generic hypotheses.  So we take a sequence of nondegenerate contact forms $\lambda_n=f_n \lambda_E$, $f_n:W_E \to (0,\infty)$ smooth $\forall n$,   and a sequence of complex structures $J_n\in \J_{\rm reg}(\lambda_n)\subset \J(\lambda_n)=\J(\lambda_E)$ satisfying $f_n \to 1$ and $J_n \to J_E$ in $C^\infty$ as $n \to \infty$. The existence of $f_n$ follows from Proposition 6.1 in \cite{convex}. If $E>0$ is sufficiently small and $n$ is sufficiently large then, by Proposition \ref{prop_step2},  $\lambda_n$ is weakly convex and contains precisely one periodic orbit with Conley-Zehnder index 2 which is not linked to any periodic orbit with Conley-Zehnder index 3. From Theorem \ref{sfef}, the pair $\tilde J_n=(\lambda_n,J_n)$ admits a stable finite energy foliation $\tilde \F_n$, which projects onto a global system of transversal sections $\F_n$ adapted to $\lambda_n$. From the description of $\F_n$ given in Theorem \ref{sfef}, we conclude that $\F_n$ is a $3-2-3$ foliation of $W_E$ adapted to $\lambda_n$. In Section \ref{sec_step4} we prove the following proposition.

\begin{prop}\label{prop_step4} If $E>0$ is sufficiently small, then the following holds:
\begin{itemize}
\item[i)] There exist a sequence of nondegenerate weakly convex contact forms $\lambda_n$ on $W_E$ satisfying $\ker \lambda_n = \ker \lambda_E, \forall n,$ $\lambda_n \to \lambda_E$ in $C^\infty$ as $n \to \infty$ and a sequence of $d\lambda_E$-compatible complex structures $J_n \in \J_{\rm reg}(\lambda_n) \subset \J(\lambda_E)$ satisfying $J_n \to J_E$ in $C^\infty$ as $n \to \infty$ so that for all $n$ sufficiently large, $\tilde J_n=(\lambda_n,J_n)$ admits a stable finite energy foliation $\tilde \F_n$ of \,$\R \times W_E$ which projects in $W_E$  onto a $3-2-3$ foliation $\F_n$ adapted to $\lambda_n$. Let $P_{3,n},P_{2,n},P_{3,n}'$ be the nondegenerate binding orbits of $\F_n$ with Conley-Zehnder indices $3,2,3,$ respectively. Then $P_{2,n} \to P_{2,E}$ as $n \to \infty$ and there exist unknotted periodic orbits $P_{3,E}\subset \dot S_E$ and $P_{3,E}'\subset \dot S_E'$ of $\lambda_E$, both with Conley-Zehnder index $3$, such that  $P_{3,n} \to P_{3,E}$ and $P_{3,n}'\to P_{3,E}'$ in $C^\infty$ as $n \to \infty$.
\item[ii)] After changing coordinates with suitable contactomorphisms $C^\infty$-close to the identity map, we can assume that $P_{3,n}=P_{3,E},P_{2,n}=P_{2,E},P_{3,n}'=P_{3,E}'$ as point sets in $W_E$ for all large $n$, and that there are sequences of constants $c_{3,n},c_{3,n}',c_{2,n} \to 1$ as $n \to \infty$ such that $$ \begin{aligned} \lambda_n|_{P_{3,E}} &= c_{3,n}\lambda_E|_{P_{3,E}}, \\ \lambda_n|_{P_{3,E}'}&=  c_{3,n}'\lambda_E|_{P_{3,E}'}, \\ \lambda_n|_{P_{2,E}}&=  c_{2,n}\lambda_E|_{P_{2,E}}.\end{aligned}$$
\end{itemize}
\end{prop}
Proposition \ref{prop_step4} provides candidates $P_{3,E},P_{2,E},P_{3,E}'$ for the  binding orbits of a $3-2-3$ foliation adapted to $\lambda_E$. For each $n$, the stable finite energy foliation $\tilde \F_n$ contains the following finite energy pseudo-holomorphic curves associated to $\tilde J_n=(\lambda_n,J_n)$:

\begin{itemize}
\item A pair of rigid planes $\tilde u_{1,n}=(a_{1,n},u_{1,n}), \tilde u_{2,n}=(a_{2,n},u_{2,n}):\C \to \R \times W_E$, both asymptotic to $P_{2,n}$. The topological embedded $2$-sphere  $u_{1,n}(\C) \cup P_{2,n} \cup u_{2,n}(\C)$ separates $W_E$ in two components denoted by $\dot S_n$ and $\dot S_n'$, which contain $P_{3,n}$ and $P_{3,n}'$, respectively.
\item A pair of rigid cylinders $\tilde v_n=(b_n,v_n), \tilde v_n'=(b_n',v_n'):\R \times S^1 \to \R \times W_E$ with $v_n(\R \times S^1)\subset \dot S_n \setminus P_{3,n}$ and $v_n'(\R \times S^1)\subset \dot S_n' \setminus P_{3,n}'$, $\tilde v_n$ asymptotic to $P_{3,n}$ and $P_{2,n}$, and $\tilde v_n'$ asymptotic to $P_{3,n}'$ and $P_{2,n}$.
\item Smooth one parameter families $\tilde w_{\tau,n}=(d_{\tau,n},w_{\tau,n}),\tilde w_{\tau,n}'=(d_{\tau,n}',w_{\tau,n}'):\C \to \R \times W_E$, $\tau \in (0,1)$, with  $\tilde w_{\tau,n}$ asymptotic to $P_{3,n}$, $D_{\tau,n}=w_{\tau,n}(\C),\tau \in (0,1),$ foliating $\dot S_n \setminus (v_n(\R\times S^1) \cup P_{3,n})$,  $\tilde w_{\tau,n}'$ asymptotic to $P_{3,n}'$  and $D_{\tau,n}'=w_{\tau,n}'(\C),\tau \in (0,1),$ foliating $\dot S_n' \setminus (v_n'(\R\times S^1) \cup P_{3,n}')$.
\end{itemize}

Our last and hardest step is to show that $\tilde \F_n$ converges as $n \to \infty$ to a stable finite energy foliation $\tilde \F_E$ associated to $\tilde J_E=(\lambda_E,J_E)$ so that the projection $\F_E = p(\tilde \F_E)$ is  a $3-2-3$ foliation of $W_E$ adapted to $\lambda_E$ containing $P_{3,E},P_{2,E}$ and $P_{3,E}'$ as binding orbits and the hemispheres $U_{1,E}$ and $U_{2,E}$ as regular leaves. We prove

\begin{prop}\label{prop_step5} If $E>0$ is sufficiently small, then the following holds. Let $\tilde \F_n$ be the stable finite energy foliation on $\R \times W_E$ associated to $\tilde J_n=(\lambda_n,J_n)$ as in Proposition \ref{prop_step4}, which contains the curves $\tilde u_{1,n},\tilde u_{2,n},\tilde v_n,\tilde v_n',\tilde w_{\tau,n},\tilde w_{\tau,n}',\tau\in (0,1),$ as above, and  projects onto a $3-2-3$ foliation $\F_n$ adapted to $\lambda_n$ on $W_E$ with binding orbits $P_{3,n},P_{2,n},P_{3,n}'$. Then,  up to suitable reparametrizations and $\R$-translations of such $\tilde J_n$-holomorphic curves, we have:
\begin{itemize}
\item[i)] $\tilde u_{i,n} \to \tilde u_{i,E},i=1,2,$ in $C^\infty_{\rm loc}$ as $n \to \infty$, where $\tilde u_{i,E}$ are the rigid planes obtained in Proposition \ref{prop_step3}. Given a small neighborhood $\U \subset W_E$ of $\partial S_E=u_{1,E}(\C)\cup P_{2,E} \cup u_{2,E}(\C)$, there exists $n_0\in \N$ so that if $n \geq n_0$, then $u_{1,n}(\C)\cup  P_{2,n} \cup u_{2,n}(\C) \subset \U.$

\item[ii)] There exist finite energy $\tilde J_E$-holomorphic embedded cylinders $\tilde v_E=(b_E,v_E),$ $\tilde v_E'=(b_E',v_E'):\R \times S^1 \to \R \times W_E$, with $v_E$ and $v_E'$ embeddings,  $\tilde v_E$ asymptotic to $P_{3,E}$ and $P_{2,E}$, $\tilde v_E'$ asymptotic to $P_{3,E}'$ and $P_{2,E}$ at their punctures, respectively,  so that $\tilde v_n \to \tilde v_E$ and $\tilde v_n' \to \tilde v_E'$ in $C^\infty_{\rm loc}$ as $n \to \infty$. We denote $V_{E}=v_{E}(\R \times S^1)$ and $V_{E}'=v_{E}'(\R \times S^1)$. Then $V_{E}\subset \dot S_E\setminus P_{3,E}$ and $V_{E}'\subset \dot S_E'\setminus P_{3,E}'$. Given small neighborhoods $\U \subset W_E$ of $V_{E}\cup P_{2,E} \cup P_{3,E}$ and $\U' \subset W_E$ of $V_{E}'\cup P_{2,E} \cup P_{3,E}'$, there exists $n_0\in \N$ so that if $n \geq n_0$, then $v_{n}(\R \times S^1) \subset \U$ and $v_{n}'(\R \times S^1) \subset \U'.$

\item[iii)] There exist smooth one parameter families of $\tilde J_E$-holomorphic embedded planes $\tilde w_{\tau,E}=(d_{\tau,E},w_{\tau,E}),$ $\tilde w_{\tau,E}'=(d_{\tau,E}',w_{\tau,E}'):\C \to \R \times W_E$, $\tau \in (0,1)$, with $w_{\tau,E}$ and  $w_{\tau,E}'$ embeddings,   $\tilde w_{\tau,E}$ asymptotic to $P_{3,E}$, $D_{\tau,E}=w_{\tau,E}(\C),\tau \in (0,1),$ foliating $\dot S_E \setminus (V_E\cup P_{3,E})$,  $\tilde w_{\tau,E}'$ asymptotic to $P_{3,E}'$  and $D_{\tau,E}'=w_{\tau,E}'(\C),\tau \in (0,1),$ foliating $\dot S_E' \setminus (V_E' \cup P_{3,E}')$.  Given small neighborhoods $\U_1,\U_2\subset W_E$ of $P_{3,E} \cup V_{E} \cup P_{2,E} \cup U_{1,E}$ and  $P_{3,E} \cup V_{E} \cup P_{2,E} \cup U_{2,E}$, respectively, we find $0<\tau_1<\tau_2<1$  so that if $\tau\in (0,\tau_1)$ then $D_{\tau,E} \subset \U_1$ and if $\tau\in (\tau_2,1)$ then $D_{\tau,E} \subset \U_2$. An analogous statement holds for the family $D_{\tau,E}',\tau \in (0,1)$. Moreover, given $p_0 \in \dot S_E \setminus (V_E \cup P_{3,E})$, we find a sequence $\tau_n \in (0,1)$ and $\bar \tau \in(0,1)$ so that $p_0 \in w_{\tau_n,n}(\C)$ for each large $n$, $p_0 \in w_{\bar \tau,E}(\C)$ and $\tilde w_{\tau_n,n} \to \tilde w_{\bar \tau,E}$ in $C^\infty_{\rm loc}$ as $n \to \infty$. An analogous statement holds if $p_0 \in \dot S_E' \setminus (V_E' \cup P_{3,E}')$.
\end{itemize}
The curves $\tilde u_{1,E},\tilde u_{2,E},\tilde v_E, \tilde v_E', \tilde w_{\tau,E}, \tilde w_{\tau,E}',\tau \in(0,1),$ and the cylinders over $P_{3,E},$ $P_{2,E}$ and $P_{3,E}'$ determine a stable finite energy foliation $\tilde \F_E$ adapted to $\tilde J_E=(\lambda_E,J_E)$, which projects to  a $3-2-3$ foliation $\F_E$ adapted to $\lambda_E$ on $W_E$, with binding orbits $P_{3,E},P_{2,E}$ and $P_{3,E}'$. In particular, the $3-2-3$ foliation $\F_E$ adapted to $\lambda_E$ on $W_E$ restricts to a $2-3$ foliation adapted to $S_E$.
\end{prop}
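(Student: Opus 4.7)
The plan is a parametric SFT compactness analysis for the five families of $\tilde J_n$-holomorphic curves constituting $\tilde \F_n$, together with an identification of all possible limit buildings. Two inputs drive the argument: (a) uniform $d\lambda_n$-energy bounds, coming from the fact that the $d\lambda_n$-energy of each curve equals, by Stokes' theorem, the difference of the periods of its positive and negative asymptotic limits, and Proposition \ref{prop_step4}(ii) guarantees that these periods converge to those of $P_{3,E}, P_{2,E}, P_{3,E}'$; (b) weak convexity of $\lambda_E$ together with uniqueness of the $CZ=2$ orbit $P_{2,E}$ and the non-linking of any $CZ=3$ orbit with $P_{2,E}$, provided by Proposition \ref{prop_step2}. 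Since symplectizations carry no holomorphic spheres, bubbling is excluded, and every SFT limit is a finite-energy pseudo-holomorphic building whose components are punctured spheres asymptotic to periodic orbits of $\lambda_E$.

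For the rigid planes $\tilde u_{i,n}$ of Fredholm index $1$ with positive puncture at $P_{2,n}$, weak convexity and the non-linking condition together exclude every nontrivial breaking, so each SFT limit is a single finite-energy plane asymptotic to $P_{2,E}$; uniqueness of such planes (Proposition \ref{prop_step3}, combined with automatic transversality and positivity of intersections) identifies the limits with $\tilde u_{1,E}, \tilde u_{2,E}$. For the index-$1$ cylinders $\tilde v_n, \tilde v_n'$ asymptotic to $(P_{3,n},P_{2,n})$ and $(P_{3,n}',P_{2,n})$, any intermediate breaking orbit $P^{*}$ of $\lambda_E$ would satisfy $CZ(P^{*}) \geq 2$; the case $CZ(P^{*})=3$ is ruled out by non-linking with $P_{2,E}$, while $CZ(P^{*})=2$ forces $P^{*}=P_{2,E}$ and produces a prohibited ghost component. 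Hence the limits $\tilde v_E, \tilde v_E'$ exist as embedded cylinders with the stated asymptotic limits, and their images lie in $\dot S_E$ and $\dot S_E'$ respectively by a Hausdorff-continuity argument on the projections.

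For the one-parameter families I fix $p_0 \in \dot S_E \setminus (V_E \cup P_{3,E})$ and, using that $\F_n$ is a foliation, select $\tau_n \in (0,1)$ with $p_0 \in D_{\tau_n,n}$. After an $\R$-translation ensuring that $\tilde w_{\tau_n,n}$ passes through a marked lift of $p_0$, SFT compactness and the same breaking exclusions yield a single embedded index-$2$ plane $\tilde w_{\bar \tau, E}$ asymptotic to $P_{3,E}$ through $p_0$. A standard open-and-closed argument in $\tau$, using automatic transversality for embedded index-$2$ planes and the forced disjointness of distinct leaves via positivity of intersections, upgrades this to a smooth one-parameter family $\{\tilde w_{\tau,E}\}_{\tau \in (0,1)}$ foliating $\dot S_E \setminus (V_E \cup P_{3,E})$; the $P_{3,E}'$-family is handled symmetrically. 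The degeneration as $\tau \to 0^{+}, 1^{-}$ is an independent SFT limit which, by dimensional and combinatorial constraints, must be a three-level building whose only possible realization is $V_E \cup P_{2,E} \cup U_{i,E}$.

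The main obstacle is the SFT compactness analysis near $\tau = 0, 1$, where the candidate limit naturally splits into three levels joined along $P_{2,E}$. Ruling out phantom cylinders over $P_{2,E}$, spurious extra components, and multiply-covered pieces requires the full strength of the non-linking property, positivity of intersections, and the embeddedness already established for $\tilde u_{i,E}$ and $\tilde v_E$. Once this is resolved, the SFT limits reassemble and are identified uniquely with $V_E \cup P_{2,E} \cup U_{i,E}$. The collection $\{\tilde u_{1,E}, \tilde u_{2,E}, \tilde v_E, \tilde v_E', \tilde w_{\tau,E}, \tilde w_{\tau,E}'\}$ together with the trivial cylinders over $P_{3,E}, P_{2,E}, P_{3,E}'$ then forms the stable finite energy foliation $\tilde \F_E$; its projection to $W_E$ is a $3$-$2$-$3$ foliation whose restriction to $S_E$ yields the $2$-$3$ foliation $\F_E$ required by Theorem \ref{main1}.
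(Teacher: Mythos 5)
Your proposal follows the same overall architecture as the paper (compactness for the rigid planes, then the rigid cylinders, then the index-$2$ family, followed by identification of limits and assembly into $\tilde\F_E$), but there is a genuine gap you have not addressed: \emph{nothing guarantees that $\lambda_E$ is nondegenerate}. The orbits $P_{3,E},P_{3,E}'$ arise only as $C^\infty$-limits of $P_{3,n},P_{3,n}'$ along the perturbed sequence $\lambda_n$, and the paper explicitly warns that $P_{3,E}$ may be degenerate. This undercuts the proposal at its foundation, because you invoke SFT compactness (which in its standard form requires nondegeneracy, or at least a Morse--Bott condition) and the standard exponential asymptotic description of curves near periodic orbits — both of which are unavailable when the asymptotic limit is degenerate. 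The paper circumvents this by never running compactness against $\lambda_E$ directly: it performs a hands-on bubbling-off analysis (Ekeland--Hofer's lemma, area-budget arguments via Lemma~\ref{lemamin}) on the nondegenerate sequence $\lambda_n\to\lambda_E$, and then derives the exponential decay of the limit curves at $P_{3,E}$ not from Theorem~\ref{asymptotic} but from uniform estimates along the approximating sequence, following [convex] (this is the content of Propositions~\ref{asymptoticcylinder} and~\ref{propconvex85}). Without that step, your appeal to ``automatic transversality for embedded index-$2$ planes'' and the uniqueness of planes through a given point $p_0$ is not justified either, since the Fredholm theory with exponential weights (Theorem~\ref{fredholm}) depends on precisely this asymptotic control.

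Two smaller inaccuracies are worth flagging. First, ``symplectizations carry no holomorphic spheres, [so] bubbling is excluded'' conflates two phenomena: there are no compact holomorphic spheres, but bubbling-off of finite energy planes at interior points is exactly the mechanism the paper has to rule out quantitatively, using the fact that the total $d\lambda_E$-area equals the period of the positive asymptotic limit and that each bubble costs at least $T_{2,E}$ (Lemma~\ref{lemamin}). Second, your breaking analysis for the cylinders says a $CZ=3$ intermediate orbit is ``ruled out by non-linking with $P_{2,E}$''; the paper's actual mechanism (Theorem~\ref{teolink}) is that every short-action orbit in $\dot S_E$ other than $P_{2,E}$ is \emph{linked} to $P_{3,E}$, so it cannot be an asymptotic limit of a curve whose projection is disjoint from $P_{3,E}$. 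That non-linking-with-$P_{3,E}$ property, together with the area budget $T_{3,E}-T_{2,E}$, is what pins down $P_{2,E}$ as the only possible intermediate orbit.
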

The proofs of Proposition \ref{prop_step5}-i), ii) and iii) are left to Sections \ref{sec_5-i)}, \ref{sec_5-ii)} and  \ref{sec_5-iii)}, respectively. Now the proof of Theorem \ref{main1} is complete, remaining only to prove Propositions \ref{prop_step1}, \ref{prop_step2}, \ref{prop_step3}, \ref{prop_step4} and \ref{prop_step5} above. This is accomplished in the next sections. The existence of homoclinics to $P_{2,E}$ is discussed below.

The $2-3$ foliation adapted to $S_E$ obtained in Proposition \ref{prop_step5} and an argument found in \cite[Proposition 7.5]{fols} imply the existence of at least one homoclinic orbit to  $P_{2,E}$ in $\dot S_E$ (see also \cite{BGS} for a similar argument). We include it here for completeness.

Consider the one parameter family of disks $D_{\tau,E}$ for $\tau \in (0,1)$, which foliates $S_E \setminus (\partial S_E \cup V_E\cup P_{3,E})$. The local branch of $W^u_{E,{\rm loc}}$ intersects $D_{\tau,E}$ transversally for all $\tau >0$ small in an embedded circle $C^u_{\tau,0}$ which encloses an embedded closed disk $B^u_{\tau,0} \subset D_{\tau,E}$ with $d\lambda_E$-area $T_{2,E}>0$. In the same way,  the local branch of $W^s_{E,{\rm loc}}$ intersects $D_{\tau',E}$ transversally for all $\tau'<1$ sufficiently close to $1$ in an embedded circle $C^s_{\tau'}$ which encloses an embedded closed disk $B^s_{\tau'} \subset D_{\tau',E}$ with the same area $T_{2,E}$.

The existence of the $2-3$ foliation adapted to $S_E$ implies that the forward flow sends $B^u_{\tau,0}$, $\tau$ close to $0$,  into a disk $B_1$ inside $D_{\tau',E}$ for any $\tau'$ close to $1$. If $B_1$ intersects $B^s_{\tau'}$ then, since both disks have the same area, their boundaries also intersect and a homoclinic to $P_{2,E}$ must exist. Otherwise, $B_1$ is contained in $D_{\tau',E} \setminus B^s_{\tau'}$. The forward flow sends $B_1$ into a disk $B^u_{\tau,1} \subset D_{\tau,E}$ which, by uniqueness of solutions, must be disjoint from $B^u_{\tau,0}$. Its area is preserved by the flow and hence equals $T_{2,E}$. Arguing  in the same way, the forward flow sends $B^u_{\tau,1}$ into a disk $B_2\subset D_{\tau',E}$ which is disjoint from $B_1$ and has area $T_{2,E}$. If $B_2$ intersects $B^s_{\tau'}$ then, as before, a homoclinic to $P_{2,E}$ must exist. Otherwise, $B_2$ is contained in $D_{\tau',E} \setminus (B^s_{\tau'} \cup B_1)$ and the forward flow sends $B_2$ into a disk $B^u_{\tau,2}\subset D_{\tau,E}$ which is disjoint from $B^u_{\tau,0}\cup B^u_{\tau,1}$ and has area $T_{2,E}$. Analogously we construct the disks $B_n\subset D_{\tau',E}$ and $B^u_{\tau, n}\subset D_{\tau,E}$ using the forward flow and the $2-3$ foliation. This procedure must terminate at some point since all disks $B^u_{\tau, i}$ are disjoint and have the same positive area $T_{2,E}$ while the area of $D_{\tau,E}$ is finite and equal to $T_{3,E}$. This forces the existence of a positive integer $n< \frac{T_{3,E}}{T_{2,E}}$ so that $B_n$ intersects $B^s_{\tau'}$ and hence a homoclinic to $P_{2,E}$ must exist.

Some works on   homoclinics to saddle-centers are found in \cite{AR,BMO,ber1,ber2,BGS,BLJ,CM,CM2,rag,LK,MHO}  and references therein.

\begin{remark}In \cite{FS}, J. Fish and R. Siefring construct stable finite energy foliations for connected sums of contact $3$-manifolds with  prescribed stable finite energy foliations. The local model for this surgery looks like a saddle-center model and the new foliation admits an additional hyperbolic orbit $P$ with Conley-Zehnder index $2$ as a binding orbit, two new rigid planes asymptotic to $P$, and two leaves with additional negative punctures asymptotic to $P$. Some generic conditions are required.  \end{remark}

\section{Proof of Proposition \ref{prop_step1}}\label{sec_prop_step1}
In this section we prove Proposition \ref{prop_step1}. We restate it for clearness.

\begin{prop}\label{prop_step1b}There exists $E^*>0$ such that if $0<E<E^*$, then there exists a Liouville vector field $Y_E$ defined on a neighborhood of $W_E$ in $\U_{E^*}$ which is transverse  to $W_E$. Moreover, $Y_E$ is invariant under the symmetry $T$ defined in local coordinates by \eqref{eq_T}, i.e., $T_* Y_E = Y_E$.  In particular,  the $1$-form $i_{Y_E} \omega_0$ restricts to a tight contact form $\lambda_E$ on $W_E$ and its Reeb flow is a reparametrization of the Hamiltonian flow of $H$ restricted to $W_E$.  In local coordinates $(q_1,q_2,p_1,p_2)$, $\lambda_E$ coincides with the Liouville form $\lambda_0=\frac{1}{2}\sum_{i=1,2} p_i dq_i -q_i dp_i$ near $\partial S_E= \{q_1+p_1=0\} \cap K^{-1}(E)$.  There exists a Liouville vector field $\bar X_0$ defined in a neighborhood of $\dot W_0=\dot S_0 \cup \dot S_0':=W_0 \setminus \{p_c\}$ in $\U_{E^*}$, transverse to $\dot W_0$, so that given any small neighborhood $U_0 \subset \U_{E^*}$ of $p_c$, we have that $Y_E=\bar X_0$ outside $U_0$, for all $E>0$ sufficiently small. We denote by  $\bar \lambda_0=i_{\bar X_0} \omega_0|_{\dot W_0}$ the induced contact form on $\dot W_0$. \end{prop}

We keep using the notation established in Section \ref{sec_proof_main}. We start with a few definitions.

\begin{definition} A Liouville vector field defined on an open subset $U\subset W$ of a symplectic manifold $(W,\omega)$ is a
vector field $Y$ on $U$ satisfying $\LL_Y \omega = \omega$,
where $\LL$ is the Lie derivative. Given a regular hypersurface $S \subset W$, we say that $S$ is of contact type if
there exists a Liouville vector field $Y$ defined on an open
neighborhood of $S$ in $W$ such that $Y$ is everywhere transverse to
$S$. \end{definition}

Let us fix the dimension of the symplectic manifold $(W,\omega)$  to be equal to $4$. It is immediate to verify from the definition above that the $1$-form  $\lambda:=i_Y \omega|_S$ is a contact form on $S$, i.e., $\lambda \wedge d\lambda \neq 0$.  The Reeb
vector field associated to  a contact form $\lambda$ is the vector field
$X_\lambda$ on $S$ uniquely determined by $$i_{X_\lambda} d\lambda =0
\mbox{ and } i_{X_\lambda} \lambda =1.$$

Recall that $\R^4$ is equipped with the standard symplectic structure $$\omega_0 = \sum_{i=1}^2 dy_i \wedge dx_i.$$ Then the radial vector field $Y(z):=\frac{1}{2}z$ is a Liouville vector field on $\R^4$ and the Liouville form $$\lambda_0:=i_Y \omega_0=\frac{1}{2}\sum_{i=1}^2 y_i dx_i - x_i dy_i$$ is a primitive of $\omega_0$, i.e., $d\lambda_0 = \omega_0$. Thus any hypersurface $S \subset \R^4$, star-shaped with respect to the origin $0\in \R^4$, is of contact type and $\lambda_0|_S$ is a contact form on $S$. Its Reeb vector field coincides up to a non-vanishing factor with the Hamiltonian vector field on $S$ associated to any Hamiltonian function admitting $S$ as a regular energy level.

The contact structure induced by a contact form $\lambda$ on a $3$-manifold $M$ is the non-integrable hyperplane distribution $$\xi:=\ker \lambda \subset TM.$$ The contact structure $\xi$ is called overtwisted if there exists an embedded disk $D \hookrightarrow M$ such that $T_z \partial D \subset \xi_z$ and $T_z D \neq
\xi_z$ for all $z\in \partial D$. Otherwise it is called tight.

Now we start the proof of Proposition \ref{prop_step1b}. In order to prove that  $W_E=\partial \U_E$ is of contact type if
$E>0$ is sufficiently small, we construct a
Liouville vector field $Y_E$ near  $W_E$ and
prove that $Y_E$ is transverse to $W_E$.

Let $U_0\subset \U_E$ be the closed set, homeomorphic to the closed $4$-ball, so that $\partial U_0 = S_0$. For simplicity, we may view $U_0$ as a subset of $\R^4$. Let
\begin{equation}\label{x0}
X_0(w)  = \frac{w-w_0}{2}
\end{equation}
be the Liouville vector field with respect to $\omega_0$, where $w_0\in U_0 \setminus S_0$.  The point $w_0$ will be determined below. Since $S_0$ is a strictly convex singular subset of $H^{-1}(0)$, $X_0$ is transverse to $\dot S_0$, see Remark \ref{remconvexity}. This implies that $X_0$ is transverse to $S_E$ outside a fixed neighborhood of $p_c$ for all $E>0$ sufficiently small.

Now let us denote by $\varphi: (V \subset \R^4,0) \to (U \subset
\R^4,p_c)$ the symplectomorphism as in Hypothesis 1. In local coordinates
$z=(q_1,q_2,p_1,p_2)\in V$, the Hamiltonian function takes the
form
$$K(z)=\bar K(I_1,I_2)= - \alpha I_1 + \omega I_2 + R(I_1,I_2),$$ where $I_1 =
q_1p_1$, $I_2 = \frac{q_2^2 + p_2^2}{2}$ and $R(I_1,I_2) = O(I_1^2
+ I_2^2)$.

Let $C(I_1,I_2,E) := \bar K(I_1,I_2)-E$. Notice that $C(0,0,0)=0$. Since
$\partial_{I_2} C(0,0,0) = \omega \neq 0$, we find  a function $I_2=I_2(I_1,E)$ such that
$C(I_1,I_2(I_1,E),E)=0,\forall (I_1,E)$ in a neighborhood of $0\in \R^2$. A direct
computation shows that \begin{equation} \label{I2} I_2(I_1,E) =
\frac{\alpha}{\omega} I_1 + \frac{1}{\omega}E + O(I_1^2 +
E^2).\end{equation}

The lemma below is crucial for proving Proposition \ref{prop_step1b}.

\begin{lem}\label{lem_prod}The following inequalities holds.
\begin{itemize}\item[(i)]If $|I_1|$ and $E\geq 0$ are small enough, then \begin{equation}\label{prod} I_1\partial_{I_1} \bar K + I_2 \partial_{I_2}\bar K \geq \frac{E}{2} + O(I_1^2).\end{equation} \item[(ii)]If $I_1\leq 0$, $z \in B_{\delta_0}(0)\cap K^{-1}(E),E>0$, with $\delta_0>0$ small enough, then \begin{equation}\label{prod2}I_1 \partial_{I_1} \bar K + I_2 \partial_{I_2}\bar K>0.\end{equation} \item[(iii)]If $z \in B_{\delta_0}(0)\cap K^{-1}(E),E<0$, with $\delta_0>0$ small enough, then \begin{equation}\label{prod3} I_1 \partial_{I_1}\bar K + I_2 \partial_{I_2}\bar K \geq  O(I_1).\end{equation} \end{itemize} \end{lem}

\begin{proof} If $|I_1|$ and $E\geq 0$ are small enough, then we obtain
$$\begin{aligned} I_1
\partial_{I_1} \bar K + I_2 \partial_{I_2}\bar K & =  I_1(-\alpha + \partial_{I_1} R) + I_2(\omega + \partial_{I_2} R) \\ & = E-R + I_1\partial_{I_1} R + I_2 \partial_{I_2} R \\ & =  E + O(I_1^2 + I_2^2) \\ & =  E + O(I_1^2+E^2)\\ & \geq \frac{E}{2} + O(I_1^2),\end{aligned}
$$
proving (i).

First observe that $|z|$ small implies $|I_1|$ and $|I_2|$ small. If $I_1\leq 0$, $z \in B_{\delta_0}(0)\cap K^{-1}(E),E>0$, with $\delta_0>0$ small enough, then $$ \begin{aligned} I_1 \partial_{I_1} \bar K + I_2 \partial_{I_2}\bar K = &  -\alpha I_1 + \omega I_2  + I_1 \partial_{I_1} R + I_2 \partial_{I_2} R \\   = &  -\alpha I_1 + \omega I_2 +O(I_1^2 + I_2^2) \\ > & 0.\end{aligned} $$ Notice that if $I_1=0$ and $E>0$ then $I_2>0$. This proves (ii).

To prove (iii), observe that
$$\begin{aligned} I_1 \partial_{I_1}\bar K + I_2 \partial_{I_2}\bar K   = &  -\alpha I_1 + \omega I_2 +O(I_1^2 + I_2^2) \\ > & \frac{\omega}{2}I_2 + O(I_1)\\ \geq & O(I_1),\end{aligned}$$ if $z \in B_{\delta_0}(0)\cap K^{-1}(E),E<0$, with $\delta_0>0$ small enough. See that if $E<0$ and $I_2=0$, then $I_1>0$.\end{proof}

We can also write \begin{equation} \label{I1}I_1= I_1(I_2,E)=
\frac{\omega}{\alpha}I_2 -\frac{1}{\alpha}E + O(I_2^2+E^2),
\end{equation}for sufficiently small neighborhood of $(I_2,E)=0\in
\R^2$. In this case we have
\begin{equation}\begin{aligned} (q_1+p_1)^2=& q_1^2+p_1^2 + 2\left(\frac{\omega}{\alpha}I_2 -\frac{1}{\alpha}E
\right) + O(I_2^2+E^2) \\
\geq & \min \left\{\frac{\omega}{\alpha},1\right\}|z|^2-2\frac{E}{\alpha} +
O(|z|^4+E^2) \\ \geq &
\min \left\{\frac{\omega}{\alpha},1\right\}\frac{|z|^2}{2}-4\frac{E}{\alpha}
\\ \geq & \min\left\{\frac{\omega}{\alpha},1\right\}\frac{|z|^2}{4},
\end{aligned} \end{equation} if $|z|$ is sufficiently small and
$0<E<\hat c|z|^2$, where $\hat c=\frac{\alpha}{16}\min\left\{\frac{\omega}{\alpha},1\right\}>0$ is a fixed small constant. This implies that
if $\delta_0>0$ is fixed sufficiently small  then
\begin{equation}\label{zc2} \delta_0 \geq |z|\geq \frac{\delta_0}{2} \Rightarrow q_1+p_1 \geq
\frac{\delta_0}{c_2},
\end{equation} for all $z\in B_{\delta_0}(0)\cap \{q_1+p_1 \geq 0\}\cap K^{-1}(E)$ and $E>0$ sufficiently small,
where
\begin{equation} \label{c2}
c_2=\frac{4}{\sqrt{\min\{\frac{\omega}{\alpha},1\}}}\geq 4.\end{equation}

We have proved the following lemma.

\begin{lem}For all fixed $\delta_0>0$ sufficiently small, implication \eqref{zc2} holds for $z\in B_{\delta_0}(0)\cap \{q_1+p_1\geq 0\}\cap K^{-1}(E),$ where $0<E<\hat c |z|^2$ and $\hat c=\frac{\alpha}{16}\min\left\{\frac{\omega}{\alpha},1\right\}>0$. \end{lem}

Let us assume that $w_0 \in U$ and denote $z_0 =
\varphi^{-1}(w_0)\in V$. Let $\widetilde Y_0 = \varphi^{-1}_* X_0$
be the Liouville vector field on $V$, where $X_0$ is given as in \eqref{x0}. Although $\widetilde Y_0$ is
not necessarily radial with respect to $z_0$, it  satisfies
$$\widetilde Y_0(z) = \frac{z-z_0}{2} + R_{z_0}(z),$$ where
\begin{equation}\label{Rz} |R_{z_0}(z)| \leq c |z-z_0|^2,\end{equation}  for a constant $c>0$  which
does not depend on $z_0$, if both $z$ and $z_0$ lie on
$B_{\delta_0}(0) = \{|z|\leq \delta_0\} \subset V$, for
$\delta_0>0$ small.

Let $$Y_0(z)= \frac{z-z_0}{2}$$ be the radial Liouville vector field
on $B_{\delta_0}(0)$. Notice that
\begin{equation}\label{fechada} \begin{aligned} d(i_{R_{z_0}}\omega_0) = &
d(i_{\widetilde Y_0} \omega_0 - i_{Y_0}\omega_0)\\ = &
\mathcal{L}_{\widetilde Y_0} \omega_0 - \mathcal{L}_{Y_0} \omega_0
\\ = & \omega_0 - \omega_0\\ = & 0
\end{aligned} \end{equation}
Since $B_{\delta_0}(0)$ is simply connected, equation
\eqref{fechada} implies that
\begin{equation}\label{defiG} i_{R_{z_0}}\omega_0=-dG_{z_0}\end{equation}  for a smooth function
$G_{z_0}:B_{\delta_0}(0) \to \R$. Observe that $R_{z_0}=\widetilde
Y_0 - Y_0$ is the Hamiltonian vector field associated to
$G_{z_0}$. From \eqref{defiG} we see that
\begin{equation}\label{partialG} \frac{\partial G_{z_0}}{\partial q_i} = -n_i \mbox{ and }
\frac{\partial G_{z_0}}{\partial p_i} = m_i,\end{equation}
$i=1,2,$ where $R_{z_0}(z) = \sum_{i=1,2} m_i(z)
\partial_{q_i} + n_i(z) \partial_{p_i}$. The functions
$m_i,n_i,i=1,2,$ depend on $z_0$ and satisfy
\begin{equation}\label{ineqmi} |m_i(z)|,|n_i(z)|\leq c
|z-z_0|^2,\end{equation} for all $z,z_0\in B_{\delta_0}(0)$, where
$c>0$ is a constant which does not depend on $z_0$.

From \eqref{partialG} and \eqref{ineqmi}, we have $$|\nabla
G_{z_0}(z)| \leq 4c |z-z_0|^2,$$ and therefore, assuming that
$G_{z_0}(z_0)=0$, we get \begin{equation}\label{Gz}
\begin{aligned} |G_{z_0}(z)| = &
\left|\int_0^1 \nabla G_{z_0}(z_0 + s(z-z_0)) \cdot (z-z_0) ds \right| \\
\leq & |z-z_0| \int_0^1 4c s^2 |z-z_0|^2 ds  \\ = & \frac{4c}{3}
|z-z_0|^3,
\end{aligned}\end{equation} for all $z,z_0\in B_{\delta_0}(0)$, with $\delta_0>0$ sufficiently small.

We choose a cut-off function $f: [0,\infty) \to [0,1]$
 satisfying $f(t) = 1$ if
$t\in \left[0,\frac{\delta_1}{3}\right]$, $f(t)=0$ if
$t\geq \frac{2\delta_1}{3}$, and $-\frac{6}{\delta_1}< f'(t)<0$ if
$t\in \left(\frac{\delta_1}{3},\frac{2\delta_1}{3}\right)$, where $0<\delta_1:=
\frac{\delta_0}{c_2}$, $c_2>0$ defined in \eqref{c2}. This implies that
\begin{equation} \label{ineqf} 0\leq 1-f(t) \leq \frac{6}{\delta_1} t,\forall t\geq 0, \end{equation}
and
\begin{equation} \label{ineqfl} |f'(t)|\leq \frac{18}{\delta_1^2} t ,\forall t\geq 0. \end{equation}

Our first step is to interpolate the Liouville vector fields $Y_0$
and $\widetilde Y_0$ so that we get a new Liouville vector field
$Y$ defined on $B_{\delta_0}(0)\cap\{q_1+p_1\geq 0\}$,
$\delta_0>0$ small, which coincides with $Y_0$ on
$B_{\frac{\delta_1}{6}}(0)\cap \{q_1+p_1\geq 0\}$ and with $\widetilde
Y_0$ on $\left(B_{\delta_0}(0) \setminus B_{\frac{\delta_0}{2}}(0)\right)\cap
\{q_1+p_1\geq 0\}$. Then we show that $Y$ is transverse to
$ \{q_1+p_1 \geq 0\} \cap B_{\delta_0}(0)\cap K^{-1}(E)$, if $E>0$ is
sufficiently small. The choice of $\frac{\delta_1}{6}$ above is due to the fact that
$|z|<\frac{\delta_1}{6}$ implies $q_1 + p_1 < \frac{\delta_1}{3}$. In the same
way, we can use \eqref{zc2} to see that if $\delta_0>0$ is
chosen small enough, then $\delta_0 \geq |z|\geq \frac{\delta_0}{2}$
implies $q_1+p_1 \geq \delta_1$ for all $E>0$ sufficiently small.

In order to do that, define
\begin{equation}\label{defi_Y} Y := \widetilde Y_0 - Y_{fG_{z_0}},\end{equation} where $Y_{fG_{z_0}}$ is the
Hamiltonian vector field on $\{q_1 + p_1 \geq 0\}\cap
B_{\delta_0}(0)$ associated to the function $K_{fG_{z_0}}(z) :=
f(q_1+p_1)G_{z_0}(z)$, $z=(q_1,q_2,p_1,p_2),$ and defined by
$i_{Y_{fG_{z_0}}}\omega_0 = -dK_{fG_{z_0}}$.

Since $\mathcal{L}_{Y_{fG_{z_0}}} \omega_0 =0$ and $\widetilde
Y_0$ is Liouville with respect to $\omega_0$, we get $\mathcal{L}_Y
\omega_0 = \omega_0,$ and therefore
$Y$ is Liouville with respect to $\omega_0$ as well.

It remains to show that $Y$ is transverse to $S_E$ in local
coordinates, if $E>0$ is sufficiently small. Let $$z_0 =
\frac{1}{\sqrt{2}}(\delta_0,0,\delta_0,0).$$

Notice that $Y_{fG_{z_0}} = G_{z_0}Y_f + K_fR_{z_0}$, with $Y_f$
defined by $i_{Y_f} \omega_0 = -dK_f$, where $K_f(z)=f(q_1+p_1)$.
We compute
\begin{equation} \label{dKY} \begin{aligned} dK\cdot Y  = & dK \cdot
\widetilde Y_0 - K_f dK \cdot R_{z_0} - G_{z_0}dK \cdot Y_f  \\
= & (1-K_f)dK\cdot R_{z_0} + dK \cdot Y_0 - G_{z_0}dK \cdot Y_f.
\end{aligned}
\end{equation}

We begin by estimating the first term of the last line of equation
\eqref{dKY}.

\begin{lem}\label{termo1}There exists a constant $A_1>0$ such that for all $\delta_0>0$ sufficiently small and $z \in \{q_1 + p_1 \geq 0\} \cap
B_{\delta_0}(0)$ we have
\begin{equation} |(1-K_f)dK\cdot R_{z_0}(z)|\leq A_1 \delta_0^2(q_1+p_1).
\end{equation}
\end{lem}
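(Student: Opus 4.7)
\noindent\textbf{Proof proposal for Lemma \ref{termo1}.} The plan is to bound the product of three factors: $(1-K_f)$, $dK$, and $R_{z_0}$, using the elementary estimates already established in the excerpt. The key observation is that the cutoff factor $1-K_f$ contributes a factor linear in $q_1+p_1$ (by \eqref{ineqf}), $dK$ is first order in $|z|$, and $R_{z_0}$ is quadratic in $|z-z_0|$. After multiplying and using the relation $\delta_0/\delta_1 = c_2$ from \eqref{c2}, we obtain the desired bound of type $\delta_0^2(q_1+p_1)$.

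\medskip

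\noindent\emph{Step 1 (cutoff factor).} By the construction of $f$ and inequality \eqref{ineqf},
\[
0 \leq 1-K_f(z) = 1-f(q_1+p_1) \leq \frac{6}{\delta_1}(q_1+p_1),
\]
for every $z \in \{q_1+p_1 \geq 0\}\cap B_{\delta_0}(0)$.

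\medskip

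\noindent\emph{Step 2 (bound on $dK$).} From the normal form \eqref{Kloc}, the first derivatives of $K$ with respect to $(q_1,q_2,p_1,p_2)$ are linear combinations of $p_1$, $q_1$, $q_2$, $p_2$ plus third order terms in $|z|$. Hence there is a constant $C_1>0$, depending only on $\alpha,\omega$, such that for $\delta_0>0$ small,
\[
|dK(z)| \leq C_1|z| \leq C_1\delta_0, \qquad \text{for all } z\in B_{\delta_0}(0).
\]

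\medskip

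\noindent\emph{Step 3 (bound on $R_{z_0}$).} By the definition of $R_{z_0}$ and the bound \eqref{Rz},
\[
|R_{z_0}(z)| \leq c|z-z_0|^2 \leq c(|z|+|z_0|)^2 \leq c(2\delta_0)^2 = 4c\delta_0^2,
\]
since $|z_0|=\delta_0$ and $|z|\leq \delta_0$.

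\medskip

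\noindent\emph{Step 4 (assembling).} Combining the three estimates,
\[
|(1-K_f)\,dK\cdot R_{z_0}(z)|
\leq \frac{6}{\delta_1}(q_1+p_1)\cdot C_1\delta_0 \cdot 4c\delta_0^2
= \frac{24 C_1 c\,\delta_0}{\delta_1}\,\delta_0^2(q_1+p_1).
\]
By the definition $\delta_1=\delta_0/c_2$ in \eqref{c2}, the ratio $\delta_0/\delta_1$ equals $c_2$ and is therefore independent of $\delta_0$. Setting $A_1:=24 C_1 c\, c_2$ yields the desired inequality.

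\medskip

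\noindent There is essentially no obstacle here: the statement is a routine three-factor estimate. The only subtlety is making sure the linear factor $(q_1+p_1)$ really appears, which is guaranteed by \eqref{ineqf}, and that the extra $\delta_0/\delta_1$ produced by that inequality does not blow up, which is controlled by the proportionality $\delta_1 = \delta_0/c_2$. Everything else is bookkeeping of constants.
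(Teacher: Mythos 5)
Your proof is correct and follows essentially the same route as the paper's: the paper also multiplies the three factors, using $|dK(z)|\leq c_0|z|$, the bound \eqref{Rz}, the cutoff bound \eqref{ineqf}, and the relation $\delta_1=\delta_0/c_2$, arriving at the same constant $A_1=24c_0c\,c_2$ (where your $C_1$ plays the role of the paper's $c_0$). The only cosmetic difference is that the paper writes $|z-z_0|^2\leq |z|^2+2|z|\delta_0+\delta_0^2\leq 4\delta_0^2$ rather than $(|z|+|z_0|)^2\leq 4\delta_0^2$.
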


\begin{proof}
First observe that $|dK(z)| \leq c_0|z|$ for some
constant $c_0>0$ and $z\in B_{\delta_0}(0)$, for $\delta_0>0$ small. Now we use that
$\delta_1=\frac{\delta_0}{c_2}$, $|z_0|=\delta_0>0$ and $|z-z_0|^2\leq |z|^2
+ 2|z|\delta_0+\delta_0^2$. From \eqref{Rz}
  and \eqref{ineqf} we have
\begin{equation} \begin{aligned} |(1-K_f)dK(z)\cdot R_{z_0}(z)|
\leq & \frac{6c_0c}{\delta_1}|z||z-z_0|^2(q_1+p_1)\\
% \leq & \frac{6c_0cc_2}{\delta_0}|z||z-z_0|^2(q_1+p_1)\\
\leq & \frac{6c_0cc_2}{\delta_0}|z|(|z|^2+2|z|\delta_0+\delta_0^2)(q_1+p_1)\\ \leq &
24c_0cc_2\delta_0^2(q_1+p_1),
\end{aligned}
\end{equation} for all $z\in \{q_1 + p_1 \geq 0\} \cap B_{\delta_0}(0)$. Let $A_1=24c_0cc_2$.
\end{proof}

Regarding the second term of \eqref{dKY}, we observe that
\begin{equation}\label{expr2} dK \cdot Y_0 (z)=dK \cdot \frac{1}{2}(z-z_0)= I_1 \partial_{I_1}\bar K + I_2 \partial_{I_2}\bar K +(q_1+p_1)\frac{\delta_0 \bar
\alpha }{2\sqrt{2}},\end{equation} where $\bar \alpha =
-\partial_{I_1}\bar K = \alpha -
\partial_{I_1} R$. Then we obtain the estimate below.

\begin{lem}\label{termo2}We have \begin{itemize}
\item[(i)] For all $\delta_0>0$ sufficiently small, the following holds: if $0 \neq z\in B_{\delta_0}(0)\cap \{q_1 +p_1\geq 0\}\cap \{E \geq 0\}$ then \begin{equation} dK \cdot Y_0 (z)>(q_1+p_1) \frac{\delta_0 \alpha}{8 \sqrt{2}}.\end{equation}

\item [(ii)] For any fixed $\delta_0>0$ small, there exists $\delta_0 \gg \bar \delta_0 >0$ depending on $\delta_0$ such that if $z\in B_{\bar \delta_0}(0)\cap \{q_1 + p_1\geq 0\}\cap \{E <0\}$ then \begin{equation}  dK \cdot Y_0 (z)>(q_1+p_1)\frac{\delta_0 \alpha}{8 \sqrt{2}}.\end{equation}
\end{itemize}
\end{lem}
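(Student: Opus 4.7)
The plan is to start from \eqref{expr2}, which decomposes $dK\cdot Y_0$ as $G + \frac{\delta_0\bar\alpha}{2\sqrt 2}(q_1+p_1)$, where I write $G := I_1\partial_{I_1}K + I_2\partial_{I_2}K$. Since $\bar\alpha = \alpha-\partial_{I_1}R\to \alpha$ as $z\to 0$, I will first shrink $\delta_0$ so that $\bar\alpha\ge \alpha/2$ on $B_{\delta_0}(0)$; then the lemma reduces to proving the strict positivity of
\[
  G + \tfrac{\delta_0\alpha}{8\sqrt 2}(q_1+p_1)
\]
on the respective sets of interest, since the leftover portion of $\frac{\delta_0\bar\alpha}{2\sqrt 2}(q_1+p_1)$ contributes at least an extra $\frac{\delta_0\alpha}{8\sqrt 2}(q_1+p_1)$.

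For part (i) I would split on the sign of $I_1$. If $I_1\le 0$, both $-\alpha I_1$ and $\omega I_2$ are non-negative, so for $\delta_0$ small the remainder $O(I_1^2+I_2^2)$ is absorbed and $G\ge \tfrac 12 \min(\alpha,\omega)(|I_1|+I_2)\ge 0$, with equality iff $I_1=I_2=0$. In that equality case, $z\neq 0$ combined with $q_1+p_1\ge 0$ forces $q_1+p_1>0$, so the second term is already strictly positive; otherwise $G$ itself is strictly positive. If $I_1>0$, then $q_1,p_1>0$ and so $2\sqrt{I_1}\le q_1+p_1\le \sqrt 2\,\delta_0$. This double inequality yields $I_1^2 \le \tfrac{(q_1+p_1)^4}{16}\le \tfrac{\sqrt 2\,\delta_0^3}{8}(q_1+p_1)$. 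Applying \eqref{prod}, $G\ge E/2-CI_1^2\ge E/2 - \tfrac{\delta_0\alpha}{16\sqrt 2}(q_1+p_1)$ once $\delta_0$ is small; adding $\tfrac{\delta_0\alpha}{8\sqrt 2}(q_1+p_1)$ leaves $E/2 + \tfrac{\delta_0\alpha}{16\sqrt 2}(q_1+p_1)$, which is strictly positive because $E\ge 0$ and $q_1+p_1>0$.

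For part (ii) the additional key step is to show that $\{q_1+p_1=0\}\cap K^{-1}(E)=\emptyset$ for $E<0$ once $\bar\delta_0$ is small enough: on $\{q_1+p_1=0\}$ one has $K=\alpha p_1^2+\omega I_2+R$, and a direct bound gives $|R|\le C\bar\delta_0^2(\alpha p_1^2+\omega I_2)$, forcing $K\ge 0$ inside $B_{\bar\delta_0}(0)$ and contradicting $K=E<0$. Thus $q_1+p_1>0$ throughout the set of interest. If $I_1\le 0$, the argument from (i) gives $G\ge 0$ with equality forcing $I_1=I_2=0$ and hence $K(z)=R(0,0)=0\ne E$, a contradiction, so $G>0$ strictly. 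If $I_1>0$, I would invoke \eqref{prod3} to get $G\ge -c_3 I_1$ for some constant $c_3$; combined with $I_1\le (q_1+p_1)^2/4\le \tfrac{\sqrt 2}{4}\bar\delta_0(q_1+p_1)$, the choice $\bar\delta_0\ll\delta_0$ (for instance $\bar\delta_0\le \delta_0\alpha/(8c_3)$) absorbs $c_3 I_1$ into $\tfrac{\delta_0\alpha}{16\sqrt 2}(q_1+p_1)$, leaving a strictly positive remainder.

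The main obstacle will be to keep tight control of the various error terms --- the $O(I_1^2+I_2^2)$ and $O(E^2)$ remainders in \eqref{prod}, and especially the linear-in-$I_1$ loss in \eqref{prod3} --- by a small multiple of $\delta_0(q_1+p_1)$. Part (i) works because $I_1>0$ automatically forces $q_1+p_1\ge 2\sqrt{I_1}$, so the error is absorbed at one power of $\delta_0$ cost. Part (ii) is harder because for $E<0$ the level set $K^{-1}(E)$ can approach the singular hyperplane $\{q_1+p_1=0\}$ arbitrarily closely when $E$ and $\bar\delta_0$ are both small; this is precisely why one must first exclude that hyperplane from the domain and then shrink $\bar\delta_0$ much below $\delta_0$ so that the linear loss in \eqref{prod3} becomes negligible against $\delta_0(q_1+p_1)$.
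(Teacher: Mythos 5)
Your proof is correct and follows the paper's approach: both start from \eqref{expr2}, reduce to bounding $G:=I_1\partial_{I_1}K+I_2\partial_{I_2}K$ from below via \eqref{prod}, \eqref{prod2} and \eqref{prod3}, and absorb the $O(I_1)$ (resp.\ $O(I_1^2)$) error against $\delta_0(q_1+p_1)$ using the AM--GM bound $I_1\le(q_1+p_1)^2/4$ together with $q_1+p_1\le\sqrt{2}\delta_0$. The only differences are presentational --- you split on the sign of $I_1$ instead of on the sign of $E$, and you make explicit the absorption that the paper leaves implicit --- but the key estimates and the smallness hierarchy $\bar\delta_0\ll\delta_0$ are identical.
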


\begin{proof}Taking $|z|$ sufficiently small we can assume that $\frac{\alpha}{2} < \bar \alpha< 2\alpha$. To prove $(i)$ we assume first that $E>0$. If $I_1> 0$ then $q_1,p_1>0$ since $q_1+p_1\geq 0$ and, therefore, from Lemma \ref{lem_prod}-(i) and \eqref{expr2} we get $$\begin{aligned} dK \cdot Y_0 (z)>& (q_1+p_1) \frac{\delta_0 \alpha}{4 \sqrt{2}}+O(I_1^2) \\ > & (q_1+p_1) \frac{\delta_0 \alpha}{8 \sqrt{2}} ,\end{aligned}$$ if $\delta_0>0$ is small enough and $z\in B_{\delta_0}(0)\cap \{q_1+p_1 \geq 0\} \cap \{E>0\} \cap \{I_1>0\}$.

If $E>0$ and $I_1 \leq 0$, then Lemma \ref{lem_prod}-(ii) and \eqref{expr2} imply that $$dK \cdot Y_0 (z)> (q_1+p_1) \frac{\delta_0 \alpha}{4 \sqrt{2}},$$ if $\delta_0>0$ is small enough and $z\in B_{\delta_0}(0)\cap \{q_1+p_1 \geq 0\} \cap \{E>0\} \cap \{I_1 \leq 0\}$.

If $E=0$, then $q_1,p_1 \geq 0$ and since $z \neq 0$ by assumption, $q_1>0$ or $p_1>0$. From Lemma \ref{lem_prod}-(i) and \eqref{expr2} we get $$\begin{aligned} dK \cdot Y_0 (z)>& (q_1+p_1) \frac{\delta_0 \alpha}{4 \sqrt{2}}+O(I_1^2) \\ > & (q_1+p_1) \frac{\delta_0 \alpha}{8 \sqrt{2}} ,\end{aligned}$$ if $\delta_0>0$ is small enough and $z\in B_{\delta_0}(0)\cap \{q_1+p_1 \geq 0\}\cap \{E= 0\}$. This proves (i).

Now assume $E<0$. To prove (ii), we observe first that $q_1,p_1>0$. From Lemma \ref{lem_prod}-(iii) and \eqref{expr2} we have $$\begin{aligned} dK \cdot Y_0 (z)>& (q_1+p_1) \frac{\delta_0 \alpha}{4 \sqrt{2}}+O(I_1) \\ > & (q_1+p_1) \frac{\delta_0 \alpha}{8 \sqrt{2}} ,\end{aligned}$$ if $0<\bar \delta_0 \ll \delta_0$ are chosen small enough and $z\in B_{\bar \delta_0}(0)\cap \{q_1+p_1 \geq 0\}\cap \{E<0\}$. \end{proof}

Now we estimate the third term of \eqref{dKY}.

\begin{lem}\label{termo3} There exists a constant $A_3>0$ such that for all $\delta_0>0$ sufficiently small and $z \in \{q_1 + p_1 \geq 0\} \cap
B_{\delta_0}(0)$ we have \begin{equation}|G_{z_0}dK \cdot Y_f(z)|\leq A_3\delta_0^2(q_1+p_1). \end{equation}
\end{lem}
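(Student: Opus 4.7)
The strategy is a direct computation: write $Y_f$ in coordinates, expand $dK\cdot Y_f$, bound each factor, and then combine with the cubic estimate \eqref{Gz} for $G_{z_0}$.

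First I compute $Y_f$. Since $K_f(z)=f(q_1+p_1)$ depends only on $q_1+p_1$, the defining equation $i_{Y_f}\omega_0=-dK_f=-f'(q_1+p_1)(dq_1+dp_1)$ combined with $\omega_0=dp_1\wedge dq_1+dp_2\wedge dq_2$ gives
\[
Y_f(z)=f'(q_1+p_1)\bigl(\partial_{q_1}-\partial_{p_1}\bigr).
\]
Consequently, using $\partial_{q_1}K=(\partial_{I_1}K)\,p_1$ and $\partial_{p_1}K=(\partial_{I_1}K)\,q_1$,
\[
dK(z)\cdot Y_f(z)=f'(q_1+p_1)\,\partial_{I_1}K(I_1,I_2)\,(p_1-q_1).
\]

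Next I bound each factor on $B_{\delta_0}(0)\cap\{q_1+p_1\geq 0\}$ for $\delta_0>0$ small. Since $\partial_{I_1}K=-\alpha+\partial_{I_1}R$ with $R=O(I_1^2+I_2^2)$, one has $|\partial_{I_1}K|\leq 2\alpha$; clearly $|p_1-q_1|\leq |z|\leq \delta_0$; and the bound \eqref{ineqfl} gives $|f'(q_1+p_1)|\leq \frac{18(q_1+p_1)}{\delta_1^2}$. Multiplying these,
\[
|dK(z)\cdot Y_f(z)|\leq \frac{36\alpha\,\delta_0}{\delta_1^2}(q_1+p_1).
\]
For $G_{z_0}$, since $|z_0|=\delta_0$ we have $|z-z_0|\leq 2\delta_0$ on $B_{\delta_0}(0)$, so \eqref{Gz} yields $|G_{z_0}(z)|\leq \frac{4c}{3}(2\delta_0)^3=\frac{32c}{3}\delta_0^3$.

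Combining the two estimates and using $\delta_1=\delta_0/c_2$, so that $1/\delta_1^2=c_2^2/\delta_0^2$, I obtain
\[
|G_{z_0}(z)\,dK(z)\cdot Y_f(z)|\leq \frac{32c}{3}\delta_0^3\cdot\frac{36\alpha\,\delta_0}{\delta_1^2}(q_1+p_1)=A_3\,\delta_0^2(q_1+p_1),
\]
with $A_3:=\frac{384\,c\,\alpha\,c_2^2}{1}$ (an explicit constant independent of $\delta_0$), which is the claimed bound.

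The only conceptually non-routine point is recognizing that the bound $|f'(t)|\leq 18t/\delta_1^2$ of \eqref{ineqfl}, rather than the cruder pointwise bound $|f'|\leq 6/\delta_1$, is exactly what is needed to extract the extra factor of $(q_1+p_1)$ from $Y_f$ and convert the $\delta_0^3$ from $G_{z_0}$ into the desired $\delta_0^2(q_1+p_1)$; everything else is bookkeeping with the Hamiltonian equations and the estimate \eqref{Gz} established earlier.
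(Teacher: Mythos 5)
Your proof is correct in strategy and matches the paper's argument essentially step for step: compute $Y_f$, observe $dK\cdot Y_f=\bar\alpha(q_1-p_1)f'(q_1+p_1)$, apply the cubic bound \eqref{Gz} for $G_{z_0}$ and the linear bound \eqref{ineqfl} for $f'$, then absorb the factors of $\delta_0$. One small slip: the claim $|p_1-q_1|\leq|z|$ is false when $q_1p_1<0$ (e.g.\ $z=(1,0,-0.9,0)$); the correct elementary bound is $|p_1-q_1|\leq\sqrt{2}\,|z|$ (the paper uses the cruder $\leq 2|z|$), which only changes the final numerical value of $A_3$ and not the conclusion.
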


\begin{proof}
Observe that
$Y_f(z) = f'(q_1+p_1)(\partial_{q_1} -
\partial_{p_1})$, hence \begin{equation}\label{dKYf} dK \cdot Y_f (z) = \bar \alpha
(q_1-p_1)f'(q_1+p_1).\end{equation}
Now using that
$\delta_1=\frac{\delta_0}{c_2}$, $|z_0|=\delta_0>0$, $|q_1-p_1| \leq 2|z|$, $|z-z_0|^3\leq |z|^3
+ 3|z|^2\delta_0+ 3|z|\delta_0^2+\delta_0^3$, and the equations \eqref{Gz},
\eqref{ineqfl} and \eqref{dKYf}, we get
\begin{equation}\begin{aligned} |G_{z_0}dK \cdot Y_f(z)| \leq
& \frac{24c}{\delta_1^2}|q_1-p_1|\bar \alpha|z-z_0|^3(q_1+p_1) \\
\leq & \frac{96c}{\delta_1^2}\alpha |z||z-z_0|^3(q_1+p_1) \\
%\leq & \frac{96cc_2^2}{\delta_0^2}\alpha |z||z-z_0|^3(q_1+p_1) \\
\leq & \frac{96cc_2^2}{\delta_0^2}\alpha
|z|(|z|^3+3|z|^2\delta_0+3|z|\delta_0^2+\delta_0^3)(q_1+p_1)  \\ \leq
& 768cc_2^2\alpha\delta_0^2(q_1+p_1),
\end{aligned}
\end{equation} for all $z \in \{q_1 + p_1 \geq 0\} \cap
B_{\delta_0}(0)$, with $\delta_0>0$ sufficiently small. Let $A_3=768cc_2^2\alpha$.
\end{proof}

Using Lemmas \ref{termo1}, \ref{termo2} and \ref{termo3}, we prove:

\begin{prop}\label{propYtransversal}For all $\delta_0>0$ sufficiently small, the following properties hold:
\begin{itemize}
\item[i)] If $0 \neq z\in B_{\delta_0}(0)\cap \{q_1 +p_1\geq 0\}\cap \{E \geq 0\}$, then the Liouville vector field $Y(z)$ constructed above and defined on $B_{\delta_0}(0)\cap \{q_1 + p_1 \geq 0\}$  is positively transverse to its energy level $K^{-1}(K(z)),$ i.e.,  $dK \cdot Y (z)>0$. Moreover, for all $z \in B_{\delta_2}(0)$, $Y(z)=\frac{z-z_0}{2}$, where $z_0 = \frac{1}{\sqrt{2}}(\delta_0,0,\delta_0,0)$, $\delta_2:=\frac{\delta_1}{6}=\frac{\delta_0}{6c_2}$ and $c_2>0$ is given by \eqref{c2}.
\item[ii)] There exists $0<\bar \delta_0  \ll \delta_0,$ depending on $\delta_0$, such that if $z\in B_{\bar \delta_0}(0)\cap \{q_1 + p_1\geq 0\}\cap \{E <0\}$, then the Liouville vector $Y(z)$ is positively transverse to its energy level $K^{-1}(K(z))$.
\end{itemize}
\end{prop}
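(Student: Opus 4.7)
The plan is to assemble Proposition \ref{propYtransversal} from the decomposition \eqref{dKY} together with the three estimates already proved in Lemmas \ref{termo1}, \ref{termo2}, and \ref{termo3}. The structural observation is that the error terms $(1-K_f)\,dK\cdot R_{z_0}$ and $G_{z_0}\,dK\cdot Y_f$ are both controlled by $\delta_0^2(q_1+p_1)$, while the principal term $dK\cdot Y_0$ is bounded below by $(q_1+p_1)\frac{\delta_0\alpha}{8\sqrt{2}}$. For $\delta_0$ small, the principal term dominates.

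Concretely, for part i), fix $z\in B_{\delta_0}(0)\cap\{q_1+p_1\geq 0\}\cap\{E\geq 0\}$ with $z\neq 0$. From \eqref{dKY} together with Lemmas \ref{termo1}, \ref{termo2}(i), and \ref{termo3},
\begin{equation*}
dK\cdot Y(z) \;>\; (q_1+p_1)\,\delta_0\left[\frac{\alpha}{8\sqrt{2}}-(A_1+A_3)\,\delta_0\right].
\end{equation*}
For $\delta_0$ small enough the bracket is strictly positive, which settles the case $q_1+p_1>0$. The delicate case, and what I expect to be the main subtlety, is the boundary $q_1+p_1=0$: here the bracketed bound degenerates to zero, but so do the error terms in Lemmas \ref{termo1} and \ref{termo3} (they carry a factor of $q_1+p_1$), so one has the equality $dK\cdot Y(z)=dK\cdot Y_0(z)=I_1\partial_{I_1}K+I_2\partial_{I_2}K$. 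Since $q_1+p_1=0$ together with $K(z)\geq 0$ and $z\neq 0$ excludes $E=0$ (from the normal form \eqref{Kloc} one sees that $K(z)=\alpha q_1^2+\omega I_2+$ h.o.t., forcing $z=0$ when $E=0$), we must have $E>0$, and then \eqref{prod}, \eqref{prod2} give $I_1\partial_{I_1}K+I_2\partial_{I_2}K>0$ directly. This yields transversality at the boundary without invoking the quantitative bound of Lemma \ref{termo2}.

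For part ii), the argument is identical with Lemma \ref{termo2}(i) replaced by Lemma \ref{termo2}(ii). The price is that the estimate on $dK\cdot Y_0$ only holds on the smaller ball $B_{\bar\delta_0}(0)$ for $\bar\delta_0\ll\delta_0$; however $q_1+p_1>0$ strictly on $B_{\bar\delta_0}(0)\cap\{q_1+p_1\geq 0\}\cap\{E<0\}$ for $z\neq 0$, since $E<0$ together with $q_1+p_1=0$ would force $q_1=p_1=0$ and then $K(z)=\omega I_2+O(I_2^2)\geq 0$, contradicting $E<0$. Hence the estimate
\begin{equation*}
dK\cdot Y(z) \;>\; (q_1+p_1)\,\delta_0\left[\frac{\alpha}{8\sqrt{2}}-(A_1+A_3)\,\delta_0\right]
\end{equation*}
is again conclusive for $\delta_0$ small and $\bar\delta_0$ chosen accordingly.

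Finally, the \emph{moreover} claim in part i) is immediate from the construction: if $|z|<\delta_2=\delta_1/6$ then $|q_1|,|p_1|<\delta_1/6$, so $q_1+p_1<\delta_1/3$ and the cutoff function satisfies $f(q_1+p_1)=1$ identically on a neighborhood of $z$. Consequently $K_{fG_{z_0}}=G_{z_0}$ there, its Hamiltonian vector field coincides with $R_{z_0}$, and
\begin{equation*}
Y(z) \;=\; \widetilde Y_0(z) - R_{z_0}(z) \;=\; Y_0(z) \;=\; \tfrac{1}{2}(z-z_0).
\end{equation*}
The main obstacle in the whole argument is really the handling of the boundary stratum $q_1+p_1=0$ described above: the quantitative estimate in Lemma \ref{termo2} is not by itself enough there, and one has to revisit the normal form and the sign relations \eqref{prod}, \eqref{prod2}, \eqref{prod3} to exclude degenerate configurations.
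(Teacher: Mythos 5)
Your proof is correct and takes essentially the same route as the paper: combine the decomposition \eqref{dKY} with Lemmas \ref{termo1}, \ref{termo2}, \ref{termo3} to get $dK\cdot Y(z) > (q_1+p_1)\bigl[\tfrac{\delta_0\alpha}{8\sqrt2}-(A_1+A_3)\delta_0^2\bigr]\geq 0$, with the strict inequality coming from Lemma \ref{termo2}, and read off the ``moreover'' from the cutoff construction. One small slip worth noting in part ii): ``$E<0$ together with $q_1+p_1=0$ would force $q_1=p_1=0$'' is not quite the right deduction --- what is true is that $q_1+p_1=0$ gives $I_1=-q_1^2\leq 0$ and hence $K=\alpha q_1^2+\omega I_2+O(\cdot)\geq 0$ on the small ball, which is incompatible with $E<0$; but this whole observation is anyway unnecessary, since the strict inequality in Lemma \ref{termo2} already yields $dK\cdot Y(z)>0$ even when $q_1+p_1=0$, exactly as in the paper's one-line chain.
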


\begin{proof}We can choose $\delta_0>0$ sufficiently small satisfying Lemmas \ref{termo1}, \ref{termo2}-(i) and \ref{termo3}  so that in addition it satisfies $\frac{\delta_0 \alpha }{8\sqrt{2}}-
(A_1+A_3)\delta_0^2>0$. Using equation \eqref{dKY} and these lemmas, we see that for all  $0 \neq z\in B_{\delta_0}(0)\cap \{q_1 +p_1\geq 0\}\cap \{E \geq 0\}$ and $\delta_0>0$ sufficiently small, we have
$$dK \cdot Y(z) >  \left(\frac{\delta_0 \alpha }{8\sqrt{2}}-
(A_1+A_3)\delta_0^2\right) (q_1+p_1) \geq  0.$$ This proves i).

Now for any $\delta_0>0$ small as above, pick $\bar \delta_0$ as in Lemma \ref{termo2}-(ii). We also get $$dK \cdot Y(z) >  \left(\frac{\delta_0 \alpha }{8\sqrt{2}}-
(A_1+A_3)\delta_0^2\right) (q_1+p_1) \geq 0,$$ for all $z\in B_{\bar \delta_0}(0)\cap \{q_1 +p_1\geq 0\}\cap \{E <0 \}$. This proves ii). \end{proof}

For a fixed $\delta_0>0$ sufficiently small, we have constructed a local Liouville vector field $Y$ which interpolates $\widetilde Y_0$ and $Y_0$,  coincides with $\widetilde Y_0$ in $\left(B_{\delta_0}(0)\setminus B_{\frac{\delta_0}{2}}(0)\right)\cap \{q_1+p_1 \geq 0\}$ and with $Y_0(z)=\frac{z-z_0}{2}$ in $B_{\delta_2}(0) \cap \{q_1+p_1\geq 0\}$, for all $E>0$ sufficiently small. Moreover, $Y$ is transverse to $B_{\delta_0}(0) \cap \{q_1+p_1\geq 0\} \cap K^{-1}(E)$. Here $\delta_2 = \frac{\delta_0}{6c_2}
$, where $c_2$ is given by \eqref{c2}.

From Proposition \ref{propYtransversal}-i), we also see that $Y$ is transverse to $B_{\delta_0}(0) \cap \{q_1+p_1 > 0\} \cap K^{-1}(0)$. Patching $\varphi_*Y$ together with $X_0(w)=\frac{w-w_0}{2}$ outside $\varphi(B_{\delta_0}(0))$, with $w_0 = \varphi(z_0)$, we obtain a Liouville vector field $\bar X_0$ in a small neighborhood of $\dot S_0 = S_0 \setminus \{p_c\}$ which is transverse to $\dot S_0$, see Remark \ref{remconvexity}.

Now we fix $0<\delta_2=\frac{\delta_0}{6c_2}<\delta_0$  small enough as in Proposition \ref{propYtransversal}. We also fix $\bar E>0$ sufficiently small as above so that $\partial S_{\bar E} = \{q_1+p_1=0\} \cap K^{-1}(\bar E)\subset B_{\delta_2}(0)$. Our next step is to perform an interpolation between $Y_0$ and
\begin{equation}\label{y0barra}
\overline Y_0(z) = \frac{z}{2}
\end{equation}
in $B_{\delta_2}(0)\cap\{q_1+p_1\geq 0\}\cap K^{-1}(E),$ for all $E>0$ close to $\bar E$, so that we find a Liouville vector field $Y_{\bar E}$ defined on a neighborhood $\W_{\bar E}\subset B_{\delta_2}(0)\cap \{q_1+p_1 \geq0\}$ of $\partial S_{\bar E}$, which coincides with $\overline Y_0$ near $\partial S_{\bar E}$ and with $Y_0$ outside a small neighborhood of $\partial S_{\bar E}$. Moreover, $Y_{\bar E}$ is transverse to $S_{E}$ at $\W_{\bar E}$ for all $E>0$ close to $\bar E$.

Recall that we can write $I_2 =I_2(I_1,E)$ as in \eqref{I2}.
From Lemma \ref{lem_prod}-(ii), we can assume that  \begin{equation}\label{prod6} I_1 \partial_{I_1}\bar K +I_2 \partial_{I_2}\bar K=(-\alpha + \partial_{I_1}R)I_1 + (\omega + \partial_{I_2}R)I_2=-\bar \alpha I_1 +\bar \omega I_2>0\end{equation} if $I_1\leq I_{1,\bar E}^*$, $z \in B_{\delta_2}(0)\cap \{q_1+p_1 \geq 0\} \cap K^{-1}(E)$ and $E>0$ is sufficiently close to $\bar E$, where $I_{1,\bar E}^*>0$ is fixed sufficiently small. This set contains an open neighborhood $\W_{\bar E} \subset B_{\delta_2}(0)\cap \{q_1+p_1 \geq 0\}$ of $\partial S_{\bar E}$. So we can find $\epsilon_{\bar E}>0$ so that $\{0\leq q_1 + p_1 \leq \epsilon_{\bar E}\} \cap K^{-1}(E) \subset \W_{\bar E}$, for all $E>0$ sufficiently close to $\bar E$. We choose $\epsilon_{\bar E}>0$ so that it satisfies \begin{equation}\label{epsbarra}\epsilon_{\bar E} \to 0 \mbox{ as } \bar E \to 0^+.\end{equation}

Consider a smooth cut-off function $f_{\bar E}:[0,\infty) \to [0,1]$ so that $f_{\bar E}(t) =1$ if $t\in \left[0,\frac{\epsilon_{\bar E}}{4}\right]$, $f_{\bar E}(t) =0$ if $t\geq\frac{\epsilon_{\bar E}}{2}$ and $f_{\bar E}'(t) \leq 0$ for all $t\geq 0$, and define $$Y_{\bar E}:=Y_0-Y_{f_{\bar E} G_{z_0}},$$ where $Y_{f_{\bar E} G_{z_0}}$ is the Hamiltonian vector field associated to the function $K_{f_{\bar E} G_{z_0}}(z):=f_{\bar E}(q_1+p_1)G_{z_0}(z)$, $z=(q_1,q_2,p_1,p_2)$, and $G_{z_0}$ is now defined by $i_{-\frac{z_0}{2}}\omega_0 = -dG_{z_0}$ and $G_{z_0}(0)=0$, i.e.,
$$G_{z_0}=(q_1-p_1)\frac{\delta_0}{2\sqrt{2}}.$$ Observe that $Y_{\bar E}$ is Liouville with respect to $\omega_0$ since $\LL_{Y_{\bar E}} \omega_0 = \omega_0$.

Denoting by $Y_{f_{\bar E}}$ the Hamiltonian vector field associated to $K_{f_{\bar E}}(z):=f_{\bar E}(q_1+p_1)$, we have $Y_{f_{\bar E}}=f'_{\bar E}(q_1+p_1) (\partial_{q_1}-\partial_{p_1})$, which implies that $dK \cdot Y_{f_{\bar E}} (z) = \bar \alpha
(q_1-p_1)f'_{\bar E}(q_1+p_1).$ Moreover, notice that $Y_{f_{\bar E}G_{z_0}} = G_{z_0}Y_{f_{\bar E}} - K_{f_{\bar E}}\frac{z_0}{2}$. Therefore
\begin{equation}\label{fim}\begin{aligned}  dK \cdot Y_{\bar E}  =   &(1-K_{f_{\bar E}})dK\cdot
\left( \frac{-z_0}{2} \right) + dK \cdot \frac{z}{2} - G_{z_0}dK
\cdot Y_{f_{\bar E}}\\ = & (1-K_{f_{\bar E}})\frac{\delta_0}{2\sqrt{2}} \bar \alpha
(q_1+p_1) + I_1 \partial_{I_1}\bar K + I_2 \partial_{I_2}\bar K
\\ & -(q_1-p_1)^2\frac{\delta_0}{2\sqrt{2}} \bar \alpha f_{\bar E}'(q_1+p_1)  >
0, \end{aligned} \end{equation} if $z\in \W_{\bar E}$. The last inequality follows from
\eqref{prod6} and from the fact that the two other terms in \eqref{fim} are
non-negative due to the properties of $f_{\bar E}$.

At this point we have found a Liouville vector field, still denoted by $Y_{\bar E}$ and defined
near $S_{\bar E}\cap B_{\delta_0}(0)$, which is transverse to $S_E$ for all $E>0$ sufficiently close to $\bar E$,  coincides with
$\overline Y_0$ near $\partial S_E$ and with
$\widetilde Y_0=\varphi_*^{-1}X_0$ on $B_{\delta_0}(0) \setminus
B_{\frac{\delta_0}{2}}(0)$, where $\overline Y_0$ and $X_0$ are defined in \eqref{y0barra} and \eqref{x0} respectively.

Patching $\varphi_* Y_{\bar E}$  together with the Liouville vector field $X_0$ outside $\varphi(B_{\delta_0}(0))$, we finally obtain a Liouville vector field defined near $S_{\bar E}\subset U_{\bar E}$, still denoted by $Y_{\bar E}$, which is transverse to $S_{\bar E}$.

Now we can extend $Y_{\bar E}$ to $\{q_1+p_1 \leq 0\}$ by defining
$$Y_{\bar E}(z) = DT^{-1}(T(z))\cdot (Y_{\bar E}\circ T(z)),$$ for all $z\in
 \{q_1+p_1 \leq 0\}\cap B_{\delta_0}(0)\cap K^{-1}(E)$, for all $E>0$ sufficiently close to $\bar E$,
where $T$ is the involution map $(q_1,q_2,p_1,p_2) \mapsto
(-q_1,q_2,-p_1,p_2)$. Since $T$ is symplectic, this extension is also Liouville. As above, this naturally induces an extension of $Y_{\bar E}$ to a neighborhood of $W_{\bar E}\subset \U_{\bar E}$, again denoted by $Y_{\bar E}$, which is transverse to $W_{\bar E}$. Thus $\lambda_{\bar E}:= i_{Y_{\bar E}} \omega_0|_{W_{\bar E}}$ is a contact form on $W_{\bar E}$ and since $W_{\bar E}=\partial \U_{\bar E}$, $W_{\bar E}$ admits a symplectic filling and its contact structure $\xi:=\ker \lambda_{\bar E}$ is diffeomorphic to the standard tight contact structure on the $3$-sphere, see \cite{eli}.

Notice that $i_{\bar Y_0} \omega_0=\frac{1}{2}\sum_{i=1,2} p_i dq_i - q_idp_i$ is the Liouville form $\lambda_0$ and, therefore, $\lambda_{\bar E}$ coincides with $\lambda_0|_{K^{-1}(\bar E)}$ near $\partial S_{\bar E}$. The Reeb vector field of $\lambda_{\bar E}$ is parallel to the Hamiltonian vector field on $S_{\bar E}$ since both lie in the line bundle $\ker d\lambda_{\bar E} = \ker \omega_0|_{S_{\bar E}}$.

Thus we have found $E^*>0$ so that for all $0<E<E^*$, there exists a Liouville vector field $Y_E$ defined near $W_E$, which is transverse to $W_E$ and has the desired properties near $\partial S_E$.

Finally observe that the Liouville vector field $\bar X_0$, constructed above and defined near $\dot S_0$, can be reflected via the involution map $T$ to a Liouville vector field, also denoted by $\bar X_0$, now defined near $W_0 \setminus \{p_c\}$ and transverse to it. This induces a contact form $\bar \lambda_0$ on $W_0 \setminus\{p_c\}.$  It follows from the construction of $Y_E, E>0,$ and $\bar X_0$, see \eqref{epsbarra} that, given any small neighborhood $U_0\subset \U_{E^*}$ of $p_c$, we have $\bar X_0=Y_E$ outside $U_0$ for all $E>0$ sufficiently small. This concludes the proof of Proposition \ref{prop_step1b}.

\begin{remark}
In \cite{AFKP}, Albers, Frauenfelder, van Koert and Paternain construct a Liouville vector field for the circular planar restricted three-body problem, which is transverse to energy levels for energies slightly above the first Lagrangian critical value. For that, they consider interpolations of Liouville vector fields similar to the ones explained above.
\end{remark}

\section{Proof of Proposition \ref{prop_step2}}\label{sec_step2}

Let $\lambda_E$ be the contact form on $W_E$ obtained in Proposition \ref{prop_step1} for $E>0$ sufficiently small. In this section we prove the following proposition which implies Proposition \ref{prop_step2}.
\begin{prop}\label{prop_EM} The following assertions hold:
\begin{itemize}
\item[i)] There exists $E^*>0$ sufficiently small such that if $0<E<E^*$ then  $P_{2,E}\subset W_E$ is the unique periodic orbit of $\lambda_E$ with Conley-Zehnder index $2$. All other periodic orbits have Conley-Zehnder index $\geq 3$.

\item[ii)] Given an integer $M\geq 3$, there exists $E_M>0$, such that if $0<E<E_M$ and $P \subset W_E\setminus P_{2,E}$ is a periodic orbit linked to $P_{2,E}$, then $CZ(P)> M$. In particular, if $0<E<E_3$ and $P\subset W_E \setminus P_{2,E}$ is a periodic orbit with $CZ(P)=3$, then $P$ is not linked to $P_{2,E}$.
\item[iii)] There exists a small neighborhood $U_{4 \pi}\subset \U_{E^*}$ of $p_c$ such that for all $E>0$ sufficiently small the following holds: let $\lambda_n$ be a sequence of contact forms on $W_E$ so that $\lambda_n \to \lambda_E$ in $C^\infty$ as $n \to \infty$. If  $n$ is sufficiently large then $\lambda_n$ is weakly convex and the Reeb flow of $\lambda_n$ admits only one periodic orbit $P_{2,n}$ with Conley-Zehnder index $2$. Moreover, $P_{2,n}$ is unknotted, hyperbolic, converges to $P_{2,E}$ as $n \to \infty$ and all periodic orbits of $\lambda_n$ with Conley-Zehnder index equal to $3$ do not intersect $U_{4 \pi}$ and are not linked to $P_{2,n}$.
\end{itemize}
\end{prop}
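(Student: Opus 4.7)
\emph{Plan.} The strategy is to reduce the Conley-Zehnder bounds on $\lambda_E$ to a local analysis near $p_c$, where the integrable normal form of Hypothesis~1 gives an explicit description of the transverse linearized flow, combined with the global input on $\dot S_0$ provided by Hypothesis~2. Concretely, I fix a small neighborhood $U_{2\pi}\subset \U_{E^*}$ of $p_c$ whose image in local coordinates has diameter independent of $E$, and trivialize the contact structure $\xi=\ker\lambda_E$ over $U_{2\pi}\cap W_E$ using the canonical splitting into the $(q_1,p_1)$ hyperbolic block and the $(q_2,p_2)$ rotational block. In these coordinates the linearized transverse flow of an orbit arc of duration $\Delta t$ contained in $U_{2\pi}$ is explicit: the hyperbolic block contributes no rotation, while the rotational block contributes angle $\bar\omega\,\Delta t$. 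Outside $U_{2\pi}$, strict convexity of $\dot S_0$ passes to $S_E$ for small $E$ and, by the calculation of \cite{convex} on strictly convex energy levels, ensures that any transverse rotation accumulated along an arc outside $U_{2\pi}$ is bounded below by a positive definite contribution.

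For (i), if a periodic orbit $P$ of $\lambda_E$ is entirely contained in $U_{2\pi}$, integrability and the behavior of the saddle block force $P\subset \{q_1=p_1=0\}$, so $P$ is an iterate of $P_{2,E}$; the rotational block then yields $CZ(P_{2,E})=2$ and $CZ(P_{2,E}^k)\geq 3$ for $k\geq 2$. If $P$ exits $U_{2\pi}$, its CZ index is at least the index computed along its global arcs in the strictly convex region, which is $\geq 3$ by the convex case, plus the non-negative contributions from the local arcs in $U_{2\pi}$. This yields (i).

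For (ii), a periodic orbit $P$ linked with $P_{2,E}$ must cross at least one of the hemispheres $U_{1,E},U_{2,E}$ and hence enter $U_{2\pi}$. Each entry $j$ has a duration $\Delta t_j$, and the rotational block contributes approximately $\lfloor \bar\omega\,\Delta t_j/(2\pi)\rfloor$ to $CZ(P)$, up to a uniformly bounded defect coming from the change of trivialization between $U_{2\pi}$ and the global one. A ceiling $CZ(P)\leq M$ thus bounds each $\Delta t_j$ uniformly from above, which, by the explicit solutions of the saddle block, forces $P$ to remain at a definite distance from the local stable and unstable manifolds of $P_{2,E}$. For $E_M>0$ small enough, the rotation accumulated along an arc avoiding a fixed neighborhood of $W^{s,u}_{\mathrm{loc}}(P_{2,E})$ but still linking $P_{2,E}$ is bounded below by $M+1$, via a compactness argument letting $E\to 0^+$ and comparing with the flow on $\dot S_0$; this contradicts the assumption, proving (ii).

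For (iii), hyperbolicity of $P_{2,E}$ guarantees via the implicit function theorem the persistence of a unique nearby hyperbolic periodic orbit $P_{2,n}$ of $\lambda_n$ with $CZ(P_{2,n})=2$ and $P_{2,n}\to P_{2,E}$. If the Reeb flow of $\lambda_n$ admitted another periodic orbit $Q_n$ with $CZ(Q_n)\leq 2$, a uniform action bound obtained by combining (i) and (ii) with the $C^\infty$-closeness of $\lambda_n$ to $\lambda_E$ would produce a $C^\infty$-convergent subsequence $Q_n\to Q$, necessarily a periodic orbit of $\lambda_E$ with $CZ(Q)\leq 2$; by (i), $Q=P_{2,E}$ and thus $Q_n=P_{2,n}$ for large $n$, a contradiction. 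The non-linking statement for index~$3$ orbits follows analogously from (ii), combining the stability of the linking number under $C^\infty$ convergence with the semicontinuity of the CZ index on a priori bounded families of periodic orbits. The main obstacle is the quantitative interplay between the local CZ contributions, which can be made arbitrarily large by long passages near $P_{2,E}$, and the global winding estimates on the convex part: making the additivity of the CZ index precise across the local/global boundary, and arranging the neighborhood $U_{2\pi}$ and the thresholds $E^*,E_M$ uniformly in the parameters, is the delicate technical core of the argument.
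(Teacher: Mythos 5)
Your plan identifies the right ingredients (explicit local winding via the normal form, positive transverse rotation from convexity outside a neighborhood of $p_c$, a bounded change-of-trivialization defect, and limiting arguments), but the assembly differs from the paper's in ways that leave genuine gaps.

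For part (i), you claim that, for a periodic orbit leaving $U_{2\pi}$, ``its CZ index is at least the index computed along its global arcs in the strictly convex region, which is $\geq 3$ by the convex case.'' This is not how the convex-case bound can be used: the Hofer--Wysocki--Zehnder result gives $CZ\geq 3$ for \emph{closed} orbits on a strictly convex hypersurface, not a lower bound on the rotation accumulated along an open \emph{arc}; rotation is additive over arcs, but the CZ index is not a sum of ``arc indices.'' The paper instead proves $CZ\geq 3$ for all $P\neq P_{2,E}$ (Proposition~\ref{prop_fracconvexo}) by a compactness argument: if $CZ(P_n)<3$ for a sequence $E_n\to 0^+$, Proposition~\ref{prop_mu}-i forces $P_n$ to avoid a fixed neighborhood of $p_c$, the rotation lower bound bounds $T_n$, and an Arzel\`a--Ascoli limit gives a periodic orbit $P\subset W_0\setminus\{p_c\}$ with $CZ(P)\leq 2$, contradicting Proposition~\ref{prop_ghomi}. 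That last proposition is essential and you omit it: $S_0$ is \emph{singular} at $p_c$, so to invoke the convex-case index bound one must first cut out a neighborhood of $p_c$ and extend by Ghomi's strictly-convex-extension theorem. Without that step, ``the calculation of \cite{convex}'' does not apply to $\dot S_0$.

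For part (ii), your detour --- bound each passage time $\Delta t_j$ from the assumed $CZ$ bound, deduce that $P$ avoids $W^{s,u}_{\mathrm{loc}}(P_{2,E})$, then a compactness argument as $E\to 0^+$ --- is both more complicated than necessary and not clearly closable. In particular it is unclear what the limit object is (a sequence of linked orbits with bounded period and bounded distance to $p_c$ may still degenerate toward a homoclinic, a possibility the paper treats separately in Theorem~\ref{teolink}). The paper's proof of Proposition~\ref{prop_mu}-ii is much more direct and uses the one fact your sketch never exploits: $\partial S_E=\{q_1+p_1=0\}\cap K^{-1}(E)$ shrinks into any prescribed neighborhood $\widetilde U_M$ of $p_c$ as $E\to 0^+$. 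Any orbit linked to $P_{2,E}$ must cross $\partial S_E\setminus P_{2,E}$, hence enter $\widetilde U_M$; Lemma~\ref{lem2} (the analogue of your local winding estimate, but with a shrinking target ball) then gives rotation $>N$ for any prescribed $N$, so $CZ(P)>M$. Your $U_{2\pi}$ is fixed independent of $M$, so a single passage through it cannot yield the required divergence of the rotation.

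Finally, the assertion that in the local block decomposition ``the hyperbolic block contributes no rotation, while the rotational block contributes angle $\bar\omega\,\Delta t$'' is accurate only for $P_{2,E}$ itself, where $\xi$ essentially is the $(q_1,p_1)$ plane. For arcs with $q_1p_1\neq 0$ the contact plane mixes both blocks, and Lemma~\ref{lem1}'s bound $\Delta\theta>\frac{\omega}{2}(t^+-t^-)-\pi$ is obtained by a nontrivial direct computation with the frame $\{Y_1,Y_2\}$ built from $\nabla K$, not by block-diagonality.
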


We denote by $j_0,j_1,j_2,j_3:\R^4 \to \R^4$ the following linear
maps: \begin{equation}\label{eqframe} \begin{aligned} j_0(a_1 \partial_{q_1}+a_2
\partial_{q_2}+b_1 \partial_{p_1} + b_2\partial_{p_2}) &=  a_1 \partial_{q_1}+a_2
\partial_{q_2}+b_1
\partial_{p_1} + b_2\partial_{p_2}, \\
j_1(a_1 \partial_{q_1}+a_2 \partial_{q_2}+b_1 \partial_{p_1} +
b_2\partial_{p_2}) &=  b_2 \partial_{q_1} -b_1
\partial_{q_2} + a_2 \partial_{p_1} - a_1 \partial_{p_2},
\\
j_2(a_1 \partial_{q_1}+a_2 \partial_{q_2}+b_1 \partial_{p_1} +
b_2\partial_{p_2}) &=  a_2 \partial_{q_1} -a_1
\partial_{q_2} - b_2 \partial_{p_1} + b_1 \partial_{p_2},
\\
j_3(a_1 \partial_{q_1}+a_2 \partial_{q_2}+b_1 \partial_{p_1} +
b_2\partial_{p_2}) &=  b_1 \partial_{q_1} + b_2
\partial_{q_2} - a_1 \partial_{p_1} -a_2 \partial_{p_2}.
\end{aligned} \end{equation} One can check that $\{j_0=\text{Id},j_1,j_2,j_3,-j_0=-\text{Id},-j_1,-j_2,-j_3\}$ is isomorphic
to the quaternion group, i.e., $j_i^2 = -\text{Id},i=1,2,3$,
$j_1j_2= j_3$ etc.

For each regular point $w\in \R^4$ of $H$, there exists an
orthonormal frame $${\rm span}\{X_0,X_1,X_2,X_3\}_w=T_w\R^4\simeq\R^4$$ determined by
\begin{equation}\label{mfX} X_i(w) = j_i \left( \frac{\nabla H(w)}{|\nabla
H(w)|}\right), \, i=0,1,2,3.\end{equation} One easily checks that
$T_w(H^{-1}(H(w)))= \text{span}\{X_1(w),X_2(w),X_3(w)\}$. Let us
denote by $S_w = H^{-1}(H(w))$ the energy level containing $w$.

We denote by $u(t)=\displaystyle \sum_{i=0}^{3}
\alpha_i(t)X_i(w(t))\in T_{w(t)}\R^4 \simeq \R^4$ a linearized solution of
 \begin{equation}\label{eqlin}
\dot u(t) = j_3 H_{ww}(w(t)) u(t)
\end{equation}
along a non-constant trajectory $w(t)$ of $X_H$, i.e., $$u(t) = D\psi_t(w(0))u(0),u(0)\in
T_{w(0)}\R^4,$$ where $\psi_t$ is the flow of $H$. Since $H$ is preserved by the flow we can assume
that $\alpha_0(t)=0,\forall t,$ which holds if $\alpha_0(0)=0$.
Moreover, $D\psi_t$ preserves the Hamiltonian vector direction $\R
X_3$. We are interested in understanding the linearized flow
projected into $\text{span}\{X_1,X_2\}$, i.e., we consider the
projection
$$\pi_{12}(u(t)) := \alpha_1(t)X_1(w(t))+\alpha_2(t)X_2(w(t)).$$
Using the equation \eqref{eqlin} one can check that $(\alpha_1(t),\alpha_2(t))\in \R^2$ satisfies
\begin{equation}\label{linflow} \left(\begin{array}{c}\dot \alpha_1(t) \\ \dot
\alpha_2(t)
\end{array} \right) = -J M(w(t)) \left(\begin{array}{c}\alpha_1(t) \\ \alpha_2(t) \end{array}
\right),
\end{equation}
where $$J=\left(\begin{array}{cc} 0 & 1 \\ -1 & 0 \end{array}
\right)$$ and $M$ is the symmetric matrix given by
\begin{equation}\label{defiM} M(w) =
\left(\begin{array}{cc}\kappa_{11}(w) & \kappa_{12}(w) \\
\kappa_{21}(w) & \kappa_{22}(w) \end{array}
 \right) + \kappa_{33}(w)I,\end{equation} with \begin{equation}\label{defikappa}\kappa_{ij}(w) =
 \left<H_{ww}(w)X_i(w),X_j(w)\right>,i,j\in\{1,2,3\}.\end{equation} Notice that if $H_{ww}(w(t))|_{T_{w(t)}S_{w(t)}}$
 is positive definite then $M(w(t))$ is positive definite as well. In fact, in this case we have
 $$\begin{aligned} \det M & =  \kappa_{11}\kappa_{22}-\kappa_{12}^2 + \kappa_{33}(\kappa_{11}+\kappa_{22}+
 \kappa_{33})>0,\\
 \tr M & =  \kappa_{11} + \kappa_{22} +2\kappa_{33}>0.\end{aligned}$$ Thus, from
 \eqref{linflow},  $$\alpha_1(t)\dot \alpha_2(t) - \alpha_2(t)\dot \alpha_1(t) =
 (\alpha_1(t),\alpha_2(t))^t M(w(t)) (\alpha_1(t),\alpha_2(t))>0,$$ i.e.,
 $0\neq (\alpha_1(t),\alpha_2(t))\in \R^2$ rotates
 counter-clockwise. Let $\alpha_1(t) + i \alpha_2(t) \in \R^+ e^{i
 \eta(t)}$ for a continuous argument $\eta(t)$. Then $$\dot \eta(t) = \frac{\alpha_1(t)\dot \alpha_2(t) - \alpha_2(t)\dot \alpha_1(t)}
 {|(\alpha_1(t),\alpha_2(t))|^2}.$$ We immediately obtain the following proposition which will be
useful
 later on.

\begin{prop}\label{etabarra} Let $W\subset \R^4$ be a compact set so that $H$ is regular at all $w\in W$.  Assume that $H_{ww}(w)$ is positive definite when restricted to
$T_{w}(H^{-1}(H(w))),\forall w\in W.$
Then there exists $\bar \eta >0$  such that if $w(t)\in W$ is a
Hamiltonian trajectory then
$$ \dot \eta(t)>\bar \eta, $$ for any non-vanishing transverse linearized solution
$\alpha_1(t)+i\alpha_2(t)\in \R^+ e^{\eta(t) i}$ along $w(t)$.
\end{prop}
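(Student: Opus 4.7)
The plan is to read off $\dot\eta(t)$ as a Rayleigh quotient for the symmetric matrix $M(w(t))$ and then use compactness to extract a uniform positive lower bound. From the derivation preceding the statement, whenever $(\alpha_1(t),\alpha_2(t))\neq 0$ we have
\[
\dot\eta(t)=\frac{(\alpha_1(t),\alpha_2(t))^{t}\,M(w(t))\,(\alpha_1(t),\alpha_2(t))}{|(\alpha_1(t),\alpha_2(t))|^{2}},
\]
so $\dot\eta(t)$ is bounded below by the smallest eigenvalue $\mu(M(w(t)))$ of $M(w(t))$.

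The first step is to verify that $M$ is a continuous positive-definite symmetric matrix-valued function on $W$. Continuity follows from \eqref{defiM}--\eqref{defikappa} together with the smoothness of $H$ and the continuity of the frame $\{X_1,X_2,X_3\}$, which in turn is continuous because $H$ is regular on the compact set $W$ (so $\nabla H$ never vanishes there). Positive definiteness at each $w\in W$ was already established in the excerpt: the hypothesis that $H_{ww}(w)|_{T_w H^{-1}(H(w))}$ is positive definite implies $\kappa_{11}\kappa_{22}-\kappa_{12}^2>0$ and $\kappa_{33}>0$, hence $\det M(w)>0$ and $\tr M(w)>0$.

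The second step is the compactness argument. The smallest eigenvalue $\mu:W\to\R$, $w\mapsto \mu(M(w))$, is continuous on $W$ (as the minimum of the continuous Rayleigh quotient over the unit circle, or equivalently as the smallest root of a continuously varying characteristic polynomial) and strictly positive at every point of $W$. Since $W$ is compact, $\mu$ attains its minimum, and we may set
\[
\bar\eta:=\min_{w\in W}\mu(M(w))>0.
\]
Then for every Hamiltonian trajectory $w(t)\in W$ and every non-vanishing transverse linearized solution along $w(t)$,
\[
\dot\eta(t)\ge \mu(M(w(t)))\ge \bar\eta>0,
\]
which is the desired conclusion.

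I do not foresee a genuine obstacle here: the positive-definiteness of $M$ is already in the text, and the rest is a standard Rayleigh-quotient-plus-compactness estimate. The only point requiring a small bit of care is continuity of the eigenvalue as a function of $w$, which is automatic for a continuously varying symmetric $2\times 2$ matrix.
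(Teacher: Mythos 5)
Your proof is correct and follows exactly the route the paper intends: the paper derives the Rayleigh-quotient formula $\dot\eta = (\alpha^t M(w)\alpha)/|\alpha|^2$ and establishes $\det M > 0$, $\tr M > 0$ just before the statement, then says the proposition is "immediately obtained," leaving the compactness/minimum-eigenvalue step implicit. You have simply spelled out that step, including the correct observation that positive definiteness of the $3\times 3$ matrix $(\kappa_{ij})$ gives $\kappa_{11}\kappa_{22}-\kappa_{12}^2>0$ and $\kappa_{33}>0$, so there is nothing to add.
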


Now we consider the local coordinates $z=(q_1,q_2,p_1,p_2)$ as in Hypothesis 1 where the Hamiltonian function $H$ is locally given by the expression \eqref{Kloc}. For any regular point $z$ of
$K$, we have $T_z\R^4 = \text{span} \{Y_0,Y_1,Y_2,Y_3\}_z$, where
\begin{equation}\label{Yi}
Y_i(z) = j_i\left(\frac{\nabla K(z)}{|\nabla K(z)|}\right), i=0,1,2,3,
\end{equation}
and $j_i,i=0,1,2,3,$ are defined as in \eqref{eqframe}.

We denote by $\phi_t$ the Hamiltonian flow of $K$ and by $v(t)\in
T_{z(t)}\R^4$ a  solution of the linearized flow along a non-constant trajectory $z(t) =
\phi_t(z(0))$. Projecting $v(t)$ into
$\text{span}\{Y_1,Y_2\}_{z(t)}$ we have
$\pi_{12}(v(t)):=\beta_1(t)Y_1(z(t)) + \beta_2(t)Y_2(z(t))$ where
$(\beta_1(t),\beta_2(t))\in \R^2$ satisfies
\begin{equation}\label{linflow2} \left(\begin{array}{c}\dot
\beta_1(t) \\ \dot \beta_2(t)
\end{array} \right) = -J M(z(t)) \left(\begin{array}{c}\beta_1(t) \\ \beta_2(t) \end{array}
\right).
\end{equation} The matrix $M$ is  defined as in \eqref{defiM}
and \eqref{defikappa}, replacing $H_{ww}$ with $K_{zz}$ and $X_i$
with $Y_i,i=1,2,3$.

For a non-vanishing solution $(\beta_1(t),\beta_2(t))\in \R^2
\simeq \C$ of \eqref{linflow2}, we write $ \beta_1(t) + i
\beta_2(t) \in \R^+e^{\theta(t)i} $, where $\theta(t)$ is a
continuous argument.
Given $z_0 \in V \setminus \{0\}$ we define as before $S_{z_0} =
K^{-1}(K(z_0))$. For each $v_0 \in T_{z_0} S_{z_0}$ satisfying
$\pi_{12}(v_0) \neq 0$ and $a\leq b\in \R$, we denote by
\begin{equation}\label{deltatheta} \Delta \theta (z_0,v_0,[a,b]) =
\theta(b) - \theta(a)\end{equation} the variation of the argument
of $(\beta_1(t),\beta_2(t))$, which corresponds to the linearized
solution $v(t)=D\phi_t(z_0) v_0$ along $z(t)=\phi_t(z_0)$
projected into $\text{span}\{Y_1,Y_2\}_{z(t)}$, in the time
interval $[a,b]$.

%Given $\tau>0$ we define $$\begin{aligned} \Sigma^-_\tau & =
%\{z\in U: |q_1|= \tau\}\\ \Sigma^+_\tau & = \{z\in U: |p_1|=
%\tau\}
%\end{aligned}$$

We are ready to state our first estimate.

\begin{lem}\label{lem1} There exists $\delta>0$ sufficiently small such that the following holds: let $z_0=(q_{10},q_{20},p_{10},p_{20}) \in B_\delta(0), q_{10}p_{10} \neq 0,$ and $[t^-,t^+],t^-<0<t^+,$ be the maximal interval containing $t=0$ such that  $z(t) \in \overline{B_\delta(0)}, \forall t\in [t^-,t^+]$, where $z(t)=\phi_t(z_0)$. Let $v_0\in T_{z_0}S_{z_0}$ be such that $\pi_{12}(v_0)\neq 0$. Then $$\Delta \theta(z_0,v_0,[t^-,t^+])>\frac{\omega}{2}(t^+-t^-) - \pi.$$ Moreover, given $N\geq 0$, there exists $0<\delta_N< \delta$ such that if $z_0\in B_{\delta_N}(0)\subset B_\delta(0)$ and $v_0\in T_{z_0}S_{z_0}$ are as above, then $$\Delta \theta(z_0,v_0,[t^-,t^+])>N.$$
\end{lem}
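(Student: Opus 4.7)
\medskip

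The plan is to exploit the integrable structure of the local Hamiltonian $K=K(I_1,I_2)$ to write the flow and its linearization explicitly, decompose the argument $\theta(t)$ into a leading rotation term plus bounded correction terms, and then combine this with an estimate on the exit time $t^+-t^-$.

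First, since $I_1$ and $I_2$ are preserved by the flow, each trajectory is given by
\[
q_1(t)=q_{10}e^{-\bar\alpha t},\quad p_1(t)=p_{10}e^{\bar\alpha t},\quad q_2(t)+ip_2(t)=(q_{20}+ip_{20})e^{-i\bar\omega t},
\]
with $\bar\alpha=\bar\alpha(I_1,I_2)$ and $\bar\omega=\bar\omega(I_1,I_2)$ close to $\alpha$ and $\omega$ when $\delta$ is small. Since $q_{10}p_{10}\neq 0$, the coordinate $p_1(t)$ grows exponentially forward in time while $q_1(t)$ grows exponentially backward, so the trajectory exits $\overline{B_\delta(0)}$ at finite times $t^\pm$ satisfying
\[
t^+-t^- = \frac{1}{\bar\alpha}\log\frac{\delta^2}{|q_{10}p_{10}|}+O(1),
\]
which already diverges as $|q_{10}p_{10}|\to 0$. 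This divergence is the mechanism behind the ``moreover'' claim.

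Next, I would differentiate the explicit flow in $z_0$ to obtain $v(t)=D\phi_t(z_0)v_0$ in closed form, and project it onto the frame $\{Y_1(z(t)),Y_2(z(t))\}$ from \eqref{Yi}. A direct computation, carried out first in the quadratic case $R\equiv 0$, shows that
\[
\beta_1(t)+i\beta_2(t)=e^{i\bar\omega t}\cdot F(t),
\]
where $F(t)$ is a smooth $\C$-valued function whose argument $\phi(t)$ depends only on $(q_1(t),p_1(t))$ and on the components of $v_0$. Since $(q_1(t),p_1(t))$ stays on one branch of the hyperbola $q_1p_1=I_1$, $\phi$ is monotone in each of the time intervals where one saddle coordinate dominates and its total variation over $[t^-,t^+]$ is bounded by $\pi$. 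The full perturbation $R$ introduces additional shear from $\partial_{I_1}\bar\omega$ and $\partial_{I_2}\bar\omega$, but this contributes only an $O(\delta^2)$ pointwise correction to $\dot\theta$. Combining these ingredients gives
\[
\Delta\theta(z_0,v_0,[t^-,t^+]) = \bar\omega(t^+-t^-)+\Delta\phi+O(\delta^2)(t^+-t^-)\geq \frac{\omega}{2}(t^+-t^-)-\pi
\]
for $\delta$ sufficiently small, where the $-\pi$ defect absorbs the bounded variation $\Delta\phi$ and the factor $\omega/2$ (rather than the sharper $\omega$) absorbs both the difference $\bar\omega-\omega=O(\delta^2)$ and the $O(\delta^2)$ error accumulated over the interval of length $O(\log(1/\delta))$.

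For the moreover part, given $N\geq 0$ I would choose $\delta_N\in(0,\delta)$ so small that $z_0\in B_{\delta_N}(0)$ with $q_{10}p_{10}\neq 0$ forces $|q_{10}p_{10}|\leq \delta_N^2$ and
\[
\frac{\omega}{2\bar\alpha}\log\frac{\delta^2}{\delta_N^2}-\pi > N,
\]
which gives $\Delta\theta>N$ by the first inequality.

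The main obstacle is the explicit computation of $\beta_1(t)+i\beta_2(t)$ in the frame $\{Y_1,Y_2\}$: this frame mixes the saddle $(q_1,p_1)$ and center $(q_2,p_2)$ directions through $\nabla K(z(t))$, so the angle $\theta$ is not simply $\bar\omega t$ plus a constant. The saddle block contributes a bounded but a priori non-monotonic twist, which must be shown to fit inside the $-\pi$ defect using that $(q_1(t),p_1(t))$ traces a single hyperbolic branch. The perturbation $R$ is then handled by a Gronwall-type comparison, exploiting $|R(I_1,I_2)|=O((I_1^2+I_2^2)) = O(|z|^4)$ and the finite time spent in $B_\delta(0)$.
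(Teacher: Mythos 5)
Your structural plan — write the flow and linearization explicitly, decompose $\theta(t)$ into a leading $\bar\omega t$ rotation plus a correction, bound the correction by $\pi$, and use the divergence of $t^+-t^-$ as $q_{10}p_{10}\to 0$ for the ``moreover'' clause — is the right shape, and the time-interval estimate and the handling of the $O(\delta^2)$ errors are fine. The paper also reduces to an explicit trajectory and works directly with the differential equation for $\theta$.

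The genuine gap is precisely at the step you flag as the ``main obstacle'': the claim that the argument $\phi(t)$ of $F(t)$ has total variation at most $\pi$. You describe $\phi$ as ``depending only on $(q_1(t),p_1(t))$ and $v_0$'' and being ``monotone in each of the time intervals where one saddle coordinate dominates,'' but this is not correct as stated. When one computes the ODE for $\theta$, one finds (see the paper's expression for $\dot\theta=G(t,\theta)$) that $\dot\theta$ depends on $\theta$ itself — the saddle-center coupling in the frame $\{Y_1,Y_2\}$ introduces terms like $\sin(2\bar\omega t-2\theta)$ — so $\phi=\theta-\bar\omega t$ is not determined by $(q_1,p_1)$ alone, and there is no a priori reason why its variation over a long interval should be bounded by $\pi$ without a concrete mechanism. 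The paper closes this gap with a barrier/comparison argument rather than a decomposition: it observes that $G(t,\theta)$ is $\pi$-periodic in $\theta$ and that $G(t,\tilde\theta(t))>\bar\omega$ along the reference curve $\tilde\theta(t)=\bar\omega t+\pi/4$; then, writing $\Delta(t)=\theta(t)-\tilde\theta(t)-k_0\pi$ with $\Delta(0)\geq 0$, one checks that $\dot\Delta>0$ whenever $\Delta\in\pi\mathbb{Z}$, so $\Delta(t)>0$ for $t>0$ and hence $\theta(t)-\theta(0)>\bar\omega t-\pi$. This one-sided monotonicity against a $\pi$-translate of the reference line is exactly the quantitative replacement for your ``$\pi$-defect'' heuristic. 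Without this (or an equivalent) you have only asserted, not proved, the key inequality.

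A minor secondary point: you propose to treat $R\equiv 0$ first and then handle $R$ by a Gronwall comparison, with the $O(\delta^2)$ correction accumulating as $O(\delta^2)(t^+-t^-)$ and being absorbed into the $\omega/2$ coefficient. This works, but it is not how the paper proceeds — the paper keeps all the $R$-dependent terms in $G(t,\theta)$ from the outset and bounds them pointwise using $b^2\cosh 2\bar\alpha t\leq 2\delta^2$, $b^2\sinh 2\bar\alpha t\leq 2\delta^2$ along the trajectory in $\overline{B_\delta(0)}$, so the comparison argument is run once for the full system. Either bookkeeping is fine once the barrier argument is in place, but splitting into ``quadratic $+$ perturbation'' does not by itself repair the missing $\pi$-bound.
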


\begin{proof} By the maximality of $[t^-,t^+]$ and since $q_{10}p_{10} \neq 0$, there exists $t^*\in
(t^-,t^+)$ such that $$z(t^*)=(b,q_{20}(t^*),\pm b,p_{20}(t^*)),b \neq 0.$$
We may assume $t^*=0$ since $\Delta \theta$ is independent of
reparametrizations in time of the linearized solutions, i.e., $$\Delta
\theta(z_0,v_0,[t^-,t^+])= \Delta \theta(z_1,v_1,[t_1^-,t_1^+]),$$
if $z_1 = \phi_{t^*}(z_0)$, $v_1 = D\phi_{t^*}(z_0) \cdot v_0$,
$t_1^- = t^- - t^*$ and $t_1^+ = t^+ - t^*$. Notice that $t^*=0$ implies $t^-=-t^+$ by symmetry.

Let us assume that $z_0 = (b,q_{20},b,p_{20})\in B_\delta(0)$ with
$b>0$, the other cases are completely analogous. Then
\begin{equation}\label{z(t)}\begin{aligned} z(t) =& (q_1(t),q_2(t),p_1(t),p_2(t)) \\=& (be^{-\bar \alpha t }, r \sin (\bar \omega
(t + \zeta_0)), be^{\bar \alpha t }, r \cos (\bar \omega (t +
\zeta_0)))\end{aligned} \end{equation} where
\begin{equation}\label{alfaomega} \begin{aligned} \bar \alpha = &
-\partial_{I_1}\bar K\left(b^2, \frac{r^2}{2}\right )  =
 \alpha +O(b^2 + r^2),
\\\bar \omega =&
\hspace{0.5cm} \partial_{I_2}\bar K\left(b^2, \frac{r^2}{2}\right)  =
\omega + O(b^2+r^2), \end{aligned} \end{equation} are constants
along the solutions, $(q_{20},p_{20})= (r\sin (\bar \omega
\zeta_0),r \cos (\bar \omega  \zeta_0))$ and $r = \sqrt{q_{20}^2 +
p_{20}^2},$ see \eqref{SolHam}. For simplicity in the notation we may assume that
$\zeta_0=0$.

Using \eqref{Yi} we compute \begin{equation}\label{nablaK}
\begin{aligned} Y_0(z(t))  = & g(t) (-\bar \alpha be^{\bar \alpha
t}, \bar \omega r \sin \bar \omega t, -\bar \alpha
be^{-\bar \alpha t}, \bar \omega r \cos \bar \omega t), \\
Y_1(z(t)) = & g(t)(\bar \omega r \cos \bar \omega t,\bar \alpha
be^{-\bar \alpha t},\bar \omega r \sin \bar \omega t,\bar \alpha
be^{\bar \alpha t}),\\
Y_2(z(t))  = & g(t) (\bar \omega r \sin \bar \omega t,\bar \alpha
be^{\bar \alpha t},-\bar \omega r \cos \bar \omega t ,- \bar
\alpha be^{-\bar \alpha t}), \\
Y_3(z(t))  = & g(t) (-\bar \alpha be^{-\bar \alpha t}, \bar \omega
r \cos \bar \omega t,\bar \alpha be^{\bar \alpha t}, -\bar \omega
r \sin \bar \omega t),
\end{aligned}
\end{equation} where $$g(t) = \frac{1}{|\nabla K(z(t))|}=\frac{1}{\sqrt{2\bar \alpha^2 b^2\cosh (2\bar \alpha
t) + \bar \omega^2 r^2}}.$$ The Hessian matrix $K_{zz}(z(t))$ of $K$
along $z(t)$ is given by
$$
\left(\begin{array}{llll} r_{11}b^2e^{2 \bar \alpha t} & r_{12}
bre^{\bar \alpha t} \sin \bar \omega t & -\bar \alpha + r_{11} b^2
& r_{12} br e^{\bar \alpha t} \cos \bar \omega t \\
r_{12}bre^{\bar \alpha t} \sin \bar \omega t & \bar \omega +
r_{22}r^2\sin^2 \bar \omega t & r_{12}br e^{-\bar \alpha t} \sin
\bar \omega t & r_{22} r^2 \sin \bar \omega t \cos \bar \omega t
\\ -\bar \alpha + r_{11}b^2 & r_{12} br e^{-\bar \alpha t} \sin
\bar \omega t & r_{11} b^2 e^{-2 \bar \alpha t} & r_{12} br
e^{-\bar \alpha t} \cos \bar \omega t \\ r_{12} br e^{\bar \alpha
t} \cos \bar \omega t & r_{22} r^2 \sin \bar \omega t \cos \bar
\omega t & r_{12} br e^{-\bar \alpha t} \cos \bar \omega t & \bar
\omega + r_{22} r^2 \cos^2 \bar \omega t
\end{array} \right).
$$  The functions $r_{ij}=\partial_{I_iI_j}
R(b^2,\frac{r^2}{2}),i,j=1,2,$ are zero order terms, constant
along the solutions, and we may assume that $|r_{ij}|\leq c_0$ on
$V$ for a uniform constant $c_0>0$. We compute
%Thus \begin{equation}\label{HessK} K_{zz}(z(t)) =  \left(\begin{array}{cccc}0 & 0 & -\bar  \alpha & 0 \\
%0 & \bar \omega & 0 & 0 \\ -\bar \alpha & 0 & 0 & 0 \\0 & 0 & 0 &
%\bar \omega\end{array} \right)+O(b^2+r^2),\end{equation} for all
%$|t|\leq T_0$. Using \eqref{nablaK}, \eqref{HessK0} and
%\eqref{HessK} we find

$$\begin{aligned}\left<K_{zz}Y_1,Y_1\right>_{z(t)}= & g(t)^2( -\bar
\alpha \bar \omega^2 r^2 \sin 2\bar \omega t + 2\bar\omega \bar
\alpha^2
b^2\cosh 2\bar \alpha t+ \\
& b^2r^2\bar r(\cosh 2\bar \alpha t + \sinh 2\bar a t \cos
2\bar
\omega t + \sin 2\bar \omega t)),\\
\left<K_{zz}Y_1,Y_2\right>_{z(t)}= & g(t)^2 (\bar \alpha \bar
\omega^2  r^2 \cos 2\bar \omega t +
 b^2r^2\bar r(-\cos 2\bar \omega t + \sin 2\bar \omega t \sinh 2\bar \alpha t)),\\
\left<K_{zz}Y_2,Y_2\right>_{z(t)}= & g(t)^2( \bar \alpha \bar
\omega^2 r^2 \sin 2\bar \omega t + 2\bar\omega \bar \alpha^2 b^2\cosh
2\bar \alpha t+ \\
& b^2r^2\bar r(\cosh 2\bar \alpha t - \sinh 2\bar a t \cos
2\bar
\omega t - \sin 2\bar \omega t)),\\
\left<K_{zz}Y_3,Y_3\right>_{z(t)}= & g(t)^2( 2 \bar \alpha^3b^2 +
\bar \omega^3 r^2 ),
\end{aligned}$$ where $\bar r(b,r)=r_{11}\bar \omega ^2 +2r_{12}\bar
\alpha \bar \omega +r_{22}\bar \alpha^2$ is constant along the trajectories and satisfies
\begin{equation}\label{rb} |\bar r(b,r)|\leq c_1\end{equation} for some uniform constant $c_1>0$.

In this way, the linearized solution $v(t)$ along $z(t)$ projected
into $\text{span}\{Y_1,Y_2\}_{z(t)}$ is represented by
$(\beta_1(t),\beta_2(t))\neq 0$ satisfying
\begin{equation}\label{betaponto}\left(\begin{array}{c} \dot \beta_1(t) \\ \dot
\beta_2(t) \end{array} \right) = \left(\begin{array}{cc}-m(t) &
p(t)-n(t) \\ p(t)+ n(t) & m(t)
\end{array} \right)\left(\begin{array}{c} \beta_1(t) \\ \beta_2(t)  \end{array}
\right)\end{equation} where
\begin{equation}\label{pmn} \begin{aligned}
m(t)= & g(t)^2[\bar\alpha \bar \omega^2 r^2\cos 2 \bar \omega t+b^2r^2\bar r(\sin 2\bar \omega t\sinh 2\bar \alpha t-\cos 2\bar \omega t)], \\
p(t)= & g(t)^2[-\bar \alpha \bar \omega^2 r^2 \sin 2\bar \omega t+b^2r^2\bar r(\cos 2\bar \omega t \sinh 2\bar \alpha t +\sin 2\bar\omega t)],\\
n(t)= & \bar \omega+ g(t)^2 (2\bar\alpha^3b^2+ b^2r^2\bar r \cosh 2\bar \alpha t).
\end{aligned}\end{equation}
Denoting
$\beta_1(t)+i\beta_2(t)=\rho(t)e^{\theta(t)i}$ for continuous
functions $\rho(t)>0$ and $\theta(t)$, we find from
\eqref{betaponto} and \eqref{pmn} that $$\begin{aligned} \dot \theta(t)
= & \frac{\beta_1(t)\dot \beta_2(t) -\beta_2(t)\dot
\beta_1(t)}{\beta_1(t)^2+\beta_2(t)^2} \\
= & n(t)+p(t)\cos 2\theta(t) + m(t)\sin 2\theta(t)\\
= &  \bar \omega+g(t)^2\{2\bar \alpha^3 b^2+(b^2r^2 \bar r  -\bar \alpha \bar
\omega^2r^2)\sin (2\bar \omega t-2\theta(t)) +\\
& b^2r^2\bar r[\cosh 2\bar \alpha t + \sinh 2 \bar \alpha t \cos (2\bar \omega t-2\theta(t))]\}\\
 =: & G(t,\theta(t)),\end{aligned}
$$ with $G(t,\theta)$ being $\pi$-periodic in $\theta$.
Let $\tilde \theta(t) = \bar \omega t+\frac{\pi}{4}$.  We have
$$G(t, \tilde \theta(t))  =  \bar \omega  + g(t)^2\{2 \bar \alpha^3
b^2 + r^2[\bar \alpha \bar \omega^2 +b^2\bar r (\cosh 2\bar \alpha t - 1)] \}$$
Since $0<q_1(t)=be^{-\bar \alpha t} \leq  \delta$ and $0<p_1(t)=be^{\bar \alpha t}
\leq \delta$ for all $t\in [t^-,t^+],$ we have
\begin{equation}\label{csh}
b^2\cosh 2\bar \alpha t\leq  2\delta^2
\end{equation} for all $t\in[t^-,t^+]$. Using \eqref{rb} and
\eqref{csh} we obtain \begin{equation} G(t, \tilde \theta(t))  =
\bar \omega  + g(t)^2\{2 \bar \alpha^3 b^2 + r^2[\bar \alpha \bar
\omega^2+ O(\delta^2)]\}.\end{equation} Thus choosing $\delta>0$
sufficiently small we finally get
\begin{equation}\label{eq_Gtetatil}
G(t, \tilde \theta(t)) > \bar\omega,
\end{equation}
for all $t\in[t^-,t^+]$.

Let $\theta(0)=k_0 \pi + \frac{\pi}{4} + \mu_0,$ for some
$k_0\in \Z$ and $\mu_0\in [0, \pi )$.  Letting $\Delta(t) =
\theta(t) - \tilde \theta(t)-k_0 \pi, \forall t,$ we have $\dot
\Delta(t) = G(t, \Delta(t)+ \tilde \theta(t)) - \bar \omega$ and
hence, from \eqref{eq_Gtetatil}, $\dot \Delta(t)>0$ if $\Delta(t)=k \pi, \forall k\in \Z$.
Since $\Delta(0)=\mu_0 \geq 0$, this implies that $\Delta(t)>0$ for all $t>0$, i.e., $\theta(t)>\tilde\theta(t) +k_0 \pi,
\forall t>0$. Therefore
\begin{equation}\label{mintheta}
 \theta(t)-\theta(0)=  \Delta(t)+\bar \omega t -\mu_0
 >  \bar \omega t-\pi, \forall t>0.\end{equation}  Since
$$\Delta \theta(z_0,v_0,[t^-,t^+]) = \Delta
\theta(\phi_{t^-}(z_0), D\phi_{t^-}(z_0) \cdot v_0,[0,t^+-t^-]),$$
we use estimate \eqref{mintheta} considering $\phi_{t^-}(z_0)$ and
$D\phi_{t^-}(z_0) \cdot v_0$ as initial conditions in order to
find \begin{equation}\label{res1} \Delta
\theta(z_0,v_0,[t^-,t^+])> \bar \omega (t^+-t^-) - \pi
> \frac{\omega}{2}(t^+-t^-) -\pi,\end{equation} if $\delta>0$ is fixed small
enough.

The last statement of this lemma clearly follows from \eqref{res1}. In fact, due to the local behavior of the flow near the saddle-center equilibrium, given $N\geq 0$ we can choose $\delta_N>0$ small enough such that if
$z_0 \in B_{\delta_N}(0)$ then $t^+- t^- >
\frac{2}{\omega}(N+\pi)$. This implies
$$\Delta \theta(z_0,v_0,[t^-,t^+])>N.$$
\end{proof}

Next we prove a similar estimate for the Hamiltonian flow of $H$
in the original coordinates $w=(x_1,x_2,y_1,y_2)$. As before, we
consider the moving frame $\{X_0,X_1,X_2,X_3\}_w$ defined in
\eqref{mfX} for regular points $w$ of $H$. Let $$u(t)=\displaystyle
\sum_{i=1}^3 \alpha_i(t)X_i(w(t))=D\psi_t(w_0) \cdot u_0$$ be a
solution of the linearized flow along a non-constant trajectory $w(t)=\psi_t(w_0)$ of $X_H$, with $u_0=u(0)$. As we
have seen, the projection $\pi_{12}(u(t))$ into
$\text{span}\{X_1,X_2\}_{w(t)}$ is represented by
$(\alpha_1(t),\alpha_2(t))\in \R^2$ which satisfies
\eqref{linflow}. We assume $\pi_{12}(u_0) \neq 0$. Let $\eta(t)$ be
a continuous argument defined by $\alpha_1(t)+ i \alpha_2(t) \in
\R^+ e^{i\eta(t)}$. In the same way, for $a \leq b$ we define
$$\Delta \eta(w_0,u_0,[a,b])=\eta(b)-\eta(a).$$

%\begin{lem}\label{lem2} \end{lem}

\begin{lem}\label{lem2} Given $\W_0\subset U$ neighborhood of $p_c$,  there exists a neighborhood $U_*\subset \W_0$ of $p_c$ such that the following holds: let
 $w_0=(x_{10},x_{20},y_{10},y_{20}) \in U_*$ so that $\varphi^{-1}(w_0)=(q_{10},q_{20},p_{10},p_{20})$
 satisfies $q_{10}p_{10} \neq 0$. Let $[t^-,t^+],t^-<0<t^+,$ be
 the maximal interval containing $t=0$ such that  $w(t) \in {\rm closure}(U_*), \forall t\in [t^-,t^+]$, where $w(t)=\psi_t(w_0)$. Let
$u_0\in T_{w_0}S_{w_0}$ be such that $\pi_{12}(u_0)\neq 0$, where $S_{w_0} = H^{-1}(H(w_0))$. Then
$$\Delta \eta (w_0,u_0,[t^-,t^+]) > \frac{\omega}{2}(t^+-t^-) -
C,$$ where $C>0$ is a uniform constant depending only on $U_*$. In particular, given $N\geq 0$,
there exists a neighborhood $U_N\subset U_*$ of $p_c$ such that if $w_0\in U_N$ and $u_0\in T_{w_0}S_{w_0}$ are as above, then
$$\Delta \eta(w_0,u_0,[t^-,t^+])>N.$$

\end{lem}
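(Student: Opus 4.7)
The plan is to conjugate the problem to the local $(q,p)$-coordinates by $\varphi$ and reduce to Lemma \ref{lem1}. Choose $U_* \subset \W_0$ to be an open neighborhood of $p_c$ small enough that $\varphi^{-1}(\overline{U_*}) \subset B_\delta(0)$, where $\delta>0$ is the constant from Lemma \ref{lem1}. Set $z_0 := \varphi^{-1}(w_0)$ and $v_0 := D\varphi^{-1}(w_0)\cdot u_0$. The conjugation gives $z(t) := \varphi^{-1}(w(t)) = \phi_t(z_0)$ and $v(t) = D\phi_t(z_0)\,v_0 = D\varphi^{-1}(w(t))\cdot u(t)$. Since $I_1 = q_1p_1$ is a first integral of $\phi_t$, the assumption $q_{10}p_{10}\neq 0$ forces $z(t)\neq 0$, hence $w(t)\neq p_c$, for all $t\in[t^-,t^+]$. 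Moreover, because $D\varphi^{-1}$ carries $X_H$ to $X_K$, the hypothesis $\pi_{12}u_0\neq 0$ is equivalent to $u_0\notin \R X_H(w_0)$, equivalently $v_0\notin \R X_K(z_0)$, i.e.\ $\pi_{12}v_0\neq 0$.

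The heart of the proof is to compare $\Delta\eta$ with $\Delta\theta$. The linearized solutions $u(t)$ and $v(t) = D\varphi^{-1}(w(t))\,u(t)$ determine the same section of the two-dimensional quotient bundle $\mathcal E_{w(t)} := T_{w(t)}S_{w(t)}/\R X_H(w(t))$, identified with $T_{z(t)}S_{z(t)}/\R X_K(z(t))$ via $D\varphi$. The argument $\eta(t)$ records rotation in the framing $\{X_1,X_2\}$, while $\theta(t)$ records rotation in the framing $\{D\varphi\cdot Y_1,\,D\varphi\cdot Y_2\}$. These framings are orthonormal with respect to two different inner products on $\mathcal E$, namely the quotients of the Euclidean metric in $(x,y)$-coordinates and of the pullback via $\varphi$ of the Euclidean metric in $(q,p)$-coordinates. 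Let $A(w)\in GL^+(2,\R)$ denote the continuous change-of-basis matrix from one framing to the other, defined on $\overline{U_*}\setminus\{p_c\}$. Since $\varphi$ is a smooth diffeomorphism with nondegenerate Jacobian on the compact set $\overline{U_*}$, the two inner products are uniformly comparable there, so $A(\overline{U_*}\setminus\{p_c\})$ lies in a bounded subset of $GL^+(2,\R)$. Writing $(\alpha_1,\alpha_2)^T = A(w(t))\,(\beta_1,\beta_2)^T$, the induced map of $S^1$ has a continuous lift, and using that $A$ extends continuously to the spherical blow-up of $\overline{U_*}$ at $p_c$ (the limit depending only on the direction of approach to $p_c$), the winding of $A(w(t))$ along any trajectory is controlled by a constant $C_0>0$ depending only on $\overline{U_*}$. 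Consequently
\begin{equation*}
|\Delta\eta(w_0,u_0,[t^-,t^+]) - \Delta\theta(z_0,v_0,[t^-,t^+])| \leq C_0.
\end{equation*}

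Lemma \ref{lem1} applied to $z_0,v_0$ yields $\Delta\theta(z_0,v_0,[t^-,t^+]) > \tfrac{\omega}{2}(t^+-t^-) - \pi$, and combining with the bound above gives the main inequality with $C := \pi + C_0$. For the last statement: given $N\geq 0$, Lemma \ref{lem1} supplies $\delta_{N'}\in(0,\delta)$ such that $z_0\in B_{\delta_{N'}}(0)$ implies $\Delta\theta > N+C_0$, where $N' := N+C_0$. Setting $U_N := \varphi(B_{\delta_{N'}}(0)) \cap U_*$ yields a neighborhood of $p_c$ on which $\Delta\eta > N$.

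The main obstacle is the uniform control of the change-of-basis $A(w)$ near the saddle-center: the individual frame vectors $X_i,Y_i$ involve normalizing the vanishing gradients $\nabla H,\nabla K$ and therefore admit no continuous extension to $p_c$, yet the \emph{relative} change of basis on the two-dimensional quotient bundle must stay bounded with controlled angular variation as a trajectory passes arbitrarily close to $p_c$. This is essentially a statement about the compatibility of the two Euclidean structures under the symplectic diffeomorphism $\varphi$, and it is where the smoothness and nondegeneracy of $D\varphi$ on a compact neighborhood of $p_c$ is decisive.
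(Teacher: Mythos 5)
Your high-level strategy matches the paper's: conjugate by $\varphi$ to reduce to Lemma \ref{lem1}, and then bound the discrepancy $\Delta\eta-\Delta\theta$ between the two moving-frame arguments by a uniform constant. Where you diverge from the paper, and where the gap sits, is the justification of that uniform bound.

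Your crucial claim is that the change-of-basis $A(w)$ extends continuously to the spherical blow-up of $\overline{U_*}$ at $p_c$, with ``the winding of $A(w(t))$ along any trajectory controlled by $C_0$.'' This is asserted, not proved, and the ingredient you cite---``smoothness and nondegeneracy of $D\varphi$''---is the wrong one: $D\varphi$ being nondegenerate is automatic for a diffeomorphism and does nothing to control the framings $X_i,Y_i$, which involve the normalized gradients $\nabla H/|\nabla H|$ and $\nabla K/|\nabla K|$ and become singular at $p_c$. What would actually make the blow-up extension work is the invertibility of $D^2H(p_c)$ (a consequence of the saddle-center hypothesis), which forces $\nabla H(w)/|\nabla H(w)|$ to converge along rays to $D^2H(p_c)v/|D^2H(p_c)v|$; nothing like this appears in your argument. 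There is also a smaller imprecision: even with bounded $A$, the identity $(\alpha_1,\alpha_2)^T=A(\beta_1,\beta_2)^T$ does not make $\Delta\eta-\Delta\theta$ literally equal to a winding number of $A$; one must absorb an extra error of size $<2\pi$ coming from the fact that $A$ is not a rotation, which the paper does explicitly via the bound $|\tilde\theta(t)-\theta(t)|<\pi$.

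The paper avoids the blow-up entirely by a cheaper observation. Since $\overline{B_\delta(0)}\setminus\{0\}\subset\R^4$ is simply connected, the angle of $\widetilde Y_1$ in the $\{\widetilde X_1,\widetilde X_2\}$ frame has a \emph{global} continuous lift $\tilde\zeta_1$ there. Writing $\eta(t)=\tilde\theta(t)+\tilde\zeta_1(z(t))$ with $|\tilde\theta-\theta|<\pi$, the frame-change contribution to $\Delta\eta-\Delta\theta$ becomes a telescope $\tilde\zeta_1(z(t^+))-\tilde\zeta_1(z(t^-))$ of a single-valued function, so it depends only on the endpoints---and by maximality of $[t^-,t^+]$ these lie on the compact sphere $\partial B_\delta(0)$, far from $p_c$, where $\tilde\zeta_1$ has finite oscillation $C_1$. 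One gets $C=3\pi+C_1$ with no analysis whatsoever of the degenerating frames near $p_c$. In short: you try to bound the lift everywhere (requiring the unproved blow-up extension), whereas the paper only evaluates it on the boundary sphere, which maximality hands you for free. The final reduction (shrinking $U_N$ to get $\Delta\eta>N$) is fine in both versions.
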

\begin{proof}

Let $\delta>0$ be such that $\overline {B_\delta(0)}\subset \varphi^{-1}(\W_0)$.
Consider the moving frame on $\overline{B_\delta(0)} \setminus
\{0\}$ given by
$$\{\widetilde X_i(z)= D\varphi^{-1}(\varphi(z))\cdot X_i(\varphi(z)),i=1,2,3\}.$$
We denote by $\widetilde \pi_{12}$ the projection into
$\text{span}\{\widetilde X_1, \widetilde X_2\}$ along $\widetilde X_3 \parallel Y_3
$.
Let $\widetilde Y_i=\widetilde \pi_{12}(Y_i),i=1,2$. Notice that
$\{\widetilde Y_1,\widetilde Y_2\}$ is a symplectic basis of
$\text{span}\{\widetilde X_1,\widetilde X_2\}$. Writing $\widetilde
Y_1(z) = a_{1}(z) \widetilde X_1(z) + a_{2}(z) \widetilde X_2(z)$
 we find a continuous function $\zeta_1: \overline{B_\delta(0)}
\setminus \{0\} \to S^1$ given by $z\mapsto
\frac{(a_{1}(z),a_{2}(z))}{|(a_{1}(z),a_{2}(z))|}$. Let $\tilde
\zeta_1: \overline{B_\delta(0)} \setminus \{0\} \to \R$ be a lift
of $\zeta_1$, which exists since $\overline{B_\delta(0)} \setminus
\{0\}$ is simply connected. In this way we have $a_{1}(z) + i a_{2}(z)
\in \R^+ e^{i\tilde \zeta_1(z)},\forall z\in \overline{B_\delta(0)}\setminus\{0\}.$
%we can assume that
%$$0<\tilde \zeta_2 - \tilde \zeta_1< \pi.$$
Let \begin{equation}\label{mc1} C_1 = \sup_{z_1,z_2\in \partial
B_\delta(0)} |\tilde \zeta_1(z_1)-\tilde
\zeta_1(z_2)|<\infty.\end{equation}

The flow $\psi_t$ of $H$ in $U$ is conjugated to the flow $\phi_t$ of $K$ in $V$
by the symplectomorphism $\varphi$, i.e., $\varphi \circ \phi_t =
\psi_t \circ \varphi$. Thus the linearized flows satisfy
$D\varphi \cdot D\phi_t = D\psi_t \cdot D\varphi$. Let $z(t) =
\varphi^{-1}(w(t))$ and $v(t)=D\phi_t(z(0))\cdot
D\varphi^{-1}(w_0) \cdot u_0$, where $w(t)$ and $u_0$ are as in
the statement. Thus $$\begin{aligned} \widetilde \pi_{12}(v(t)) = &
\alpha_1(t) \widetilde X_1(z(t))+ \alpha_2(t) \widetilde
X_2(z(t))\\ = & \beta_1(t) \widetilde Y_1(z(t))+ \beta_2(t)
\widetilde Y_2(z(t)),\end{aligned}$$ where $(\alpha_1(t),\alpha_2(t))$ satisfies \eqref{linflow}$-$\eqref{defikappa} and
$(\beta_1(t),\beta_2(t))$ satisfies \eqref{linflow2} with $H$ replaced with $K$. Let
$\theta(t)$ be a continuous argument of $\beta_1(t)+ i
\beta_2(t)$. Now let \begin{equation}\label{tetat} \tilde
\theta(t) = \eta(t) - \tilde \zeta_1(z(t)).\end{equation} Observe
that $\tilde \theta(t)$ corresponds to the angle between $\tilde
\pi_{12}(v(t))$ and $\widetilde Y_1(z(t))$ in the trivialization
induced by $\{\widetilde X_1,\widetilde X_2\}$ while $\theta(t)$
is the angle between $\pi_{12}(v(t))$ and $Y_1(z(t))$ in the
trivialization induced by $\{Y_1,Y_2\}$. We can assume that $$-\pi
<\tilde \theta(0) - \theta(0) < \pi.$$ Since $Y_1$ and $Y_2$
correspond to $\widetilde Y_1$ and $\widetilde Y_2$ in
$\text{span} \{\widetilde X_1,\widetilde X_2\}$, respectively, we
must have
\begin{equation}\label{dtheta} -\pi <\tilde \theta(t) - \theta(t)
< \pi,\forall t.\end{equation}

We can take $\delta>0$ even smaller so that the conclusions of Lemma \ref{lem1}  hold in $B_\delta(0)$. Let $U_*:=\varphi(B_\delta(0)) \subset \W_0$. From
\eqref{mc1}, \eqref{tetat} and \eqref{dtheta}, if $w_0 \in U_*$, we find
$$\begin{aligned} \Delta \eta (w_0,u_0,[t^-,t^+]) =  & \eta(t^+)-\eta(t^-) \\ =
& (\tilde \theta(t^+) - \tilde \theta(t^-)) +(\tilde
\zeta_1(z(t^+)) - \tilde \zeta_1(z(t^-))) \\ = & (\tilde
\theta(t^+) - \theta(t^+)) + (\theta(t^+) - \theta(t^-)) \\ &  +
(\theta(t^-) - \tilde \theta(t^-)) +(\tilde \zeta_1(z(t^+)) -
\tilde \zeta_1(z(t^-)))
\\ > & \Delta \theta(z_0,v_0,[t^-,t^+]) -2\pi - C_1 \\ > &
\frac{\omega}{2}(t^+-t^-) - C, \end{aligned}$$ where $C = 3\pi +
C_1$.

Given $N\geq 0$, choose $\delta_N>0$ sufficiently small such that
if $z_0 \in B_{\delta_N}(0)$ then $t^+- t^- >
\frac{2}{\omega}(N+C)$. Let $U_N := \varphi(B_{\delta_N}(0)) \subset U_*$. If
$w_0 \in U_N$ then $$\Delta \eta(w_0,u_0,[t^-,t^+])>N.$$ \end{proof}

Given a contractible periodic orbit $P=(w,T)\subset H^{-1}(E)$ with period $T>0$, we
have the following characterization of its Conley-Zehnder index $CZ(P)$: let $u(t)$ be a solution of the linearized flow along $w(t)$
with initial condition $ u(0) \in T_{w(0)}S_{w(0)}$ satisfying
$\pi_{12} (u(0)) \neq 0$. We write $\pi_{12}(u(t)) = \alpha_1(t)
X_1(w(t))+\alpha_2(t) X_2(w(t))$, where the frame $\{X_1,X_2\}$ is defined as in \eqref{mfX}. Let $\eta(t)$ be a continuous
argument of $\alpha_1(t) + i \alpha_2(t)$. Let
$$I = \left\{\frac{\Delta\eta(w(0),u(0),[0,T])}{2\pi} : \pi_{12} (u(0)) \neq 0\right\}.$$
Then $I$ is a compact interval with
length $< \frac{1}{2}$. Let
\begin{equation}\label{Iepsilon} I_\epsilon = I + \epsilon,\end{equation} with $\epsilon \in \R$. We find
an integer $k$ such that for all $\epsilon <0$ small, either
$I_\epsilon \subset (k,k+1)$ or $k\in
\text{interior}(I_\epsilon)$. In the first case,  $CZ(P) =
2k+1$ and in the second case  $CZ(P) = 2k$, see \cite{hryn2}.  Notice that
we are using the symplectic trivialization of ${\rm span}\{X_1,X_2\}$ induced by
$\{X_1,X_2\}$ to study the transverse linearized flow. In our case we always assume that this
trivialization is defined over an embedded disk $D \subset H^{-1}(E)$ such that $\partial D=P$.

\begin{prop}\label{prop_p2e} For each $E>0$ small, let $P_{2,E} = (z_E, T^H_{2,E})\subset K^{-1}(E)$ be the periodic orbit
 in the center manifold of the saddle-center, given in local
coordinates $(q_1,q_2,p_1,p_2)$ by $\{q_1=p_1=0, I_2=I_2(0,E)\}$, see \eqref{eqI2}. Then $P_{2,E}$ is unknotted,
hyperbolic and $CZ(P_{2,E})=2$.
\end{prop}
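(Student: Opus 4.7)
The plan is to verify all three conclusions by a direct computation in the integrable local model. Parametrize $P_{2,E}$ in the local coordinates of Hypothesis 1 as $z(t) = (0, r\sin\bar\omega t, 0, r\cos\bar\omega t)$, with $r = \sqrt{2 I_2(0,E)}$ and $\bar\omega = \partial_{I_2} K(0, I_2(0,E)) = \omega + O(E) > 0$, so that $T^H_{2,E} = 2\pi/\bar\omega$. Unknottedness is immediate: $P_{2,E}$ is the equator of the topological $2$-sphere $\partial S_E$, and the hemispheres $U_{1,E}, U_{2,E} \subset \partial S_E$, both contained in the regular set of $H$, furnish embedded capping disks in $W_E$.

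For hyperbolicity, I would exploit the integrability of $K$. The linearized flow decouples, and on the $(q_1, p_1)$ plane at points of $P_{2,E}$, which is tangent to $K^{-1}(E)$ and transverse to $X_K$ since $\nabla K = (0, \bar\omega q_2, 0, \bar\omega p_2)$ there, it acts as $q_1(t) = q_1(0) e^{-\bar\alpha t}$, $p_1(t) = p_1(0) e^{\bar\alpha t}$ with $\bar\alpha = -\partial_{I_1} K(0, I_2(0,E)) = \alpha + O(E) > 0$. The transverse linearized Poincar\'e map after time $T^H_{2,E}$ therefore has the distinct positive real eigenvalues $e^{\pm \bar\alpha T^H_{2,E}}$, proving that $P_{2,E}$ is hyperbolic.

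For the Conley-Zehnder index, I would use $U_{1,E}$ as the capping disk and the symplectic frame $\{Y_1, Y_2\}$ from \eqref{Yi} along $z(t)$. A short computation using $Y_0 = (0, q_2, 0, p_2)/r$ gives $Y_1(t) = \cos\bar\omega t\, \partial_{q_1} + \sin\bar\omega t\, \partial_{p_1}$ and $Y_2(t) = \sin\bar\omega t\, \partial_{q_1} - \cos\bar\omega t\, \partial_{p_1}$, so a linearized solution with initial condition $(q_1(0), 0, p_1(0), 0)$ has transverse components
\[
\beta_1(t) + i\beta_2(t) = e^{i\bar\omega t}\bigl(q_1(0)\, e^{-\bar\alpha t} - i\, p_1(0)\, e^{\bar\alpha t}\bigr).
\]
Writing $\eta(t) = \bar\omega t + \phi(t)$, where $\phi(t)$ is the continuous argument of the hyperbolic factor, yields $\Delta\eta = 2\pi + (\phi(T^H_{2,E}) - \phi(0))$. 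As $(q_1(0), p_1(0))$ varies over nonzero initial directions, $\phi(T^H_{2,E}) - \phi(0)$ traces out a continuous interval $(-a, b)$ with $a, b \in (0, \pi/2)$, vanishing exactly on the eigendirections $q_1(0) = 0$ and $p_1(0) = 0$. Hence $I = \frac{1}{2\pi}\{\Delta\eta\}$ is a closed interval of length strictly less than $1/2$ containing $1$ in its interior, and by the characterization following \eqref{Iepsilon} this forces $CZ(P_{2,E}) = 2$. The replacement of $\{Y_1, Y_2\}$ by the frame $\{X_1, X_2\}$ used in the definition of $\Delta\eta$ introduces the correction $\tilde\zeta_1(z(t)) - \tilde\zeta_1(z(0))$ from Lemma~\ref{lem2}, which vanishes at $t = T^H_{2,E}$ because $z$ is closed, so both frames produce the same $\Delta\eta$.

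The chief technical point to watch is controlling the $+2\pi$ contributed by the rotating frame $\{Y_1, Y_2\}$ along $P_{2,E}$: one must verify that the sign and magnitude are correct so that $1$ lies in the interior of $I$ rather than at an endpoint, thereby yielding $CZ(P_{2,E}) = 2$ rather than $0$ or any other even integer. Once the explicit formulas for $Y_1, Y_2$ above are recorded, this is a one-line check, and no further analytic input is needed.
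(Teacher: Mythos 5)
Your proposal is correct and follows essentially the same route as the paper's proof: parametrize $P_{2,E}$ in the local normal form, use the frame $\{Y_1,Y_2\}$ to project the linearized flow, note that the $(q_1,p_1)$-block decouples with eigenvalues $e^{\pm\bar\alpha t}$ (giving hyperbolicity), and show that the argument variation $\Delta\eta$ takes values in a compact interval of length $<\pi$ containing $2\pi$ in its interior, whence $CZ(P_{2,E})=2$. The only presentational difference is that you write the transverse solution $\beta_1+i\beta_2 = e^{i\bar\omega t}\bigl(q_1(0)e^{-\bar\alpha t} - i p_1(0)e^{\bar\alpha t}\bigr)$ explicitly, whereas the paper obtains the same structure by conjugating the linearized ODE from \eqref{linflow2} into $\tilde\beta(t)=\beta(t)e^{-i\bar\omega t}$ and solving; the content is identical, and your added remark that the $\tilde\zeta_1$-correction between the $\{Y_1,Y_2\}$ and $\{X_1,X_2\}$ frames drops out at $t=T^H_{2,E}$ because the orbit is closed is a nice sanity check implicitly assumed in the paper.
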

\begin{proof} We consider the symplectic frame $\{Y_1,Y_2\}$ defined in
\eqref{Yi}. Notice that $P_{2,E}$ is the boundary of the embedded disk
$ {\rm closure}(U_{1,E})=\{q_1+p_1=0, q_1\leq 0\}\cap K^{-1}(E)\subset V,$ for $E>0$
small. Hence $P_{2,E}$ is unknotted.

Assume that $z_E(0)= (0,0,0,r_E)$, where $r_E =\sqrt{2I_2(0,E)}=
\sqrt{2\frac{E}{\omega} + O(E^2)}.$ The period of $P_{2,E}$ with respect to the Hamiltonian flow is given by $T^H_{2,E} = \frac{2 \pi}
{\bar \omega}$, where $\bar \omega = \omega + O(r_E^2)$ is constant in $t$.  Thus $z_E(t)=(0,r_E \sin \bar \omega t,0, r_E \cos
\bar \omega t)$. Using \eqref{linflow2}, we see that the projection of a linearized solution $v(t)$ along $z_E(t)$, with $\pi_{12} (v(0))\neq 0,$ is represented by
$\beta(t)=\beta_1(t)+i \beta_2(t) \in \R^+ e^{i \theta(t)}$,
 satisfying
\begin{equation} \left(\begin{array}{c}\dot
\beta_1(t) \\ \dot \beta_2(t)
\end{array} \right) = \left(\begin{array}{cc}-\bar \alpha \cos 2\bar \omega t &
 -\bar \alpha \sin 2\bar \omega t - \bar \omega \\ -\bar \alpha \sin 2 \bar \omega t + \bar \omega &
 \bar \alpha \cos 2\bar \omega t \end{array} \right) \left(\begin{array}{c}\beta_1(t) \\ \beta_2(t) \end{array}
\right).
\end{equation}
Defining new coordinates $\tilde \beta(t) =\tilde \beta_1(t) + i\tilde \beta_2(t):= (\beta_1(t)+i\beta_2(t)) e ^{-i\bar
\omega t} \in \R^+ e^{i \tilde \theta(t)}$,  we get
$$\left(\begin{array}{c}
\dot {\tilde \beta}_1(t) \\
\dot {\tilde \beta}_2(t)
\end{array}\right)=
\left(\begin{array}{cc}
-\bar \alpha & 0\\
0 & \bar\alpha
\end{array}\right)
\left(\begin{array}{c}
\tilde \beta_1(t) \\
\tilde\beta_2(t)
\end{array}\right).$$
Since $\tilde \beta(T^H_{2,E}) = \beta(T^H_{2,E})$, $P_{2,E}$ is hyperbolic. Now since $\Delta \theta(z_E(0),v(0),[0,T^H_{2,E}])=\Delta \tilde\theta(z_E(0),v(0),[0,T^H_{2,E}])+ 2\pi$ and
$$0 \in {\rm interior}(\{\Delta \tilde\theta(z_E(0),v(0),[0,T^H_{2,E}]):\pi_{12}(v(0)) \neq 0\}),$$ we
have that $1 \in \text{interior}(I_\epsilon)$ for all $\epsilon<0$
small, where $I_\epsilon$ is defined in \eqref{Iepsilon}. This
implies that $CZ(P_{2,E})=2$. \end{proof}

\begin{prop}\label{prop_mu} The following assertions hold:
\begin{itemize}
\item[i)] There exists $E_0>0$ sufficiently small such
that  given an integer $M>0$, there exists a
neighborhood $\widetilde U_M$ of $p_c$ such that if $0<E<E_0$ and
$P\subset W_E\setminus P_{2,E}$ is a periodic orbit intersecting
$\widetilde U_M$, then $CZ(P)> M$.
\item[ii)] Given an integer $M>0$, there exists $0<E_M<E_0$, $E_0>0$ as in i), so that if $0<E<E_M$  and $P\subset W_E\setminus P_{2,E}$ is linked to $P_{2,E}$ then $CZ(P) > M$. In particular, if $0<E<E_3$ and $P \subset W_E\setminus P_{2,E}$ is a periodic orbit with $CZ(P)=3$, then $P$ is not linked to $P_{2,E}$.
\end{itemize}
\end{prop}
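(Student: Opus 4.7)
The plan is to combine the transverse rotation estimate near $p_c$ from Lemma \ref{lem2} with the positive rotation on the ``convex'' complement supplied by Proposition \ref{etabarra}. Before either, I would record the observation that for any periodic orbit $P\subset W_E\setminus P_{2,E}$ meeting the local chart, any point $w_0\in P$ satisfies $q_{10}p_{10}\neq 0$ in its local coordinates: if $q_{10}=0$, then $I_1=0$ and the energy relation \eqref{eqI2} forces $I_2=I_2(0,E)$, so $w_0\in W^u_{E,\mathrm{loc}}$. Orbits on $W^u_{E,\mathrm{loc}}$ are backward-asymptotic to $P_{2,E}$ and forward-escaping, hence cannot close up unless $P=P_{2,E}$; the case $p_{10}=0$ is symmetric. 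This licenses the use of Lemma \ref{lem2} at every $w_0\in P\cap U_*$.

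For part i), given $M$ I would set $N=2\pi(M+2)$ and take $\widetilde{U}_M:=U_N$ from the last clause of Lemma \ref{lem2}. For a periodic orbit $P=(w,T)\neq P_{2,E}$ meeting $\widetilde{U}_M$, pick $w_0\in P\cap \widetilde{U}_M$ and any $u_0\in T_{w_0}S_{w_0}$ with $\pi_{12}u_0\neq 0$. Lemma \ref{lem2} yields $\Delta\eta(w_0,u_0,[t^-,t^+])>N$ on the maximal interval $[t^-,t^+]$ during which the trajectory stays in $\overline{U_*}$. The next step is to upgrade this to a bound over the whole period by decomposing $[0,T]$ into the primary excursion $[t^-,t^+]$ (contribution $>N$), possibly further excursions into $U_*$ (each contributing at least $-C$ by the sharper form of Lemma \ref{lem2}), and the outside pieces, on which $\dot\eta>\bar\eta>0$ holds by Proposition \ref{etabarra}. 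The latter applies since Hypothesis 2 on $S_0$ and its symmetric counterpart on $S_0'$ make $H_{ww}|_{T_wS_w}$ uniformly positive definite on a compact subset of $\dot W_0$ disjoint from $p_c$, and this property persists on $W_E\setminus U_*$ for $E>0$ small. Choosing $U_*$ small enough that every orbit segment connecting two consecutive excursions accumulates positive rotation exceeding the error $C$ per excursion, I obtain $\Delta\eta(w_0,u_0,[0,T])>N$ uniformly in $u_0$; the characterization of $CZ$ via the interval $I=\frac{1}{2\pi}\{\Delta\eta(w_0,u_0,[0,T])\}$ then gives $CZ(P)\geq 2M+5>M$.

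For part ii), I would reuse the neighborhood $\widetilde{U}_M$ from part i). The $2$-sphere $\partial S_E=\{q_1+p_1=0\}\cap K^{-1}(E)$ has diameter $O(\sqrt{E})$ via the rescaling in the proof of Lemma \ref{2-sphere}, so I can select $E_M\in(0,E_0)$ with $\partial S_E\cup\partial S_E'\subset \widetilde{U}_M$ for all $0<E<E_M$. The closed hemisphere $\overline{U_{1,E}}\subset\partial S_E$ is an embedded disk in $W_E$ bounded by $P_{2,E}$, hence a Seifert surface of $P_{2,E}$. If $P\neq P_{2,E}$ is linked to $P_{2,E}$ then its algebraic intersection with $\overline{U_{1,E}}\subset\widetilde{U}_M$ is non-zero, so $P\cap \widetilde{U}_M\neq \emptyset$ and part i) gives $CZ(P)>M$. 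Setting $M=3$ yields the final assertion.

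The main obstacle will be the quantitative bookkeeping in part i), namely ensuring that the additive errors $-C$ from the auxiliary excursions into $U_*$ cannot accumulate to overturn the primary lower bound $N$. The resolution I envision is to shrink $U_*$ against the convexity estimate so that between consecutive excursions the orbit is forced outside $U_*$ for long enough to accumulate, via Proposition \ref{etabarra}, a positive rotation exceeding $C$. Once this quantitative comparison is in place, part ii) is essentially topological: $\partial S_E$ collapses onto $p_c$ as $E\to 0^+$, forcing every orbit linked to $P_{2,E}$ into the prescribed neighborhood of the saddle-center.
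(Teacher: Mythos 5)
Your part ii) is correct and matches the paper's argument: you shrink $E$ until $\partial S_E\subset\widetilde U_M$ and observe that a periodic orbit linked to $P_{2,E}$ must hit $\partial S_E\setminus P_{2,E}$ (you phrase this via the Seifert disk $\overline{U_{1,E}}$, the paper via the separating sphere; same content). Your preliminary observation that $q_{10}p_{10}\neq 0$ at any point of $P\cap U_*$ whenever $P\neq P_{2,E}$ is also correct and is a point the paper leaves implicit.

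Part i) has a genuine gap. You want to offset the $-C$ worst-case loss from each auxiliary maximal excursion into $\overline{U_*}$ by positive rotation accumulated outside $\overline{U_*}$ between consecutive excursions, and you propose to close the gap by shrinking $U_*$. But there is no positive lower bound on the time the orbit spends outside $\overline{U_*}$ between two consecutive maximal excursions: the trajectory can exit $\partial U_*$ and re-enter after an arbitrarily short interval. Shrinking $U_*$ neither forbids this nor drives the constant $C=3\pi+C_1$ below $3\pi$, so the comparison you envision is unavailable. The paper resolves this by working with the nested pair $U_0:=U_{N=0}\subset U_*$ supplied by Lemma~\ref{lem2}, and by establishing the positivity $\dot\eta>\bar\eta>0$ of Proposition~\ref{etabarra} on $W_E\setminus U_0$ rather than on $W_E\setminus U_*$. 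The period $[0,T]$ is then decomposed into (a) the maximal segments of the orbit in $\overline{U_*}$ that actually penetrate $U_0$, each of which contributes $\Delta\eta>0$ by Lemma~\ref{lem2} applied with $N=0$, and one of which (through $w(0)\in U_{\widetilde M}\subset U_0$) contributes $>\widetilde M$; and (b) all remaining times, which necessarily lie in $W_E\setminus U_0$ and so contribute positively by the convexity estimate. Every piece is nonnegative and one exceeds $\widetilde M$, with no quantitative trade-off required. The mechanism you are missing is that the detection neighborhood for excursions must be strictly smaller than, and nested inside, the neighborhood over which Lemma~\ref{lem2} is invoked, and must coincide with the set on which the convexity estimate is allowed to fail.
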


\begin{proof}Let $U_0\subset U_* \subset U$ be the neighborhoods of $p_c$ given in Lemma
\ref{lem2}, where $U_0$ is obtained choosing $N=0$. We can find
$E_0>0$ and a constant $k_0>0$ such that for all $0\leq E<E_0$ we
have $\left< H_{ww}(w) \cdot u, u\right>
> k_0$ for all $w \in W_E \setminus U_0$ and $u\in T_w S_{w}$
with $|u|=1$. This follows from the convexity assumption on the
critical level given by Hypothesis 2. Hence, from Proposition \ref{etabarra}
  we find $\bar \eta>0$ such that if $\alpha_1(t)+ i \alpha_2(t) \in \R^+ e^{i \eta(t)}$ represents a non-vanishing linearized solution $u(t)$ along a trajectory $w(t)$ projected into $\text{span} \{X_1,X_2\}_{w(t)}$, and so that $w(t)\in W_E \setminus U_0$ for $t \in [a,b]$, then \begin{equation} \label{eqq1} \dot \eta(t)
> \bar \eta >0, \forall t\in[a,b].\end{equation} Given $\widetilde M>0$,  let $U_{\widetilde M}\subset U_0$ be
as in Lemma \ref{lem2} so that if $w(0)=\varphi(q_{10},q_{20},p_{10},p_{20})$\\ $\in U_{\widetilde M}$ with $q_{10}p_{10}\neq 0$
 then \begin{equation}\label{eqq2} \Delta \eta(w(0),u(0),[t^-,t^+]) > \widetilde M,\end{equation}
for any $u(0)\in T_{w(0)}S_{w(0)}$ satisfying $\pi_{12}(u(0)) \neq 0$, where $t^-<0<t^+$
and $[t^-,t^+]$ corresponds to the maximal time interval containing
$t=0$ such that $w([t^-,t^+])\subset {\rm closure} (U_*)$. Any other maximal segment of orbit
intersecting $U_0$ and contained in $U_*$ contributes positively
to $\Delta \eta$, by Lemma \ref{lem2}.  Now if $w(t)$ corresponds
to a periodic orbit $P=(w,T)\subset W_E$ intersecting
$U_{\widetilde M}$, with $ P\neq P_{2,E},$ then by \eqref{eqq1} and \eqref{eqq2} we obtain
$$CZ(P) > 2 \left(\left \lfloor \frac{\Delta \eta(w(0),u(0),[t^-,t^+])}{2\pi}
\right \rfloor -1\right )
 \geq 2 \left(\left \lfloor \frac{\widetilde M}{2\pi} \right \rfloor -1\right ).$$ Therefore, given an integer $M>0$ we can take $\widetilde U_M = U_{\widetilde
M}$ for $\widetilde M= 2\pi (M+1)$. This proves i).

Finally there exists $0<E_M<E_0$ so that $\partial S_E \subset \widetilde U_M$ for all $0<E<E_M$. Hence if $P \subset W_E\setminus P_{2,E},0<E<E_M,$ is linked to $P_{2,E}$ then $P$ must intersect $\partial S_E \setminus P_{2,E}$ and therefore must also intersect $\widetilde U_M$. This implies $CZ(P)>M$ and proves ii). \end{proof}

The next proposition shows that $W_0 \setminus \{p_c\}$ is dynamically convex, i.e., all of its periodic orbits have Conley-Zehnder index $\geq 3$.
\begin{prop}\label{prop_ghomi}Let $P \subset W_0 \setminus \{p_c\}$ be a periodic orbit. Then $CZ(P) \geq 3$.  \end{prop}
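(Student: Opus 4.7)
The plan is to reduce the statement to the strict convexity theorem of Hofer--Wysocki--Zehnder \cite{convex}. First I observe that since $S_0 \cap S_0' = \{p_c\}$ and $P$ is a connected subset of $W_0 \setminus \{p_c\}$, the orbit $P$ must lie entirely in either $\dot S_0$ or $\dot S_0'$. Using the $T$-symmetry of the construction from Section \ref{sec_proof_main}, I may assume without loss of generality that $P \subset \dot S_0$.

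By Hypothesis 2 together with Remark \ref{remconvexity}, $S_0$ is the boundary of a convex open subset $U_0 \subset \R^4$ and the Hessian $H_{ww}(w)|_{T_w \dot S_0}$ is positive definite at every regular point $w \in \dot S_0$. Since the image of $P$ is a compact subset of the smooth locus $\dot S_0$, lying at positive distance from $p_c$, I can choose a compact tubular neighborhood $W$ of $P$ inside $\dot S_0$ on which $H_{ww}|_{T\dot S_0}$ is uniformly positive definite. Using the moving frame $\{X_0, X_1, X_2, X_3\}$ introduced in \eqref{mfX}, the transverse linearized flow along $P$ is governed by \eqref{linflow} with $M(w(t))$ positive definite, so Proposition \ref{etabarra} applied to $W$ yields a uniform lower bound $\dot \eta(t) > \bar \eta > 0$ for the rotation rate of any nonvanishing transverse linearized solution along $P$.

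The next step, which is the content of the strict convexity argument in \cite{convex}, is to upgrade this pointwise positivity to the global statement that the winding interval $I$ defined just before equation \eqref{Iepsilon} is contained in $(1,\infty)$. Combined with the fact that $I$ has length $<1/2$, the recipe for the Conley--Zehnder index recalled before Proposition \ref{prop_p2e} then forces either $CZ(P)=2k+1$ with $k\geq 1$ or $CZ(P)=2k$ with $k\geq 2$, so in either case $CZ(P) \geq 3$.

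The main obstacle is that the theorem of \cite{convex} is stated for smooth compact strictly convex hypersurfaces in $\R^4$, whereas $S_0$ has an isolated singularity at $p_c$. The cleanest way around this is a localized smoothing argument: replace $S_0$ inside a small neighborhood of $p_c$ (disjoint from $P$) by a smooth strictly convex cap bounding a convex domain close to $U_0$, obtaining a smooth strictly convex hypersurface $\tilde S \subset \R^4$ that agrees with $S_0$ on a neighborhood of $P$. Applying \cite{convex} to $\tilde S$ yields $CZ \geq 3$ for all of its closed characteristics, and since the linearized dynamics along $P$ and the induced trivialization of the contact distribution coincide on $S_0$ and on $\tilde S$, the same bound holds for $P$ viewed as a periodic orbit of the Reeb flow of $\bar \lambda_0$ on $\dot S_0$. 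This proves $CZ(P)\geq 3$.
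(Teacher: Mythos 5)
Your approach is correct and is essentially the same as the paper's: the orbit $P$ stays at positive distance from $p_c$, one replaces $S_0$ near $p_c$ by a smooth strictly convex hypersurface that agrees with $S_0$ on a neighborhood of $P$, and then the dynamical convexity theorem of Hofer--Wysocki--Zehnder \cite{convex} gives $CZ(P)\geq 3$. Two remarks: the extension step you assert --- that one can cap off the singularity while keeping global strict convexity and agreeing with $S_0$ away from $p_c$ --- is the nontrivial geometric input here, and the paper obtains it from a theorem of Ghomi \cite{ghomi} on extending strictly convex hypersurfaces, which you should cite rather than leave as an unargued assertion; and your middle paragraph invoking Proposition \ref{etabarra} is not actually used in the final step (a pointwise lower bound $\dot\eta>\bar\eta$ alone does not force the winding interval $I$ into $(1,\infty)$ for short orbits --- that is precisely the deeper content of \cite{convex}, to which your smoothing step already reduces), so it can be dropped.
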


\begin{proof} Assume $P \subset \dot S_0$, the case $\dot S_0'$ is analogous. Let $\delta>0$ be small enough such that $\overline {B_\delta(p_c)} \cap
P = \emptyset$.  The set $\dot S_0 \setminus
B_\delta(p_c)$ is strictly convex in the sense that any hyperplane tangent to it is a non-singular support hyperplane. By a theorem of M. Ghomi \cite[Theorem 1.2.5]{ghomi}, $\dot S_0 \setminus
B_\delta(p_c)$ can be extended to a strictly convex smooth
hypersurface $\widetilde S_0$ which is diffeomorphic to $S^3$.
Since $P\subset S_0\cap\widetilde S_0$ and $CZ(P)$ does not depend on the
fact that $P$ lies either in $S_0$ or in $\widetilde S_0$, we must
have $CZ(P) \geq 3$ by a theorem of Hofer, Wysocki and Zehnder,
see \cite[Theorem 3.4]{convex}. \end{proof}

Using Propositions \ref{prop_mu} and \ref{prop_ghomi}, we prove the following result.

\begin{prop}\label{prop_fracconvexo} There exists $E^*>0$ such that for each $0<E<E^*$, we have $CZ(P)\geq 3$ for all periodic
orbits $P \subset W_E\setminus P_{2,E}$. \end{prop}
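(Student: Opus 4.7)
The plan is to argue by contradiction, reducing the problem to the situation already handled in Proposition \ref{prop_ghomi} via a Ghomi-type extension. Suppose the conclusion fails. Then there exist sequences $E_n \to 0^+$ and periodic orbits $P_n \subset W_{E_n} \setminus P_{2,E_n}$ of the Reeb flow of $\lambda_{E_n}$ with $CZ(P_n) \leq 2$. The goal is to show each $P_n$ must lie in a region of $W_{E_n}$ where the strict convexity of $\dot S_0$ persists, and then to invoke the Hofer--Wysocki--Zehnder dynamical convexity theorem on an extended strictly convex hypersurface to get $CZ(P_n)\geq 3$.

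First I would apply Proposition \ref{prop_mu}-i) with $M=2$: this produces a neighborhood $\widetilde U_2$ of $p_c$ so that if $0<E<E_0$ and $P\subset W_E\setminus P_{2,E}$ intersects $\widetilde U_2$, then $CZ(P)>2$. The contrapositive forces $P_n\subset W_{E_n}\setminus \widetilde U_2$ for all $n$ large. Shrinking $\widetilde U_2$ if necessary, I may assume that $\partial S_E\subset \widetilde U_2$ for all small $E>0$; then $W_{E_n}\setminus \widetilde U_2$ has two connected components, one sitting inside $\dot S_{E_n}$ near $\dot S_0\setminus \widetilde U_2$ and the symmetric one inside $\dot S_{E_n}'$ near $\dot S_0'\setminus \widetilde U_2$. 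Passing to a subsequence, I may assume all $P_n$ lie in the first component.

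Next I invoke the local strict convexity assumption \eqref{hessiano}, which is an open condition. By Remark \ref{remconvexity}, $\dot S_0\setminus \widetilde U_2$ lies on the boundary of a convex subset of $\R^4$ and satisfies the non-singular support hyperplane property. By openness (and since $S_{E_n}\to S_0$ in $C^\infty$ outside any neighborhood of $p_c$), for $n$ sufficiently large the piece $S_{E_n}\setminus \widetilde U_2$ is still strictly convex in the sense of Ghomi \cite{ghomi}, and can therefore be extended to a smooth strictly convex embedded hypersurface $\widetilde S_n\subset \R^4$ diffeomorphic to $S^3$. Since $P_n\subset S_{E_n}\setminus \widetilde U_2 \subset \widetilde S_n$ and the characteristic line field $\ker(\omega_0|_{TS})$ of a hypersurface depends only on $\omega_0$ and $TS$, the orbit $P_n$ is also a periodic orbit of the characteristic flow on $\widetilde S_n$. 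Its Conley--Zehnder index coincides whether computed on $W_{E_n}$ or on $\widetilde S_n$, because both computations use the same linearized flow on the same symplectic normal bundle along $P_n$. The Hofer--Wysocki--Zehnder theorem for strictly convex energy levels in $\R^4$ (see \cite{convex}) then gives $CZ(P_n)\geq 3$ on $\widetilde S_n$, contradicting $CZ(P_n)\leq 2$.

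The delicate point, which I would expect to require the most care, is the independence of $CZ(P_n)$ from the ambient hypersurface. The argument is essentially the one already used in the proof of Proposition \ref{prop_ghomi}: one needs to check that the homotopy class of trivialization of the contact structure along $P_n$, usually fixed by a capping disk, can be chosen consistently on both $W_{E_n}$ and $\widetilde S_n$. Since $P_n$ is contained in the common region $S_{E_n}\setminus \widetilde U_2$, which is simply connected up to finitely many bounded pieces, a capping disk inside this region trivializes $\xi$ compatibly for both hypersurfaces, so the two Conley--Zehnder indices agree. All other steps are either openness arguments or direct appeals to results already invoked in the paper.
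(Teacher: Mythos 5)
Your proof is correct in substance but takes a genuinely different route from the paper's. The paper argues indirectly, uses Proposition~\ref{prop_mu}-i) with $M=3$ to keep the orbits $P_n$ outside a fixed neighborhood $\widetilde U_3$ of $p_c$, then deduces a uniform bound on the periods from the rotation estimate \eqref{eqq3}, applies Arzel\`a--Ascoli to extract a $C^\infty$-limit periodic orbit $P\subset W_0\setminus\{p_c\}$, invokes lower semicontinuity of the Conley--Zehnder index to get $CZ(P)<3$, and finally contradicts Proposition~\ref{prop_ghomi}. In other words, the paper passes to the critical level $E=0$ and applies the Ghomi extension only once, to the fixed singular hypersurface $\dot S_0$. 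You instead apply the Ghomi extension to the perturbed pieces $S_{E_n}\setminus\widetilde U_2$ directly, bypassing the compactness/limit step entirely. Each approach has its own delicate ingredient: the paper needs the uniform period bound and the lower semicontinuity of $CZ$ under $C^\infty$-convergence of orbits; yours needs that Ghomi's hypotheses (positive-definite second fundamental form together with the global non-singular support hyperplane property) are $C^2$-open for compact hypersurfaces with boundary. That openness is true — the strict support hyperplane property has a uniform quantitative gap on a compact strictly convex piece, and the gap persists under small $C^2$-perturbations — but you state it without justification, and it is the one point where a reader would want to see the compactness argument spelled out. Minor remark: you use $M=2$ where the paper uses $M=3$; both choices work for the contrapositive since $CZ(P_n)\le 2$ is the indirect hypothesis. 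Your treatment of the trivialization/capping-disk issue for comparing $CZ$ on $W_{E_n}$ and on $\widetilde S_n$ is at the same level of rigor as the paper's treatment in Proposition~\ref{prop_ghomi}, so that step is acceptable as written.
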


\begin{proof}Let $E_0>0$
and $\widetilde U_3 \subset U_0 \subset U_*$ be as in Lemma
\ref{lem2} and Proposition \ref{prop_mu}. Let $0<E_1<E_0$ be such that
$\left< H_{ww}(w) \cdot u, u\right>
> k_0>0$ for all $w \in W_E \setminus \widetilde U_3,0<E<E_1,$ and $u\in T_w S_{w}$
with $|u|=1$. As in the proof of Proposition \ref{prop_mu}, we find a
constant $\bar \eta>0$ such that \begin{equation} \label{eqq3}
\dot \eta
> \bar \eta,\end{equation} for any non-vanishing transverse linearized solution $\alpha_1(t) + i
\alpha_2(t) \in \R^+ e^{i \eta(t)}$ along a segment of trajectory
in $W_E \setminus \widetilde U_3$, where $0<E<E_1$.

Arguing indirectly, assume that there
exists a sequence $E_n \to 0^+, 0<E_n< E_1$ such that each $W_{E_n}$ admits
a periodic orbit $P_n=(w_n,T_n)\neq P_{2,E},$ with $CZ(P_n) <3$. By definition
of $\widetilde U_3$, we have $P_n \subset W_{E_n} \setminus
\widetilde U_3$ for all $n$. From estimate \eqref{eqq3}, we get
that $T_n$ is uniformly bounded and therefore by Arzel\`a-Ascoli theorem
we find a subsequence of $E_n$, also denoted by $E_n$, such that
$T_n \to T
>0$ and $w_n \to w$, where $P=(w,T)\subset W_0\setminus \{p_c\}$ is a periodic orbit. Here the periods of the periodic orbits correspond to the Hamiltonian function $H$.
By the definition of the Conley-Zehnder index we see that $CZ(P)
\leq \liminf_{n \to \infty} CZ(P_n)<3$. This leads to a
contradiction with Proposition \ref{prop_ghomi} and shows that there exists $0<E^* < E_1$ so that all periodic orbits $P \subset W_E \setminus P_{2,E}$ have $CZ(P)\geq 3$ for all $0<E<E^*$.
\end{proof}

Propositions \ref{prop_mu} and \ref{prop_fracconvexo} imply i) and ii) of Proposition \ref{prop_EM}, remaining only to prove iii).

For $E>0$ sufficiently small, consider the contact form $\lambda_E$ on $W_E$ given in Proposition \ref{prop_step1} so that its Reeb flow is equivalent to the Hamiltonian flow of $H$ restricted to $W_E$ under a reparametrization of the trajectories. This reparametrization also reparametrizes  the transverse linearized flow.

From Propositions \ref{prop_p2e}, \ref{prop_mu}-ii) and \ref{prop_fracconvexo}, we know that for all $E>0$ sufficiently small, $\lambda_E$ is weakly convex (i.e., all of its periodic orbits have Conley-Zehnder index $\geq 2$), $P_{2,E}$ is the only periodic orbit of $\lambda_E$ with Conley-Zehnder index $2$, and if $P\subset W_E \setminus P_{2,E}$ is a periodic orbit of $\lambda_E$ with Conley-Zehnder index $3$ then $P$ is not linked to $P_{2,E}$.

Let $\lambda_n \to \lambda_E$ in $C^\infty$ as $n \to \infty$ with $E>0$ small enough as above. Since $P_{2,E}$ is unknotted and hyperbolic, it follows that for all large $n$, $\lambda_n$ admits a unique unknotted and hyperbolic periodic orbit $P_{2,n}$ satisfying $CZ(P_{2,n})=2$ and, moreover, $P_{2,n} \to P_{2,E}$ in $C^\infty$ as $n \to \infty$.

We fix the symplectic frame $\{X_1,X_2\}\subset TW_E$, defined in \eqref{mfX}, which is transverse to the Reeb vector field $X_{\lambda_E}$, where $E>0$ is sufficiently small. This frame was used to project the linearized flow of $\lambda_E$ and in the same way we use it to project the linearized flow of $\lambda_n$ for all large $n$ in order to estimate the Conley-Zehnder index of periodic orbits of $\lambda_n$.

Since the flow of $\lambda_n$ converges to the flow of $\lambda_E$ in $C^\infty_{\rm loc}$ as $n \to \infty$ and the estimates in Lemma \ref{lem2}, Propositions \ref{prop_mu} and \ref{prop_fracconvexo} are open, we obtain similar estimates for the Conley-Zehnder index of periodic orbits of $\lambda_n$ for all large $n$. To be more precise, as in Lemma \ref{lem2} we can find small neighborhoods $U_{4\pi} \subset U_*\subset \U_{E^*}$ of $p_c$ so that the argument $\eta(t)$ of any non-vanishing transverse linearized solution along a maximal segment of trajectory of $\lambda_n$ contained in $U_*$ and intersecting $U_{4 \pi}$ satisfies $\Delta \eta >4\pi$ for all large $n$. Now proceeding as in Proposition \ref{prop_mu}, we find $\bar \eta>0$ so that \begin{equation}\label{eq_doteta} \dot \eta(t)> \bar \eta\end{equation} for any segment of trajectory outside $U_{4\pi}$, for all $E>0$ fixed sufficiently small and $n$  sufficiently large. The fact that $P_{2,n} \to P_{2,E}$ as $n \to \infty$ implies that, for all $E>0$ fixed sufficiently small, every periodic orbit $P$ of $\lambda_n$ in $W_E \setminus P_{2,n}$ which is linked to $P_{2,n}$ intersects $U_{4\pi}$ for all large $n$ and therefore satisfies $CZ(P)> 3$ .

Now arguing as in Proposition \ref{prop_fracconvexo}, we can show that for all $E>0$  fixed even smaller, $\lambda_n$ is weakly convex and $P_{2,n}$ is the only periodic orbit with Conley-Zehnder index $2$ for all large $n$. In fact, arguing indirectly,  assume that for all large $n$, $\lambda_n$ admits a periodic orbit $P_n\neq P_{2,n}$ with $CZ(P_n)\leq 2$. Then $P_n$ does not intersect $U_{4\pi}$ for all large $n$ and the period $T_n$ of $P_n$ is uniformly bounded in $n$. This last statement follows from \eqref{eq_doteta}. From Arzel\`a-Ascoli theorem, we can extract a subsequence so that $P_n \to \bar P$ in $C^\infty$ as $n \to \infty$, where $\bar P$ is a periodic orbit of $\lambda_E$, geometrically distinct to $P_{2,E}$ and  $CZ(\bar P)\leq \liminf_{n \to \infty} CZ(P_n)\leq 2$. This contradicts Proposition \ref{prop_fracconvexo} and concludes the proof of Proposition \ref{prop_EM}-iii).

Thus the proof of Proposition \ref{prop_EM} is complete.

\section{Proof of Proposition \ref{prop_step3}}\label{sec_step3}

In this section we prove Proposition \ref{prop_step3} which is restated below.

\begin{prop}\label{prop_step3b}If $E>0$ is sufficiently small, then there exists $J_E \in \J(\lambda_E)$ so that the almost complex structure $\tilde J_E=(\lambda_E,J_E)$ on the symplectization $\R \times W_E$ admits a pair of finite energy  planes $\tilde u_{1,E}=(a_{1,E},u_{1,E}),\tilde u_{2,E}=(a_{2,E},u_{2,E}):\C \to \R\times W_E$ and their projections $u_{1,E}(\C),u_{2,E}(\C)$ into $W_E$ are, respectively, the hemispheres $U_{1,E},U_{2,E} \subset \partial S_E$ defined in local coordinates by \eqref{hemis}, both asymptotic to the hyperbolic periodic orbit $P_{2,E}$.  \end{prop}

To prove Proposition \ref{prop_step3b}, we first define a suitable complex structure $J_E \in \J(\lambda_E)$ and then explicitly construct a pair of finite energy $\tilde J_E=(\lambda_E,J_E)$-holomorphic planes $\tilde u_{1,E}$ and $\tilde u_{2,E}$ satisfying the desired properties for $E>0$  sufficiently small. By explicit construction, we mean that $\tilde u_{i,E},i=1,2,$ are given in terms of solutions of certain one-dimensional O.D.E.'s.

Consider the local coordinates $(q_1,q_2,p_1,p_2)\in V$ as in Hypothesis 1. Recall from Proposition \ref{prop_step1} that the contact form $\lambda_E$ on $W_E$ coincides near $\partial S_E$ with $$\lambda_E =\frac{1}{2} \sum_{i=1}^2 p_i dq_i - q_i dp_i|_{K^{-1}(E)}$$
and therefore the Reeb vector field $X_{\lambda_E}$ near $\partial S_E$ is given by
\begin{equation}\label{eq_XlambdaE}
X_{\lambda_E}=\frac{1}{-\bar \alpha I_1 + \bar \omega I_2}(-\bar \alpha q_1, \bar \omega p_2, \bar \alpha p_1, -\bar \omega q_2).
\end{equation}
The Hamiltonian function in these coordinates is $$K=-\alpha I_1 + \omega I_2 + O(I_1^2 + I_2^2),$$ where $I_1 = q_1p_1$ and $I_2 = \frac{q_2^2 + p_2^2}{2}$. We know that if  $|I_1|,E>0$ are small enough, we can write \begin{equation}\label{i2pseudo} I_2 = I_2(I_1,E) = \frac{E}{\omega} +\frac{\alpha}{\omega}I_1 + O(I_1^2 + E^2).\end{equation} Thus for each $E>0$ small we find $I_1^-(E)=-\frac{E}{\alpha} + O(E^2)<0$ so that $I_2(I_1^-(E),E)=0$ and $I_2(I_1,E)>0$ if $I_1>I_1^-(E)$.

Given  $z=(q_1,q_2,p_1,p_2)\in V$, let $S_z = K^{-1}(K(z))$. We assume $E=K(z)>0$. Then $T_z S_z = {\rm span} \{e_1,e_2,e_3 \}_z$, where $e_i=j_i e_0$, $e_0 = \nabla K\neq 0$, and $j_i,i=1,2,3,$ are  defined in \eqref{eqframe}. The vector $e_3$ is parallel to $X_{\lambda_E}$ and the contact structure $\xi = \ker \lambda_E$ is isomorphic to $\Pi_{12}:={\rm span}\{e_1,e_2\}$, as a tangent hyperplane distribution, via the projection $\pi_{12}:TW_E \to \Pi_{12}$ along $e_3$. Let $\bar e_i \in \xi$ be the vectors determined by $\pi_{12}(\bar e_i) = e_i, i=1,2$. We define locally the $d\lambda_E$-compatible complex structure $J_E:\xi \to \xi$ in these coordinates  by \begin{equation}\label{Jxi}J_E \cdot \bar e_1 = \bar e_2.\end{equation}
Using a suitable auxiliary Riemannian metric on $W_E$, we can smoothly extend $J_E$ to all $W_E$. We have $$\begin{aligned}e_1 & = (\bar \omega p_2, \bar \alpha q_1,\bar \omega q_2, \bar \alpha p_1),\\ e_2 & = (\bar \omega q_2, \bar \alpha p_1,-\bar \omega p_2, -\bar \alpha q_1), \end{aligned}$$ where both $\bar \alpha = \alpha - \partial_{I_1} R$ and $\bar \omega = \omega + \partial_{I_2} R$ depend on $I_1,I_2$.

Let $\tilde J_E=(\lambda_E,J_E)$ be the natural almost complex structure on the symplectization $\R \times W_E$. We look for a pair of $\tilde J_E$-holomorphic rigid planes $\tilde u_{1,E},\tilde u_{2,E}:\C \to \R \times W_E$, both asymptotic to $P_{2,E}=(x_{2,E}, T_{2,E})$ at $+\infty$ and  approaching $P_{2,E}$ through opposite directions. Assuming that $x_{2,E}(0)=(0,r_E,0,0)$, we write in local coordinates $$x_{2,E}(t) = \left(0,r_E \cos \frac{2 \pi t}{T_{2,E}},0, -r_E \sin \frac{2 \pi t}{T_{2,E}}\right),t\in \R,$$ where $\frac{r_E^2}{2}=I_2(0,E)\sim \frac{E}{\omega}>0,$ if $E>0$ is small, see \eqref{i2pseudo}. Since $X_{\lambda_E}(0,q_2,0,p_2)=\frac{1}{I_2}(0,p_2,0,-q_2)$ we see that the period of $P_{2,E}$ with respect to the Reeb flow of $\lambda_E$ is given by $$T_{2,E}=\pi r_E^2= \frac{2 \pi E}{\omega}+O(E^2).$$ Note that $T_{2,E}$  differs from the period $T_{2,E}^H= \frac{2\pi}{\omega}+O(E)$ of the Hamiltonian flow of $H$.

Our candidates for the planes $\tilde u_{1,E}$ and $\tilde u_{2,E}$ are the ones which project into the embedded $2$-sphere $$\partial S_E=\{q_1+p_1=0\} \cap K^{-1}(E),$$ with $E>0$ small. For simplicity  these planes will be defined in the cylinder $\R \times S^1$ with the help of the change of coordinates $\R \times S^1 \ni (s,t)\mapsto e^{2\pi (s+it)}\in \C$.

Let us construct $\tilde u_{1,E}=(a_{1,E},u_{1,E})$ which will be denoted simply by $\tilde u=(a,u)$. The case $\tilde u_{2,E}$ is completely analogous due to the symmetry of our coordinates. Trying an ansatz, we assume that $\tilde u(s,t)=(a(s,t),u(s,t)):\R \times S^1 \to \R \times W_E$ has the form \begin{equation}\label{eqpseudou}\begin{aligned}u(s,t) & =(q_1(s,t),q_2(s,t),p_1(s,t),p_2(s,t))\\ & =(-h(s),f(s)\cos 2 \pi t,h(s),-f(s) \sin 2 \pi t),\end{aligned}\end{equation} where $f:\R \to (0,\infty)$ and $h:\R \to \R \setminus\{0\}$ are suitable smooth functions to be determined below. The conditions for $\tilde u$ to be $\tilde J_E$-holomorphic are expressed by the equations
\begin{equation}\label{eqpseudosist} \left\{ \begin{array}{c}d(u^* \lambda_E \circ i)=0,\\ \pi u_s + J_E(u) \pi u_t=0,  \end{array} \right. \end{equation} where $\pi:TW_E \to \xi$ is the projection along the Reeb vector field $X_{\lambda_E}$ given by \eqref{eq_XlambdaE}. Using \eqref{eqpseudou}, one can easily check that $$d(u^* \lambda_E \circ i) =\frac{1}{2}(q_1 \Delta p_1 - p_1 \Delta q_1 +q_2 \Delta p_2 - p_2 \Delta q_2)ds \wedge dt =0,$$ and, therefore, the first equation in \eqref{eqpseudosist} is automatically satisfied.

Now we find conditions on $f(s)$ and $h(s)$ so that the second equation of \eqref{eqpseudosist} is satisfied. Restricting the frame $\{\bar e_1,\bar e_2\} \subset \xi$ to $\partial S_E,$ where $p_1=-q_1$, we obtain
\begin{equation}\label{eibarra} \begin{aligned} \bar e_1 & =e_1 - \lambda_E(e_1) X_{\lambda_E} \\ & = (\bar \omega p_2,\bar \alpha q_1,\bar \omega q_2, -\bar \alpha q_1) -\frac{1}{2}\frac{q_1(q_2+p_2)(\bar \omega - \bar \alpha)}{\bar \alpha I_1 - \bar \omega I_2}(-\bar \alpha q_1,\bar \omega p_2,-\bar \alpha q_1,-\bar \omega q_2), \\ \bar e_2 & =e_2 - \lambda_E(e_2) X_{\lambda_E} \\ & = (\bar \omega q_2,-\bar \alpha q_1,-\bar \omega p_2, -\bar \alpha q_1) -\frac{1}{2}\frac{q_1(q_2-p_2)(\bar \omega - \bar \alpha)}{\bar \alpha I_1 - \bar \omega I_2}(-\bar \alpha q_1,\bar \omega p_2,-\bar \alpha q_1,-\bar \omega q_2).   \end{aligned}\end{equation}

A direct computation shows that \begin{equation}\label{piust}\begin{aligned}\pi u_s  &=  u_s - \lambda_E(u_s) X_{\lambda_E} = (-h'(s),f'(s) \cos 2 \pi t, h'(s), -f'(s) \sin 2 \pi t),\\ \pi u_t  &=  u_t - \lambda_E(u_t) X_{\lambda_E}  = (0,-2\pi f(s) \sin 2 \pi t, 0,-2 \pi f(s) \cos 2 \pi t)\\ &  - \frac{\pi f(s)^2}{\bar \alpha h(s)^2 + \frac{\bar \omega f(s)^2}{2}}(\bar \alpha h(s),-\bar \omega f(s) \sin 2 \pi t, \bar \alpha h(s),-\bar \omega f(s) \cos 2 \pi t)  \end{aligned}\end{equation}

Using \eqref{eibarra} e \eqref{piust} we find

\begin{equation}\begin{aligned} \bar e_1= & \frac{\bar \omega}{2h'(s)} \, (q_2-p_2) \, \pi u_s - \left( \frac{\bar \omega - N(\bar \omega - \bar \alpha)\bar \alpha h(s)^2}{2N \bar \alpha \pi f(s)^2 h(s)}\right)(q_2+p_2) \, \pi u_t,\\  \bar e_2= & -\frac{\bar \omega}{2h'(s)} \, (q_2+p_2) \, \pi u_s - \left( \frac{\bar \omega - N(\bar \omega - \bar \alpha)\bar \alpha h(s)^2}{2N \bar \alpha\pi f(s)^2 h(s)} \right) (q_2-p_2) \, \pi u_t,\end{aligned} \end{equation} where $N = \left(\bar \alpha h(s)^2 + \frac{\bar \omega}{2} f(s)^2\right)^{-1}$. Now using \eqref{Jxi} and the second equation of \eqref{eqpseudosist}, we end up finding the following differential equation \begin{equation}\label{eqdifpseudo}h'(s) =-\frac{2 \pi \bar \alpha \bar \omega h(s) f(s)^2}{\bar \omega^2 f(s)^2 + 2 \bar \alpha ^2 h(s)^2}. \end{equation} Recall that $u(s,t)\in K^{-1}(E),\forall (s,t)$. Using \eqref{i2pseudo}, we have \begin{equation}\label{eqfdes} f(s)^2 = \frac{2}{\omega}E - \frac{2\alpha}{\omega} h(s)^2 +  O(h(s)^4 + E^2)\end{equation}for all $s\in \R$. Hence we may view \eqref{eqdifpseudo} as a differential equation of the type $$h'(s) = G(h(s)),$$ where $G=G(h)$ is a smooth function defined in the interval $[-h_E^*,h_E^*]$, where $$h_E^*=\sqrt{-I_1^-(E)}>0$$ is the positive small value of $h$ so that $f$ vanishes in \eqref{eqfdes}, with fixed $E>0$ small enough, see definition of $I_1^-(E)$ above. Thus $G$  vanishes at $h=0$ and at $h=\pm h_E^*$, $G$ is positive in $(-h_E^*,0)$ and negative in $(0,h_E^*)$. Any solution $h=h(s)$ of \eqref{eqdifpseudo} with $h(0)\in (0,h_E^*)$ is strictly decreasing and satisfies \begin{equation} \label{limiteh} \lim_{s \to -\infty} h(s) = h_E^* \mbox{ and } \lim_{s \to +\infty} h(s) =0. \end{equation} It follows from \eqref{eqfdes} that \begin{equation}\label{limitef} \lim_{s \to -\infty} f(s) =0 \mbox{ and } \lim_{s \to +\infty} f(s) =  r_E>0.\end{equation}  Equations \eqref{limiteh} and \eqref{limitef} imply that the loops $S^1 \ni t\mapsto u(s,t)$ converge to $S^1 \ni t\mapsto x_{2,E}(T_{2,E}t)$ in $C^\infty$ as $s \to +\infty$. Moreover, $u(s,t) \to  (-h_E^*,0,h_E^*,0)$ uniformly in $t$ as $s \to -\infty$.

The function $a(s,t)$ can be  defined by
\begin{equation}\label{a}
a(s,t) = \pi \int_0^s f(\tau )^2 d\tau,
\end{equation}
depends only on $s\in \R$ and
\begin{equation}\label{eq_asat}
\begin{aligned}
0 & <a_s(s,t)=\lambda_E(u_t(s,t))=\pi f(s)^2 \to \pi r_E^2=T_{2,E} \mbox{ as } s \to +\infty\\
0 & =a_t(s,t)=\lambda_E(u_s(s,t)).
\end{aligned}
\end{equation}

Let $\tilde u=(a,u):\R\times S^1 \to \R\times W_E$ with $a(s,t)$ given by \eqref{a} and $u(s,t)$ defined as in \eqref{eqpseudou}, where $f(s)$ is determined in terms of $h(s)$ by \eqref{eqfdes} and $h(s)$ is a solution of \eqref{eqdifpseudo} satisfying $h(0)\in (0,h_E^*)$. By construction, $\tilde u$ is a $\tilde J_E$-holomorphic curve since satisfies \eqref{eqpseudosist}. Moreover, \eqref{eq_asat} implies that $\tilde u$ has finite energy $0<E(\tilde u) = T_{2,E}<\infty$. In fact, by Stoke's theorem one has $$\begin{aligned} \int_{\R \times S^1} \tilde u^* d \lambda_\varphi = & \lim_{R \to +\infty} \left\{-\varphi(a(-R)) \pi f(-R)^2 + \varphi(a(R))\pi f(R)^2\right\}\\ = & \varphi(+\infty)\pi r_E^2 \\ = & \varphi(+\infty)T_{2,E},\end{aligned}$$ where $\lambda_\varphi(a,u) =\varphi(a) \lambda_E(u)$,  $\varphi:\R \to [0,1]$ is smooth and  $\varphi'\geq0$. Thus $E(\tilde u)= T_{2,E}$.

The mass $\lim_{s \to -\infty}\int_{\{s\} \times S^1} u^* \lambda_E$ of $\tilde u$ at $s=-\infty$ is equal to 0 and this implies that $\tilde u$ has a removable singularity at $s=-\infty$, see \cite{props2}. Removing it, we obtain the desired $\tilde J_E$-holomorphic plane $\tilde u_{1,E}=(a_{1,E},u_{1,E}):\C \to \R \times W_E$ asymptotic to $P_{2,E}$ at its positive puncture.  One can check that $u_{1,E}$ is an embedding and by construction we have $u_{1,E}(\C)= U_{1,E}$, where $U_{1,E}=\{q_1+p_1 = 0,q_1<0\} \cap K^{-1}(E)$ is a hemisphere of the embedded $2$-sphere $\partial S_E$, which has $P_{2,E}$ as the equator.

Now starting with a solution $h=h(s)$ of \eqref{eqdifpseudo} with $h(0)\in (-h_E^*,0)$, we can proceed in the same way in order to find a finite energy $\tilde J_E$-holomorphic plane $\tilde u_{2,E}=(a_{2,E},u_{2,E}):\C \to \R \times W_E$, also asymptotic to $P_{2,E}$ at its positive puncture, but now converging to $P_{2,E}$ through the opposite direction, i.e., $u_{2,E}(\C)=U_{2,E}$, where $U_{2,E}=\{q_1+p_1 = 0,q_1>0\} \cap K^{-1}(E)$ is the other hemisphere of $\partial S_E$.  This completes the proof of Proposition \ref{prop_step3b}.

\begin{figure}[ht!!]
  \centering
  \includegraphics[width=0.25\textwidth]{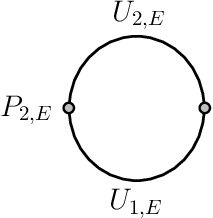}
  \caption{So far we have obtained the binding orbit $P_{2,E}$ and the pair of rigid planes $U_{1,E}=u_{1,E}(\C)$ and $U_{2,E}=u_{2,E}(\C).$}
  \label{fig_planos}
\hfill
\end{figure}

\section{Proof of Proposition \ref{prop_step4}}\label{sec_step4}

In this section we prove Proposition \ref{prop_step4} which is restated below.

\begin{prop}\label{prop_step4b} If $E>0$ is sufficiently small, then the following holds:
\begin{itemize}
\item[i)] There exist a sequence of nondegenerate weakly convex contact forms $\lambda_n$ on $W_E$ satisfying $\ker \lambda_n = \ker \lambda_E, \forall n,$ $\lambda_n \to \lambda_E$ in $C^\infty$ as $n \to \infty$ and a sequence of $d\lambda_E-$compatible complex structures $J_n \in \J_{\rm reg}(\lambda_n) \subset \J(\lambda_E)$ satisfying $J_n \to J_E$ in $C^\infty$ as $n \to \infty$ so that for all $n$ sufficiently large, $\tilde J_n=(\lambda_n,J_n)$ admits a stable finite energy foliation $\tilde \F_n$ of $\R \times W_E$ which projects in $W_E$  onto a $3-2-3$ foliation $\F_n$ adapted to $\lambda_n$. Let $P_{3,n},P_{2,n},P_{3,n}'$ be the nondegenerate binding orbits of $\F_n$ with Conley-Zehnder indices $3,2,3,$ respectively. Then $P_{2,n} \to P_{2,E}$ as $n \to \infty$ and there exist unknotted periodic orbits $P_{3,E}\subset \dot S_E$ and $P_{3,E}'\subset \dot S_E'$ of $\lambda_E$, both with Conley-Zehnder index $3$, such that  $P_{3,n} \to P_{3,E}$ and $P_{3,n}'\to P_{3,E}'$ in $C^\infty$ as $n \to \infty$.
\item[ii)] After changing coordinates with suitable contactomorphisms $C^\infty$-close to the identity map, we can assume that $P_{3,n}=P_{3,E},P_{2,n}=P_{2,E},P_{3,n}'=P_{3,E}'$ as point sets in $W_E$ for all large $n$, and that there are sequences of constants $c_{3,n},c_{3,n}',c_{2,n} \to 1$ as $n \to \infty$ such that $$ \begin{aligned} \lambda_n|_{P_{3,E}} & =  c_{3,n}\lambda_E|_{P_{3,E}}, \\ \lambda_n|_{P_{3,E}'}&=  c_{3,n}'\lambda_E|_{P_{3,E}'}, \\ \lambda_n|_{P_{2,E}}&= c_{2,n}\lambda_E|_{P_{2,E}}.\end{aligned}$$
\end{itemize}
\end{prop}

In order to prove Proposition \ref{prop_step4b}, fix $E>0$ sufficiently small and let $\lambda_n=f_n \lambda_E, f_n\in C^\infty(W_E,(0,+\infty)),$ be a sequence of nondegenerate contact forms on $W_E$ satisfying $\lambda_n \to \lambda_E$ in $C^\infty$ as $n \to \infty$. The existence of such a sequence is proved in \cite[Proposition 6.1]{convex}.

For each large $n$, we choose $J_n \in \J_{\rm reg} (\lambda_n)\subset \J(\lambda_E)$ so that the pair $\tilde J_n=(\lambda_n,J_n)$ admits a stable finite energy foliation $\tilde \F_n$ as in Theorem \ref{sfef}. Since $\J_{\rm reg} (\lambda_n)$ is dense in the set $\J(\lambda_E)$ of $d\lambda_E$-compatible complex structures on $\xi$, with the $C^\infty$-topology, we can assume that $J_n \to J_E$ in $C^\infty$ as $n \to \infty$, where $J_E\in \J(\lambda_E)$ is given by Proposition \ref{prop_step3}.

The projection $\F_n=p(\tilde \F_n)$ onto $W_E$ is a global system of transversal sections adapted to $\lambda_n$, where $p:\R \times W_E \to W_E$ denotes the projection onto the second factor. We know from Proposition \ref{prop_step2} that, for all large $n$, $\lambda_n$ is weakly convex and admits only one periodic orbit $P_{2,n}$ with Conley-Zehnder index $2$ and, moreover, any periodic orbit of $\lambda_n$ with Conley-Zehnder index $3$ is not linked to $P_{2,n}$. Hence $\F_n$ must be a $3-2-3$ foliation adapted to $\lambda_n$ as described in Theorem \ref{sfef}-iv). Denote by $P_{3,n}=(x_{3,n}, T_{3,n})$ and $P_{3,n}'=(x_{3,n}',T_{3,n}')$ the binding orbits of $\F_n$ having Conley-Zehnder index $3$.

From Proposition \ref{prop_EM} we find $E^*>0$ small and a neighborhood $U_{4 \pi}\subset \U_{E^*}$ of $p_c$ so that for all fixed $0<E<E^*$ sufficiently small and large $n$, $P_{3,n}$ and $P_{3,n}'$ do not intersect $U_{4\pi}$, each one lying on a different component of $W_E \setminus U_{4 \pi}$, in particular, on a different component of  $W_E \setminus \partial S_E$, say $\dot S_E$ and $\dot S_E'$, respectively, without loss of generality. Moreover, from the proof of Proposition \ref{prop_EM}-iii) we see that there exists $\bar \eta>0$ so that the argument $\eta(t)$ of any non-vanishing transverse linearized solution along $x_{3,n}$ and $x_{3,n}'$ satisfies $\dot \eta > \bar \eta$, which implies that $T_{3,n}$ and $T_{3,n}'$ are uniformly bounded in $n$. This follows from the fact that $CZ(P_{3,n})=CZ(P_{3,n}')=3, \forall n$. From Arzel\`a-Ascoli theorem we find periodic orbits $P_{3,E}$ and $P_{3,E}'$ of $\lambda_E$, geometrically distinct to $P_{2,E}$, each one lying on a different component of $W_E \setminus \partial S_E$, so that $P_{3,n} \to P_{3,E}$ and $P_{3,n}' \to P_{3,E}'$ in $C^\infty$ as $n \to \infty$, up to extraction of a subsequence. Since $CZ(P_{3,n}) = CZ(P_{3,n}')=3$, since $\lambda_E$ is weakly convex and since $P_{2,E}$ is the only periodic orbit with Conley-Zehnder index $2$, we get that $CZ(P_{3,E}) = CZ(P_{3,E}') = 3$. Moreover, $P_{3,E}$ and $P_{3,E}'$ are simply covered since otherwise their indices would be $\geq 5$, see \cite{convex}. This also implies that $P_{3,E}$ and $P_{3,E}'$ are unknotted. This proves Proposition \ref{prop_step4b}-i).

The proof of Proposition \ref{prop_step4b}-ii) makes use of Moser's trick but a preparation  is necessary. Let $\U\subset W_E$ be a small tubular neighborhood of $P$, where $P$ is one of the orbits $P_{2,E}$, $P_{3,E}$ or $P_{3,E}'$ and denote by $P_n$ the corresponding sequence of periodic orbits of $\lambda_n$ converging to $P$ in $C^\infty$ as $n \to \infty$.  Since $P_n \to
P$ in $C^\infty$ as $n \to \infty$, we find a sequence of diffeomorphisms $\varphi_n:
W_E \to W_E$ supported in $\U$ so that $\varphi_n \to
\text{Id}|_{W_E}$ in $C^\infty$ as $n \to \infty$ and $\varphi_n(P) = P_n$. See \cite[Lemma 4.2]{HS3} for a proof. Letting
$\mu_n = \varphi_n^* \lambda_n$, we have that $\mu_n \to \lambda_E$ in $C^\infty$  as $n \to \infty$ and that $P$ is the geometric image of a periodic orbit of $\mu_n$.

Consider Martinet's coordinates $(\vartheta,x,y)\in S^1 \times
\R^2$ in $\U$ around $P \equiv S^1 \times  \{0\}$, see Appendix \ref{ap_basics}. In these
coordinates we have $\lambda_E = g (\vartheta,x,y) \cdot
(d\vartheta + x dy),$ where $g$ is a smooth and positive function. We may work on the universal cover $\R \times \R^2$ of $S^1 \times \R^2$ and in this case we consider $\vartheta \in \R$. We modify
$\mu_n$, using these coordinates, in order that \begin{equation}\label{partialx} \ker \mu_n|_P =
\ker \lambda_n|_P=\ker \lambda_E|_P=\text{span}\{\partial_x,\partial_y\}.\end{equation}

To achieve \eqref{partialx} let $a_{0,n}(\vartheta),b_{0,n}(\vartheta),\vartheta\in \R,$ be $1$-periodic functions such that $$\ker \mu_n|_{(\vartheta,0,0)} = \text{span}\{a_{0,n}(\vartheta)\partial_\vartheta + \partial_x,
b_{0,n}(\vartheta)\partial_\vartheta + \partial_y\}, \forall \vartheta\in \R.$$ Since $\mu_n \to \lambda_E$, we have $a_{0,n},b_{0,n} \to 0$ in $C^\infty$ uniformly in $\vartheta$ as $n \to \infty$. Fix a cut-off function $f:\U \to [0,1]$ which does not depend on $\vartheta$, is equal to $1$ near $P$ and $0$ outside a small tubular neighborhood of $P$. Let $\psi_{0,n}:\U \to \U$ be the diffeomorphism defined by $$\psi_{0,n}(\vartheta,x,y) = (\vartheta + f(\vartheta,x,y)(a_{0,n}(\vartheta)x +b_{0,n}(\vartheta)y), x,y).$$ Let $\bar \mu_n:= \psi_{0,n}^*\mu_n$. One easily checks that $d \psi_{0,n}(\vartheta,0,0) \cdot \partial_x=a_{0,n}(\vartheta)\partial_\vartheta + \partial_x$ and $d \psi_{0,n}(\vartheta,0,0) \cdot \partial_y=b_{0,n}(\vartheta)\partial_\vartheta + \partial_y$, hence $\bar \mu_n$ satisfies
\begin{equation}\label{eqcontato}
\ker \bar \mu_n|_P = \text{span}\{\partial_x,\partial_y\}=\ker\lambda_E|_P.
\end{equation}  Observe that $\psi_{0,n}$ fixes $P \equiv S^1 \times  \{0\}$ and that $\bar \mu_n \to \lambda_E$ in $C^\infty$ as $n \to \infty$ since $\psi_{0,n} \to \text{Id}|_\U$ in $C^\infty$ as $n\to \infty$. We replace $\mu_n$ with $\bar \mu_n$ keeping $\mu_n$ in the notation for simplicity.

Another modification on $\mu_n$ is necessary. From \eqref{eqcontato}, we can find $h_n:S^1 \to \R$ such that
\begin{equation}\label{eq_hn}
\mu_n|_{(\vartheta,0,0)}=h_n(\vartheta)\lambda_E|_{(\vartheta,0,0)}.
\end{equation}
Define $T_n(\vartheta)$ by
\begin{equation}\label{eq_Tn}
\int_0^{T_n(\vartheta)} h_n(\tau) d \tau = c_n\vartheta,
\end{equation}
where $c_n:= \int_0^1 h_n(\tau) d \tau.$ Note that $T_n(0)=0$ and $T_n(1)=1$.

Let $\psi_{1,n}:\U \to \U$ be the diffeomorphism defined by $$\psi_{1,n}(\vartheta,x,y) = \left( \vartheta +f(\vartheta,x,y)\left(T_n(\vartheta) - \vartheta \right),x,y\right),$$ where  $f$ is the cut-off function constructed above. Note that $h_n \to 1$ in $C^\infty$ uniformly in $\vartheta$ as $n \to \infty$ since $\mu_n \to \lambda_E$ in $C^\infty$ as $n \to \infty$. Let $\bar \mu_n = \psi_{1,n}^* \mu_n$. From \eqref{eq_hn}, \eqref{eq_Tn} and the fact that $\lambda_E|_{(\vartheta,0,0)} \cdot \partial_\vartheta=T$, $\forall\vartheta \in S^1$, where $T$ is the period of $P$ with respect to $\lambda_E$, we obtain that $ \bar \mu_n|_{(\vartheta,0,0)} \cdot \partial_\vartheta = c_n \lambda_E|_{(\vartheta,0,0)}\cdot \partial_\vartheta$. Thus equation \eqref{eqcontato} is improved by \begin{equation}\label{eqmultiplo} \bar \mu_n|_{(\vartheta,0,0)} = c_n \lambda_E|_{(\vartheta,0,0)}, \forall \vartheta \in S^1.\end{equation} Again we replace $\mu_n$ with $\bar \mu_n$ keeping $\mu_n$ in the notation. Since $h_n \to 1$, we have $c_n \to 1$ and, therefore, $T_n \to {\rm Id}$ and $\psi_{1,n} \to  \text{Id}|_{\U}$ in $C^\infty$ uniformly in $\vartheta$ as $n \to \infty$. This implies that $\mu_n \to \lambda_E$ in $C^\infty$ as $n \to \infty$. Note further that $\psi_{1,n}$ also fixes $P \equiv S^1 \times  \{0\}$.

Since the diffeomorphisms above are supported near $P$, we can change the initial sequence $\lambda_n$ to a new sequence of contact forms $\mu_n$ which satisfy \begin{equation}\label{eqmultiplo2}\mu_n|_P = c_n \lambda_E|_P \end{equation} simultaneously for each one of the periodic orbits $P_{3,E},P_{3,E}'$ and $P_{2,E}$ for constants $c_{3,n},c_{3,n}',c_{2,n} \to 1$ as $n \to \infty$. Observe that by construction $P_{3,E},P_{3,E}'$ and $P_{2,E}$ are geometric images of periodic orbits of $\mu_n$.

Finally, we use Moser's trick in order to modify $\mu_n$ so that $\ker \mu_n=\ker\lambda_E$ for all $n$.
Consider for $t\in [0,1]$ the family of $1$-forms $\mu_t = t
\mu_n + (1-t) \lambda_E$, which are contact forms for all large $n$. We fix $n$ large and omit the dependence of
the functions on $n$ to simplify the notation. We construct an
isotopy $\rho_t:W_E \to W_E, t\in [0,1],$  so that
\begin{equation}\label{eq_gt}
\rho_t^* \mu_t = g_t \lambda_E,
\end{equation}
where $g_t,t\in[0,1],$ is a
smooth family of positive functions, satisfying $g_0\equiv 1$.
This isotopy is generated by a time-dependent vector field $X_t$
on $W_E$, so that $X_t|_P \equiv 0$ for
$t\in[0,1]$. In fact in order to have these conditions, we choose
$X_t \subset \ker \mu_t$ satisfying
\begin{equation}\label{eqXt} i_{X_t} d\mu_t|_{\ker \mu_t} = (\lambda_E
- \mu_n)|_{\ker \mu_t}.\end{equation} Since $d\mu_t|_{\ker \mu_t}$
is nondegenerate, $X_t$ is uniquely defined for all $t\in[0,1]$.
From \eqref{eqXt} and since $X_t \subset \ker \mu_t$, we have
$$\begin{aligned} \frac{d}{dt} \rho_t^* \mu_t = &
\rho_t^*\left(\LL_{X_t}\mu_t + \frac{d}{dt}\mu_t \right)
\\ = & \rho_t^* \left(i_{X_t} d \mu_t + d i_{X_t} \mu_t + \mu_n -
\lambda_E \right) \\ = & \rho_t^*(a_t \mu_t) \\ = & (a_t \circ
\rho_t) \rho_t^* \mu_t,\end{aligned}$$
which implies that
$\rho_t^*\mu_t=e^{\int_0^t a_s \circ \rho_s ds} \lambda_E.$
Thus defining $g_t: W_E \to (0,\infty)$ by $$g_t = e^{\int_0^t a_s \circ \rho_s ds}$$ we obtain \eqref{eq_gt}.

For each large $n$, we define $\psi_n: W_E \to W_E$ by
$\psi_n:= \rho_1$, $\bar f_n: W_E \to (0,\infty)$ by $\bar f_n:=g_1$ and $\bar \lambda_n:= \psi_n^*
\mu_n$. Since $\mu_n \to \lambda_E$ in $C^\infty$ as $n \to \infty$, it follows from \eqref{eqXt} that $X_t,t\in [0,1],$ converges to the
null vector field in $C^\infty$ as $n \to \infty$. This implies
that $\psi_n \to \text{Id}_{W_E}$ and $\bar \lambda_n \to \lambda_E$ in $C^\infty$ as $n \to \infty$. Moreover, from \eqref{eq_gt} we see that $\bar\lambda_n=\bar f_n \lambda_E$ and therefore $\ker\bar\lambda_n=\ker\lambda_E$. Since
$\ker \mu_t|_P = \ker \mu_n|_P = \ker \lambda_E|_P = \text{span}
\{\partial_x, \partial_y\}$, we have from \eqref{eqXt} that
$X_t|_P \equiv 0,\forall t\in[0,1]$, hence $P$ is the geometric image of a periodic orbit of $\bar
\lambda_n$ and we still have $\bar \lambda_n|_P = c_n \lambda_E|_P$, with $c_n \to 1$ as $n\to \infty$.

This procedure is done simultaneously for the periodic orbits $P=P_{3,E},$ $P=P_{3,E}'$ and $P=P_{2,E}$, and we obtain a sequence of nondegenerate weakly convex contact forms $\bar \lambda_n$ on
$W_E$ satisfying the requirements
of the statement for the constants $c_{3,n},c_{3,n}'$ and $c_{2,n}$ constructed above. Notice that $c_{3,n}T_{3,E},c_{3,n}'T_{3,E}'$ and $c_{2,n}T_{2,E}$ are the periods of $P_{3,n},P_{3,n}'$ and $P_{2,n}$, with respect to the Reeb flow of $\bar \lambda_n,$ respectively.

Observe that the map $T_n:= \varphi_n\circ\psi_{0,n}\circ\psi_{1,n}\circ\psi_n:W_E \to W_E$ supported near $P_{2,E}\cup P_{3,E}\cup P_{3,E}'$ induces the contact form $\bar \lambda_n:= T_n^* \lambda_n$, with $\ker \bar \lambda_n = \ker \lambda_n = \ker \lambda_E$,  and also induces the complex structure $\bar J_n = T_n^* J_n\in \J_{\rm reg}(\bar \lambda_n)$. Let $\tilde {\bar J}_n$ be the almost complex structure induced by the pair $(\bar \lambda_n,\bar J_n)$. The map $\bar T_n: \R \times W_E \to \R \times W_E$ given by $(a,u)\mapsto (a,T_n(u))$ satisfies $\tilde {\bar J}_n= \bar T_n ^* \tilde J_n$ and therefore $\tilde {\bar J}_n$ admits a stable finite energy foliation $\bar {\tilde \F}_n:=\bar T_n^{-1}(\tilde \F_n)$ with the same properties of $\tilde \F_n$. We keep using the notation $\lambda_n,J_n,\tilde J_n,\tilde \F_n$ instead of $\bar \lambda_n,\bar J_n,\tilde {\bar J}_n, \bar {\tilde \F}_n$. Proposition \ref{prop_step4b}-ii) is proved.

\begin{figure}[ht!!]
  \centering
  \includegraphics[width=0.35\textwidth]{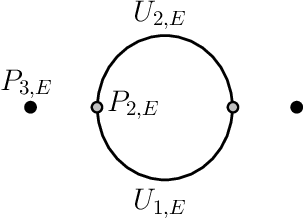}
  \caption{So far we have obtained the binding orbits $P_{2,E}$, $P_{3,E}$ and the pair of rigid planes $U_{1,E}=u_{1,E}(\C)$ and $U_{2,E}=u_{2,E}(\C).$}
  \label{fig_planosbindings}
\hfill
\end{figure}

\section{Proof of Proposition \ref{prop_step5}-${\rm i})$ }\label{sec_5-i)}

At this point, for all $E>0$ sufficiently small, we have sequences  $$\begin{aligned} C^\infty(W_E,(0,\infty)) \ni f_n & \to 1,\\ \J(\lambda_E)\supset \J_{\rm reg}(\lambda_n) \ni J_n & \to J_E,\end{aligned}$$ as $n \to \infty$, both convergences in  $C^\infty$. Moreover,  the following holds:

\begin{itemize}
\item $\lambda_E$ admits unknotted periodic orbits $$P_{3,E}=(x_{3,E},T_{3,E}), P_{3,E}'=(x_{3,E}',T_{3,E}') \mbox{ and } P_{2,E}=(x_{2,E},T_{2,E})$$ with $CZ(P_{3,E})=CZ(P_{3,E}')=3$ and $CZ(P_{2,E})= 2$. $P_{3,E}\subset \dot S_E$, $P_{3,E}'\subset \dot S_E'$ and $P_{2,E} \subset \partial S_E=\partial S_E'$.  $\tilde J_E=(\lambda_E,J_E)$ admits a pair of rigid planes $\tilde u_{1,E}=(a_{1,E},u_{1,E}),\tilde u_{2,E}=(a_{2,E},u_{2,E}):\C \to \R\times W_E$, both asymptotic to $P_{2,E}$, so that $u_{1,E}(\C)=U_{1,E}$ and $u_{2,E}(\C) = U_{2,E}$ are the hemispheres of $\partial S_E$.

\item For all $n$, $\lambda_n=f_n \lambda_E$ admits unknotted periodic orbits $$P_{3,n}=(x_{3,n},T_{3,n}), P_{3,n}'=(x_{3,n}',T_{3,n}') \mbox{ and } P_{2,n}=(x_{2,n},T_{2,n})$$ with $CZ(P_{3,n})=CZ(P_{3,n}')=3 \mbox{ and }CZ(P_{2,n})= 2.$ Moreover, $P_{3,n}=P_{3,E}$, $P_{3,n}'= P_{3,E}'$ and $P_{2,n}=P_{2,E}$ as point sets in $W_E$ and for all $n$ 
$$\begin{aligned} 
f_n|_{P_{3,E}}=c_{3,n}, f_n|_{P_{3,E}'}=c_{3,n}', f_n|_{P_{2,E}}=c_{2,n}, \\ df_n|_{P_{3,E}}=df_n|_{P_{3,E}'}=df_n|_{P_{2,E}}=0, 
\end{aligned}$$ 
where the constants $c_{3,n},c_{3,n}',c_{2,n} \to 1$ as $n \to \infty$. We also have $$\begin{aligned} T_{3,n} &= c_{3,n}T_{3,E} \to T_{3,E}, \\ T_{3,n}' &= c_{3,n}'T_{3,E}' \to T_{3,E}'\\ T_{2,n} &= c_{2,n}T_{2,E} \to T_{2,E},\end{aligned}$$  as  $n\to \infty.$
\item The pair $\tilde J_n=(\lambda_n,J_n)$ admits a stable finite energy foliation $\tilde \F_n$, which projects onto a $3-2-3$ foliation $\F_n$ adapted to $\lambda_n$ having $P_{3,n}, P_{3,n}'$ and $P_{2,n}$ as binding orbits.
\item $\tilde \F_n$ contains a pair of rigid planes $\tilde u_{1,n}=(a_{1,n},u_{1,n}),\tilde u_{2,n}=(a_{2,n},u_{2,n}):\C \to \R \times W_E$, both asymptotic to $P_{2,n}$. $u_{1,n}(\C) \cup P_{2,n} \cup u_{2,n}(\C)$ is a topological embedded $2$-sphere in $W_E$. In fact this sphere is at least $C^1$ and separates $W_E$ in two components whose closures, denoted by $S_n$ and $S_n'$, are both topological closed $3$-balls and satisfy $\partial S_n = \partial S_n'= u_{1,n}(\C) \cup P_{2,n} \cup u_{2,n}(\C)$. Moreover,  $P_{3,n} \subset \dot S_n:= S_n \setminus \partial S_n$ and $P_{3,n}' \subset \dot S_n':= S_n' \setminus \partial S_n'$ for all $n$.
\item The Reeb vector field $X_{\lambda_n}$ is transverse to $u_{1,n}$ and $u_{2,n}$, points inside $S_n$ at $u_{1,n}(\C)$ and points inside $S_n'$ at $u_{2,n}(\C)$.
\end{itemize}

In this section we prove Proposition \ref{prop_step5}-i) restated below.

\begin{prop}\label{prop_step5ib} If $E>0$ is sufficiently small then,  up to suitable reparametrizations and $\R$-translations,
$\tilde u_{i,n} \to \tilde u_{i,E},i=1,2,$ in $C^\infty_{\rm loc}$ as $n \to \infty$, where $\tilde u_{i,E}$ are the rigid planes obtained in Proposition \ref{prop_step3}. Moreover, given a small neighborhood $\U \subset W_E$ of $\partial S_E=u_{1,E}(\C)\cup P_{2,E} \cup u_{2,E}(\C)$, there exists $n_0\in \N$ so that if $n \geq n_0$, then $u_{1,n}(\C)\cup  P_{2,n} \cup u_{2,n}(\C) \subset \U.$
\end{prop}

We will need  the following lemma.

\begin{lem}\label{lemamin} $P_{2,E}$  is the periodic orbit of $\lambda_E$ with minimal action and, except by its covers, all other periodic orbits have action bounded below by a constant $T_{\rm min}>0$ which does not depend on $E>0$ small.\end{lem}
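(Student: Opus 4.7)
The plan is twofold: first I will compute $T_{2,E}$ explicitly to see it vanishes with $E$, and then I will establish a uniform lower bound for the action of every non-cover periodic orbit via the Hamiltonian flow on compact sets away from $p_c$. For the first part, the construction in Section \ref{sec_step3} parametrizes $P_{2,E}$ in Hamiltonian time as $x_{2,E}(t)=(0,r_E\cos\bar\omega t,0,-r_E\sin\bar\omega t)$ with $r_E^2=2I_2(0,E)$. A direct computation of $\int_{P_{2,E}}\lambda_E$ using $\lambda_E=\lambda_0=\tfrac12\sum_i(p_i\,dq_i-q_i\,dp_i)$ near $\partial S_E$ gives $T_{2,E}=\pi r_E^2=\frac{2\pi E}{\omega}+O(E^2)\to 0$ as $E\to 0^+$, so for all sufficiently small $E$ the orbit $P_{2,E}$ achieves the minimum action.

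For the second part, I use that the Reeb flow of $\lambda_E$ is a positive reparametrization of the Hamiltonian flow of $H$, namely $X_{\lambda_E}=X_H/\lambda_E(X_H)$ with $\lambda_E(X_H)=dH\cdot Y_E>0$ (since $Y_E$ is an outward Liouville vector field). Consequently every Reeb periodic orbit $P$ has action $\mathcal{A}(P)=\int_0^{T^H}\lambda_E(X_H(w(t)))\,dt$, where $T^H$ is the Hamiltonian period. The key local observation is that by the integrability of $K$ in the coordinates of Hypothesis 1, the explicit formulas $q_1(t)=q_1(0)e^{-\bar\alpha t}$, $p_1(t)=p_1(0)e^{\bar\alpha t}$, $q_2(t)+ip_2(t)=(q_2(0)+ip_2(0))e^{-i\bar\omega t}$ force any periodic orbit contained in a sufficiently small neighborhood $V_1$ of $0$ to satisfy $q_1(0)=p_1(0)=0$, hence to be a cover of $P_{2,E}$. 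Therefore every non-cover periodic orbit $P$ must exit $V_1$.

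Now fix a smaller neighborhood $V_2\subset V_1$ of $0$ and consider the compact set $K_{\rm cpt}=\bigcup_{0\le E\le E^*}(W_E\setminus V_2)$. On $K_{\rm cpt}$ the vector field $X_H$ is smooth and non-vanishing (as $p_c\notin K_{\rm cpt}$), $\lambda_E\to\bar\lambda_0$ in $C^\infty$ by Proposition \ref{prop_step1}, and $\bar\lambda_0(X_H)>0$; hence there exist uniform constants $0<m\le|X_H|\le M$ and $\lambda_E(X_H)\ge c>0$ on $K_{\rm cpt}$ for all $E>0$ small. Two sub-cases arise: either $P\subset W_E\setminus V_2$, in which case a standard flow-box estimate $|\phi_t^H(p)-p|\ge tm/2$ for small $t$ on $K_{\rm cpt}$ yields a uniform lower bound $T^H\ge\varepsilon_0>0$; or $P$ enters $V_2$, in which case it must twice traverse the annulus $V_1\setminus V_2$, contributing at least $2\,{\rm dist}(\partial V_1,\partial V_2)/M$ of Hamiltonian time inside $K_{\rm cpt}$. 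Either way, $\mathcal{A}(P)\ge c\tau_0=:T_{\rm min}>0$ independently of $E$. The main obstacle to watch for is precisely this uniformity as $E\to 0^+$: the Reeb field $X_{\lambda_E}$ speeds up near $p_c$, which is exactly why $T_{2,E}\to 0$; the trick is that all decisive estimates are performed on $K_{\rm cpt}$, strictly away from $p_c$, where $X_H$, $\lambda_E(X_H)$, and the comparison with $\bar\lambda_0$ all remain uniformly controlled.
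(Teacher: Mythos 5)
Your proof is correct and follows essentially the same route as the paper: compute $T_{2,E}=\frac{2\pi E}{\omega}+O(E^2)\to 0$ from the local normal form, then bound from below the action of non-cover orbits by isolating a compact region away from $p_c$ on which $\lambda_E\to\bar\lambda_0$, splitting into the case where the orbit stays entirely in that region (uniform period bound) and the case where it must traverse a fixed annulus around $p_c$ (uniform action contribution). The only difference is stylistic: the paper argues indirectly and handles the first case via Arzel\`a--Ascoli, whereas you give a direct flow-box estimate on the Hamiltonian period, but the decisive ideas coincide.
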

\begin{proof} According to Proposition \ref{prop_step1}, the contact form $\lambda_E$ near $P_{2,E}$ is given in local coordinates by $\lambda_E = \frac{1}{2}\sum_{i=1,2} p_i dq_i - q_idp_i|_{K^{-1}(E)}$. In such coordinates we have $P_{2,E} = \left\{ I_2 = \frac{E}{\omega} + O(E^2), q_1=p_1=0\right\}$, see \eqref{I2}. Then the action of $P_{2,E}$ satisfies $$\int_{P_{2,E}} \lambda_E = T_{2,E}  = \frac{2\pi E}{\omega} + O(E^2)\to 0 \mbox{ as } E \to 0^+.$$

Arguing indirectly, let us assume that there exists a sequence $E_k \to 0^+$ and periodic orbits $Q_k=(y_k,T_k)\in\P(\lambda_{E_k})$, which are not covers of $P_{2,E_k}$, with \begin{equation}\label{eqaction0}0<\int_{Q_k} \lambda_{E_k} \to 0 \mbox{ as } k \to \infty.\end{equation}

Let $\delta_0>0$ be any fixed small constant. From Proposition \ref{prop_step1}, we have $W_E \to W_0$ and $\lambda_E \to \bar \lambda_0$ in $C^\infty$ as $E \to 0^+$ outside $B_{\delta_0}(0)\subset \U_{E^*},$ where $\bar \lambda_0$ is a contact form on $\dot S_0 \cup \dot S_0'$ and $E^*>0$ is small.

If $Q_k$ is contained outside $B_{\delta_0}(0)$ for all $k$, then by  Arzel\`a-Ascoli theorem we find a constant Reeb orbit of $\bar \lambda_0$ on $\dot S_0\cup \dot S_0'=W_0 \setminus \{p_c\}$. This is a contradiction and we conclude that $Q_k$ must intersect $B_{\delta_0}(0)$. Now since $Q_k$ is not a cover of $P_{2,E_k}$, $Q_k$ must contain a branch $q_k$ inside $B_{2 \delta_0}(0)\setminus B_{\delta_0}(0)$, from $\partial B_{\delta_0}(0)$ to $\partial B_{2\delta_0}(0)$, for all $k$. This follows from the local behavior of the flow near the saddle-center equilibrium point. Therefore we find a uniform constant $\epsilon>0$ so that $$\int_{Q_k} \lambda_{E_k}>\int_{q_k} \lambda_{E_k} > \epsilon, \forall k \mbox{ large.} $$ This contradicts \eqref{eqaction0} and proves the existence of $T_{\rm min}>0$ as in the statement.
\end{proof}

Our aim is to show that, after reparametrization and $\R$-translation, $\tilde u_{i,n} \to \tilde u_{i,E}$ in $C^{\infty}_{\rm loc}$ as $n \to \infty$, $i=1,2$. We denote $\tilde u_{i,n}=(a_{i,n},u_{i,n})$ simply by $\tilde u_n=(a_n,u_n)$.

Now we have \begin{equation}\label{eqElimit} E(\tilde u_n) = \int_{\C} u_n^* d\lambda_n = T_{2,n} \to T_{2,E}=\frac{2\pi E}{\omega} + O(E^2) \mbox{ as } n \to \infty. \end{equation}

After a reparametrization $z \mapsto \alpha z,\alpha>0,$ and a translation in the $\R$-direction of $\tilde u_n$, we can assume that for all $n$ we have \begin{equation}\label{eqa1}  a_n(0)=  0=\min_{z\in \C} a_n(z) \mbox{ and } \int_{\D}u_n^*d\lambda_n =  T_{2,n} - \gamma_0,  \end{equation} where $0<\gamma_0\ll T_{2,E}$ is fixed and $\D = \{z\in \C: |z| \leq 1\}$ is the closed unit disk.

For each $n$, we have the Riemannian metric on $\R \times W_E$ induced by $\tilde J_n=(\lambda_n,J_n)$, given by $$\left< u, v \right>_n:= da(u)da(v)+ \lambda_n(u) \lambda_n(v)+ d\lambda_n (u,J_n \cdot v),$$ where $\lambda_n$ is seen as a $1$-form on $\R \times W_E$ via the projection onto the second factor $p:\R \times W_E \to W_E$. Its norm is denoted by $|u|_n = \sqrt{\left<u,u\right>_n}.$ Let $$\left|\nabla \tilde u_n(z)\right|_n=\max\left \{\left|\frac{\partial \tilde u_n}{\partial s}(z)\right|_n, \left|\frac{\partial \tilde u_n}{\partial t}(z)\right|_n\right\}$$

\begin{prop}\label{propbounds}If $E>0$ is sufficiently small, then $|\nabla \tilde u_n(z)|_n$ is uniformly bounded in $z\in \C$ and in $n\in \N$. Moreover,  there exists a finite energy $\tilde J_E$-holomorphic plane $\tilde u:\C \to \R \times W_E$, asymptotic to $P_{2,E},$ so that, up to extraction of a subsequence,  $\tilde u_n \to \tilde u$ in $C^\infty_{\rm loc}$ as $n \to \infty$. \end{prop}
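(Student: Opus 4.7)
The plan is to combine a bubbling-off analysis with an Arzel\`a--Ascoli / elliptic regularity argument. The uniform gradient bound will follow from the fact that Lemma \ref{lemamin} prevents bubbles from being asymptotic to any orbit other than $P_{2,E}$, while the normalization \eqref{eqa1} prevents them from absorbing the full $d\lambda_E$-mass $T_{2,E}$ of $\tilde u_n$. Once the gradient bound is secured, the limit $\tilde u$ is extracted in the standard way and its asymptotic orbit is identified via Hofer's theorem together with Lemma \ref{lemamin} once more.

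For the gradient bound, I argue by contradiction. Suppose $R_n := |\nabla \tilde u_n(z_n)|_n \to \infty$ for some sequence $z_n \in \C$. Applying Hofer's rescaling lemma and passing to a subsequence, the reparametrizations $w \mapsto z_n + w/R_n$ of $\tilde u_n$, after vertical $\R$-translations, converge in $C^\infty_{\rm loc}$ to a non-constant finite energy $\tilde J_E$-holomorphic plane $\tilde v = (b,v) : \C \to \R \times W_E$; bubble spheres are excluded because $d\lambda_E$ is exact. By Hofer's theorem, $\tilde v$ is asymptotic at infinity to a periodic orbit $P$ of $\lambda_E$, and its $d\lambda_E$-mass equals the action $T_P$ of $P$. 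Together with \eqref{eqElimit} this gives $T_P \leq T_{2,E}$. By Lemma \ref{lemamin}, after fixing $E$ so small that $T_{2,E} < T_{\rm min}$, the inequality $T_P \leq T_{2,E}$ forces $P = P_{2,E}$ and $T_P = T_{2,E}$, so that the bubble carries the \emph{entire} $d\lambda_E$-mass $T_{2,E}$ concentrated in arbitrarily small neighborhoods of $z_n$. But \eqref{eqa1} prescribes $\int_\D u_n^* d\lambda_n = T_{2,n} - \gamma_0$ and $\int_{\C \setminus \D} u_n^* d\lambda_n = \gamma_0$, with $0 < \gamma_0 \ll T_{2,E}$ fixed. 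Whichever accumulation point of $\{z_n\}$ we consider (in $\D^\circ$, in $\C \setminus \overline{\D}$, on $\partial \D$, or at infinity), concentrating the mass $T_{2,E}$ near it eventually forces one of these two quantities to exceed $T_{2,E}$, a contradiction.

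With $\sup_{n,z} |\nabla \tilde u_n(z)|_n < \infty$ established, and using $a_n(0) = \min a_n = 0$ together with the gradient bound to keep $a_n$ locally bounded, the family $\tilde u_n$ is equicontinuous and locally bounded on $\C$. Elliptic regularity for the Cauchy--Riemann-type equation $\partial_s \tilde u_n + \tilde J_n(\tilde u_n)\, \partial_t \tilde u_n = 0$, combined with $\tilde J_n \to \tilde J_E$ in $C^\infty$, yields a subsequence converging in $C^\infty_{\rm loc}$ to a $\tilde J_E$-holomorphic map $\tilde u = (a,u) : \C \to \R \times W_E$. Lower semicontinuity of energy gives $E(\tilde u) \leq T_{2,E}$, and the $d\lambda_E$-mass $\int_\D u^* d\lambda_E = T_{2,E} - \gamma_0 > 0$ shows $\tilde u$ is non-constant, hence a genuine finite energy plane. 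Hofer's asymptotic theorem provides an asymptotic periodic orbit of action at most $T_{2,E}$, and Lemma \ref{lemamin} forces this orbit to be $P_{2,E}$, completing the proof.

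The main obstacle is that the usual compactness machinery for pseudo-holomorphic curves relies on a strict positive lower bound on the periods of possible bubble limits, while here the target orbit $P_{2,E}$ has action $T_{2,E}$ that degenerates to zero as $E \to 0^+$. Lemma \ref{lemamin} is precisely what separates the short orbit $P_{2,E}$ from all other periodic orbits of $\lambda_E$, and the normalization \eqref{eqa1} is the quantitative device that forbids the bubble from swallowing the whole asymptotic mass $T_{2,E}$. The subtle point is the order of the choices: first shrink $E$ so that $T_{2,E} < T_{\rm min}$, then pick $\gamma_0$ with $0 < \gamma_0 < T_{2,E}$.
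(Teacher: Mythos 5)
Your overall strategy matches the paper's, and the second half (extracting the $C^\infty_{\rm loc}$ limit once gradient bounds hold, identifying its asymptotic limit via Lemma \ref{lemamin} and Theorem \ref{asymptotic}) is correct. The ordering of choices you highlight --- first shrink $E$ so that $T_{2,E} < T_{\min}$, then fix $\gamma_0 \in (0,T_{2,E})$ --- is indeed the crucial set-up.

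However, there is a genuine gap in your bubbling-off argument: the mass-budget contradiction you describe does not rule out a bubbling point $z_* \in \partial\D$. You write that ``whichever accumulation point of $\{z_n\}$ we consider \dots concentrating the mass $T_{2,E}$ near it eventually forces one of these two quantities to exceed $T_{2,E}$.'' This is correct when $z_*$ lies in $\D^\circ$ (a small ball $B_\epsilon(z_n)\subset\D$ would then carry mass $\to T_{2,E}$, but $\int_\D u_n^*d\lambda_n = T_{2,n}-\gamma_0 < T_{2,E}$) and when $z_*$ lies in $\C\setminus\overline{\D}$ or at $\infty$ (similarly using $\int_{\C\setminus\D}u_n^*d\lambda_n = \gamma_0 < T_{2,E}$). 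But if $z_* \in \partial\D$, the ball $B_\epsilon(z_n)$ straddles $\partial\D$, and the $d\lambda_n$-mass near $z_*$ is split between $\D$ and its complement; the total is $T_{2,n}\to T_{2,E}$, and neither piece need exceed $T_{2,E}$. So the mass argument exactly fails to derive a contradiction in the one case that actually occurs. The paper concludes from the mass argument only that $|z_n|\to 1$, and then handles $z_*\in\partial\D$ separately: since $z_*$ absorbs the whole $d\lambda_E$-mass, there is no other bubbling point, so $\tilde u_n$ has $C^\infty_{\rm loc}$-bounds on $\C\setminus\{z_*\}$ and converges there to a finite-energy curve $\tilde v$ with $\int v^*d\lambda_E=0$ and $\int_{\{|z|=2\}}v^*\lambda_E\geq T_{2,E}-\gamma_0>0$, hence a trivial cylinder over a periodic orbit --- which by Lemma \ref{lemamin} must be $P_{2,E}$. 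The contradiction is then that the $\R$-component of a trivial cylinder is unbounded below, while the normalization $a_n(0)=\min_\C a_n=0$ forces the $\R$-component of any $C^\infty_{\rm loc}$-limit to be $\geq 0$. This second use of \eqref{eqa1} --- the lower bound on $a_n$ rather than the mass split --- is what your argument is missing and needs to be added to close the proof.
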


To prove this proposition we make use of the Lemma \ref{lemamin}, the normalization \eqref{eqa1} and the topological lemma below.

\begin{lem}[{Ekeland-Hofer's Lemma }\cite{93}]\label{lemtop} Let $(X,d)$ be a complete metric space and $f:X \to [0,\infty)$ be a continuous function. Given $z\in X$ and $\epsilon>0$, there exists $0<\epsilon'\leq \epsilon$ and $z'\in X$ such that $\epsilon'f(z') \geq \epsilon f(z), d(z,z') \leq 2 \epsilon'$ and $f(y) \leq 2f(z')$ if $d(y,z')\leq \epsilon'$.
\end{lem}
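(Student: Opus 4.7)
The plan is to prove this by an iterative ``improvement'' argument. I set $z_0 := z$ and $\epsilon_0 := \epsilon$, and proceed inductively. At step $n$ with data $(z_n, \epsilon_n)$, I check whether $f(y) \leq 2 f(z_n)$ for every $y$ with $d(y, z_n) \leq \epsilon_n$. If yes, I terminate and declare $(z', \epsilon') := (z_n, \epsilon_n)$. If no, I pick $z_{n+1} \in X$ with $d(z_{n+1}, z_n) \leq \epsilon_n$ and $f(z_{n+1}) > 2 f(z_n)$, and I set $\epsilon_{n+1} := \epsilon_n/2$, so that $\epsilon_n = \epsilon/2^n$ for all $n$.

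First I would verify that, if the iteration terminates at some finite step $n$, the triple $(z', \epsilon') = (z_n, \epsilon_n)$ satisfies the three required properties. By construction $\epsilon_n \leq \epsilon_0 = \epsilon$, and the third property (the ``almost maximum'' condition $f(y)\leq 2f(z')$ on the ball of radius $\epsilon'$) is exactly the termination criterion. For the multiplicative inequality, an easy induction using $f(z_{k+1}) > 2 f(z_k)$ gives $f(z_n) \geq 2^n f(z)$, and hence
\begin{equation*}
\epsilon_n f(z_n) \;\geq\; \frac{\epsilon}{2^n}\cdot 2^n f(z) \;=\; \epsilon f(z),
\end{equation*}
which is trivial when $f(z)=0$. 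For the distance bound I use the telescoping estimate
\begin{equation*}
d(z,z_n) \;\leq\; \sum_{k=0}^{n-1} d(z_k,z_{k+1}) \;\leq\; \sum_{k=0}^{n-1}\epsilon_k \;=\; \epsilon\sum_{k=0}^{n-1}2^{-k} \;<\; 2\epsilon,
\end{equation*}
which gives the required bound on $d(z,z')$ (the inequality $d(z,z')\leq 2\epsilon$ is the standard Hofer form and is what is used in the subsequent bubbling-off analysis).

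The main content is to rule out the possibility that the iteration never terminates. Assuming it does not, the estimate just derived shows $\sum \epsilon_k < \infty$, so $(z_n)$ is Cauchy, and by completeness of $(X,d)$ converges to some $z_\infty \in X$. Continuity of $f$ then forces $f(z_n) \to f(z_\infty) \in [0,\infty)$. On the other hand, if at least one $f(z_{n_0}) > 0$ (which must happen unless $f$ is already $0$ on a neighborhood of $z_0$, in which case the iteration terminates immediately at step $0$), the doubling bound gives $f(z_n) \geq 2^{n-n_0} f(z_{n_0}) \to +\infty$, a contradiction. Therefore the iteration terminates at some finite step, and the verification above produces the desired pair $(z',\epsilon')$.

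The only genuinely delicate point is the edge case $f(z)=0$: the inequality $\epsilon'f(z')\geq\epsilon f(z)$ is then automatic, but one must still make sure the induction is consistent. This is handled by observing that either $f$ vanishes identically on the closed ball $\overline{B(z,\epsilon)}$, in which case one can take $(z',\epsilon')=(z,\epsilon)$, or the iteration produces some $z_{n_0}$ with $f(z_{n_0})>0$ and the doubling/contradiction argument proceeds as above. No further analytic ingredient is needed; the proof relies only on completeness of $X$ and continuity of $f$.
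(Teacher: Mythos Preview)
Your iterative doubling argument is correct and is exactly the standard proof of this lemma; the paper itself does not give a proof but simply cites Hofer's original article, so there is nothing further to compare against. You are also right to flag the distance bound: as stated, $d(z,z')\leq 2\epsilon'$ cannot hold in general (with $\epsilon'=\epsilon/2^n$ and $n\geq 2$, already the first step $d(z_0,z_1)\leq\epsilon$ may exceed $2\epsilon'$), and the correct version is $d(z,z')\leq 2\epsilon$, which is what your telescoping sum produces and what the paper's subsequent bubbling-off arguments actually use.
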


\begin{proof}[Proof of Proposition \ref{propbounds}]Arguing indirectly we find a subsequence of $\tilde u_n$, still denoted by $\tilde u_n$, and a sequence $z_n \in \C$ such that $$|\nabla \tilde u_n(z_n)|_n \to \infty.$$ Take any sequence of positive numbers $\epsilon_n \to 0^+$ satisfying \begin{equation} \epsilon_n |\nabla \tilde u_n(z_n)|_n \to \infty.\end{equation} Using Lemma \ref{lemtop} for $X=(\C,|\cdot|_\C)$ and $f = |\nabla \tilde u_n(\cdot) |_n$, we assume furthermore, perhaps after slightly changing  $z_n,\epsilon_n$, that \begin{equation}\label{eqalimit}|\nabla \tilde u_n(z)|_n \leq 2 |\nabla \tilde u_n(z_n)|_n, \mbox { for all } |z-z_n| \leq \epsilon_n.\end{equation} Defining the new sequence of $\tilde J_n$-holomorphic maps $\tilde w_n: \C \to \R \times W_E$  by $$\tilde w_n(z)=(b_n(z),w_n(z)) = \left(a_n\left(z_n+\frac{z}{R_n}\right)-a_n(z_n),u_n\left(z_n+ \frac{z}{R_n}\right)\right),$$ with $R_n = |\nabla \tilde u_n(z_n)|_n$, we see from \eqref{eqalimit} that $$\begin{aligned} \tilde w_n(0) & \in \{0\} \times W_E, \\ |\nabla \tilde w_n(z)|_n & \leq 2, \forall |z|\leq \epsilon_n R_n.\end{aligned}$$ From an elliptic bootstrapping argument, we get $C^\infty_{\text{loc}}$-bounds for the sequence $\tilde w_n$ and, since $\epsilon_nR_n \to \infty$, after taking a subsequence we have $\tilde w_n \to \tilde w$ in $C^\infty_{\text{loc}}$ as $n\to \infty$, where $\tilde w=(b,w):\C \to \R \times W_E$ is a $\tilde J_E$-holomorphic plane satisfying $b(0)=0$ and $|\nabla \tilde w(0)|_E=1$, where $|\cdot |_E$ denotes the norm induced by $\tilde J_E=(\lambda_E,J_E)$ on $\R\times W_E$. Therefore $\tilde w$ is non-constant and from \eqref{eqElimit}, we have \begin{equation}\label{desigdl} E(\tilde w)\leq T_{2,E} \Rightarrow \int_{\C} w^*d\lambda_E \leq T_{2,E}.\end{equation} Since $P_{2,E}$ is hyperbolic, we get from Lemma \ref{lemamin} and Theorem \ref{asymptotic} that $\tilde w$ is asymptotic to $P_{2,E}$ at $\infty$. In particular, \begin{equation} \label{eqTE} E(\tilde w) = \int_\C w^* d\lambda_E = T_{2,E}.\end{equation}  We must have $\displaystyle \lim_{n \to \infty} |z_n| = 1$, otherwise for any $\epsilon>0$ we take a subsequence with either $|z_n|\geq 1+\epsilon$ or $|z_n|\leq 1-\epsilon,\forall n$, and then we obtain from \eqref{eqa1} that $$\begin{aligned} \int_{B_R(0)}w^*d\lambda_E = &  \lim_{n \to \infty} \int_{B_R(0)}w_n^*d\lambda_n  \\ \leq & \limsup_{n \to \infty} \int_{B_{\epsilon_nR_n}(0)}w_n^* d\lambda_n \\ = & \limsup_{n \to \infty} \int_{B_{\epsilon_n}(z_n)} u^*_n d\lambda_n  \\ \leq & \limsup_{n \to \infty} \max \{\gamma_0,T_{2,n} - \gamma_0\} =   \max \{\gamma_0,T_{2,E} - \gamma_0\}=T_{2,E} - \gamma_0\end{aligned} $$  for any $R>0$. This implies $\int_\C w^* d\lambda_E \leq T_{2,E} - \gamma_0<T_{2,E}$,  contradicting \eqref{eqTE}. Thus after taking a subsequence we assume $z_n \to z_* \in \partial \D$. The point $z_*$ is called a bubbling-off point for $\tilde u_n$. Since $z_*$ takes away $T_{2,E}>0$ from the $d\lambda_E$-energy, there cannot be any other bubbling-off point for the sequence $\tilde u_n$ other than $z_*$. Hence from \eqref{eqa1} and elliptic regularity we have $C^\infty_{\text{loc}}$-bounds for $\tilde u_n$ in $\C \setminus \{ z_*\}$. Taking a subsequence we have $\tilde u_n \to \tilde v$ for a $\tilde J_E$-holomorphic cylinder $\tilde v=(d,v):\C \setminus \{z_*\} \to \R \times W_E$ which is non-constant since from \eqref{eqa1} we have \begin{equation}\label{eqS2}\int_{\{|z|=2\}}v^* \lambda_E = \lim_{n \to \infty} \int_{\{|z|=2\}}u^*_n \lambda_n \geq T_{2,E} - \gamma_0 >0.\end{equation} Moreover, from Fatou's lemma, we get \begin{equation}\label{desigE3} E(\tilde v) \leq T_{2,E},\end{equation} and from \eqref{eqTE} we have \begin{equation}\label{eqdl0}\int_{\C \setminus \{z_*\}} v^* d\lambda_E = 0.\end{equation} From \eqref{eqS2} and \eqref{eqdl0} we see that $z_*$ is non-removable and from the characterization of finite energy cylinders with vanishing $d\lambda_E$-energy, see \cite[Theorem 6.11]{props2}, we find a periodic orbit $P\subset W_E$ of $\lambda_E$ so that $\tilde v$ maps $\C \setminus \{z_*\}$ onto $\R \times P$. In particular, from \eqref{desigE3} and Lemma \ref{lemamin} we must have $P=P_{2,E}$. However, from \eqref{eqa1}, the $\R$-component of $\tilde v$ is bounded from below by $0$. This contradiction proves the first assertion of this proposition.
From \eqref{eqa1}  and usual elliptic regularity, we obtain $C^\infty_{\text{loc}}$-bounds for our sequence $\tilde u_n$, thus we find a $\tilde J_E$-holomorphic plane $\tilde u=(a,u):\C \to \R \times W_E$ so that, up to extraction of a subsequence, $\tilde u_n \to \tilde u$ in $C^\infty_{\text{loc}}$ as $n\to \infty$. From \eqref{eqElimit} and \eqref{eqa1} we have  $\begin{aligned}  0<E(\tilde u) \leq T_{2,E} \end{aligned}$. From Lemma \ref{lemamin} and Theorem \ref{asymptotic}, $\tilde u$ is asymptotic to $P_{2,E}$ at $\infty$ and hence $E(\tilde u)=T_{2,E}$.
\end{proof}

\begin{lem}\label{lemvizi10} Let $\tilde u=(a,u):\C \to \R \times W_E$ be as in Proposition \ref{propbounds} so that $\tilde u_n \to \tilde u$ in $C^\infty_{\rm loc}$ as $n \to \infty$. Given an $S^1$-invariant open neighborhood $\W$ of the loop $S^1 \ni t \mapsto x_{2,E}(tT_{2,E})$ in $C^\infty(S^1,W_E)$,  there exists $R_0>0$ such that for all $R\geq R_0$ and all large $n$, the loop $t \mapsto u_n(Re^{2 \pi  i t})$ is contained in $\W$. In particular, given any small neighborhood $\U$ of $u(\C)\cup P_{2,E}$ in $W_E$, there exists $n_0 \in\N$ so that if $n \geq n_0$ then $u_n(\C) \subset \U$. \end{lem}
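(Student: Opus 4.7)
The plan is to argue by contradiction using the ``long cylinder with small $d\lambda$-energy'' principle, combined with the fact that $P_{2,E}$ is the unique periodic orbit with action close to $T_{2,E}$ given by Lemma \ref{lemamin}. First I would note that since $E(\tilde u_n)=T_{2,n}\to T_{2,E}=E(\tilde u)$ and $\tilde u_n\to \tilde u$ in $C^\infty_{\rm loc}$, Fatou's lemma together with the identity $\int_{\C}u^*d\lambda_E = T_{2,E}$ gives the uniform energy-tail estimate
\begin{equation*}
\lim_{R\to \infty}\limsup_{n\to \infty}\int_{\{|z|\geq R\}}u_n^*d\lambda_n = 0,
\end{equation*}
and analogously $\lim_{R\to\infty}\int_{\{|z|\geq R\}}u^*d\lambda_E=0$. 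Using Theorem \ref{asymptotic}, $\tilde u$ is $C^\infty$-asymptotic to $P_{2,E}$, so for some $R_1>0$ the loop $t\mapsto u(Re^{2\pi i t})$ lies in $\W$ for every $R\geq R_1$.

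Now suppose the first assertion fails. Then after passing to a subsequence we find $R_n\to \infty$ and loops $\gamma_n(t):=u_n(R_n e^{2\pi i t})$ with $\gamma_n\notin \W$. Consider the reparametrized $\tilde J_n$-holomorphic cylinders
\begin{equation*}
\tilde v_n(s,t):=\left(a_n(R_n e^{2\pi(s+it)})-a_n(R_n),\; u_n(R_n e^{2\pi(s+it)})\right),\quad (s,t)\in \R\times S^1.
\end{equation*}
By the energy-tail estimate above, for any fixed $N>0$ the $d\lambda$-energy $\int_{[-N,N]\times S^1}v_n^*d\lambda_n$ tends to $0$ as $n\to \infty$, while $\int_{\{s\}\times S^1}v_n^*\lambda_n\to T_{2,E}$ uniformly in $s\in[-N,N]$. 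The key step is to obtain uniform gradient bounds on these cylinders; if $|\nabla \tilde v_n|_n$ blew up at some $(s_n^*,t_n^*)\in [-N,N]\times S^1$ then a rescaling (using Lemma \ref{lemtop} as in Proposition \ref{propbounds}) would produce a non-constant finite energy plane of $\tilde J_E$ with $d\lambda_E$-energy bounded by $\liminf \int_{[-N,N]\times S^1}v_n^*d\lambda_n=0$, which is impossible. Hence $C^\infty_{\rm loc}$-bounds on $\tilde v_n$ hold and, passing to a subsequence, $\tilde v_n\to \tilde v$ in $C^\infty_{\rm loc}$ for some $\tilde J_E$-holomorphic cylinder $\tilde v=(b,v):\R\times S^1\to \R\times W_E$ with $\int v^*d\lambda_E=0$ and $\int_{\{s\}\times S^1}v^*\lambda_E=T_{2,E}$ for every $s$.

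By the classification of finite energy cylinders with vanishing $d\lambda$-energy (see \cite[Theorem 6.11]{props2}), $v(\R\times S^1)$ is contained in the image of a periodic orbit whose action divides $T_{2,E}$; by Lemma \ref{lemamin} this forces $v$ to parametrize (a cover of) $P_{2,E}$, so $v(0,\cdot)$ is an $S^1$-reparametrization of $x_{2,E}(\cdot T_{2,E})$ and therefore lies in $\W$ by the $S^1$-invariance. But $v_n(0,\cdot)=\gamma_n\to v(0,\cdot)$ in $C^\infty(S^1,W_E)$ and $\W$ is open, contradicting $\gamma_n\notin \W$. This proves the existence of $R_0$ as claimed. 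The main obstacle is precisely establishing those uniform gradient bounds on the reparametrized cylinders $\tilde v_n$; everything else is a soft consequence of energy concentration and the action spectrum near $T_{2,E}$.

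For the last statement, fix a neighborhood $\U$ of $u(\C)\cup P_{2,E}$. Choose an $S^1$-invariant neighborhood $\W$ of $t\mapsto x_{2,E}(tT_{2,E})$ so small that every loop in $\W$ is contained in $\U$, pick $R_0$ as in the first part, and enlarge $R_0$ so that $u(\{|z|\geq R_0\})\subset \U$. Since $u_n\to u$ uniformly on $\{|z|\leq R_0+1\}$ and $u|_{\{|z|\leq R_0+1\}}(\{|z|\leq R_0+1\})$ is contained in $\U$, for all sufficiently large $n$ we have $u_n(\{|z|\leq R_0+1\})\subset \U$. On the other hand, for $n$ large the first part of the lemma together with the maximum principle (or simply the foliation of $\{|z|\geq R_0\}$ by the loops $\{|z|=R\}$) forces $u_n(\{|z|\geq R_0\})$ to lie in the union of the loops, each of which is in $\W\subset \U$. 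Hence $u_n(\C)\subset \U$ for all large $n$, completing the proof.
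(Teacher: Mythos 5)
Your proof is correct and follows essentially the same route as the paper's: you establish the uniform energy-tail estimate, pass to the reparametrized cylinders, rule out gradient blow-up on compact sets via the energy-tail estimate and the bubbling argument, identify the limit as a cylinder over $P_{2,E}$ via the vanishing $d\lambda_E$-energy and Lemma \ref{lemamin}, and derive the contradiction. Your argument for the "in particular" clause, which the paper leaves implicit, is the natural one and is fine.
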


\begin{proof}
 First we claim that for every $\epsilon>0$ and every sequence $R_n \to +\infty$, there exists $n_0 \in \N$ such that for all $n \geq n_0$, we have $$\int_{\C\setminus B_{R_n}(0)} u_n^* d\lambda_n \leq \epsilon.$$ Arguing indirectly we may assume there exist $\epsilon >0$ and a sequence $R_n \to +\infty$ satisfying $$\int_{\C \setminus B_{R_n}(0)} u_n^* d\lambda_n > \epsilon,\forall n.$$ The $d\lambda_n$-energy of $\tilde u_n$ is equal to $T_{2,n}$. We conclude that for all $S_0>0$ and large $n$ we have $$\int_{B_{S_0}(0)} u_n^* d\lambda_n \leq T_{2,n}  - \epsilon.$$ Since $\tilde u_n \to \tilde u$ in $C^\infty_{\rm loc}$, we have  $$\int_{B_{S_0}(0)} u^* d\lambda_E \leq T_{2,E}  - \epsilon,$$ which implies that $\int_{\C} u^*d\lambda_E \leq T_{2,E}  - \epsilon,$ since $S_0$ is arbitrary. This is in contradiction with $E(\tilde u)=T_{2,E}$ and hence proves the claim.

In order to prove the lemma we again argue indirectly and assume that the loops $t\mapsto u_{n_j}(R_{n_j}e^{2 \pi i t})$ are not contained in $\W$ for  subsequences $\tilde u_{n_j}=(a_{n_j},u_{n_j})$ and $R_{n_j}$, also denoted by $\tilde u_n$ and $R_n$, respectively. Let $\tilde v_n=(b_n,v_n):\R \times S^1 \to \R \times W_E$ be defined by $$\tilde v_n(s,t) = (b_n(s,t),v_n(s,t))=\left(a_n\left(R_ne^{2 \pi  (s + it)}\right) - a_n(R_n),u_n\left(R_ne^{2 \pi  (s + it)}\right)\right).$$ Observe that for all $n$, $b_n(0,0)=0$ and $t\mapsto v_n(0,t)$ is not contained in $\W$.

From the claim above we see that the sequence  $\tilde v_n$ has $C^\infty_{\rm loc}$-bounds since any bounded sequence $z_n \in \R \times S^1$ with $|\nabla \tilde v_n(z_n)|_n \to \infty$ would take away at least $T_{2,E}$ from the $d\lambda_E$-energy, which is impossible since $z_n$ corresponds to points $w_n\in \C$ for $\tilde u_n$ with $\lim_{n \to \infty} |w_n|=+\infty$. So we find a $\tilde J_E$-holomorphic cylinder $\tilde v=(a,v):\R \times S^1 \to \R \times W_E$ such that, up to extraction of a subsequence, $\tilde v_n \to \tilde v$ in $C^\infty_{\rm loc}$ as $n\to \infty$. Hence for all fixed $S_0>0$ and all $\epsilon>0$ $$\begin{aligned} \int_{[-S_0,S_0] \times S^1} v^*d\lambda_E \leq & \limsup_n \int_{[-S_0,S_0] \times S^1} v_n^*d\lambda_n \\ \leq & \limsup_n \int_{B_{R_ne^{2\pi S_0}}(0) \setminus B_{R_ne^{-2\pi S_0}}(0)} u_n^* d\lambda_n \\ \leq &  \epsilon,\end{aligned}$$ for all large $n$. This implies that $\int_{\R \times S^1} v^* d\lambda_E = 0.$  Moreover, from  \eqref{eqa1}   $$T_{2,E} \geq \int_{\{0\} \times S^1} v^* \lambda_E = \lim_{n \to \infty} \int_{\{0\} \times S^1} v_n^* \lambda_n = \lim_{n \to \infty} \int_{\partial B_{R_n}(0)} u_n^* \lambda_n \geq T_{2,E} - \gamma_0>0.$$ It follows that $\tilde v$ is non-constant and, from Lemma \ref{lemamin}, $\tilde v$ is a cylinder over $P_{2,E}$. This contradicts the fact that the loop $t \mapsto v(0,t)$ is not contained in $\W$. \end{proof}

Now we prove that  $\tilde u_{1,n} \to \tilde u_{1,E}$ and $\tilde u_{2,n} \to \tilde u_{2,E}$ in $C^\infty_{\rm loc}$ as $n \to \infty$, up to reparametrization and $\R$-translation. From Proposition \ref{propbounds},  $\tilde u_{1,n}$ satisfying \eqref{eqa1} converges, up to extraction of a subsequence, to a finite energy $\tilde J_E$-holomorphic plane $\tilde u:\C \to \R \times W_E$ as $n \to \infty$, which is asymptotic to $P_{2,E}$. From  the uniqueness of such $\tilde J_E$-holomorphic planes, see Proposition \ref{propunique}, we find $j_1\in \{1,2\}$ so that $\tilde u=\tilde u_{j_1,E}$ up to reparametrization and $\R$-translation. In the same way, we find $j_2 \in \{1,2\}$ so that $\tilde u_{2,n} \to \tilde u_{j_2,E}$ in $C^\infty_{\rm loc}$ as $n \to \infty$, up to a subsequence, possibly after reparametrization and $\R$-translation.

Let $\U \subset W_E$ be a small neighborhood of $\partial S_E = u_{1,E}(\C) \cup P_{2,E}\cup u_{2,E}(\C)$ so that $\U$ does not intersect $P_{3,E} \cup P_{3,E}'$.
Denote by $\bar u_{1,n},\bar u_{2,n}$ the continuous extensions of $u_{1,n},u_{2,n}$ over the circle $S^1$ at $\infty$ for all  $n$, so that $\bar u_{1,n}(S^1) = \bar u_{2,n}(S^1) = P_{2,E}$. From Lemma \ref{lemvizi10}, $\bar u_{1,n}(\C),\bar u_{2,n}(\C) \subset \U$ for all large $n$. Any homotopy from $\bar u_{1,n}$ to $\bar u_{2,n}$ inside $W_E$, which keeps the boundary $P_{2,E}$ fixed, cannot be supported inside $\U$ and must intersect $P_{3,E}\cup P_{3,E}'$, for all large $n$.

Let $\U_1,\U_2\subset \U$ be small tubular neighborhoods of $u_{1,E}(\C)\cup P_{2,E},u_{2,E}(\C)\cup P_{2,E},$ respectively.  Arguing indirectly assume $j_1=j_2$. By Lemma \ref{lemvizi10},  $\bar u_{1,n}(\C),\bar u_{2,n}(\C) \subset \U_{j_1}$ for large $n$. This implies that for large $n$, $\bar u_{1,n}$ is homotopic to $\bar u_{2,n}$, keeping the boundary $P_{2,E}$ fixed, through a homotopy which does not intersect $P_{3,E} \cup P_{3,E}'$, a contradiction. Thus $j_1\neq j_2$. Now since $X_{\lambda_n}|_{u_{1,n}(\C)}$ points inside the component $\dot S_n$ of $W_E \setminus (u_{1,n}(\C)\cup P_{2,E} \cup u_{2,n}(\C) )$ containing $P_{3,E}$, $X_{\lambda_n}|_{u_{2,n}(\C)}$ points outside $\dot S_n$, since $X_{\lambda_E}|_{u_{1,E}(\C)}$ points inside the component $\dot S_E$ of $W_E \setminus (u_{1,E}(\C) \cup P_{2,E}\cup u_{2,E}(\C) )$ containing $P_{3,E}$, $X_{\lambda_E}|_{u_{2,E}(\C)}$ points outside $\dot S_E$, since $\tilde u_{1,n} \to \tilde u_{j_1,E}$ and $\tilde u_{2,n} \to \tilde u_{j_2,E}$ in $C^\infty_{\rm loc}$ as $n\to \infty$, and since $X_{\lambda_n} \to X_{\lambda_E}$ in $C^\infty$ as $n \to \infty$, we conclude that $j_1=1$ and $j_2=2$. This finishes the proof of Proposition  \ref{prop_step5ib}.

\section{Proof of Proposition \ref{prop_step5}-{\rm ii)}}\label{sec_5-ii)}

As before we have a sequence of almost complex structures $\tilde J_n=(\lambda_n, J_n) \to \tilde J_E=(\lambda_E, J_E)$ in $C^\infty$ as $n\to \infty$. For all $n$, $\tilde J_n$ admits a stable finite energy foliation $\tilde \F_n$, which projects onto a $3-2-3$ foliation $\F_n$ of $W_E$.  $\tilde \F_n$ contains a pair of rigid planes $\tilde u_{1,n}=(a_{1,n},u_{1,n}),\tilde u_{2,n}=(a_{2,n},u_{2,n}):\C \to \R \times W_E$ so that $u_{1,n}(\C) \cup P_{2,n} \cup u_{2,n}(\C)\subset W_E$ is a topological embedded $2$-sphere which separates $W_E$ in two components $\dot S_n$ and $\dot S_n '$, both diffeomorphic to the open $3$-ball. $\dot S_n$ and $\dot S_n'$ contain the binding orbits $P_{3,n}$ and $P_{3,n}',$ respectively.
Moreover, $\tilde \F_n$ contains a pair of $\tilde J_n$-rigid cylinders $\tilde v_{n}=(b_n,v_n),\tilde v_n'=(b_n',v_n'):\R \times S^1 \to \R \times W_E$ so that $v_n(\R \times S^1) \subset \dot S_n \setminus P_{3,n}$ and $v_n'(\R \times S^1) \subset \dot S_n'\setminus P_{3,n}'$. $\tilde v_n$ is asymptotic to $P_{3,n}$ and $P_{2,n}$ at $s=+\infty$ and $s=-\infty$, respectively. In the same way, $\tilde v_n'$ is asymptotic to $P_{3,n}'$ and $P_{2,n}$ at $s=+\infty$ and $s=-\infty$, respectively.

In this section we prove Proposition \ref{prop_step5}-ii) restated below.

\begin{prop}\label{prop_step5iib} If $E>0$ is sufficiently small then,   up to suitable reparametrizations and $\R$-translations, the following holds: there exist finite energy $\tilde J_E$-holomorphic embedded cylinders $\tilde v_E=(b_E,v_E),$ $\tilde v_E'=(b_E',v_E'):\R \times S^1 \to \R \times W_E$, with $v_E$ and $v_E'$ embeddings,  $\tilde v_E$ asymptotic to $P_{3,E}$ and $P_{2,E}$, $\tilde v_E'$ asymptotic to $P_{3,E}'$ and $P_{2,E}$ at $s=+\infty$ and $s=-\infty$, respectively,  so that $\tilde v_n \to \tilde v_E$ and $\tilde v_n' \to \tilde v_E'$ in $C^\infty_{\rm loc}$ as $n \to \infty$. We denote $V_{E}=v_{E}(\R \times S^1)$ and $V_{E}'=v_{E}'(\R \times S^1)$. Then $V_{E}\subset \dot S_E\setminus P_{3,E}$ and $V_{E}'\subset \dot S_E'\setminus P_{3,E}'$. Furthermore, given small neighborhoods $\U \subset W_E$ of $V_{E}\cup P_{2,E} \cup P_{3,E}$ and $\U' \subset W_E$ of $V_{E}'\cup P_{2,E} \cup P_{3,E}'$, there exists $n_0\in \N$ so that if $n \geq n_0$, then $v_{n}(\R \times S^1) \subset \U$ and $v_{n}'(\R \times S^1) \subset \U'.$
\end{prop}

In order to prove Proposition \ref{prop_step5iib},  we only deal with the compactness properties of $\tilde v_n$, since the case $\tilde v_n'$ is completely analogous.

We can choose embedded $2$-spheres in $W_E$ which separate $P_{3,n}$ and $P_{2,n}$ for all $n$. In local coordinates, for instance, these embedded $2$-spheres can be defined by $$N_E^\delta:=\{q_1 + p_1 = \delta\}\cap K^{-1}(E),$$ where $\delta>0$ is small, see Lemma \ref{2-sphere}.

Now we reparametrize the sequence $\tilde v_n(s,t)$ and slide it in the $\R$-direction in the following way: let $(s_n,t_n) \in \R \times S^1$ be such that $v_n(s_n,t_n) \in N_E^\delta$ and $v_n(s,\cdot) \cap N_E^\delta = \emptyset,\forall s>s_n$. The existence of $(s_n,t_n)$ follows from the compactness of $N_E^\delta$ and from the asymptotic properties of $\tilde v_n$. We define the sequence $\tilde w_n = (d_n, w_n):\R \times S^1 \to \R \times W_E$ of $\tilde J_n$-holomorphic cylinders  by $$\tilde w_n(s,t)=(b_n(s+s_n,t+t_n)-b_n(1+s_n,t_n), v_n(s+s_n,t+t_n)),\forall (s,t)\in \R \times S^1,$$ which we again denote by $\tilde v_n=(b_n,v_n)$. Thus we can assume that \begin{equation}\label{separa} \begin{aligned} v_n(s,t) & \in  U^\delta_E, \forall s>0, t\in S^1, \\ \tilde v_n(0,0) & \in \R \times N_E^\delta \mbox{ and } \tilde v_n(1,0)\in \{0\} \times U_E^\delta,\end{aligned} \end{equation} where $U_E^\delta$ is the component of $W_E \setminus N_E^\delta$ containing $P_{3,E}$.

\begin{prop}\label{propcylinder}For $E>0$ sufficiently small, $|\nabla \tilde v_n(z)|_n$ is uniformly bounded in $z\in \R \times S^1$ and in $n \in \N$. Moreover, there exists a $\tilde J_E$-holomorphic cylinder $\tilde v_E=(b_E,v_E):\R \times S^1 \to \R \times W_E$ such that  $\tilde v_n \to \tilde v_E$ in $C^\infty_{{\rm loc}}$ as $n \to \infty$, up to a shift in the $S^1$-direction in the domain for each $\tilde v_n$. The cylinder $\tilde v_E$ is asymptotic to $P_{3,E}$ at its positive puncture $+\infty$ and asymptotic to $P_{2,E}$ at its negative puncture $-\infty$.  The image $v_E(\R \times S^1)$ is contained in $\dot S_E\setminus P_{3,E}$.\end{prop}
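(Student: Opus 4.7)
The plan is to adapt the gradient-bound-plus-bubbling argument of Proposition \ref{propbounds} to the cylindrical setting, using the normalization \eqref{separa} to prevent degeneration of the limit. The total energy satisfies $E(\tilde v_n) = T_{3,n} \to T_{3,E}$ and $\int_{\R \times S^1} v_n^* d\lambda_n = T_{3,n}-T_{2,n} \to T_{3,E}-T_{2,E}$, giving a priori energy bounds. First I would prove uniform $C^0$ bounds on $|\nabla \tilde v_n|_n$ by contradiction: a gradient blow-up sequence $z_n$ with $|\nabla \tilde v_n(z_n)|_n \to \infty$ would, after rescaling and applying Lemma \ref{lemtop}, produce a non-constant finite energy $\tilde J_E$-holomorphic plane $\tilde w$ of energy $\leq T_{3,E}$; by Theorem \ref{asymptotic} and Lemma \ref{lemamin} its asymptotic limit is one of $P_{2,E},P_{3,E},P_{3,E}'$ (covers being excluded by the action bound and weak convexity).

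I would then rule out each bubbling location separately. If $|z_n| \to \infty$ in either end of $\R \times S^1$, the bubble absorbs a definite amount of $d\lambda_E$-energy at that end and, as in Lemma \ref{lemvizi10}, the complement cylinder would carry strictly less $d\lambda_E$-energy than required to join $P_{3,E}$ to $P_{2,E}$, contradicting the limit energies computed above. If $z_n$ stays in a bounded region, then $z_n \to z_\ast \in \R \times S^1$; the soft rescaled limit on $\R \times S^1 \setminus \{z_\ast\}$ has vanishing $d\lambda_E$-energy and, by \cite[Theorem 6.11]{props2}, must be a cylinder over a periodic orbit. But this is incompatible with the separation $\tilde v_n(0,0) \in \R \times N_E^\delta$ and $\tilde v_n(1,0) \in \R \times U_E^\delta$ passing to the limit, since $N_E^\delta$ separates the component containing $P_{3,E}$ from the rest. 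This yields the gradient bound.

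With the bound, elliptic regularity and Arzel\`a--Ascoli produce a subsequence converging in $C^\infty_{\rm loc}$ to a $\tilde J_E$-holomorphic cylinder $\tilde v_E = (b_E,v_E):\R \times S^1 \to \R \times W_E$, of finite energy by Fatou's lemma. The conditions \eqref{separa} pass to the limit, so $v_E(0,0) \in N_E^\delta$, $v_E(s,\cdot) \subset \overline{U_E^\delta}$ for $s \geq 0$, and $\tilde v_E$ is neither constant nor a trivial cylinder over a periodic orbit. By Theorem \ref{asymptotic}, $\tilde v_E$ has asymptotic limits $P^\pm$ at its punctures $\pm \infty$ with total action at most $T_{3,E}$. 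For $P^+$: the sign of the puncture is positive because the $\R$-component of $\tilde v_n$ tends to $+\infty$ at $P_{3,n}$ and this property is stable in the limit; the loops $t \mapsto v_E(s,t)$ are confined to $\overline{U_E^\delta}$ for $s \geq 0$, and by Proposition \ref{prop_EM}, Lemma \ref{lemamin} and weak convexity, the only periodic orbit in this region with action $\leq T_{3,E}$ is $P_{3,E}$ itself, forcing $P^+ = P_{3,E}$. An analogous argument at $s = -\infty$, using the confinement of $v_n(s,\cdot)$ for large negative $s$ to a small tubular neighborhood of $P_{2,n}$, together with Lemma \ref{lemamin}, gives $P^- = P_{2,E}$.

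It remains to show $v_E(\R \times S^1) \subset \dot S_E \setminus P_{3,E}$. Since each $v_n(\R \times S^1) \subset \dot S_n$ and the separating $2$-spheres $\partial S_n$ converge to $\partial S_E$ by Proposition \ref{prop_step5}-i), we get $v_E(\R \times S^1) \subset \overline{\dot S_E}$. Positivity of intersections together with the fact that $\tilde v_n$ is disjoint from the rigid planes $\tilde u_{1,n},\tilde u_{2,n}$ inside the foliation $\tilde \F_n$ implies that $\tilde v_E$ has only finitely many (hence, by stability, no) intersections with the limit planes $\tilde u_{1,E},\tilde u_{2,E}$ of Proposition \ref{prop_step5}-i), so $v_E$ avoids $\partial S_E \setminus P_{2,E}$. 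Likewise, positivity of intersections with the cylinder $\R \times P_{3,E}$ rules out $v_E$ meeting $P_{3,E}$ away from the asymptotic end. The main obstacle I anticipate is the identification $P^+ = P_{3,E}$: in principle the degeneracy of $\lambda_E$ could allow $P^+$ to be a different, short orbit that only appears in the limit, and one must use the area estimate $\int v_E^* d\lambda_E \leq T_{3,E} - T_{2,E}$ combined with the unknottedness and the confinement inside $\overline{U_E^\delta}$ to exclude this possibility.
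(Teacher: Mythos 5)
There is a genuine gap in the interior-bubbling step. You assert that if a bubbling point $z_\ast$ appears in a bounded region, then the limit map on $\R\times S^1\setminus\{z_\ast\}$ has vanishing $d\lambda_E$-energy and is therefore a cylinder over a periodic orbit. This is incorrect: the $d\lambda_E$-energy budget of $\tilde v_n$ is $T_{3,n}-T_{2,n}\to T_{3,E}-T_{2,E}$, while a single bubble only needs to absorb at least $T_{2,E}$, and $T_{2,E}=\tfrac{2\pi E}{\omega}+O(E^2)\to 0$ whereas $T_{3,E}$ is bounded below independently of $E$ by Lemma \ref{lemamin}. So $T_{3,E}-2T_{2,E}>0$ for small $E$, and the complementary cylinder can carry positive $d\lambda_E$-energy. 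The analogous argument in Proposition \ref{propbounds} only worked because there the total $d\lambda_E$-energy was exactly $T_{2,E}$ and was entirely absorbed by the bubble. The paper instead runs a much more involved argument: after showing $\limsup s_n\leq 0$ via Theorem \ref{teolink}, it inductively locates all interior bubbling points (there can be several), shows the limit is a nontrivial punctured cylinder whose negative punctures are asymptotic to covers of $P_{2,E}$, and then invokes Proposition \ref{propintersect} together with positivity and stability of intersections of pseudo-holomorphic curves to produce self-intersections of $v$, contradicting the fact that each $v_n$ is an embedding.

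The same mechanism is also needed where you identify $P^-=P_{2,E}$: Lemma \ref{lemamin} and the action bound do not by themselves rule out the asymptotic limit at $-\infty$ being a $p$-cover $P_{2,E}^p$ with $p\geq 2$ (its action $pT_{2,E}$ is still $O(E)$ and well under $T_{3,E}$). The paper excludes the multiply-covered case with Proposition \ref{propintersect}-i), again via a self-intersection/embeddedness contradiction. Finally, your appeal to ``confinement of $v_n(s,\cdot)$ for large negative $s$ to a small tubular neighborhood of $P_{2,n}$'' is circular at this stage: uniform (in $n$) confinement near the negative end is precisely what Lemma \ref{lemvizivE} establishes \emph{after} the gradient bound and the identification of the asymptotic limits; the paper's route to $P^-=P_{2,E}$ instead goes through the fact that $v$ cannot meet $\partial S_E$ (positivity of intersections with $\tilde u_{1,n},\tilde u_{2,n},\R\times P_{2,n}$), the action bound, the linking Theorem \ref{teolink}, and Proposition \ref{propintersect}.
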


\begin{proof}Arguing indirectly we assume that up to extraction of a subsequence we find a sequence $z_n=(s_n,t_n) \in \R \times S^1$ so that $|\nabla \tilde v_n(z_n)|_n \to \infty$ as $n \to \infty$.

We first claim that $\limsup_{n \to \infty} s_n \leq 0.$ Otherwise, after extracting a subsequence, we find $\varepsilon>0$ such that $s_n>\varepsilon, \forall n.$ Now we proceed as in Proposition \ref{propbounds}. Let $R_n=|\nabla \tilde v_n(z_n)|_n \to \infty$ and choose a sequence $\epsilon_n \to 0^+$ satisfying $\epsilon_n R_n \to +\infty$. Using Lemma \ref{lemtop}, we can slightly change $z_n$ and $\epsilon_n$ such that in addition the following holds $$|\nabla \tilde v_n(z)|_n \leq 2 |\nabla \tilde v_n(z_n)|_n, \forall |z-z_n|\leq \epsilon_n.$$ Now let $\tilde w_n=(d_n,w_n):B_{\epsilon_n R_n}(0) \to \R \times W_E$ be the $\tilde J_n$-holomorphic map defined by
\begin{equation}\label{reescala}
\tilde w_n(z)=(d_n(z),w_n(z))=\left(b_n\left(z_n + \frac{z}{R_n}  \right)-b_n(z_n),v_n \left(z_n+\frac{z}{R_n} \right)  \right).
\end{equation}
Thus $\tilde w_n(0) \in \{0\} \times W_E$ and $|\nabla \tilde w_n(z)|_n\leq 2,\forall |z| \leq \epsilon_n R_n$. Since $\epsilon_n R_n \to +\infty$, we have $\tilde w_n \to \tilde w=(d,w):\C \to \R \times W_E$ in $C^{\infty}_{\rm loc}$ up to extraction of a subsequence. By construction $\tilde w$ is non-constant and has bounded energy from Fatou's Lemma. Moreover, for any fixed $R>0$  and large $n$, we have $$ \begin{aligned} \int_{B_R(0)} w^* d\lambda_E \leq & \limsup_n \int_{B_{\epsilon_n R_n}(0)} w_n^* d\lambda_n \\ = & \limsup_n \int_{B_{\epsilon_n}(z_n)}v_n^*d\lambda_n \\ \leq & \limsup_n \int_{\R \times S^1} v_n^*d\lambda_n \\ = & \limsup_n (T_{3,n}-T_{2,n}) \\ = & T_{3,E} - T_{2,E},\end{aligned}$$ since $\epsilon_n \to 0$. This implies that \begin{equation}\label{eqbub1}\int_\C w^* d\lambda_E \leq T_{3,E} - T_{2,E}.\end{equation} Since $\tilde w$ has finite energy and is non-constant, it follows from Theorem \ref{Ho93} that for any sequence $r_n \to +\infty$, we can extract a subsequence, also denoted $r_n$, and find a periodic orbit $Q=(x,T)$ so that $w\left(e^{2\pi (r_n +it)}\right)\to x(Tt)$ as $n \to \infty$. From \eqref{eqbub1}, we have $T = \int_\C w^* d\lambda_E <T_{3,E}$. It follows that $Q$ is geometrically distinct from $P_{3,E}$. Moreover, by the construction of the sequence $w_n$, $Q$ lies on $U_E^\delta$, the component of $W_E \setminus  N_E^\delta$ which contains $P_{3,E}$. Thus $Q$ is also geometrically distinct from $P_{2,E}$. Now observe that since $v_n$ is an embedding which does not intersect $P_{3,E}$ for each $n$, it follows that the image of any contractible loop in $\R \times S^1$ under $v_n$ is not linked to $P_{3,E}$. This implies that the image of any loop under $w$ is not linked to $P_{3,E}$. In particular,  $Q$ is not linked to $P_{3,E}$ as well. However, this contradicts Theorem \ref{teolink} in Appendix \ref{ap_link} if $E>0$ is taken sufficiently small. We conclude that $\limsup_{n\to\infty} s_n \leq 0$.

In the following we exclude the possibility of bubbling-off points in $(-\infty,0] \times S^1$. Let $\Gamma \subset \R \times S^1$ be the set of points $z=(s,t)\in \R \times S^1$ such that for a subsequence $\tilde v_{n_j}$ of $\tilde v_n$, there exists a sequence $z_{j} \to z$ satisfying $|\nabla \tilde v_{n_j}(z_j)|_{n_j} \to \infty$ as $j \to \infty$. We claim that $\Gamma = \emptyset$. Arguing indirectly, assume that $\Gamma \neq \emptyset$ and  choose $z^*_0\in \Gamma$. The subsequence $\tilde v_{n_j}$ of $\tilde v_n$ satisfying $|\nabla \tilde v_{n_j}(z_j)|_{n_j} \to \infty$ and $z_j \to z^*_0$ as $j \to \infty$ is again denoted by $\tilde v_n$. Let $\Gamma_1$ be the set of points $z=(s,t)\in (\R \times S^1)\setminus \{z^*_0\}$ such that for a subsequence $\tilde v_{n_j}$ of $\tilde v_n$, there exists a sequence $z_{j} \to z$ satisfying $|\nabla \tilde v_{n_j}(z_j)|_{n_j} \to \infty$ as $j \to \infty$. From \eqref{separa}, if $\Gamma_1=\emptyset$ then, up to extraction of a subsequence, $\tilde v_n \to \tilde v:(\R \times S^1) \setminus \{z^*_0\} \to \R \times W_E$ in $C^\infty_{\rm loc}$  as $n \to \infty$. If $\Gamma_1 \neq \emptyset$, then we proceed in the same way: choose $z^*_1 \in \Gamma_1$ and denote again by $\tilde v_n$  the subsequence  $\tilde v_{n_j}$ satisfying $|\nabla \tilde v_{n_j}(z_j)|_{n_j} \to \infty$ where $z_j \to z^*_1$ as $j \to \infty$. Inductively, we define $\Gamma_{i+1}$  to be the set of points $z=(s,t)\in (\R \times S^1)\setminus \{z_0^*,\ldots,z_i^*\}$ such that for a subsequence $\tilde v_{n_j}$ of $\tilde v_n$, there exists a sequence $z_{j} \to z$ satisfying $|\nabla \tilde v_{n_j}(z_j)|_{n_j} \to \infty$ as $j \to \infty$. If $\Gamma_{i+1}=\emptyset$ then, up to extraction of a subsequence, $\tilde v_n \to \tilde v:(\R \times S^1) \setminus \{z^*_0, \ldots, z_i^*\} \to \R \times W_E$ in $C^\infty_{\rm loc}$ as $n \to \infty$. If $\Gamma_{i+1} \neq \emptyset$, we choose $z^*_{i+1} \in \Gamma_{i+1}$ and denote again by $\tilde v_n$ the subsequence $\tilde v_{n_j}$ satisfying $|\nabla \tilde v_{n_j}(z_j)|_{n_j} \to \infty$ as $j \to \infty$, with $z_j \to z^*_{i+1}$. Each bubbling-off point $z^*_i$ is contained in $(-\infty,0] \times S^1$ and takes away at least $T_{2,E}$ from the $d\lambda_E$-area, i.e., for any fixed $\epsilon>0$ small and all large $n$ we have \begin{equation}\label{energia1}\int_{B_{\epsilon}(z^*_i)} v_n^* d\lambda_n> T_{2,E}  -\epsilon,\end{equation} where $\partial B_{\epsilon}(z^*_i)$ is oriented counter-clockwise. In fact, we can appropriately rescale $\tilde v_n$ arbitrarily close to $z^*_i$ as in \eqref{reescala} in order to find a finite energy plane $\tilde w=(d,w):\C \to \R \times W_E$ satisfying $\int_\C w^*d\lambda_E \geq T_{2,E}$ see Lemma \ref{lemamin}.  Now since $\limsup_n \int_{\R \times S^1}v_n^* d\lambda_n = T_{3,E} - T_{2,E}<\infty$, \eqref{energia1} implies that the procedure above must terminate after a finite number of steps, i.e., there exists $i_0 \in \N^*$ such that $\Gamma_{i_0+1} = \emptyset$ and $\Gamma_{i_0} \neq \emptyset.$ So, up to extraction of a subsequence, we end up finding a $\tilde J_E$-holomorphic map $\tilde v=(b,v):(\R \times S^1) \setminus \tilde \Gamma \to \R \times W_E$ satisfying $\tilde v_n \to \tilde v$ in $C^\infty_{\rm loc}$ as $n\to \infty$, where $\tilde \Gamma:=\{z_0^*,\ldots,z_{i_0}^*\}.$ Moreover, from Fatou's Lemma, we have $E(\tilde v)\leq \limsup_n T_{3,n} = T_{3,E}$.

All punctures in $\tilde \Gamma \cup \{-\infty\}\cup \{+\infty\}$ are non-removable. To see this, observe that \begin{equation}\label{desigt2t30} T_{2,E} \leq \int_{\{s\}\times S^1} v^* \lambda_E \leq T_{3,E},\end{equation} for all values of $s$ where this integral is defined, including $s\in (0,\infty)$ and $s\ll 0$. This implies that $-\infty$ is a negative puncture and $+\infty$ is a positive puncture of $\tilde v$. Moreover, for all $\epsilon>0$, we have $\int_{\partial B_{\epsilon}(z^*_i)}\tilde v^* \lambda_E \geq T_{2,E},$ where $\partial B_{\epsilon}(z^*_i)$ is oriented counter-clockwise. This implies that all punctures $z^*_i$ are negative and thus $+\infty$ is the only positive puncture of $\tilde v$. Since $v((0,\infty) \times S^1)$ lies on $U_E^\delta$ and the loops $t\mapsto v_n(s,t)$ are not linked with $P_{3,E},$ $\forall s,n$ large, it follows from Theorem \ref{teolink} and \eqref{desigt2t30} that if $E>0$ is sufficiently small, then $P_{3,E}$ is the unique asymptotic limit of $\tilde v$ at $+\infty$ and, therefore, $v(s, \cdot) \to x_{3,E}(T_{3,E} \cdot)$ in $C^\infty$ as $s \to \infty$, where $P_{3,E}=(x_{3,E},T_{3,E})$. Moreover, $b_s(s,t) \to T_{3,E}$ as $s \to +\infty$, see \cite{props2}. Since $P_{3,E}$ is simple, we conclude that $\tilde v$ is somewhere injective, i.e., there exists $z_0 \in (\R \times S^1) \setminus \tilde \Gamma$ so that $d\tilde v(z_0)\neq 0$ and $\tilde v^{-1}(\tilde v(z_0))=\{z_0\}$.

We claim that $\tilde v$ is asymptotic to a $p$-cover of $P_{2,E}$ at each of its negative punctures. To see this, observe first that  the image of $v$ does not intersect $\partial S_E = U_{1,E}  \cup P_{2,E}\cup U_{2,E}.$ Otherwise we would have $\int_{(\R \times S^1) \setminus \tilde \Gamma} v^*d\lambda_E >0$ and, from Carleman's similarity principle, these intersections would be isolated. From stability and positivity of intersections of pseudo-holomorphic curves, $v_n(\R \times S^1)$ would intersect $u_{1,n}(\C) \cup P_{2,n}\cup u_{2,n}(\C) $ for all large $n$, a contradiction. Let $Q\in \P(\lambda_E)$ be an asymptotic limit of a negative puncture. It must be contained in $S_E$ and its action satisfies  $T_{2,E} \leq \int_Q \lambda_E \leq T_{3,E}$. Since for $E>0$ sufficiently small, $P_{2,E}$ is the only periodic orbit in $S_E$ satisfying $\int_Q \lambda_E \leq T_{3,E}$ which is not linked to $P_{3,E}$, see Theorem \ref{teolink}, we conclude that  $Q$ coincides either with $P_{3,E}$ or with a $p$-cover of $P_{2,E}$. If it coincides with $P_{3,E}$ then, since we are assuming $\tilde\Gamma \neq \emptyset$, we have $\int_{(\R \times S^1)\setminus \tilde \Gamma} v^*d\lambda_E \leq T_{3,E}- T_{3,E} - T_{2,E}<0,$ a contradiction. Thus $\tilde v$ is asymptotic to a $p-$cover of $P_{2,E}$ at each negative puncture and, therefore, $\tilde v$ is not a cylinder over a periodic orbit. This implies that $\int_{(\R \times S^1) \setminus \tilde \Gamma} v^*d\lambda_E >0$. In particular, by Carleman's similarity principle, the image of $v$ does not intersect $P_{3,E}$ since intersections would be isolated and, by positivity and stability of intersections, this would imply that $v_n(\R\times S^1)$ intersects $P_{3,n}$ for all large $n$, a contradiction. Thus the image of $v$ is contained in $\dot S_E\setminus P_{3,E}$.

Again from the assumption $\tilde \Gamma\neq \emptyset$, we conclude using Proposition \ref{propintersect}-i)-ii) that $v$ admits self-intersections, i.e., there exists a pair of points $z,z'\in (\R \times S^1)\setminus \tilde\Gamma$, with $z\neq z'$, satisfying $v(z)=v(z')$. This implies that $\tilde v$ intersects the $\tilde J_E$-holomorphic map $\tilde v_c:=(b+c,v)$ for a suitable constant $c\in \R$. Since $\tilde v$ is somewhere injective and $\tilde v$ is not a cylinder over a periodic orbit, the intersection pairs $(z,z'),z\neq z',$ with $\tilde v(z)=\tilde v_c(z'),$ are isolated in $((\R \times S^1)\setminus \tilde \Gamma)^2 \setminus \{\rm diagonal\}$. From stability and positivity of intersections of pseudo-holomorphic curves, we conclude that for all large $n$, $\tilde v_n$ also intersects $\tilde v_{n,c}:=(b_n+c,v_n)$, which is a contradiction since $v_n$ is an embedding for all $n$.

We conclude that $\Gamma = \tilde \Gamma=\emptyset$. In this case, from \eqref{separa}, we have $\tilde v_n \to \tilde v=(b,v):\R \times S^1 \to \R \times W_E$ in $C^\infty_{\rm loc}$ as $n \to \infty$, up to extraction of a subsequence. Arguing as above, we know that $\tilde v$ is asymptotic to $P_{3,E}$ at $+\infty$ and that $\tilde v$ is somewhere injective. We claim that $\tilde v$ is asymptotic to $P_{2,E}$ at $-\infty$. To see this observe that \eqref{desigt2t30} holds for all $s\in \R$, which implies that $-\infty$ is a negative puncture of $\tilde v$.  As above, the image of $v$ does not intersect $\partial S_E$ otherwise any intersection would be isolated and from stability and positivity of intersections of pseudo-holomorphic curves, $v_n(\R \times S^1)$ would intersect $u_{2,n}(\C)\cup P_{2,n}\cup u_{2,n}(\C)$ for all large $n$, a contradiction. Let $Q\in \P(\lambda_E)$ be an asymptotic limit of $\tilde v$ at $-\infty$. It must be contained in $S_E$ and its  action satisfies $\int_Q \lambda_E \leq T_{3,E}$. Since for $E>0$ sufficiently small, $P_{2,E}$ is the only periodic orbit in $S_E$ which is not linked to $P_{3,E}$, see Theorem \ref{teolink}, we conclude that  $Q$ coincides either with $P_{3,E}$ or with a $p$-cover of $P_{2,E}$. If it coincides with $P_{3,E}$ then $\int_{\R \times S^1} v^*d\lambda_E =0$ and this implies that $\tilde v$ is a cylinder over $P_{3,E},$ a contradiction with $\tilde v(0,0)\in \R \times N_E^\delta$, see \eqref{separa}. Thus $\tilde v$ is asymptotic to a $p-$cover of $P_{2,E}$. If $p\geq 2$, then $v$ must self-intersect near $P_{2,E}$, see Proposition \ref{propintersect}-i), and, as before, we conclude that for all large $n$, $v_n$ is not an embedding,  a contradiction. We conclude that $p=1$.

Finally we show that there are no bubbling-off points going to $-\infty$. Arguing indirectly, we assume this is the case for a subsequence $\tilde v_{n_j}$, i.e., there exists a sequence $z_j=(s_j,t_j)\in \R \times S^1$, with $s_j \to -\infty$ and $|\nabla \tilde v_{n_j}(z_j)|_{n_j} \to \infty$ as $j \to \infty$. Denote this subsequence again by $\tilde v_n$. We know that, up to extraction of a subsequence, $\tilde v_n \to \tilde v$ in $C^\infty_{\rm loc}(\R \times S^1, \R \times W_E)$ as $n\to\infty$, and that $\tilde v$ is asymptotic to $P_{3,E}$ and $P_{2,E}$ at $s=+\infty$ and $s=-\infty$, respectively. Given $\frac{T_{2,E}}{2}>\epsilon>0$ small, we choose $R>0$ large enough so that $\int_{[-R,R]\times S^1} v^* d\lambda_E > T_{3,E} - T_{2,E} -\epsilon$, thus $\int_{[-R,R]\times S^1} v_n^* d\lambda_n > T_{3,E} - T_{2,E} -\epsilon$ for all large $n$. Now given $\epsilon_0>0$ we have $\int_{B_{\epsilon_0}(z_n)}v_n^*d\lambda_n>T_{2,E} -\epsilon$ for all large $n$. Since $B_{\epsilon_0}(z_n)\cap [-R,R]\times S^1 = \emptyset$ for all large $n$, we conclude that  $$\begin{aligned} T_{3,E}-T_{2,E}= & \lim_n (T_{3,n} - T_{2,n}) \\ = & \lim_n \int_{\R \times S^1} v_n^*d\lambda_n \\ \geq & \limsup_n \int_{([-R,R] \times S^1)\cup B_{\epsilon_0}(z_n) } v_n^*d\lambda_n \\  \geq & T_{3,E}-T_{2,E} +T_{2,E} -2\epsilon\\ > & T_{3,E}-T_{2,E}\end{aligned} $$ for all large $n$, a contradiction.

We have proved that $|\nabla \tilde v_n(z)|_n$ is uniformly bounded in $n\in \N$ and in $z\in \R \times S^1$. As mentioned above,  we have $\tilde v_n \to \tilde v:\R \times S^1 \to \R \times W_E$ in $C^\infty_{\rm loc}$ as $n \to \infty$, up to extraction of a subsequence, where $\tilde v$ is non-constant,  asymptotic to $P_{3,E}$ at its positive puncture $+\infty$ and asymptotic to $P_{2,E}$ at its negative puncture $-\infty$. In particular, we have \begin{equation}\label{energiadl} \int_{\R \times S^1} v^* d\lambda_E = T_{3,E} - T_{2,E}>0.
\end{equation}

Now we prove that $v$ does not intersect $P_{3,E}$, the case $P_{2,E}$ is similar. If $v$ intersects $P_{3,E}$, then $\tilde v$ intersects the $\tilde J_E$-holomorphic cylinder over $P_{3,E}$. By Carleman's similarity principle and \eqref{energiadl} these intersections are isolated. By positivity and stability of intersections, any such intersection implies intersections of $\tilde v_n$ with the $\tilde J_n$-holomorphic cylinder over $P_{3,n}$ for all large $n$, a contradiction. Reasoning in the same way, we get that $v_E(\R \times S^1)$ does not intersect $u_{1,E}(\C) \cup u_{2,E}(\C)$. It follows that $v_E(\R \times S^1) \subset \dot S_E \setminus P_{3,E}$.

In Proposition \ref{asymptoticcylinder} below, we prove that $\tilde v$ converges to $P_{3,E}$ exponentially fast at $+ \infty$. Since $P_{2,E}$ is hyperbolic, $\tilde v$ also converges exponentially fast to $P_{2,E}$ at $-\infty$. From the uniqueness of such $\tilde J_E$-holomorphic cylinders, up to reparametrization and $\R$-translation, see Proposition \ref{propunique}, we conclude that any subsequence of $\tilde v_n$ has the same $C^\infty_{\rm loc}$-limit, denoted by $\tilde v_E=(b_E,v_E):\R \times S^1 \to \R\times W_E$, which, up to a shift in the $S^1$-direction in the domain, is determined by normalization \eqref{separa}.  This finishes the proof of the proposition. \end{proof}

Next we give a better description of the asymptotic behavior of the finite energy cylinder $\tilde v_E=(b_E,v_E)$ obtained in Proposition \ref{propcylinder} at its positive puncture. Our aim is to prove the following more general statement.

\begin{prop}\label{asymptoticcylinder}Let $\tilde v_E=(b_E,v_E):\R \times S^1 \to \R \times W_E$ be the $\tilde J_E$-holomorphic cylinder obtained in Proposition \ref{propcylinder}, which is asymptotic to $P_{3,E}=(x_{3,E},T_{3,E})$ at its positive puncture $+\infty$ and to $P_{2,E}$ at $-\infty$, and is the $C^\infty_{\rm loc}$-limit of the sequence of $\tilde J_n$-holomorphic cylinders $\tilde v_n=(b_n,v_n)$ satisfying \eqref{separa}, where $E>0$ is sufficiently small. Consider Martinet's coordinates $(\vartheta, x,y)\in S^1 \times \R^2$ defined in a small tubular neighborhood $\U\subset W_E$ of $P_{3,E}$, as explained in Appendix \ref{ap_basics}, where the contact form takes the form $\lambda_E\equiv g_E \cdot (d\vartheta +x dy)$ and $P_{3,E} \equiv S^1 \times \{0\}$. Let $A_{P_{3,E}}$ be the asymptotic operator associated to $P_{3,E}$ which assumes the form \eqref{operadorAP} in these local coordinates. Then $v_E(s,t)\in \U$ for all $s$ sufficiently large and, in these local coordinates, the cylinder $\tilde v_E$ is represented by the functions $$(b_E(s,t),\vartheta(s,t),x(s,t),y(s,t)), (s,t)\in \R \times S^1, s\gg0,$$ where
\begin{equation}\label{eqasymptotics} \begin{aligned}|D^\gamma(b_E(s,t) - (T_{3,E}s + a_0))| & \leq A_\gamma e^{-r_0s},\\
|D^\gamma(\vartheta(s,t) - t-\vartheta_0) | & \leq A_\gamma e^{-r_0s},\\
z(s,t):=(x(s,t),y(s,t)) & = e^{\int_{s_0}^s \mu(r) dr}(e(t)+R(s,t)),\\
|D^\gamma R(s,t)|,|D^\gamma (\mu(s)- \delta)| & \leq A_\gamma
e^{-r_0s},
\end{aligned}
\end{equation}for all large $s$ and all $\gamma \in \N \times \N$, where $A_\gamma,r_0>0,\vartheta_0,a_0\in \R$ are suitable constants.  $\vartheta(s,t)$ is seen as
a map on $\R$ and satisfies $\vartheta(s,t+1) = \vartheta(s,t)+1$. Here $\mu(s) \to \delta <0,$  where $\delta$ is an eigenvalue of $A_{P_{3,E}}$ and $e:S^1 \to \R^2$
is an eigensection of $A_{P_{3,E}}$ associated to $\delta$,
represented in these coordinates. \end{prop}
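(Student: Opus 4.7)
The plan is to deduce Proposition \ref{asymptoticcylinder} as a direct application of the Hofer--Wysocki--Zehnder asymptotic analysis at nondegenerate punctures developed in \cite{props1,props2}. Although $\lambda_E$ itself need not be globally nondegenerate, the asymptotic formula requires only that the asymptotic limit $P_{3,E}$ be nondegenerate, which is already guaranteed by the information we have at this stage.

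First I would check the hypotheses needed to invoke the asymptotic theorem. Since $CZ(P_{3,E})=3$ is well defined (Proposition \ref{prop_step4}), the asymptotic operator $A_{P_{3,E}}$ is an unbounded self-adjoint operator on $L^2(S^1,\R^2)$ with trivial kernel, i.e.\ $P_{3,E}$ is nondegenerate; moreover, as noted after Proposition \ref{prop_step4}-i), $P_{3,E}$ is simply covered (otherwise its Conley--Zehnder index would be at least $5$, violating convexity of $\dot S_0$ and Proposition \ref{prop_ghomi}). From Proposition \ref{propcylinder} we already know that $\tilde v_E$ has finite $d\lambda_E$-energy equal to $T_{3,E}-T_{2,E}$ and that the loops $t\mapsto v_E(s,t)$ converge in $C^\infty$ to $t\mapsto x_{3,E}(T_{3,E}(t+t_0))$ as $s\to+\infty$. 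These are precisely the standing assumptions of the HWZ asymptotic theorem.

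Next I would choose Martinet coordinates $(\vartheta,x,y)\in S^1\times \R^2$ on a tubular neighborhood $\U$ of $P_{3,E}$, in which $\lambda_E=g_E(d\vartheta+xdy)$, $g_E|_{P_{3,E}}\equiv T_{3,E}$, and $P_{3,E}\equiv S^1\times\{0\}$, as in Appendix \ref{ap_basics}. By the $C^\infty$-convergence of the loops, $v_E(s,\cdot)\subset \U$ for $s$ sufficiently large, so $\tilde v_E$ is represented by four functions $(b_E,\vartheta,x,y)$ satisfying $\vartheta(s,t+1)=\vartheta(s,t)+1$ since $P_{3,E}$ is simply covered. Writing out the Cauchy--Riemann equation for $\tilde v_E$ in these coordinates and linearizing along the asymptotic orbit, the transverse component $z(s,t):=(x(s,t),y(s,t))$ satisfies a semilinear elliptic equation of the form
\begin{equation*}
z_s+J_0 z_t+S(s,t)z=r(s,t),
\end{equation*}
with $S(s,t)\to S_\infty(t)$ and $r(s,t)\to 0$ exponentially as $s\to+\infty$, where $-J_0\partial_t-S_\infty(t)$ is conjugate to $A_{P_{3,E}}$. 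Applying the main theorem of \cite{props1} (or \cite{props2}, Theorem 2.2 therein), which is exactly the asymptotic formula for such equations at a positive puncture, yields a negative eigenvalue $\delta<0$ of $A_{P_{3,E}}$, an associated eigensection $e:S^1\to\R^2$, and an exponent $r_0>0$ smaller than the spectral gap between $\delta$ and the next negative eigenvalue, such that
\begin{equation*}
z(s,t)=e^{\int_{s_0}^s\mu(r)dr}\bigl(e(t)+R(s,t)\bigr),\qquad |D^\gamma R|,\;|D^\gamma(\mu-\delta)|\leq A_\gamma e^{-r_0 s}.
\end{equation*}
The asymptotic behavior of $b_E$ and $\vartheta$ stated in \eqref{eqasymptotics} then follows by integrating the Cauchy--Riemann equations, using that $b_E$ is a proper map with $b_{E,s}\to T_{3,E}$ and $\vartheta_s\to 0$, $\vartheta_t\to 1$ exponentially fast; this last step is classical and done e.g.\ in \cite{props1}.

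The main technical issue is therefore not the derivation of \eqref{eqasymptotics} itself, which is a textbook application once the setup is in place, but rather ensuring that the hypotheses on the asymptotic orbit $P_{3,E}$ are genuinely available at this stage of the argument and not only for the approximating orbits $P_{3,n}$. This is guaranteed by the simplicity and nondegeneracy of $P_{3,E}$ noted above, together with the exponential decay being a purely local phenomenon near the puncture, independent of the global regularity (or lack thereof) of $\lambda_E$ away from $P_{3,E}$. Once this is verified the proof reduces to citing \cite{props1,props2}.
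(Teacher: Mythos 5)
Your proposal rests on a claim that is false: you assert that because $CZ(P_{3,E})=3$ is well defined, the asymptotic operator $A_{P_{3,E}}$ has trivial kernel, i.e.\ $P_{3,E}$ is nondegenerate. But the Conley--Zehnder index, as defined in Appendix~\ref{ap_basics} via $CZ(P)=2\wind^{<0}(A_P)+p$ with $p=\wind^{\geq0}(A_P)-\wind^{<0}(A_P)\in\{0,1\}$, is perfectly well defined for degenerate orbits. Having $CZ(P_{3,E})=3$ forces $\wind^{<0}(A_{P_{3,E}})=1$ and $\wind^{\geq0}(A_{P_{3,E}})=2$, which is entirely compatible with $0\in\sigma(A_{P_{3,E}})$ (the zero eigenvalue would simply have winding number $2$). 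Nor does simplicity of $P_{3,E}$ rule out degeneracy. The paper is in fact explicit about this: in the discussion after Proposition~\ref{propplanes2} it notes that the HWZ uniqueness/asymptotic theorems assume nondegeneracy of the asymptotic limit, and ``Since $P_{3,E}$ may be degenerate, we cannot guarantee this assumption.'' So you cannot directly cite \cite{props1,props2} for the asymptotics of $\tilde v_E$.

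The paper's actual route circumvents this by using the approximating data: each $\tilde v_n$ is $\tilde J_n$-holomorphic and asymptotic at $+\infty$ to $P_{3,n}$, which \emph{is} nondegenerate since $\lambda_n$ is a nondegenerate contact form. The HWZ asymptotic theorem therefore applies to each $\tilde v_n$, producing eigenvalues $\delta_n<0$ of $A_n^\infty$ and decay rates $\mu_n(s)\to\delta_n$. The key additional ingredient is Proposition~\ref{propconvex85} (Lemmas~8.4 and~8.5 of \cite{convex}), which gives a \emph{uniform} spectral-gap bound: after passing to a subsequence, $\delta_n\to\delta<0$ with $\delta$ an eigenvalue of $A^\infty=A_{P_{3,E}}$, eigensections converge, and $\mu_n(s)\leq\delta+\epsilon<0$ for all $s\geq s_1$ and $n\geq n_0$. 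This uniformity in $n$ is what survives the passage to the $C^\infty_{\rm loc}$-limit $\zeta^n\to\zeta$, and is precisely what a direct application of the nondegenerate asymptotic theorem to $\tilde v_E$ cannot provide when $P_{3,E}$ is degenerate (if $0\in\sigma(A_{P_{3,E}})$, the decay constant could in principle degenerate to $0$). You should also note the change of variables $z=\T(m)\zeta$ in \eqref{eq_barraT}, which puts the Cauchy--Riemann system into the canonical form $\zeta_s+J_0\zeta_t+S'\zeta=0$ simultaneously for $\tilde v_E$ and for all the $\tilde v_n$; this normalization is what lets one compare the matrices $S_n'$ and $S'$, and hence the asymptotic operators, in a single trivialization. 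Your proof does not compensate for the missing nondegeneracy, so it has a genuine gap.
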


Proposition \ref{asymptoticcylinder} is essentially proved in \cite{convex}. Since our present situation has minor differences, we include the main steps of the proof for completeness. We need a preparation lemma analogous to Lemma 8.1 in \cite{convex}.

\begin{lem}\label{lemvizi}Let $\tilde v_E=(b_E,v_E):\R \times S^1 \to \R \times W_E$ be the finite energy $\tilde J_E$-holomorphic cylinder obtained in Proposition \ref{propcylinder} as the $C^\infty_{\rm loc}$-limit of the sequence of $\tilde J_n$-holomorphic cylinders $\tilde v_n=(b_n,v_n)$ satisfying \eqref{separa}, where $E>0$ is sufficiently small. Given an $S^1$-invariant neighborhood $W$ of $t\mapsto x_{3,E}(T_{3,E}t)$ in the loop space $C^\infty(S^1,W_E)$, there exists $s_0>0$ such that for all $s \geq s_0$ and for every large $n$, the loop $t \mapsto v_n(s,t)$ is contained in $W$. \end{lem}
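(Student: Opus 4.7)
The strategy mirrors that of Lemma \ref{lemvizi10}, transplanted to the positive puncture of the cylinders $\tilde v_n$. Arguing by contradiction, suppose there exist an $S^1$-invariant neighborhood $W$ of $t\mapsto x_{3,E}(T_{3,E}t)$ and, after passing to a subsequence, a sequence $s_n \to +\infty$ such that the loop $t\mapsto v_n(s_n,t)$ fails to lie in $W$ for every $n$.

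The first step is a tail-energy estimate: for every sequence $s_n\to +\infty$,
$$\int_{[s_n,\infty)\times S^1} v_n^*\,d\lambda_n \longrightarrow 0 \quad \mbox{as } n\to\infty.$$
Indeed, the total $d\lambda_n$-energy of $\tilde v_n$ equals $T_{3,n}-T_{2,n}\to T_{3,E}-T_{2,E}=E(\tilde v_E)$; by the $C^\infty_{\rm loc}$-convergence $\tilde v_n\to\tilde v_E$ from Proposition \ref{propcylinder}, for any $\epsilon>0$ we may choose $R>0$ with $\int_{[-R,R]\times S^1}v_E^*\,d\lambda_E > T_{3,E}-T_{2,E}-\epsilon$, and the same inequality then holds with $v_E$ replaced by $v_n$ for $n$ large. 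Subtracting from the total energy leaves at most $2\epsilon$ on the complement of $[-R,R]\times S^1$, which eventually contains $[s_n,\infty)\times S^1$.

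Next, define the translated $\tilde J_n$-holomorphic cylinders
$$\tilde w_n(s,t) := \bigl(b_n(s+s_n,t)-b_n(s_n,0),\, v_n(s+s_n,t)\bigr).$$
The uniform gradient bound of Proposition \ref{propcylinder} descends to $\tilde w_n$, so elliptic bootstrapping yields a $\tilde J_E$-holomorphic $C^\infty_{\rm loc}$-limit $\tilde w=(b,w):\R\times S^1\to\R\times W_E$. The tail estimate forces $\int_K w^*\,d\lambda_E=0$ on every compact $K\subset \R\times S^1$, so $\tilde w$ has vanishing $d\lambda_E$-energy. Stokes' theorem combined with $\lim_{s\to\infty}\int_{\{s\}\times S^1}v_n^*\lambda_n=T_{3,n}$ gives
$$\int_{\{s\}\times S^1}w^*\lambda_E = T_{3,E}, \qquad \forall s\in \R,$$
so $\tilde w$ is non-constant, and by \cite[Theorem 6.11]{props2} it is a cylinder over a periodic orbit $Q\in \P(\lambda_E)$ of action $T_{3,E}$.

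It remains to identify $Q=P_{3,E}$, and this is the main difficulty. Since $v_n((0,\infty)\times S^1)\subset U_E^\delta$ by \eqref{separa} and $s_n\to +\infty$, the image of $w$ lies in $\overline{U_E^\delta}$, which for $\delta>0$ small excludes $P_{2,E}$, all its covers, and $P_{3,E}'\subset \dot S_E'$. Moreover, each slice $t\mapsto v_n(s,t)$ lies in the embedded cylinder $v_n(\R\times S^1)\subset W_E\setminus P_{3,n}$, and any two slices are homologous within this cylinder; letting $s\to+\infty$ shows the slices to be unlinked with $P_{3,n}$, and hence $Q$ is unlinked with $P_{3,E}$ in the limit. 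Combined with Theorem \ref{teolink}, the weak convexity from Proposition \ref{prop_EM}, and the minimal-action Lemma \ref{lemamin}, the only candidate in $\overline{U_E^\delta}$ of action $T_{3,E}$ unlinked with $P_{3,E}$ is $P_{3,E}$ itself. Consequently $w(0,\cdot)$ is a reparametrization of $x_{3,E}(T_{3,E}\cdot)$; by the $S^1$-invariance of $W$ this loop belongs to $W$, and the $C^\infty$-convergence $v_n(s_n,\cdot) \to w(0,\cdot)$ forces $v_n(s_n,\cdot)\in W$ for $n$ large, contradicting the standing hypothesis. The main obstacle is precisely this identification of $Q$, which relies on the weak-convexity and linking results of Section \ref{sec_step2} together with the position control \eqref{separa}.
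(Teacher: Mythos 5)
Your proof is correct and follows essentially the same approach as the paper: tail-energy estimate via the total $d\lambda_n$-energy and $C^\infty_{\rm loc}$-convergence to $\tilde v_E$, translated cylinders $\tilde w_n$ converging to a finite-energy cylinder $\tilde w$ with vanishing $d\lambda_E$-energy, identification of the limit orbit, and contradiction via Theorem~\ref{teolink}. The only (cosmetic) difference is the logical ordering at the end: the paper observes that $Q$ must be geometrically distinct from $P_{3,E}$ \emph{by construction} (since the loops lie outside $\W$) and then derives a contradiction with Theorem~\ref{teolink}, whereas you use Theorem~\ref{teolink} together with the action bound and position in $\overline{U_E^\delta}$ to force $Q=P_{3,E}$ and then contradict the standing hypothesis; these are logically equivalent.
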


\begin{proof}First we claim that for every $\epsilon>0$ and every sequence $s_n \to +\infty$, there exists $n_0 \in \N$ such that for all $n \geq n_0$, we have $$\int_{[s_n,\infty) \times S^1} v_n^* d\lambda_n \leq \epsilon.$$ Arguing indirectly we may assume there exists $\epsilon >0$ and a sequence $s_n \to +\infty$ satisfying $$\int_{[s_n,\infty) \times S^1} v_n^* d\lambda_n > \epsilon,\forall n.$$ The $d\lambda_n$-energy of $\tilde v_n$ is equal to $T_{3,n}-T_{2,n}$. We conclude that for all $S_0>0$ and large $n$  we have $$\int_{[-S_0,S_0]\times S^1} v_n^* d\lambda_n \leq T_{3,n} - T_{2,n} - \epsilon.$$ Since $\tilde v_n \to \tilde v_E$ in $C^\infty_{\rm loc}$, we have  $$\int_{[-S_0,S_0]\times S^1} v_E^* d\lambda_E \leq T_{3,E} - T_{2,E} - \epsilon,$$ which implies that $\int_{\R \times S^1} v_E^*d\lambda_E \leq T_{3,E} - T_{2,E} - \epsilon,$ since $S_0$ is arbitrary. This is in contradiction with \eqref{energiadl}.

Now assume that the loops $t\mapsto v_{n_j}(s_{n_j},t)$ are not contained in $W$ for subsequences $\tilde v_{n_j}=(b_{n_j},v_{n_j})$ and $s_{n_j}\to +\infty$, also denoted by $\tilde v_n$ and $s_n$, respectively. Let $$\tilde w_n(s,t) = (d_n(s,t),w_n(s,t))=(b_n(s+s_n,t) - b_n(s_n,0),v_n(s+s_n,t)).$$ Observe that for all $n$, $d_n(0,0)=0$ and $w_n(0,t) \notin W$ for every $t\in S^1.$ From Proposition \ref{propcylinder}, this sequence has $C^\infty_{\rm loc}-$bounds and, therefore, we find a $\tilde J_E$-holomorphic map $\tilde w=(d,w):\R \times S^1 \to \R \times W_E$ such that $\tilde w_n \to \tilde w$ in $C^\infty_{\rm loc}$ as $n\to\infty$. From the previous claim we have, for all fixed $r_0>0$ and all $\epsilon>0$, that $$\begin{aligned} \int_{[-r_0,r_0] \times S^1} w^*d\lambda_E \leq & \limsup_n \int_{[-r_0,r_0] \times S^1} w_n^*d\lambda_n \\ \leq & \limsup_n \int_{[s_n-r_0,s_n+r_0]\times S^1} v_n^* d\lambda_n \\ \leq &  \epsilon,\end{aligned}$$ for all large $n$. This implies that $\int_{\R \times S^1} w^* d\lambda_E = 0.$  Moreover,  $$T_{3,E} \geq \int_{\{0\} \times S^1} w^* \lambda_E = \lim_{n \to \infty} \int_{\{0\} \times S^1} w_n^* \lambda_n =\lim_{n \to \infty} \int_{\{s_n\} \times S^1} v_n^* \lambda_n \geq T_{2,E}.$$ It follows that $\tilde w$ is a cylinder over a periodic orbit $Q=(x,T)\in \P(\lambda_E)$ which, by construction, is geometrically distinct from $P_{3,E}$. Now, since for all large $n$ the loops $t \mapsto w_n(0,t)$ are contained in the component of $W_E \setminus N_E^\delta$ which contains $P_{3,E}$  from \eqref{separa} and such loops converge to $t \mapsto x(tT+c_0)$ in $C^\infty$ as $n \to \infty$, for a suitable constant $c_0$, we conclude that $Q$ is also geometrically distinct from $P_{2,E}$. Moreover, these loops are not linked to $P_{3,E}$, which implies that $Q$ is not linked to $P_{3,E}$ as well. This contradicts Theorem \ref{teolink},  if $E>0$ is taken sufficiently small.  \end{proof}

\begin{proof}[Sketch of the proof of Proposition \ref{asymptoticcylinder}] Lemma \ref{lemvizi} proves the existence of $s_0\gg 0$ such that for all large $n$, $v_n(s,t) \in \U,\forall (s,t)\in \R \times S^1,s\geq s_0$, where $\U\subset W_E$ is a small fixed neighborhood of $P_{3,E}$ where we have Martinet's coordinates $(\vartheta, x,y)$. Thus all analysis can be done in these coordinates. We denote $\lambda_n=g_n\cdot(d\vartheta  + x dy) \to \lambda_E$, where $g_n=f_ng_E \to g_E$ in $C^\infty(\U_0)$ as $n\to \infty$ and $\U_0 \subset S^1 \times \R^2$ is a neighborhood of $S^1 \times \{0\}\equiv P_{3,E}$.

Using a new notation, we write for each $n$ and $s\geq s_0$ $$\begin{aligned} \tilde v_E(s,t)=(a(s,t),\vartheta(s,t), x(s,t),y(s,t)),\\ \tilde v_n(s,t)=(a^n(s,t),\vartheta^n(s,t), x^n(s,t),y^n(s,t)),\end{aligned}$$ where the superscripts are used to simplify notation.  The following equations hold \begin{equation}\label{eq_zs}\begin{aligned}a_s =g_E(v)(\vartheta_t + xy_t),\\ a_t=-g_E(v)(\vartheta_s + xy_s),\\ z_s + M(s,t)z_t + S(s,t)z =0,  \end{aligned} \end{equation}and for all $n$, \begin{equation}\label{eq_zns} \begin{aligned}a^n_s =g_n(v_n)(\vartheta^n_t + x^ny^n_t),\\ a^n_t=-g_n(v_n)(\vartheta^n_s + x^ny^n_s),\\z^n_s +  M^n(s,t)z^n_t + S_n(s,t)z^n  =0, \\ \end{aligned}\end{equation} where $z=(x,y)$, $z^n=(x^n,y^n)$, $v_E=(\vartheta,x,y)$, $v_n=(\vartheta^n,x^n,y^n)$ and $S,S_n,M=M(v_E), M^n=M^n(v_n)$ are $2 \times 2$ matrices, smooth in $(s,t)$. The matrices $M, M^n$ correspond to the complex structures $J_E,J_n:\xi \to \xi$ in the basis $$\{e_1=(0,1,0),e_2=(-x,0,1)\}\subset \xi,$$ and hence satisfy $M^2=-I$ and $(M^n)^2=-I$. Recall that $\tilde v_n$ are $\tilde J_n=(\lambda_n,J_n)$-holomorphic maps and \begin{equation}\label{vntov}\tilde v_n \to \tilde v_E \mbox{ in } C^\infty_{\rm loc}. \end{equation}

In order to apply results from \cite{convex} and obtain the desired asymptotic behavior  for $\tilde v_E$ at $+\infty$, we slightly modify $\tilde v_n$ in the following way: first we define the new variables $\zeta,\zeta^n$ by $$\begin{aligned} z=& \T(m)\zeta \\  z^n= & \T^n(m)\zeta^n,\end{aligned}$$ where \begin{equation}\label{eq_barraT}\begin{aligned} \T(m) = (-J_0 M(m))^{-1/2}, m\in \U_0,\\ \T^n(m)=(-J_0 M^n(m))^{-1/2}, m\in \U_0,\end{aligned} \end{equation} and $J_0$ is the matrix $$J_0 = \left(\begin{array}{cc}0 &  -1 \\ 1 & 0  \end{array}  \right).$$ $\T$ is symplectic, symmetric and satisfies $\T^{-1}M\T = J_0$. The same holds for $\T^n$. Thus, from \eqref{eq_zs} and \eqref{eq_zns}, we obtain \begin{equation}\label{eqzns}\begin{aligned}  \zeta_s + J_0\zeta_t + S'(s,t)\zeta =0, \\ \zeta^n_s +  J_0\zeta^n_t + S_n'(s,t)\zeta^n  =0, \end{aligned}\end{equation} where $S',S_n'$ are modified accordingly and satisfy the same properties of $S,S_n$. Defining $d^n(s,t)=\frac{T_{3,E}}{T_{3,n}}a^n(s,t)$, we have for all $\alpha \in \N \times \N,$ \begin{equation}\begin{aligned} \partial^\alpha[\vartheta^n(s,t)-t-\bar c_n] \to 0,  \\ \partial^\alpha[d^n(s,t) - T_{3,E}s -d_n] \to 0, \end{aligned} \end{equation} as $s\to \infty$, for suitable constants $\bar c_n\in S^1,d_n\in \R$. Let $$ \begin{aligned}  \tilde v'(s,t)=(a(s,t),\vartheta(s,t),\zeta(s,t)),\\ \tilde v_n'(s,t)=(d^n(s,t),\vartheta^n(s,t),\zeta^n(s,t)). \end{aligned} $$ Since  $\T^n \to \T$ in $C^\infty(\U_0)$ and $T_{3,n} \to T_{3,E}$ as $n \to \infty$, it follows from \eqref{vntov} that  $\tilde v_n' \to \tilde v'$ in $C^\infty_{\rm loc}$ as $n \to \infty$.

For each $n$, $S_n'(s,t)$ converges as $s \to \infty$ to a symmetric matrix $S^\infty_n(t)$, which depends on the linearized dynamics of $\lambda_n$ over $P_{3,n}$. In the same way, $S'(s,t)$ converges as $s \to \infty$ to a symmetric matrix $S^\infty(t)$, depending on the linearized dynamics of $\lambda_E$ over $P_{3,E}$. All these computations are shown in \cite{props1} and \cite{convex}.  Again, since $\T^n \to \T$ in $C^\infty(\U_0)$ and $T_{3,n} \to T_{3,E}$ as $n\to \infty$, we can choose $\bar c_n$ appropriately such that, up to extraction of a subsequence, we have \begin{equation}\label{Sinfty} \begin{aligned} \bar c_n & \to \vartheta_0 \mbox{ as } n\to \infty, \\ S^\infty_n(t) & \to S^\infty(t) \mbox{ as } n \to \infty.\end{aligned}\end{equation} Let $A^\infty,A_n^\infty$  be the self-adjoint operators on $W^{1,2}(S^1, \R^2)\subset L^2(S^1,\R^2)$ defined by $$\begin{aligned}A^\infty \eta = -J_0 \dot \eta - S^\infty \eta,\\ A^\infty_n\eta = -J_0 \dot \eta - S^\infty_n \eta.  \end{aligned}$$ From \eqref{Sinfty}, we have $A^\infty_n \to A^\infty$ as $n \to \infty$.

Since $v_n,v_E$ do not intersect $S^1 \times \{0\}$ in local coordinates, we can define the functions $$\xi(s,t) = \frac{\zeta(s,t)}{|\zeta(s)|} \mbox{ and }  \xi^n(s,t) = \frac{\zeta^n(s,t)}{|\zeta^n(s)|},$$ where $|\zeta(s)|,|\zeta^n(s)|$ denote the $L^2$-norm of the loops $t \mapsto \zeta(s,t),\zeta^n(s,t),$ respectively. From \eqref{eqzns}, we see that the following holds $$\begin{aligned}\frac{1}{2}\frac{\frac{d}{ds}|\zeta|^2}{|\zeta|^2} & =\left<-J_0 \xi_t - S \xi ,\xi\right>=:\mu(s),\\ \frac{1}{2}\frac{\frac{d}{ds}|\zeta^n|^2}{|\zeta^n|^2}& =\left<-J_0 \xi^n_t - S_n \xi^n ,\xi^n\right>=:\mu_n(s),\end{aligned}$$ thus $$|\zeta(s)| = e^{\int_{s_0}^s \mu(\tau) d\tau}|\zeta(s_0)| \mbox{ and } |\zeta^n(s)| = e^{\int_{s_0}^s \mu_n(\tau) d\tau}|\zeta^n(s_0)|.$$

Since $P_{3,n}$ is nondegenerate for each $n$, it follows from the results in \cite{props1} that  $\mu_n(s) \to \delta_n$ as $s\to \infty$, where $\delta_n$ is a negative eigenvalue of $A^\infty_n$, i.e., $A^\infty_n e_n = \delta_n e_n$ for an eigensection $e_n$ with $|e_n|=1$. In \cite{convex}, we find the following result.

\begin{prop}[\cite{convex}, Lemmas 8.4 and 8.5]\label{propconvex85} There exists a subsequence of $\delta_n$, still denoted by $\delta_n$, satisfying the following:  $\delta_n \to \delta$, where $\delta$ is a negative eigenvalue of $A^\infty$, and $e_n \to e'$ in $W^{1,2}(S^1,\R^2)$, where $e'$ is an eigensection of $A^\infty$ associated to $\delta$, i.e., $A^\infty e' = \delta e'$ and $|e'|=1$.  Moreover, for each $\epsilon>0$, satisfying $\delta+\epsilon <0$, there exist $s_1 \geq s_0$ and $n_0$ such that $\mu_n(s) \leq \delta +\epsilon, \forall n \geq n_0,s \geq s_1$.\end{prop}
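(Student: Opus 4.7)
The plan is to establish simultaneously the spectral convergence $\delta_n\to\delta$ and $e_n\to e'$, and then to deduce the uniform asymptotic estimate $\mu_n(s)\leq \delta+\epsilon$ from a version of the Hofer--Wysocki--Zehnder asymptotic formula that is uniform along the sequence $\tilde v_n$. All three conclusions will follow once one controls, uniformly in $n$, the size of $\delta_n$ and the exponential rate at which $\xi^n(s,\cdot)$ and $\mu_n(s)$ approach their asymptotic limits.

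For the first part, I would begin with a uniform bound on $\delta_n$. The eigenvalue equation $-J_0\dot e_n = \delta_n e_n + S_n^\infty e_n$ together with $|e_n|_{L^2}=1$ and $\sup_n\|S_n^\infty\|_{C^0}<\infty$ does not by itself bound $|\delta_n|$, so the bound must come from the geometric data. I would extract it from the $C^\infty_{\rm loc}$-convergence $\tilde v_n\to \tilde v_E$ of Proposition \ref{propcylinder}, which yields $\mu_n(s_*)\to \mu(s_*)$ for any fixed $s_*\geq s_0$, combined with the fact that for each $n$ the function $\mu_n(s)$ approaches $\delta_n$ as $s\to\infty$ with decay controlled by the spectral gap of $A_n^\infty$ above $\delta_n$. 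Once $|\delta_n|\leq C$, elliptic bootstrapping of $-J_0\dot e_n=\delta_n e_n+S_n^\infty e_n$, using $S_n^\infty\to S^\infty$ in $C^\infty(S^1)$, gives uniform $W^{k,2}$-bounds on $e_n$ for every $k$; Rellich--Kondrachov then produces a subsequence with $e_n\to e'$ in $W^{1,2}$ and $\delta_n\to\delta$. Passing to the limit in the eigenvalue equation identifies $e'$ as an eigensection of $A^\infty$ with eigenvalue $\delta\leq 0$, and $|e'|_{L^2}=1$. The equality $\delta=0$ is ruled out by nondegeneracy of $P_{3,E}$, which implies $\ker A^\infty=\{0\}$ in the transverse trivialization.

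The main obstacle is the final uniform statement $\mu_n(s)\leq \delta+\epsilon$ for all $s\geq s_1$ and $n\geq n_0$. For fixed $n$, the convergence $\mu_n(s)\to\delta_n$ comes with an exponential rate determined by the spectral gap of $A_n^\infty$ above $\delta_n$; since $A_n^\infty\to A^\infty$ in norm and $\delta_n\to\delta$, this gap is bounded below uniformly in large $n$, which should translate into uniform constants $C,r>0$ with $\mu_n(s)-\delta_n\leq Ce^{-rs}$ for all $n\geq n_0$ and $s\geq s_1$. Combined with $\delta_n\to\delta$, this would give the claimed bound. The delicate point, and the main obstacle I anticipate, is extracting these uniform constants from the asymptotic analysis of \cite{props1,convex}, which is carried out for a single contact form: one must verify that all the implicit constants in the normal-form expansion near $P_{3,n}$ and in the associated ODE estimates can be chosen uniformly for the sequence $\lambda_n$ with $S_n^\infty\to S^\infty$. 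Should this direct route prove too rigid, I would fall back on a contradiction argument, applying a translation/rescaling to $\tilde v_{n_k}(s+s_k,\cdot)$ for hypothetical violating sequences $n_k,s_k\to\infty$ and extracting, via the compactness machinery of Proposition \ref{propcylinder}, a $\tilde J_E$-holomorphic limit whose asymptotic eigenvalue would lie strictly above $\delta$ while its eigensection coincides (up to scale) with $e'$, contradicting simplicity of the asymptotic decomposition.
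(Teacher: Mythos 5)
The main gap is your claim that ``$\delta=0$ is ruled out by nondegeneracy of $P_{3,E}$.'' That hypothesis is not available here: $P_{3,E}$ is a periodic orbit of the \emph{possibly degenerate} contact form $\lambda_E$ — the whole scheme of approximating $\lambda_E$ by nondegenerate $\lambda_n$ exists precisely because $\lambda_E$ may fail to be nondegenerate, and the paper explicitly acknowledges after Proposition~\ref{propplanes2} that $P_{3,E}$ may be degenerate. If $0\in\sigma(A^\infty)$, your argument only gives $\delta\leq 0$ from the limit $\delta_n\to\delta$, and the strict inequality $\delta<0$ — which is the point of the lemma — remains unproved.

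What makes this work in \cite{convex} (Lemmas 8.4--8.5), and what your proposal is missing, is the winding number / Conley--Zehnder argument. The rigid cylinder $\tilde v_n$ has $\wind_\pi(\tilde v_n)=0$, hence $\wind_\infty(+\infty)=1$ at the positive puncture asymptotic to $P_{3,n}$; equivalently $\wind(\delta_n)=1$ in a global trivialization of $\xi$. Since $CZ(P_{3,n})=3$ forces $\wind^{\geq 0}(A_n^\infty)=2$, any eigenvalue with $\wind=1$ is strictly negative, so $\delta_n<0$. Moreover, for each integer $k$ the operator $A_n^\infty$ has exactly two eigenvalues (with multiplicity) of winding number $k$, and as $A_n^\infty\to A^\infty$ these eigenvalues converge to the corresponding eigenvalues of $A^\infty$; this supplies the uniform bound on $\delta_n$ that you were trying to extract from the geometry. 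In the limit, $\wind(\delta)=1$, and $CZ(P_{3,E})=3$ again forces $\wind^{\geq 0}(A^\infty)=2$ (this holds whether or not $P_{3,E}$ is degenerate), giving $\delta<0$. Your proposed route to a bound on $\delta_n$ — combining $\mu_n(s_*)\to\mu(s_*)$ with the exponential decay of $\mu_n(s)$ to $\delta_n$ governed by the spectral gap above $\delta_n$ — is also circular: the decay rate is controlled by the gap that depends on $\delta_n$ itself, and the near-monotone decrease of $\mu_n$ only bounds $\delta_n$ from above, not from below. The broad architecture (spectral convergence, then a uniform tail estimate, with a translation/rescaling contradiction as fallback) is consistent with HWZ's argument, but the spectral input must come from the winding/Conley--Zehnder constraints, not from a nondegeneracy assumption that isn't there.
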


Following \cite{convex}, we find from Proposition \ref{propconvex85} a uniform exponential estimate for $|\zeta^n(s)|$ as follows: if $r:=-(\delta+\epsilon)>0$, then $|\zeta^n(s)| \leq e^{-r(s-s_0)}|\zeta^n(s_0)|,\forall n\geq n_0,s\geq s_1$. Now since $\zeta^n \to \zeta$ in $C^\infty_{\rm loc}$ as $n \to \infty$, we get $|\zeta(s)|\leq e^{-r(s-s_0)}|\zeta(s_0)|,\forall s \geq s_1$. Arguing as in \cite{convex}, this last inequality is enough for proving the asymptotic properties of $\tilde v'$ by using results from \cite{props1}. We conclude that $$\begin{aligned}|\zeta(s)| & = e^{\int_{s_0}^s \mu(\tau) d \tau}|\zeta(s_0)|,\mbox{ where }\mu(s)  \to \delta<0 \mbox{ as } s\to \infty,\\ \xi(s,t) & =\frac{\zeta(s,t)}{|\zeta(s)|}\to e'(t) \mbox{ as } s\to \infty,\mbox{ where }A^\infty e' = \delta e'. \end{aligned}$$ In coordinates $z$, the eigensection $e'$ is represented by $e(t)=\T(t+\vartheta_0,0,0)e'(t).$ Therefore, \eqref{eqasymptotics} holds for suitable constants $a_0,r_0,A_\gamma$.  This concludes the proof of Proposition \ref{asymptoticcylinder}.\end{proof}

\begin{prop}\label{cylinderconclusion}There exists a finite energy $\tilde J_E$-holomorphic rigid cylinder $$\tilde v_E=(b_E,v_E): \R \times S^1 \to \R \times W_E,$$ with a positive puncture at $+\infty$ and a negative puncture at $-\infty$, such that
\begin{itemize}
\item [(i)] $\tilde v_E$ is the $C^\infty_{\rm loc}$-limit of the $\tilde J_n$-holomorphic cylinders $\tilde v_n=(b_n,v_n)$, contained in $\tilde \F_n, \forall n$, with the normalization \eqref{separa} and possibly a shift in the $S^1$-direction in the domain for each $\tilde v_n$.
\item[(ii)] $\tilde v_E$ is asymptotic to $P_{3,E}$ at $+\infty$ and to $P_{2,E}$ at $-\infty$.  In particular, $$\int_{\R \times S^1 } v_E^*d\lambda_E>0.$$
\item[(iii)] The image of $v_E$ is contained in $\dot S_E \setminus P_{3,E}$.
\item[(iv)] The winding numbers of $\tilde v_E$ at $\pm \infty$ are equal to $1$.
\item[(v)] $v_E$ is transverse to the Reeb vector field $X_{\lambda_E}$.
\item[(vi)] $\tilde v_E$ and  $v_E$ are embeddings.
\end{itemize}
An analogous statement holds for the sequence $\tilde v_n'\in \tilde \F_n$.
\end{prop}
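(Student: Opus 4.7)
Parts $(i)$, $(ii)$ and $(iii)$ of the proposition are already contained in Propositions~\ref{propcylinder} and~\ref{asymptoticcylinder}, so the plan is to concentrate on $(iv)$, $(v)$, and $(vi)$.

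For $(iv)$ the key input is the asymptotic normal form from Proposition~\ref{asymptoticcylinder}: near $+\infty$, $\tilde v_E$ is governed by a negative eigenvalue $\delta$ of the asymptotic operator $A_{P_{3,E}}$, with eigensection $e(t)$, and $\wind_\infty(\tilde v_E,+\infty)$ is by definition the winding of $e$ in the chosen trivialization. Since $P_{2,E}$ is nondegenerate (in fact hyperbolic), the analogous normal form holds at $-\infty$, with the governing eigenvalue now positive. Introducing the spectral winding numbers $\alpha^-(P)$ and $\alpha^+(P)$ of $A_P$ (the maximum winding over negative eigensections and the minimum over positive ones), one has the classical identity $CZ(P) = \alpha^-(P) + \alpha^+(P)$, which yields $\alpha^-(P_{3,E})=1$, $\alpha^+(P_{3,E})=2$ and $\alpha^-(P_{2,E})=\alpha^+(P_{2,E})=1$. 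This forces $\wind_\infty(\tilde v_E,+\infty) \leq 1$ and $\wind_\infty(\tilde v_E,-\infty) \geq 1$. I would then combine these with the standard formula
\[
\wind_\pi(\tilde v_E) = \wind_\infty(\tilde v_E,+\infty) - \wind_\infty(\tilde v_E,-\infty) \geq 0,
\]
valid for finite energy $\tilde J_E$-holomorphic cylinders, where $\wind_\pi(\tilde v_E)$ counts algebraic zeros of $\pi v_{E,s}$ in $v_E^*\xi$. The previous two inequalities then force $\wind_\pi(\tilde v_E)=0$ and both asymptotic windings to equal $1$, which is $(iv)$. At the same time $\wind_\pi(\tilde v_E)=0$ means $\pi v_{E,s}$ never vanishes, so $v_E$ is transverse to the Reeb vector field $X_{\lambda_E}$, giving $(v)$.

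For $(vi)$ the plan is to use positivity and stability of intersections of pseudo-holomorphic curves. Each $\tilde v_n$ is an embedding and so is $v_n$. Since $P_{3,E}$ is simply covered, $\tilde v_E$ is somewhere injective, in particular not a multiple cover. If $\tilde v_E$ had a self-intersection or a critical point, the Carleman similarity principle would force it to be isolated, and positivity and stability of intersections would transfer it to $\tilde v_n$ for all large $n$, contradicting embeddedness. Similarly, a putative self-intersection $v_E(z_1)=v_E(z_2)$ with $z_1 \neq z_2$ means $\tilde v_E$ intersects its $\R$-translate $T_c(\tilde v_E)$ for $c=b_E(z_2)-b_E(z_1)$; since $\tilde v_E$ is not a cylinder over a periodic orbit (its asymptotic limits are geometrically distinct), such intersections are isolated, and again propagate to $\tilde v_n$ versus $T_c(\tilde v_n)$ for large $n$, forcing $v_n$ to self-intersect, a contradiction. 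The main obstacle in this plan is the spectral bookkeeping of step $(iv)$, namely the clean identification of $\wind_\infty(\tilde v_E,\pm\infty)$ with eigensection winding numbers of $A_{P_{3,E}}$ and $A_{P_{2,E}}$ together with the sign conventions at positive versus negative punctures; once $(iv)$ has been pinned down, both $(v)$ and $(vi)$ follow cleanly.
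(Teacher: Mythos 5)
Your proposal is correct and follows essentially the same strategy as the paper: (i)--(iii) are taken from Propositions \ref{propcylinder} and \ref{asymptoticcylinder}; (iv)--(v) use the spectral bounds $\wind_\infty(+\infty)\leq\wind^{<0}(A_{P_{3,E}})=1$, $\wind_\infty(-\infty)\geq\wind^{\geq 0}(A_{P_{2,E}})=1$ together with $0\leq\wind_\pi=\wind_\infty(+\infty)-\wind_\infty(-\infty)$ to pin both asymptotic windings to $1$ and conclude $\wind_\pi=0$, hence transversality to $X_{\lambda_E}$; and (vi) goes via the $C^\infty_{\rm loc}$-convergence of the embeddings $\tilde v_n$ together with isolation of self-intersections and the $\R$-translate trick. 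The only divergence is stylistic: for the embeddedness of $\tilde v_E$ itself you propose a hands-on argument via Carleman similarity and stability of (self-)intersections, whereas the paper notes that $\tilde v_E$ is embedded near $\pm\infty$ (from the asymptotic formulas) and then cites McDuff \cite{dusa} to conclude the limit of embeddings is embedded; the two routes are really the same theorem, with yours re-deriving what McDuff packaged. You should however be a bit more careful than ``positivity and stability of intersections would transfer [a critical point] to $\tilde v_n$'': stability of intersections is stated for pairs of distinct curves, and critical points are handled by McDuff's singularity-index argument rather than by an intersection count, which is precisely what the paper's citation buys. For the embeddedness of $v_E$, your observation that an image coincidence $\operatorname{image}(\tilde v_E)=\operatorname{image}(\tilde v_{E,c})$, $c\neq0$, would force $\tilde v_E$ to be a trivial cylinder is a clean shortcut around the paper's explicit iteration argument and is perfectly valid.
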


\begin{proof}(i), (ii) and (iii) follow from Proposition \ref{propcylinder}. From the asymptotic description given in Proposition \ref{asymptoticcylinder} and since $P_{2,E}$ is hyperbolic,  we have well-defined winding numbers $\wind_\pi(\tilde v_E)$ and $\wind_\infty(\tilde v_E)$. See Appendix \ref{ap_basics} for details. Moreover, since $CZ(P_{3,E})=3$ and $CZ(P_{2,E})=2$, we have  $$\wind_\infty(+\infty) \leq \wind ^{<0}(A_{P_{3,E}})= 1$$ and $$\wind_\infty(-\infty) \geq \wind^{\geq 0}(A_{P_{2,E}})= 1.$$ From \cite{props2}, we know that $$0\leq \wind_\pi(\tilde v_E)=\wind_\infty(\tilde v_E) = \wind_\infty(+\infty)-\wind_\infty(-\infty) \leq 1-1=0.$$ It follows that $\wind_\infty(+\infty)=\wind_\infty(-\infty)=1$ and $\wind_\pi(\tilde v_E)=0$. Since $\wind_\pi(\tilde v_E)$ counts the zeros of $\pi \circ d v_E$, where $\pi:TW_E \to \xi$ is the projection along the Reeb vector field $X_{\lambda_E}$, and all such zeros are positive, we conclude that $v_E$ is an immersion transverse to the Reeb vector field $X_{\lambda_E}$. This proves (iv) and (v).

In order to prove that $\tilde v_E$ is an embedding, we see from its asymptotic behavior near the punctures $\pm \infty$ that $\tilde v_E$ is an embedding near the boundary, meaning that there exists $R>0$ sufficiently large so that $\tilde v_E^{-1}(\tilde v_E(\{|s|>R\}\times S^1)) = \{|s|>R\} \times S^1$ and $\tilde v_E|_{\{|s|>R\} \times S^1}$ is an embedding. Now since $\tilde v_E$ is the $C^\infty_{\rm loc}$-limit of the embeddings $\tilde v_n$, it follows from results of D. McDuff in \cite{dusa} that $\tilde v_E$ is an embedding as well.

Next we show that $v_E$ is an embedding. First we claim  that if $c\neq 0$ then ${\rm image}(\tilde v_E) \neq {\rm image}(\tilde v_{E,c}),$ where $\tilde v_{E,c}(s,t)=(b_E(s,t)+c,v_E(s,t))$. Arguing indirectly, assume there exists $c\neq 0$ such that ${\rm image}(\tilde v_E) ={\rm image}(\tilde v_{E,c}).$ If $(b,z)=\tilde v_E(s_0,t_0)$, we can find $(s_1,t_1)\neq (s_0,t_0)$ such that $\tilde v_{E,c}(s_1,t_1)$ $=(b,z)$ and therefore $\tilde v_E(s_1,t_1)=(b-c,z)$. Repeating this argument we find a sequence $(s_n,t_n)\in \R \times S^1$ such that $\tilde v_E(s_n,t_n)=(b-nc,z)$. If $c>0$ then $s_n \to -\infty$ and from the asymptotic description of $\tilde v_E$ near $-\infty$ this implies that $z=v_E(s_n,t_n)$ approaches $P_{2,E}$ as $n \to \infty$. This is a contradiction since $z\not\in P_{2,E}$. In the same way, if $c<0$ we have $s_n \to +\infty$ and $z=v_E(s_n,t_n)$ approaches  $P_{3,E}$ as $n \to \infty$, also a contradiction. Thus it follows from Carleman's similarity principle that the intersections of $\tilde v_E$ with $\tilde v_{E,c},c\neq 0,$ are isolated. The existence of any such intersection implies, from positivity and stability of intersections of pseudo-holomorphic curves, that $\tilde v_n$ intersects $\tilde v_{n,c}$ for large $n$, where $\tilde v_{n,c}(s,t)=(b_n(s,t)+c,v_n(s,t))$, and leads to a contradiction. Hence $\tilde v_E(\R \times S^1) \cap \tilde v_{E,c} (\R \times S^1) = \emptyset, \forall c \neq 0$. From the asymptotic behavior of $v_E$ near the punctures $\pm \infty$, from the fact that $\tilde v_E$ is an embedding and that $v_E$ is an immersion, we conclude that $v_E$ is also an embedding.  This proves (vi). \end{proof}

\begin{lem}\label{lemvizivE} Let $\tilde v_E=(b_E,v_E):\R \times S^1 \to \R \times W_E$ be the $C^\infty_{\rm loc}$-limit of the $\tilde J_n$-holomorphic rigid cylinders $\tilde v_n=(b_n,v_n)\in \tilde \F_n$ with the normalization \eqref{separa}, as in Proposition \ref{propcylinder}. Given an $S^1$-invariant open neighborhood $\W$ of the loop $S^1 \ni t \mapsto x_{2,E}(tT_{2,E})$ in $C^\infty(S^1,W_E)$,  there exists $s_1 \ll 0$ such that for all $s<s_1$ and all large $n$, the loop $t \mapsto v_n(s,t)$ is contained in $\W$.  \end{lem}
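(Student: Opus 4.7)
The plan is to mirror Lemmas \ref{lemvizi10} and \ref{lemvizi}, now working at the negative puncture of $\tilde v_n$. First I would establish the analogue of the energy-concentration claims used in those lemmas: for every $\epsilon>0$ and every sequence $r_n\to -\infty$, one has $\int_{(-\infty,r_n]\times S^1} v_n^* d\lambda_n \leq \epsilon$ for all large $n$. This follows from the $C^\infty_{\rm loc}$-convergence $\tilde v_n\to \tilde v_E$ together with the total-area identity $\int_{\R\times S^1} v_E^* d\lambda_E = T_{3,E}-T_{2,E}$ of Proposition \ref{propcylinder}: choosing $S_0$ with $\int_{[-S_0,S_0]\times S^1} v_E^* d\lambda_E$ within $\epsilon$ of $T_{3,E}-T_{2,E}$, the local convergence leaves at most $2\epsilon$ of $d\lambda_n$-area outside $[-S_0,S_0]\times S^1$ for large $n$.

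Then I would argue by contradiction: if some subsequence yields loops $v_n(s_n,\cdot)\notin \W$ with $s_n\to -\infty$, I form the translated cylinders $\tilde w_n(s,t)=(b_n(s+s_n,t)-b_n(s_n,0),v_n(s+s_n,t))$, use the uniform gradient bound of Proposition \ref{propcylinder} (which is translation invariant) to extract a $C^\infty_{\rm loc}$-limit $\tilde w=(d,w)$, and observe that the energy-concentration claim forces $\int_{\R\times S^1} w^* d\lambda_E = 0$, while $\int_{\{0\}\times S^1} w^* \lambda_E = \lim_n \int_{\{s_n\}\times S^1} v_n^* \lambda_n \in [T_{2,E},T_{3,E}]$ by monotonicity of the $\lambda_n$-integral along each pseudoholomorphic cylinder. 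Hence $\tilde w$ is a nontrivial cylinder over some periodic orbit $Q=(x_0,T)$ with $T_{2,E}\leq T\leq T_{3,E}$.

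To pin down $Q$, I note that $P_{2,n}$ bounds the hemisphere $u_{1,n}(\C)\subset \partial S_n$, which avoids $P_{3,n}$, so $[P_{2,n}]=0$ in $H_1(W_E\setminus P_{3,n})$; the half-cylinder $v_n|_{(-\infty,s]\times S^1}$ is a chain in $W_E\setminus P_{3,n}$ homologizing $v_n(s,\cdot)$ with $P_{2,n}$, so every loop $t\mapsto v_n(s,t)$ is null-homologous in $W_E\setminus P_{3,n}$. Passing to the limit, $w(0,\cdot)$ is null-homologous in $W_E\setminus P_{3,E}$, hence the underlying simple orbit of $Q$ is not linked to $P_{3,E}$. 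Since $v_n(\R\times S^1)\subset \dot S_n$ and $\partial S_n\to \partial S_E$ by Proposition \ref{prop_step5}-i), this simple orbit lies in $\overline{S_E}$, has action $\leq T_{3,E}$, and by Theorem \ref{teolink} it must be $P_{2,E}$; thus $Q$ is a $p$-fold cover of $P_{2,E}$ for some $p\geq 1$.

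The main obstacle is excluding $p\geq 2$, and I expect to handle it by a second use of action monotonicity. For every fixed $R>0$, $\int_{\{s_n+R\}\times S^1} v_n^* \lambda_n \to \int_{\{R\}\times S^1} w^* \lambda_E = pT_{2,E}$, while for every fixed $s_*<0$, $\int_{\{s_*\}\times S^1} v_n^* \lambda_n \to A_E(s_*):=\int_{\{s_*\}\times S^1} v_E^* \lambda_E$, and since $\tilde v_E$ is asymptotic to $P_{2,E}$ simply, with winding one, at $-\infty$, $A_E(s_*)\to T_{2,E}$ as $s_*\to -\infty$. For $n$ large enough $s_n+R<s_*$, so monotonicity gives $pT_{2,E}\leq A_E(s_*)$; taking $n\to\infty$ then $s_*\to -\infty$ forces $p=1$. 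Then $w(0,\cdot)$ is a simple $S^1$-reparametrization of $t\mapsto x_{2,E}(T_{2,E}t)$, hence lies in the $S^1$-invariant open set $\W$, and the $C^\infty$-convergence $v_n(s_n,\cdot)\to w(0,\cdot)$ contradicts the assumption $v_n(s_n,\cdot)\notin \W$.
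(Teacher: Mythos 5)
Your proof is correct. The first half --- establishing the uniform tail-energy estimate, translating the cylinders, extracting the $C^\infty_{\rm loc}$-limit $\tilde w$, and showing $\int_{\R\times S^1} w^* d\lambda_E = 0$ --- is exactly the paper's argument. Where you diverge is in identifying the limit orbit $Q$: you first record only the crude bound $\int_{\{0\}\times S^1}w^*\lambda_E\in[T_{2,E},T_{3,E}]$, then invoke the linking statement of Theorem~\ref{teolink} (via the null-homology of the loops $v_n(s,\cdot)$ in $W_E\setminus P_{3,E}$) to conclude that the underlying simple orbit is $P_{2,E}$, and finally use a separate action-monotonicity squeeze to rule out multiplicity $p\geq 2$. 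The paper is more economical: it combines the lower bound $\int_{\{s_n\}\times S^1}v_n^*\lambda_n\geq T_{2,n}$ with the upper bound $\int_{\{s_n\}\times S^1}v_n^*\lambda_n<T_{2,E}+\epsilon$ (obtained in one shot from monotonicity in $s$ and the asymptotics of $\tilde v_E$ at $-\infty$) to squeeze the action to exactly $T_{2,E}$, and then Lemma~\ref{lemamin} (minimal action is attained only by $P_{2,E}$, and the next smallest action is bounded below by a uniform $T_{\min}$) immediately pins down $Q=P_{2,E}$ with no linking argument and no multiplicity case analysis. Notice that your final squeeze to exclude $p\geq 2$ is essentially the same inequality the paper uses to get the tight bound in the first place, so you could shorten your proof by carrying out that estimate before identifying $Q$; the linking step then becomes unnecessary. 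One small point worth tightening: when you invoke Theorem~\ref{teolink} you should handle the two cases $Q_0\subset\dot S_E$ and $Q_0\subset\partial S_E$ separately, since the theorem is stated only for orbits in $\dot S_E$ --- in the latter case $Q_0=P_{2,E}$ is automatic because $P_{2,E}$ is the only closed characteristic in $\partial S_E$.
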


\begin{proof}We first claim that for every $\epsilon>0$ and every sequence $s_n \to -\infty$, there exists $n_1 \in \N$ such that for all $n \geq n_1$, we have $$\int_{(-\infty,s_n]\times S^1} v_n^* d\lambda_n \leq \epsilon.$$ Arguing indirectly we may assume there exist $\epsilon >0$ and a sequence $s_n \to -\infty$ satisfying $$\int_{(-\infty,s_n]\times S^1} v_n^* d\lambda_n > \epsilon,\forall n.$$ The $d\lambda_n$-energy of $\tilde v_n$ is equal to $T_{3,n}-T_{2,n}$. We conclude that for all $S_0> 0$ fixed and large $n$ we have $$\int_{[-S_0,S_0] \times S^1} v_n^* d\lambda_n \leq T_{3,n}-T_{2,n} - \epsilon.$$ Since $\tilde v_n \to \tilde v_E$ in $C^\infty_{\rm loc}$, we have  $$\int_{[-S_0,S_0] \times S^1} v_E^* d\lambda_E \leq T_{3,E}-T_{2,E}  - \epsilon,$$ which implies that $\int_{\R \times S^1} v_E^*d\lambda_E \leq T_{3,E}-T_{2,E}  - \epsilon,$ since $S_0$ is arbitrary. This is in contradiction with $\int_{\R \times S^1} v_E^* d\lambda_E=T_{3,E}-T_{2,E}$ and thus the claim is proved.

Now we argue again indirectly and assume that the loops $t\mapsto v_{n_j}(s_{n_j},t)$ are not contained in $\W$ for subsequences $\tilde v_{n_j}=(b_{n_j},v_{n_j})$ and $s_{n_j} \to -\infty$, also denoted by $\tilde v_n$ and $s_n$, respectively. Let $\tilde w_n=(d_n,w_n):\R \times S^1 \to \R \times W_E$ be defined by $$\tilde w_n(s,t) = (d_n(s,t),w_n(s,t))=(b_n(s+s_n,t) - b_n(s_n,0),v_n(s+s_n,t)).$$ For each $n$, we have $d_n(0,0)=0$ and $w_n(0,t) \notin \W$ for all $t\in S^1$ and, from Proposition \ref{propcylinder},  $\tilde w_n$ has $C^\infty_{\rm loc}-$bounds.  So we find a $\tilde J_E$-holomorphic cylinder $\tilde w=(d,w):\R \times S^1 \to \R \times W_E$ such that $\tilde w_n \to \tilde w$ in $C^\infty_{\rm loc}$ as $n\to\infty$.

For all fixed $r_0>0$ and all $\epsilon>0$, $$\begin{aligned} \int_{[-r_0,r_0] \times S^1} w^*d\lambda_E \leq & \limsup_n \int_{[-r_0,r_0] \times S^1} w_n^*d\lambda_n \\ = & \limsup_n \int_{[-r_0+s_n,r_0+s_n]\times S^1}  v_n^* d\lambda_n \\ \leq &  \epsilon,\end{aligned}$$ for all large $n$. This follows from the first claim and implies that $\int_{\R \times S^1} w^* d\lambda_E = 0.$  Moreover, since $\tilde v_n \to \tilde v_E$ in $C^\infty_{\rm loc}$ and $s_n \to -\infty$ as $n \to \infty$, given any $\epsilon>0$, we have   $$T_{2,n} \leq \int_{\{0\} \times S^1} w_n^* \lambda_n = \int_{\{s_n\} \times S^1} v_n^* \lambda_n<T_{2,E}+\epsilon,$$ for all large $n$. Since $T_{2,n} \to T_{2,E}$ as $n \to \infty$, we get $\int_{\{0\} \times S^1} w^* \lambda_E=T_{2,E}$. It follows from Lemma \ref{lemamin} that $\tilde w$ is a cylinder over $P_{2,E}$. However, this contradicts the fact that the loop $t \mapsto w(0,t)$ is not contained in $\W$. \end{proof}

It follows from Proposition \ref{propcylinder}, Lemmas \ref{lemvizi} and \ref{lemvizivE} that given a neighborhood $\U \subset W_E$ of $v_E(\R \times S^1) \cup P_{2,E} \cup P_{3,E}$, there exists $n_0 \in \N$ so that if $n\geq n_0$, then $v_n(\R \times S^1) \subset \U$. As mentioned before, the case of the sequence $\tilde v_n'$ is totally analogous. This completes the proof of Proposition \ref{prop_step5iib}.

\begin{figure}[h!!]
  \centering
  \includegraphics[width=0.35\textwidth]{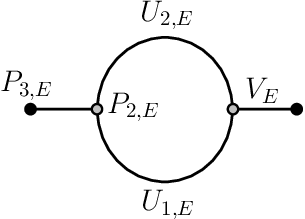}
  \caption{So far we have obtained the binding orbits $P_{2,E}$ and $P_{3,E}$, the pair of rigid planes $U_{1,E}=u_{1,E}(\C)$ and $U_{2,E}=u_{2,E}(\C)$ and the rigid cylinder $V_E=v_E(\R \times S^1)$ connecting $P_{3,E}$ to $P_{2,E}$.}
  \label{fig_cilindro}
\hfill
\end{figure}

\section{Proof of Proposition \ref{prop_step5}-{\rm iii)}}\label{sec_5-iii)}

We start this section recalling what we have obtained so far. Let $\tilde \F_n$ be the stable finite energy foliation on $\R \times W_E$ associated to $\tilde J_n=(\lambda_n,J_n)$ as in Proposition \ref{prop_step4}-i), which projects onto a $3-2-3$ foliation $\F_n$ on $W_E$ with binding orbits $P_{3,n},P_{2,n}$ and $P_{3,n}'$. Let $\tilde J_E=(\lambda_E,J_E)$ be the almost complex structure in $\R\times W_E$ constructed in Propositions \ref{prop_step1} and \ref{prop_step3}, which satisfies $\lambda_n \to \lambda_E$ and $J_n \to J_E$ in $C^\infty$ as $n\to\infty$. Let $P_{3,E},P_{2,E}$ and $P_{3,E}'$ be the periodic orbits of $\lambda_E$, with $P_{3,n} \to P_{3,E}$, $P_{2,n} \to P_{2,E}$ and $P_{3,n}' \to P_{3,E}'$ in $C^\infty$ as $n\to\infty$, for which we can assume $P_{3,n}=P_{3,E}, P_{2,n}=P_{2,E},P_{3,n}'=P_{3,E}'$, $\forall n$, as point sets in $W_E$, according to Proposition \ref{prop_step4}-ii). For each $n$, $\tilde\F_n$ contains a pair of $\tilde J_n$-rigid planes $\tilde u_{1,n}=(a_{1,n},u_{1,n})$ and $\tilde u_{2,n}=(a_{2,n},u_{2,n})$,  both asymptotic to $P_{2,n}$, so that $\tilde u_{1,n} \to \tilde u_{1,E}$ and $\tilde u_{2,n} \to \tilde u_{2,E}$ in $C^\infty_{\rm loc}$ as $n \to \infty$, up to reparametrizations and $\R$-translations. The $\tilde J_E$-rigid planes $\tilde u_{1,E}=(a_{1,E},u_{1,E})$ and $\tilde u_{2,E}=(a_{2,E},u_{2,E})$ are  both asymptotic to $P_{2,E}$ and $u_{1,E}(\C)  \cup P_{2,E}\cup u_{2,E}(\C)$ separates $W_E$ in two components $\dot S_E$ and $\dot S_E'$ containing $P_{3,E}$ and $P_{3,E}'$, respectively. The foliation $\tilde\F_n$ also contains a pair of   $\tilde J_n$-rigid cylinders $\tilde v_{n}=(b_{n},v_{n})$ and $\tilde v_{n}'=(b_{n}',v_{n}')$ such that $\tilde v_n$ and $\tilde v_n'$ are asymptotic to $P_{3,n}$ and $P_{3,n}'$ at their positive punctures, respectively, and both are asymptotic to $P_{2,n}$ at their negative punctures. Up to reparametrizations and $\R$-translations, we find $\tilde J_E$-rigid cylinders $\tilde v_E=(b_E,v_E)$ and $\tilde v_E'=(b_E',v_E')$ asymptotic to $P_{3,E}$ and $P_{3,E}'$ at their positive punctures, respectively, and both asymptotic to $P_{2,E}$ at their negative punctures, such that $\tilde v_n\to \tilde v_E$ and $\tilde v_n'\to \tilde v_E'$ in $C^\infty_{\rm loc}$ as $n \to \infty$. The images $V_E=v_E(\R \times S^1)$ and $V_E'= v_E'(\R \times S^1)$ lie in $\dot S_E$ and $\dot S_E',$ respectively. Finally, $\tilde \F_n$ contains one parameter families of $\tilde J_n$-planes $\tilde w_{\tau,n}=(d_{\tau,n},w_{\tau,n})$ and $\tilde w_{\tau,n}'=(d_{\tau,n}',w_{\tau,n}')$, $\tau \in (0,1)$, with $\tilde w_{\tau,n}$ asymptotic to $P_{3,n}$ and $\tilde w_{\tau,n}'$ asymptotic to $P_{3,n}'$. The compactness properties of $\tilde w_{\tau,n}$ and $\tilde w_{\tau,n}'$ are the matter of this section. We prove the Proposition \ref{prop_step5}-iii), which is restated below.

\begin{prop}\label{prop_step5iiib} If $E>0$ is sufficiently small then, up to suitable reparametrizations and $\R$-translations,  the following holds: there exist smooth one parameter families of $\tilde J_E$-holomorphic embedded planes $\tilde w_{\tau,E}=(d_{\tau,E},w_{\tau,E}),$ $\tilde w_{\tau,E}'=(d_{\tau,E}',w_{\tau,E}'):\C \to \R \times W_E$, $\tau \in (0,1)$, with $w_{\tau,E}$ and  $w_{\tau,E}'$ embeddings,   $\tilde w_{\tau,E}$ asymptotic to $P_{3,E}$, $D_{\tau,E}=w_{\tau,E}(\C),\tau \in (0,1),$ foliating $\dot S_E \setminus (V_E\cup P_{3,E})$,  $\tilde w_{\tau,E}'$ asymptotic to $P_{3,E}'$  and $D_{\tau,E}'=w_{\tau,E}'(\C),\tau \in (0,1),$ foliating $\dot S_E' \setminus (V_E' \cup P_{3,E}')$.  Given small neighborhoods $\U_1,\U_2\subset W_E$ of $P_{3,E} \cup V_{E} \cup P_{2,E} \cup U_{1,E}$ and  $P_{3,E} \cup V_{E} \cup P_{2,E} \cup U_{2,E}$, respectively, we find $0<\tau_1<\tau_2<1$  so that if $\tau\in (0,\tau_1)$ then $D_{\tau,E} \subset \U_1$ and if $\tau\in (\tau_2,1)$ then $D_{\tau,E} \subset \U_2$. An analogous statement holds for the family $D_{\tau,E}',\tau \in (0,1)$. Moreover, given $p_0 \in \dot S_E \setminus (V_E \cup P_{3,E})$, we find a sequence $\tau_n \in (0,1)$ and $\bar \tau \in(0,1)$ so that $p_0 \in w_{\tau_n,n}(\C)$ for each large $n$, $p_0 \in w_{\bar \tau,E}(\C)$ and $\tilde w_{\tau_n,n} \to \tilde w_{\bar \tau,E}$ in $C^\infty_{\rm loc}$ as $n \to \infty$. An analogous statement holds if $p_0 \in \dot S_E' \setminus (V_E' \cup P_{3,E}')$.
\end{prop}

In the following, we only work on $\dot S_E$ since the case $\dot S_E'$ is completely analogous.
Let $\V_E=\dot S_E \setminus (V_E \cup P_{3,E})$.  $\V_E$ is homeomorphic to the open $3$-ball and we shall foliate it with a  one parameter family of planes asymptotic to $P_{3,E}$.

Let us fix an embedded $2$-sphere $N^\delta_E\subset \dot S_E$ so that $N^\delta_E$ separates $P_{3,n}$ and $P_{2,n}\forall n$. In local coordinates $(q_1,q_2,p_1,p_2)$,  it is given by $$N^\delta_E=\{q_1 + p_1 = \delta\}\cap K^{-1}(E),$$ where $\delta>0$ is small. Denote by $U^\delta_E \subset \dot S_E$ the component of $W_E \setminus N^\delta_E$ containing $P_{3,E}$.

The embedded topological $2$-sphere $u_{1,n}(\C) \cup P_{2,n} \cup u_{2,n}(\C)$ separates $W_E$ in two components $\dot S_n, \dot S_n'$ containing $P_{3,n},P_{3,n}'$ respectively. The image $v_n(\R \times S^1)$ is contained in $\dot S_n\setminus P_{3,n}$ and let $\V_n = \dot S_n \setminus (v_n(\R \times S^1) \cup P_{3,n})$. For each $n$, the one parameter family of finite energy $\tilde J_n$-holomorphic planes $\tilde w_{\tau,n}=(d_{\tau,n},w_{\tau,n}):\C \to \R \times W_E$, $\tau \in (0,1)$, is so that $w_{\tau,n}(\C)$ foliates $\V_n$.

By the convergence  of $\tilde u_{1,n},\tilde u_{2,n}$ and $\tilde v_{n}$ to $\tilde u_{1,E},\tilde u_{2,E}$ and $\tilde v_E$, respectively, obtained in Proposition \ref{prop_step5}-i)-ii), we know that given $p_0\in \V_E$, we have $p_0\in \V_n$ for all large $n$. We fix such $p_0\in \V_E$ and consider for each large $n$ the unique, up to $\R$-translation, $\tilde J_n$-holomorphic plane $\tilde w_{\tau_n,n}=(d_{\tau_n,n},w_{\tau_n,n}):\C \to \R \times W_E$, asymptotic to $P_{3,n}$ at $\infty$, satisfying $p_0 \in w_{\tau_n,n}(\C)\subset \V_n$. For simplicity we denote these planes simply by $\tilde w_n=(d_n,w_n)$.

The main purpose of this section is to prove the following proposition.

\begin{prop}\label{propplanes}For $E>0$ sufficiently small, the following holds: let $p_0\in \V_E$ and consider, for each large $n$, a finite energy $\tilde J_n$-holomorphic plane $\tilde w_n=(d_n,w_n)\in \tilde \F_n$ asymptotic to $P_{3,n}$  at $+\infty$ so that $p_0\in w_n(\C)$. Then, up to reparametrization and $\R$-translation, $|\nabla \tilde w_n(z)|_n$ is uniformly bounded in $z\in \C$ and in $n\in \N$ and there exists a finite energy $\tilde J_E$-holomorphic plane $\tilde w=(d,w):\C \to \R \times W_E,$ asymptotic to $P_{3,E}$ at $+\infty$, so that $p_0\in w(\C)\subset \V_E$ and $\tilde w_n \to \tilde w$ in $C^\infty_{\rm loc}$ as $n \to \infty$. Moreover, $\tilde w$ and $w$ are embeddings and $w$ is transverse to the Reeb vector field $X_{\lambda_E}$. \end{prop}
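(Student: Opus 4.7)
The plan is to follow the same bubbling-off/compactness strategy employed in Propositions \ref{propbounds} and \ref{propcylinder} for the rigid planes and rigid cylinders, adapted to this one-parameter family. First I would normalize the sequence: pick $z_n \in \C$ with $w_n(z_n) = p_0$, and by a reparametrization $z \mapsto \alpha_n(z - z_n)$ with suitable $\alpha_n > 0$ together with an $\R$-translation, arrange that $w_n(0) = p_0$, $d_n(0) = 0$, and that a fixed amount of $d\lambda_n$-energy, say $T_{3,n} - \gamma_0$ for a small $\gamma_0 > 0$, is concentrated inside $\D$ (the analogue of \eqref{eqa1}). Recall $E(\tilde w_n) = T_{3,n} \to T_{3,E}$.

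Next I would establish uniform $C^1$-bounds $|\nabla \tilde w_n(z)|_n \leq C$ by contradiction, mimicking the argument in Proposition \ref{propbounds}. Assuming bubbling at some $z_n' \in \C$, the Ekeland--Hofer Lemma \ref{lemtop} produces a rescaled subsequence converging to a nonconstant finite energy $\tilde J_E$-holomorphic plane $\tilde w_*$ with $E(\tilde w_*) \leq T_{3,E}$, asymptotic to some periodic orbit $Q$ of $\lambda_E$ with $\int_Q \lambda_E \leq T_{3,E}$. Since the loops $t \mapsto w_n(Re^{2\pi i t})$ for large $R$ are not linked to $P_{3,n}$, the limit $Q$ is also not linked to $P_{3,E}$; by Lemma \ref{lemamin} and Theorem \ref{teolink} (valid for $E>0$ small), this forces $Q \in \{P_{2,E}, P_{3,E}\}$. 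Ruling out both alternatives — by comparing $d\lambda_E$-energies with the normalization, by showing the bubbling points must cluster on $\partial \D$, and by analyzing the residual cylinder with vanishing $d\lambda_E$-area as in Proposition \ref{propbounds} — produces the required contradiction.

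With uniform gradient bounds and the normalization, elliptic regularity yields a $C^\infty_{\rm loc}$-limit $\tilde w = (d, w): \C \to \R \times W_E$, a finite energy $\tilde J_E$-holomorphic plane with $w(0) = p_0$ and $E(\tilde w) \leq T_{3,E}$. I would then identify the asymptotic limit of $\tilde w$ at $+\infty$: it must be a periodic orbit of $\lambda_E$ contained in $S_E$ with action at most $T_{3,E}$; since $w(\C)$ lies on the $\dot S_E$-side of $\partial S_E$ and the loops $t \mapsto w(Re^{2\pi i t})$ are not linked to $P_{3,E}$ (by stability of the corresponding property for $w_n$), Theorem \ref{teolink} and Lemma \ref{lemamin} again leave only $P_{2,E}$ or $P_{3,E}$ as candidates. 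The option $Q = P_{2,E}$ is ruled out by considering a supplementary bubbling-off analysis, analogous to Lemma \ref{lemvizi10}/\ref{lemvizi}, combined with the fact that $p_0 \in \V_E$; hence $\tilde w$ is asymptotic to $P_{3,E}$ and $E(\tilde w) = T_{3,E}$. That $w(\C) \subset \V_E$ then follows by the standard argument: any intersection of $w$ with $V_E \cup P_{3,E} \cup \partial S_E$ would, by Carleman's similarity principle and positivity/stability of intersections, force $w_n$ to intersect $v_n(\R \times S^1) \cup P_{3,n} \cup \partial S_n$ for all large $n$, contradicting that $\tilde \F_n$ is a foliation.

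Finally, for the embedding and transversality properties, I would repeat the winding-number argument of Proposition \ref{cylinderconclusion}: since $CZ(P_{3,E}) = 3$, one has $\wind_\infty(\tilde w) \leq \wind^{<0}(A_{P_{3,E}}) = 1$, and the Fredholm index together with $\wind_\pi(\tilde w) \geq 0$ forces $\wind_\pi(\tilde w) = 0$, so $w$ is an immersion transverse to $X_{\lambda_E}$. That $\tilde w$ is embedded near $+\infty$ follows from its exponential asymptotic formula (the analogue of Proposition \ref{asymptoticcylinder}); being a $C^\infty_{\rm loc}$-limit of embeddings $\tilde w_n$, McDuff's theorem upgrades $\tilde w$ to a global embedding. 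Non-intersection of $\tilde w$ with its nontrivial $\R$-translates $\tilde w_c$ is established exactly as in Proposition \ref{cylinderconclusion}: any such intersection is isolated and would propagate to intersections of $\tilde w_n$ with $\tilde w_{n,c}$, contradicting embeddedness of $w_n$. Combined with the asymptotic behavior, this yields that $w$ itself is an embedding. I expect the main obstacle to be the exclusion of bubbling: because $T_{3,E} > T_{2,E}$, bubbles asymptotic to $P_{2,E}$ are a priori energetically allowed, and ruling them out requires a careful combination of the energy normalization, the linking constraints inherited from $\tilde\F_n$, and the location of $p_0$ in $\V_E$.
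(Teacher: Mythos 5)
Your overall strategy (compactness via bubbling-off, $C^\infty_{\rm loc}$ limit, winding numbers for transversality, McDuff and positivity/stability of intersections for embeddedness) is the right skeleton and matches the paper's. The final steps — identifying the asymptotic limit via Lemma \ref{lemamin} and Theorem \ref{teolink}, $w(\C)\subset\V_E$ via Carleman and stability of intersections, embeddedness via the winding/Fredholm computation plus McDuff and the $\R$-translate argument — are all fine. However, there is a genuine gap at the single hardest point of the proof, and you identify it yourself at the end without actually resolving it: ruling out bubbling asymptotic to $P_{2,E}$.

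The specific remedy you propose — ``by showing the bubbling points must cluster on $\partial\D$, and by analyzing the residual cylinder with vanishing $d\lambda_E$-area as in Proposition \ref{propbounds}'' — does not apply to a $P_{2,E}$-bubble. In Proposition \ref{propbounds} the total $d\lambda_E$-energy equals the minimal action $T_{2,E}$, so any bubble removes it all, forces the bubbling point onto $\partial\D$, and leaves a vanishing-$d\lambda_E$-area residual. Here the total energy is $T_{3,E}>T_{2,E}$, a $P_{2,E}$-bubble removes only $T_{2,E}$, may sit anywhere in $\D$, and the residual has positive $d\lambda_E$-area $T_{3,E}-T_{2,E}$ (it converges to the rigid cylinder $\tilde v_E$, not to a trivial cylinder). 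None of the Proposition \ref{propbounds} mechanisms transfer. In the paper this is exactly where the real work happens: the argument splits into a case where $w_n(\C)$ misses a fixed separating sphere $N^{\bar\delta}_E$ (which does reduce to Proposition \ref{propbounds}-type reasoning) and a case where $w_n(\C)$ meets $N^{\delta_0}_E$. In the latter case, a carefully tailored normalization is introduced using a regular value of $\bar G_n=G\circ w_n$ and the disk of minimal radius containing $\bar G_n^{-1}(y_n)$; this pins down where the transition to $P_{2,E}$ happens. One then shows there is a unique negative puncture $z^*$, the outer limit is $\tilde v_E$, a soft re-scaling near $z^*$ produces $\tilde u_{1,E}$ or $\tilde u_{2,E}$, and a long-cylinder estimate (Claim II, the analogue of Lemma~4.9 in \cite{fols}) controls the neck. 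Assembling all of this shows $w_n(\C)$ is eventually trapped in a small neighborhood of $U_{1,E}\cup P_{2,E}\cup V_E\cup P_{3,E}$, contradicting $p_0\in w_n(\C)$ with $p_0\in\V_E$ fixed away from that set. Your appeal to ``a supplementary bubbling-off analysis, analogous to Lemma \ref{lemvizi10}/\ref{lemvizi}, combined with the fact that $p_0\in\V_E$'' gestures at the right conclusion but supplies none of this machinery.

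A secondary remark on your normalization: you cannot simultaneously impose $w_n(0)=p_0$, $d_n(0)=\min d_n=0$, and the $d\lambda_n$-energy split across $\D$ — there are not enough reparametrization degrees of freedom. The paper's normalization \eqref{eqa1} (and its analogue \eqref{reparamwnn1}) uses the minimum property of $d_n$ to bound the residual cylinder's $\R$-coordinate from below, which is how the contradiction in the non-intersecting case is closed. Your alternative, $w_n(0)=p_0$ without the minimum property, trades that information for the knowledge that the marked point lands on $p_0$; this is a potentially workable substitute (in the non-intersecting case, a residual cylinder over $P_{3,E}$ passing through $p_0\in\V_E$ is already absurd), but it is a different argument than the one you cite, and in the intersecting case it still requires the soft re-scaling and neck analysis to conclude. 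As written, the proposal leaves the central obstruction, the possibility of a $P_{2,E}$-bubble inside $\D$, unresolved.
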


\begin{proof} Let us assume first that there exists $\bar \delta>0$ small such that for a subsequence of $\tilde w_n$, still denoted by $\tilde w_n$, we have \begin{equation}\label{naointersecta} w_n(\C) \cap N^{\bar \delta}_E = \emptyset.\end{equation} In this case we reparametrize $\tilde w_n=(d_n,w_n)$ and slide it in the $\R$-direction so that \begin{equation} \label{reparamwnn1} \int_{\C \setminus \D} w_n^*d\lambda_n = \gamma_0 \mbox{ and } d_n(0) = \min \{d_n(z):z\in \C\}=0, \end{equation} where $0<\gamma_0\ll T_{3,E}$ is fixed and $\D=\{z\in \C:|z|\leq 1\}$ is the closed unit disk.

Arguing indirectly we find a subsequence of $\tilde w_n$,  still denoted by $\tilde w_n$, and a sequence $z_n \in \C$ such that $$|\nabla \tilde w_n(z_n)|_n \to \infty.$$ Take any sequence of positive numbers $\epsilon_n \to 0^+$ satisfying \begin{equation}\label{eq_explode} \epsilon_n |\nabla \tilde w_n(z_n)|_n \to \infty.\end{equation} Using Ekeland-Hofer's Lemma (Lemma \ref{lemtop}) for $X=(\C,|\cdot|_\C)$, $f = |\nabla \tilde w_n(\cdot) |_n$ and $n$ fixed, we assume furthermore, perhaps after slightly changing  $z_n,\epsilon_n$, that \begin{equation}\label{eqalimitn}|\nabla \tilde w_n(z)|_n \leq 2 |\nabla \tilde w_n(z_n)|_n, \mbox { for all } |z-z_n| \leq \epsilon_n.\end{equation} Defining the new sequence of $\tilde J_n$-holomorphic maps $\tilde u_n: \C \to \R \times W_E$  by \begin{equation}\label{defutn} \tilde u_n(z)=(b_n,u_n) = \left(d_n\left(z_n+\frac{z}{R_n}\right)-d_n(z_n),w_n\left(z_n+ \frac{z}{R_n}\right)\right),\end{equation} with $R_n = |\nabla \tilde w_n(z_n)|_n$, we see from \eqref{eqalimitn} and \eqref{defutn} that \begin{equation}\label{utlimita} \begin{aligned} \tilde u_n(0) & \in \{0\} \times W_E, \\ |\nabla \tilde u_n(z)|_n & \leq 2, \forall |z|\leq \epsilon_n R_n.\end{aligned}\end{equation} From an elliptic bootstrapping argument we get $C^\infty_{\text{loc}}$-bounds for the sequence $\tilde u_n$ and, since $\epsilon_nR_n \to \infty$, after taking a subsequence we have $\tilde u_n \to \tilde u$ in $C^\infty_{\text{loc}}$, where $\tilde u=(b,u):\C \to \R \times W_E$ is a $\tilde J_E$-holomorphic plane satisfying $b(0)=0$ and $|\nabla \tilde u(0)|_E=1$, where $|\cdot |_E$ is the norm induced by $\tilde J_E$. Therefore $\tilde u$ is non-constant and from $E(\tilde u_n)\leq E(\tilde w_n)=T_{3,n}$, we have \begin{equation}\label{desigdln} E(\tilde u)\leq T_{3,E} \mbox{ and } 0<\int_{\C} u^*d\lambda_E \leq T_{3,E}.\end{equation} Since there exists no periodic orbit in $U_E^{\bar \delta}\setminus P_{3,E}$ with action $\leq T_{3,E}$ which is not linked to $P_{3,E}$, see Theorem \ref{teolink}, we conclude that $\tilde u$ is asymptotic to $P_{3,E}$ at its positive puncture $+\infty$. In particular, we have \begin{equation} \label{eqTEn} E(\tilde u) = \int_\C u^* d\lambda_E = T_{3,E}.\end{equation}  We must have $\displaystyle \lim_{n \to \infty} |z_n| = 1$, otherwise for any $\epsilon>0$ we take a subsequence with either $|z_n|\geq 1+\epsilon$ or $|z_n|\leq 1-\epsilon,\forall n$, and from \eqref{reparamwnn1} we obtain $$\begin{aligned} \int_{B_R(0)}u^*d\lambda_E = &  \lim_{n \to \infty} \int_{B_R(0)}u_n^*d\lambda_n  \\ \leq & \limsup_{n \to \infty} \int_{B_{\epsilon_nR_n}(0)}u_n^* d\lambda_n \\ = & \limsup_{n \to \infty} \int_{B_{\epsilon_n}(z_n)} w^*_n d\lambda_n  \\ \leq & \limsup_{n \to \infty} \max \{\gamma_0,T_{3,n} - \gamma_0\} =   \max \{\gamma_0,T_{3,E} - \gamma_0\}= T_{3,E}-\gamma_0\end{aligned} $$ for any $R>0$ fixed. This implies $\int_\C u^* d\lambda_E \leq T_{3,E}-\gamma_0 < T_{3,E}$,  contradicting \eqref{eqTEn}. Thus after taking a subsequence we assume $z_n \to z_* \in \partial \D$.  Since $z_*$ takes away $T_{3,E}$ from the $d\lambda_E$-energy, there cannot be any other bubbling-off point for the sequence $\tilde w_n$ other than $z_*$. Hence from \eqref{reparamwnn1}  and elliptic regularity we have $C^\infty_{\text{loc}}$-bounds for $\tilde w_n$ in $\C \setminus \{ z_*\}$. Taking a subsequence we have $\tilde w_n \to \tilde v$ for a $\tilde J_E$-holomorphic map $\tilde v=(a,v):\C \setminus \{z_*\} \to \R \times W_E$ which is non-constant since from \eqref{reparamwnn1} we have \begin{equation}\label{eqS2n}\int_{\{|z|=2\}}v^* \lambda_E = \lim_{n \to \infty} \int_{\{|z|=2\}}w^*_n \lambda_n \geq T_{3,E} - \gamma_0 >0.\end{equation} Moreover, from Fatou's lemma, we get \begin{equation}\label{desigE3n} E(\tilde v) \leq T_{3,E},\end{equation} and from \eqref{eqTEn} we have \begin{equation}\label{eqdl0n}\int_{\C \setminus \{z_*\}} v^* d\lambda_E = 0.\end{equation} From \eqref{eqS2n} and \eqref{eqdl0n}, we see that $z_*$ is non-removable and from the characterization of finite energy cylinders with vanishing $d\lambda_E$-energy, see \cite[Theorem 6.11]{props2}, we find a periodic orbit $P\subset W_E$ so that $\tilde v$ maps $\C \setminus \{z_*\}$ onto $\R \times P$.  However, from \eqref{reparamwnn1}, the $\R$-component of $\tilde v$ is bounded from below by $0$. This contradiction proves that there are no bubbling-off points for $\tilde w_n$.
From \eqref{reparamwnn1}  and usual elliptic regularity we obtain $C^\infty_{\text{loc}}$-bounds for our sequence $\tilde w_n$, thus we find a $\tilde J_E$-holomorphic plane $\tilde w=(d,w):\C \to \R \times W_E$ so that, up to extraction  of a subsequence, $\tilde w_n \to \tilde w$ in $C^\infty_{\rm loc}$ as $n\to\infty$. From  \eqref{reparamwnn1} and since $T_{3,n} \to T_{3,E}$ as $n\to \infty$ we have  $0<E(\tilde w) \leq T_{3,E}$, hence $0<\int_\C w^*d\lambda_E \leq T_{3,E}$. Since there exists no periodic orbit in $U^{\bar \delta}_E \setminus P_{3,E}$ with action $\leq T_{3,E}$ which is not linked to $P_{3,E}$, see Theorem \ref{teolink}, we conclude that $\tilde w$ is asymptotic to $P_{3,E}$ at $+\infty$.

Now assume that for all large $n$, \begin{equation}\label{intersecta} w_n(\C) \cap N^{\delta_0}_E\neq \emptyset \mbox{ for some } \delta_0>0 \mbox{ small.}\end{equation}

Fix $\delta_1>\delta_0> 0$ small and let $g:[0,+\infty) \to [0,1]$ be a smooth non-decreasing function satisfying $g(t)=0$ for $t\in[0,\delta_0]$, $g|_{[\delta_0,\delta_1]}$ is strictly increasing  and $g(t)=1$ if $t\geq \delta_1$. $g$ induces a smooth function  $G:S_E\to [0,1]$ which in local coordinates $x=(q_1,q_2,p_1,p_2)\in V$ is given by $G(x)= g(q_1+p_1)$ and $G\equiv 1$ outside $V$, where $V$ is the neighborhood of the saddle-center  as in Hypothesis 1. The function $G$ extends to a smooth function on $W_E$ also denoted by $G$, by declaring $G|_{\dot S_E'} \equiv 0$.

The function $\bar G_n:\C \to [0,1]$ defined by $\bar G_n(z) = G(w_n(z))$ is smooth for all $n$. From \eqref{intersecta} and since $\tilde w_n$ is asymptotic to $P_{3,n}$ at $+\infty$, the image of $\bar G_n$ coincides with $[0,1]$. Moreover, $\bar G_n(z)=1$ if $|z|$ is large. By Sard's theorem we find $y_n\in (\frac{1}{2},1)$ so that $y_n$ is a regular value for $\bar G_n$. This implies that $K_n:=\bar G_n^{-1}(y_n)\subset \C$ is a non-empty compact set formed by finitely many embedded circles and we let
$$r_n = \inf \{r>0: \exists \mbox { a closed disk } D_r \subset\C \mbox{ with radius } r \mbox{ s.t. } K_n\subset D_r\}.$$
Since $K_n$ contains at least two points, we have $r_n>0$ and there exists a closed disk $D_{r_n}$ with radius $r_n$ containing $K_n$. After a change of coordinates of the type $\C \ni z \mapsto az + b$, we can assume that $D_{r_n}$ coincides with the closed unit disk $\D\subset \C$ and that $-i \in K_n$. From the definition of $r_n$ we can moreover assume that $\partial D_{r_n} \cap K_n$ contains at least two points and there exists $z'_n\in \partial \D \cap K_n$ satisfying $\Im z_n' \geq 0$.

Hence we can  reparametrize $\tilde w_n$ and slide it in the $\R$-direction so that \begin{equation}\label{reparamwnn2}\begin{aligned}& \bar G_n|_{\C \setminus \D} > y_n \Rightarrow w_n(\C \setminus \D) \subset U^{\delta_n}_E,\\ & \bar G_n(-i)=y_n \Rightarrow w_n(-i)\in N^{\delta_n}_E, \\ & \bar G_n(z_n')=y_n \Rightarrow w_n(z_n')  \in N^{\delta_n}_E, \mbox{ with } z_n'\in \partial \D,\Im z_n'  \geq 0,  \\& d_n(2)=0, \end{aligned}\end{equation}where $\delta_n=g^{-1}(y_n)$. With this renormalization we necessarily have from \eqref{intersecta}
\begin{equation}\label{eq_dentro}
\exists z''_n\in \D\setminus \partial \D \mbox{ so that } w_n(z''_n)\in N^{\delta_0}_E.
\end{equation}

Now we show that with this parametrization $\tilde w_n$ does not admit bubbling-off points. Arguing indirectly, assume that up to extraction of a subsequence, we find a sequence $z_n \in \C$ such that $|\nabla \tilde w_n(z_n)|_n \to +\infty$ as $n \to \infty$. First we show that $\limsup_{n \to \infty} |z_n| \leq 1$. Otherwise we find $\epsilon>0$ so that up to extraction of a subsequence we have $|z_n|> 1+\epsilon$. Now after using Ekeland-Hofer's Lemma, we can find a sequence $\epsilon_n \to 0^+$ so that \eqref{eqalimitn} holds and the maps $\tilde u_n=(b_n,u_n):\C \to \R\times W_E$, defined as in \eqref{defutn}, are such that, up to extraction of a subsequence, $\tilde u_n \to \tilde u$ in $C^\infty_{\rm loc}$ as $n \to \infty$, where $\tilde u=(b,u):\C \to \R \times W_E$ is a non-constant finite energy $\tilde J_E$-holomorphic plane. It must satisfy $\int_\C u^*d\lambda_E \leq T_{3,E}$ since $\int_\C u_n^* d\lambda_n = \int_\C w_n^* d\lambda_n = T_{3,n}\to T_{3,E}$ as $n \to \infty$. From \eqref{reparamwnn2} and since there exists no periodic orbit in $U_E^{\delta_0}\setminus P_{3,E}$ with action $\leq T_{3,E}$ which is not linked to $P_{3,E}$, see Theorem \ref{teolink}, $\tilde w$ must be asymptotic to $P_{3,E}$ at $+\infty$ and $E(\tilde u)=T_{3,E}$. This implies $\int_\C u^* d\lambda_E = T_{3,E}$.

Assume that for a subsequence $|z_n|$ is bounded. Then we can also assume $z_n \to z_*\in \C$ with $|z_*|\geq 1+\epsilon.$ Since the bubbling-off point $z_*$ takes away $T_{3,E}$ of the $d\lambda_E$-area, we cannot have any other bubbling-off point for the sequence $\tilde w_n$ in $\C \setminus \{z_*\}$. Thus from \eqref{reparamwnn2} we have $C^\infty_{\rm loc}$-bounds for $\tilde w_n$ in $\C \setminus \{z_*\}$, possibly after translating the sequence $\tilde w_n$ in the $\R$-direction appropriately. Therefore, after taking a subsequence, we find a map $\tilde w=(d,w):\C \setminus \{z_*\} \to \R \times W_E$ so that $\tilde w_n|_{\C \setminus \{z_*\}} \to \tilde w$ in $C^\infty_{\rm loc}$ as $n\to\infty$. Since $z_*$ is a negative puncture of $\tilde w$,  $\tilde w$ is non-constant. Moreover, we must have $\int_{\C \setminus \{z_*\}} w^* d\lambda_E=0$ and $E(\tilde w)\leq T_{3,E}$. So $\tilde w$ is a cylinder over a periodic orbit with action $\leq T_{3,E}$. By \eqref{reparamwnn2} and since there is no periodic orbit in $\dot S_E$ with action $\leq T_{3,E}$ which is not linked to $P_{3,E}$,  see Theorem \ref{teolink}, $\tilde w$ must be a cylinder over $P_{3,E}$.  This contradicts the fact that there are points in $\D$ being mapped by $w$ into $N^{\delta_0}_E$, see \eqref{eq_dentro}. Now if $|z_n|\to \infty$, then in the same way, since $z_n$ takes away $T_{3,E}$ of the $d\lambda_E$-area, there is no bubbling-off point in $\C$ and we have $C^\infty_{\rm loc}$-bounds for the sequence $\tilde w_n$ and after taking a subsequence we have $\tilde w_n \to \tilde w=(d,w):\C \to \R \times W_E$, where $\tilde w$ has finite energy. In this case, we must have $\int_\C w^* d\lambda_E=0$ which implies that $\tilde w$ is constant. This contradicts the fact that there must exist a point $z_1 \in \D$ which is mapped under $w$ into $N^{\delta_0}_E$ and a point $z_2\in \D$ which is mapped under $w$ into $N^{\bar \delta}_E$, with $g(\bar \delta)\geq\frac{1}{2}>0=g(\delta_0) \Rightarrow \bar \delta \neq \delta_0\Rightarrow w(z_1)\neq w(z_2)$.

We have proved that any sequence of bubbling-off points $z_n$ of $\tilde w_n$ satisfies \begin{equation}\label{znlimitado} \limsup_{n \to \infty} |z_n| \leq 1.\end{equation} Let $\emptyset \neq \Gamma \subset \C$ be the set of points $z\in \C$ such that for a subsequence $\tilde w_{n_j}$ of $\tilde w_n$, there exists a sequence $z_{j} \to z$ satisfying $|\nabla \tilde w_{n_j}(z_j)|_{n_j} \to \infty$ as $j \to \infty$.

As explained in the proof of Proposition \ref{propcylinder}, we can use Ekeland-Hofer's Lemma to show that up to extraction of a subsequence we may assume that $\# \Gamma$ is finite. From \eqref{znlimitado}, we know that  $\Gamma \subset \D$.  Thus from the last equation in \eqref{reparamwnn2} and elliptic regularity  we find a non-constant $\tilde J_E$-holomorphic map $\tilde w=(d,w):\C \setminus \Gamma \to \R \times W_E$ satisfying \begin{equation}\label{wnw}\tilde w_n \to \tilde w \mbox{ in }C^\infty_{\rm loc}\mbox{ as }n\to \infty. \end{equation}Moreover, from Fatou's Lemma, we have $E(\tilde w)\leq \limsup_n T_{3,n} = T_{3,E}$. All punctures in $\Gamma$ are negative.  From \eqref{reparamwnn2} and the fact that there is no periodic orbit in $U^{\delta_0}_E \setminus P_{3,E}$ with action $\leq T_{3,E}$ which is not linked to $P_{3,E}$, for $E>0$ sufficiently small, see Theorem \ref{teolink}, $\tilde w$ is asymptotic to $P_{3,E}$ at $+\infty$ and hence $\tilde w$ is somewhere injective.
Note further that $w(\C\setminus \Gamma) \subset \dot S_E$. In fact, if $w(\C \setminus \Gamma) \cap \partial S_E \neq \emptyset,$ then by Carleman's similarity principle, intersections would be isolated and, by stability and positivity of such intersections, they would imply intersections of $w_n(\C)$ with $u_{1,n}(\C) \cup P_{2,n} \cup u_{2,n}(\C)$ for large $n$, a contradiction.

Following our indirect argument that $\Gamma \neq \emptyset$, we first show that  $\#\Gamma =1$, i.e., $\tilde w$ has precisely one puncture $z^*\in \Gamma \subset \D$ at which $\tilde w$ is asymptotic to $P_{2,E}$. From the linking property obtained in Theorem \ref{teolink}, all the asymptotic limits at the negative punctures of $\tilde w$ must coincide either with $P_{3,E}$ or to a cover of $P_{2,E}$. If for a negative puncture $z^*_i$, $\tilde w$ is asymptotic to $P_{3,E}$, then $z^*=z^*_i$ is the only negative puncture, otherwise we would have $\int_{\C \setminus\Gamma} w^*d\lambda_E<T_{3,E}-T_{3,E}=0,$ a contradiction. In this case, we must have $\int_{\C \setminus \{z^*\}}w^*d\lambda_E =0$ and since $\tilde w$ is non-constant, it follows that $\tilde w$ is a cylinder over $P_{3,E}$. Let $\delta_n\in (\frac{1}{2},1)$ be as in \eqref{reparamwnn2}. We may assume that $\delta_n \to \delta'\in[\frac{1}{2},1]$ as $n \to \infty$. If  $z^* \neq -i$, then from \eqref{reparamwnn2} we have $w(-i) \in N^{\delta'}_E$. If $z^* = -i$ then, again from \eqref{reparamwnn2}, we may assume that $z_n' \to z'$, where $\Im z'\geq 0$. In this case we have $w(z') \in N^{\delta'}_E$. In both cases we get a contradiction, since $P_{3,E} \cap N^{\delta'}_E=\emptyset$, which shows that $\tilde w$ is asymptotic to covers of $P_{2,E}$ at all of its negative punctures. In case $\#\Gamma \geq 2$ or in case $\tilde w$ is asymptotic to a $p$-cover of $P_{2,E}$ with $p\geq 2$, $w$ must self-intersect, see Proposition \ref{propintersect}-i)-ii). This follows from the fact that $\tilde w$ is somewhere injective.  However, since $\int_{\C \setminus \Gamma} w^* d\lambda_E>0$, these self-intersections of $w$ correspond to isolated intersections of $\tilde w$ with $\tilde w_c(s,t):=(d(s,t)+c,w(s,t))$ for suitable $c\in \R$. From positivity and stability of intersections of pseudo-holomorphic maps, this implies intersections of $\tilde w_n$ with $\tilde w_{n,c}(s,t):=(d_n(s,t)+c,w_n(s,t))$ for all large $n$, a contradiction.

So far we have proved that  the $\tilde J_E$-holomorphic cylinder $\tilde w=(d,w):\C \setminus \{z^*\} \to \R \times W_E$  is asymptotic to $P_{3,E}$ at $+\infty$ and has precisely one negative puncture $z^*$ at which $\tilde w$ is asymptotic to $P_{2,E}$.

{\it Claim 0. The following holds if $E>0$ is sufficiently small. Given any $S^1$-invariant small neighborhood $\W_1$ of $t\mapsto x_{3,E}(T_{3,E}t)$ in the loop space $C^\infty(S^1, W_E)$, there exists $R_1\gg 0$ such that \begin{equation}\label{vizii2} \mbox{ the loop } S^1 \ni t \mapsto w_n(Re^{2 \pi it}) \mbox{ is contained in } \W_1,\forall R \geq R_1,\forall \mbox{ large } n.\end{equation}}
The proof of Claim 0 follows the same lines of the proofs of Lemmas \ref{lemvizi10} and \ref{lemvizi}. We include it here for completeness. First we claim that for every $\epsilon>0$ and every sequence $R_n \to +\infty$, there exists $n_0 \in \N$ such that for all $n \geq n_0$, we have \begin{equation}\label{eq_limita_energia} \int_{\C \setminus B_{R_n}(0)} w_n^* d\lambda_n \leq \epsilon.\end{equation} Arguing indirectly we may assume there exists $\epsilon >0$ and a sequence $R_n \to +\infty$ satisfying $$\int_{\C \setminus B_{R_n}(0)} w_n^* d\lambda_n > \epsilon,\forall n.$$ The $d\lambda_n$-energy of $\tilde w_n$ is equal to $T_{3,n}$. We conclude that for all $R_0>0$ fixed and large $n$  we have $$\int_{B_{R_0}(0)} w_n^* d\lambda_n \leq T_{3,n} - \epsilon.$$ Since $z^*$ is a negative puncture of $\tilde w$, given any $r_0>0$ fixed sufficiently small we have $\int_{B_{r_0}(z^*)} w_n^* d\lambda_n \geq T_{2,E}-\epsilon/2$ for all large $n$. Since $\tilde w_n \to \tilde w$ in $C^\infty_{\rm loc}$, we have  $$\int_{B_{R_0}(0) \setminus B_{r_0}(z^*)} w^* d\lambda_E \leq T_{3,E}-\epsilon - (T_{2,E} - \epsilon/2)= T_{3,E} - T_{2,E} - \epsilon/2,$$ which implies that $\int_{\C \setminus \{z^*\}} w^*d\lambda_E \leq T_{3,E} - T_{2,E} - \epsilon/2.$ This is a contradiction and proves \eqref{eq_limita_energia}.

In order to prove Claim 0, we argue indirectly assuming that the loops $t\mapsto w_{n_j}(R_{n_j}e^{2 \pi t i})$ are not contained in $\W_1$ for subsequences $\tilde w_{n_j}=(d_{n_j},w_{n_j})$ and $R_{n_j} \to +\infty,$ also denoted by $\tilde w_n$ and $R_n$, respectively. Let $$\tilde v_n(s,t) = (b_n(s,t),v_n(s,t))=\left(d_n\left(R_ne^{2 \pi (s+ i t)}\right) - d_n(R_n),w_n\left(R_ne^{2 \pi (s+ i t)}\right)\right).$$ Notice that for all $n$, $b_n(0,0)=0$ and $v_n(0,t) \notin \W_1$ for every $t\in S^1.$ Moreover, this sequence has $C^\infty_{\rm loc}-$bounds since any bubbling-off point in $\R \times S^1$ would take away at least $T_{2,E}>0$ from the $d\lambda_E$-area, contradicting \eqref{eq_limita_energia}. Thus we find a $\tilde J_E$-holomorphic map $\tilde v=(b,v):\R \times S^1 \to \R \times W_E$ such that $\tilde v_n \to \tilde v$ in $C^\infty_{\rm loc}$ as $n\to\infty$. Also from \eqref{eq_limita_energia}, we have for all fixed $S_0>0$ and all $\epsilon>0$ $$\begin{aligned} \int_{[-S_0,S_0] \times S^1} v^*d\lambda_E \leq & \limsup_n \int_{[-S_0,S_0] \times S^1} v_n^*d\lambda_n \\ \leq & \limsup_n \int_{B_{e^{2 \pi S_0}R_n}(0) \setminus B_{e^{-2 \pi S_0}R_n}(0)} w_n^* d\lambda_n \\ \leq &  \epsilon,\end{aligned}$$ for all large $n$. This implies that $\int_{\R \times S^1} v^* d\lambda_E = 0.$  Moreover,  $$T_{3,E} \geq \int_{\{0\} \times S^1} v^* \lambda_E = \lim_{n \to \infty} \int_{\{0\} \times S^1} v_n^* \lambda_n  = \lim_{n \to \infty} \int_{\partial B_{R_n}(0)} w_n^* \lambda_n \geq T_{2,E}.$$ It follows that $\tilde v$ is a cylinder over a periodic orbit $Q=(x,T)\in \P(\lambda_E)$ which, by construction, is geometrically distinct from $P_{3,E}$ and has action $\leq T_{3,E}$. Now since the loops $t \mapsto v_n(0,t)=w_n(R_n e^{2 \pi i t})$ have image contained in $U^{\delta_0}_E$ from \eqref{reparamwnn2} and converge to $t \mapsto x(tT+c_0)$ in $C^\infty$ as $n \to \infty$, for a suitable constant $c_0$, we conclude that $Q$ is also geometrically distinct from $P_{2,E}$. Moreover, these loops are not linked to $P_{3,E}$, which implies that $Q$ is not linked to $P_{3,E}$ as well. This contradicts Theorem \ref{teolink},  if $E>0$ is taken sufficiently small, proving Claim 0.

We can extract more information about the behavior of $\tilde w$ near $+\infty$ as in Proposition \ref{asymptoticcylinder}. In fact, in Martinet's coordinates near $P_{3,E}$, we express $$\tilde w\left(e^{2 \pi (s + it)}\right)=(d(s,t),\vartheta(s,t),x(s,t),y(s,t)),s\gg 0,$$ and it satisfies the exponential estimates \eqref{eqasymptotics} (with $b_E(s,t)$ replaced with $d(s,t)$), for an eigenvalue $\delta <0$ and a $\delta$-eigensection $e$ of the asymptotic operator $A_{P_{3,E}}$ so that $e$ has winding number equal to $1$, with respect to a global trivialization of $\xi=\ker \lambda_E$. The proof of this fact follows the same lines of the proof of Proposition \ref{asymptoticcylinder}. Moreover, $w$ does not intersect $u_{1,E}(\C)\cup u_{2,E}(\C) \cup P_{2,E} \cup P_{3,E}$ since  it is the $C^\infty_{\rm loc}$-limit of the maps $w_n$, which do not intersect $u_{1,n}(\C) \cup u_{2,n}(\C)\cup P_{2,n} \cup P_{3,n}$. Thus $w(\C \setminus \{z^*\}) \subset \dot S_E \setminus P_{3,E}$ and from the uniqueness of such cylinders, see Proposition \ref{propunique}, $\tilde w$ coincides with the rigid cylinder $\tilde v_E=(b_E,v_E)$ obtained in Section \ref{sec_5-ii)}, up to reparametrization and $\R$-translation.

We conclude from \eqref{vizii2} and from $\tilde w_n \to \tilde w$ in $C^\infty_{\rm loc}$ that given any $R_2>0$ fixed and any open neighborhood $\V_1 \subset W_E$ of $v_E(\R \times S^1)\cup P_{3,E}$, we have \begin{equation}\label{vizii3} w_n(\C \setminus B_{R_2}(z^*))\subset \V_1, \forall \mbox{ large }n.\end{equation}

Next we study the behavior of $\tilde w_n$ near $z^*$ for large $n$ by using the so called soft rescaling procedure near $z^*$  defined in \cite{fols} (see also \cite{HLS} and \cite{HS1}). Since $\tilde w$ is asymptotic to $P_{2,E}$ at $z^*$, we can fix $\epsilon >0$ small so that \begin{equation}\label{soft} 0< \int_{\partial B_\epsilon(z^*)} w^* \lambda_E - T_{2,E} < \frac{T_{2,E}}{4}, \end{equation} where $\partial B_\epsilon(z^*)$ is oriented counter-clockwise. We define a new sequence $z_n \to z^*$ as follows. Let $z_n \in B_\epsilon(z^*)$ and $0<\delta_n<\epsilon$ be such that \begin{equation} \label{soft2} \begin{aligned} d_n(z_n) \leq d_n(\zeta), \forall \zeta \in B_\epsilon(z^*), \\ \int_{B_\epsilon(z^*) \setminus B_{\delta_n}(z_n)} w_n^* d\lambda_n = \frac{T_{2,E}}{2}.\end{aligned}\end{equation} Since $z^*$ is a negative puncture we necessarily have $z_n \to z^*$ and the existence of $\delta_n$ follows from \eqref{soft}. Next we show that $\delta_n \to 0$ as $n \to \infty$. Arguing indirectly assume that up to extraction of a subsequence we have $\delta_n \to \bar \delta>0$ and take $0<\bar \epsilon<\bar \delta$. From \eqref{soft} and \eqref{soft2} we get $$\frac{T_{2,E}}{4} \geq \lim_{n \to \infty} \int_{B_\epsilon(z^*) \setminus B_{\bar \epsilon}(z^*)} w_n^* d\lambda_n \geq \lim_{n \to \infty} \int_{B_\epsilon(z^*) \setminus B_{\delta_n}(z_n)} w_n^* d\lambda_n =\frac{T_{2,E}}{2},$$ which is a contradiction.

Let $R_n$ be a sequence of real numbers satisfying $R_n \to +\infty$ and \begin{equation}\label{Rn} \delta_n R_n <\epsilon/2.\end{equation} Consider the $\tilde J_n$-holomorphic maps $\tilde v_n:B_{R_n}(0) \to \R \times W_E$ defined by $$\tilde v_n(z) = (b_n,v_n)=(d_n(z_n+\delta_n z) - d_n(z_n +2 \delta_n),w_n(z_n + \delta_n z)), \forall z\ \in B_{R_n}(0).$$ Notice that $\tilde v_n(2) \in \{0\} \times W_E$ and that $z=0$ is the minimum point of $b_n$ for all $n$. This last assertion follows from \eqref{soft2} and will be useful in what follows.

{\it Claim I. Up to a subsequence, $\tilde v_n \to \tilde v$ in $C^\infty_{\rm loc}$ as $n \to \infty$, where $\tilde v=(b,v):\C \to \R\ \times W_E$ is a finite energy $\tilde J_E$-holomorphic plane asymptotic to $P_{2,E}$ at its positive puncture $+\infty$.} To prove Claim I, we first show that $|\nabla \tilde v_n(z)|_n$ is uniformly bounded in $z\in B_{R_n}(0)$ and in $n$. Arguing indirectly we may assume that $|\nabla \tilde v_n(\zeta_n)|_n \to \infty$ for a sequence $\zeta_n \in B_{R_n}(0)$ as $n \to \infty$. From \eqref{soft2} and \eqref{Rn} we have $$\int_{B_{R_n}(0) \setminus \D} v_n^* d\lambda_n=\int_{B_{\delta_nR_n}(z_n) \setminus B_{\delta_n}(z_n)} w_n^* d\lambda_n \leq \frac{T_{2,E}}{2},\forall n.$$ Using Lemmas \ref{lemamin} and \ref{lemtop} we see that each bubbling-off point of $\tilde v_n$ takes away at least $T_{2,E}$ of the $d\lambda_E$-area, hence we necessarily have $\limsup_{n \to \infty} |\zeta_n| \leq 1$.

For all fixed $R>0$ large, we have from \eqref{soft} that \begin{equation}\label{soft3} \int_{\partial B_R(0)} v_n^* \lambda_n = \int_{\partial B_{R\delta_n}(z_n)} w_n^* \lambda_n < \int_{\partial B_{\epsilon}(z^*)} w_n^* \lambda_n < \frac{5 T_{2,E}}{4},\end{equation} for all large $n$.
After extracting a subsequence, we have $\tilde v_n \to \tilde v$ in $C^\infty_{\rm loc},$ where $\tilde v=(b,v):\C \setminus \Gamma' \to \R\ \times W_E$ is a finite energy $\tilde J_E$-holomorphic map and  $\Gamma' \subset \D$ is a finite set of bubbling-off points for the sequence $\tilde v_n$. It follows from \eqref{soft3} that $\#\Gamma'\leq 1$. If $\# \Gamma'=1$ then $\tilde v$ is non-constant and $\Gamma'=\{\bar z\}$. In this case $\bar z=0$ since $b_n$ attains its minimum value at $z=0$. From \eqref{soft3} and Lemma \ref{lemamin}, $\tilde v$ is asymptotic to $P_{2,E}$ at its positive puncture $+\infty$ and also at its negative puncture $\bar z$, therefore $\tilde v$ is a cylinder over $P_{2,E}$.

However, we have from \eqref{soft2} that  $$\int_{\partial \D} v_n^*\lambda_n = \int_{\partial B_{\delta_n}(z_n)} w_n^* \lambda_n = \int_{\partial B_\epsilon(z^*)} w_n^* \lambda_n - \frac{T_{2,E}}{2},$$ which implies, from  \eqref{soft}, $$\int_{\partial \D} v^* \lambda_E = \int_{\partial B_\epsilon (z^*) } w^* \lambda_E - \frac{T_{2,E}}{2}\leq \frac{T_{2,E}}{4}+T_{2,E} - \frac{T_{2,E}}{2}=\frac{3T_{2,E}}{4} < T_{2,E},$$ a contradiction with the fact that $\tilde v$ is a cylinder over $P_{2,E}$. Thus $\Gamma'=\emptyset$ and, since $\tilde v_n(2) \in \{0\} \times W_E$ for all $n$, we obtain $C^\infty_{\text{loc}}$-bounds for the sequence $\tilde v_n$ from usual elliptic regularity. Then we find a finite energy $\tilde J_E$-holomorphic plane $\tilde v=(b,v):\C \to \R \times W_E$ so that, up to extraction  of a subsequence, $\tilde v_n \to \tilde v$ in $C^\infty_{\rm loc}$ as $n\to\infty$. From \eqref{soft3} and Lemma \ref{lemamin} we see that $\tilde v$ is asymptotic to $P_{2,E}$ at $+\infty$.
This proves Claim I.

%We know that $\tilde v$ is asymptotic to $P_{2,E}$ at its unique puncture $+\infty$ and since $v_n$ does not intersect $P_{2,n}$ for all $n$, $v$ does not intersect $P_{2,E}$ as well. Its asymptotic behavior at $+\infty$ is described in Martinet's coordinates by the extremal negative eigenvalue $\delta_2$ of $A_{P_{2,E}}$ and a $\delta_2$-eigensection having winding number equal to $1$. Moreover, $v:\C \to W_E$ is an embedding, see \cite[Theorem 1.2]{ props2}.
From the uniqueness %of such finite energy planes asymptotic to $P_{2,E}$,
obtained in Proposition \ref{propunique},
we conclude that the $\tilde J_E$-holomorphic plane $\tilde v$ from Claim I must coincide, up to reparametrization and $\R$-translation, with either $\tilde u_{1,E}$ or $\tilde u_{2,E}$,  the rigid planes constructed in Section \ref{sec_step3}. Assuming without loss of generality that $\tilde v$ coincides with $\tilde u_{1,E}$, we can use the fact that $\tilde v_n \to \tilde v$ in $C^\infty_{\rm loc}$ to conclude that given any $R_3>0$ and any neighborhood $\V_2 \subset W_E$ of $u_{1,E}(\C)$, we have $v_n(B_{R_3}(0)) \subset \V_2$ for all large $n$. This implies that \begin{equation}\label{vizii5} w_n(B_{R_3\delta_n}(z_n)) \subset \V_2, \forall \mbox{ large } n.\end{equation}

Let $\W_2$ be an $S^1$-invariant neighborhood of the loop $S^1 \ni t \mapsto x_{2,E}(T_{2,E}t)$ in $C^\infty(S^1,W_E)$. Since $P_{2,E}$ is hyperbolic, we can assume that there is no other periodic orbit in $\W_2$ other than $P_{2,E}$, modulo $S^1$-reparametrizations. Now we use the fact that both $\tilde w$ and $\tilde v$ are asymptotic to $P_{2,E}$ at their punctures $z^*$ and $+\infty$, respectively. Since $\tilde w_n \to \tilde w$ we find $0<\epsilon_0<\epsilon$ small so that if $0<\rho\leq \epsilon_0$ is fixed then \begin{equation}\label{viz1} \mbox{the loop }t \mapsto w_n\left(z_n + \rho e^{2 \pi i t}\right) \mbox{ is contained in } \W_2, \forall \mbox{ large }n.\end{equation}  In the same way, since $\tilde v_n \to \tilde v$, we can find $R_0 \gg 0$ large so that if $R\geq R_0$ is fixed then \begin{equation}\label{viz2} \mbox{the loop }t \mapsto w_n\left(z_n + R \delta_n e^{2 \pi i t}\right) \mbox{ is contained in } \W_2, \forall \mbox{ large }n.\end{equation}  We can also assume that for all $n$ sufficiently large we have \begin{equation} \label{soft4} 0< \frac{T_{2,E}}{2}< \int_{\partial B_{\delta_n R_0}(z_n)} w_n^* \lambda_n. \end{equation}

We prove the following claim which is in essence equivalent to Lemma 4.9 in \cite{fols}, adapted to our situation where we have a sequence of almost complex structures $\tilde J_n$ converging to $\tilde J_E$ in $C^\infty$ as $n \to \infty$ and the only periodic orbit   for $\lambda_E$ with action $<2T_{2,E}$ is the hyperbolic periodic orbit $P_{2,E}$, for $E>0$ sufficiently small.

{\it Claim II. Let $\W_2\subset C^\infty(S^1,W_E)$ be a small $S^1$-invariant neighborhood of the loop $t \mapsto x_{2,E}(T_{2,E}t)$, where $P_{2,E}=(x_{2,E},T_{2,E})$. Then there exist $h>0$ such that if $\tilde u_n=(a_n,u_n):[r_n,R_n] \times S^1 \to \R \times  W_E$ is a sequence of $\tilde J_n$-holomorphic cylinders satisfying \begin{eqnarray} \label{en1} & E(\tilde u_n)\leq \displaystyle\frac{3T_{2,E}}{2}, \\ \label{en2} & \displaystyle\int_{[r_n,R_n] \times S^1} u_n^* d\lambda_n \leq \displaystyle\frac{T_{2,E}}{2},\\ \label{en3} & \displaystyle\int_{\{\rho\} \times S^1} u_n^* \lambda_n \geq \displaystyle\frac{T_{2,E}}{2},\forall \rho \in [r_n,R_n],\end{eqnarray} then each loop $S^1 \ni t\mapsto u_n(s,t)$ is contained in $\W_2$ for all $s\in[r_n+h,R_n-h]$. }

To prove Claim II, we argue indirectly and consider a sequence of $\tilde J_n$-holomorphic cylinders $\tilde u_n=(a_n,u_n):[r_n,R_n] \times S^1 \to \R \times W_E$ satisfying conditions \eqref{en1}-\eqref{en3} and such that \begin{equation}\label{loop1} \mbox{the loop } S^1 \ni t \mapsto u_n(s_n,t) \mbox{ is not contained in } \W_2,\end{equation} for some $s_n \in [r_n+n,R_n-n]$ and $R_n-r_n \geq 2n$. Define the $\tilde J_n$-holomorphic maps $\tilde E_n =(e_n,E_n):[r_n-s_n+\epsilon_1,R_n-s_n-\epsilon_1] \times S^1 \to \R \times W_E$ by $$\tilde E_n(s,t)=(a_n(s+s_n,t)-a_n(s_n,0), u_n(s+s_n,t)),$$ where $\epsilon_1>0$ is any sufficiently small real number. Notice that $e_n(0,0)=0$ and, from \eqref{loop1}, \begin{equation}\label{loop2} \mbox{the loop } S^1 \ni t \mapsto E_n(0,t) \mbox{ is not contained in } \W_2.\end{equation}
$|\nabla \tilde E_n(s,t)|_n$ is uniformly bounded in $(s,t)$ and in $n$, since any bubbling-off sequence of points would take away at least $T_{2,E}$ of the $d\lambda_E$-area, and this is not available from \eqref{en2}. Therefore, the sequence $\tilde E_n$ has $C^\infty_{\rm loc}$-bounds and, up to extraction of a subsequence, we can assume that $\tilde E_n \to \tilde E$ in $C^\infty_{\rm loc}$ as $n\to\infty$, where $\tilde E=(e,E):\R \times S^1 \to \R \times W_E$ is a $\tilde J_E$-holomorphic cylinder.

In view of the properties of $\tilde u_n$ in \eqref{en1}-\eqref{en3}, and consequently similar properties of $\tilde E_n$, we have $$ E(\tilde E) \leq \frac{3T_{2,E}}{2}, \int_{\R \times S^1} E^* d\lambda_E \leq \frac{T_{2,E}}{2} \mbox{ and } \int_{\{\rho\} \times S^1} E^* \lambda_E \geq \frac{T_{2,E}}{2},\forall \rho \in \R.$$ It follows that $\tilde E$ has finite energy, is non-constant, has a negative puncture at $s=-\infty$ and a positive puncture at $s=+\infty$. Moreover, $\tilde E$ must be asymptotic at $+\infty$ to a periodic orbit with action $\leq \frac{3T_{2,E}}{2}$. Thus the same holds for its negative puncture $-\infty$. Since $P_{2,E}$ is the only periodic orbit with such low action, for all $E>0$ sufficiently small, see Lemma \ref{lemamin}, we conclude that $\tilde E$ is a cylinder over $P_{2,E}$.  However, from \eqref{loop2}, the loop $S^1 \ni t \mapsto E(0,t)$ is not contained in $\W_2$, a contradiction. This proves Claim II.

Now consider for each $n$ the $\tilde J_n$-holomorphic cylinder $\tilde C_n:\left[ \frac{\ln R_0 \delta_n}{2 \pi}, \frac{\ln \epsilon_0}{2\pi} \right]\times S^1 \to \R \times W_E$ defined by $$\tilde C_n(s,t)=(c_n(s,t), C_n(s,t))=\tilde w_n\left(z_n+e^{2 \pi(s+it)}\right).$$ From \eqref{soft}, \eqref{soft2} and \eqref{soft4} we have $$\begin{aligned}  & E(\tilde C_n)<\frac{5T_{2,E}}{4}< \frac{3T_{2,E}}{2}, \\  & \int_{\left[ \frac{\ln R_0 \delta_n}{2 \pi}, \frac{\ln \epsilon_0}{2\pi} \right] \times S^1} C_n^* d\lambda_n < \frac{T_{2,E}}{2},\\  & \int_{\{\rho\} \times S^1} C_n^* \lambda_n > \frac{T_{2,E}}{2},\forall \rho \in \left[ \frac{\ln R_0 \delta_n}{2 \pi}, \frac{\ln \epsilon_0}{2\pi} \right],\end{aligned}$$ for all large $n$, where $R_0\gg 0$ is defined above.
From Claim II we find $h>0$ so that the loop $S^1 \ni t \mapsto C_n(s,t)$ is contained in $\W_2$ for $s\in \left[ \frac{\ln R_0 \delta_n}{2 \pi}+h, \frac{\ln \epsilon_0}{2\pi}-h \right]$ and all large $n$. This implies that \begin{equation}\label{vizi3} \mbox{ the loop } S^1 \ni t \mapsto w_n\left(z_n +\rho e^{2 \pi t i}\right) \mbox{ is contained in } \W_2, \end{equation}for all $\rho \in [e^{2 \pi h} R_0 \delta_n, e^{-2 \pi h} \epsilon_0]$ and all large $n.$ We conclude that given any neighborhood $\V_3 \subset W_E$ of $P_{2,E}$, we have
\begin{equation}\label{vizii4}w_n\left(B_{e^{-2 \pi h} \epsilon_0}(z_n) \setminus B_{e^{2 \pi h} R_0 \delta_n}(z_n)\right) \subset \V_3, \forall \mbox{ large }n. \end{equation}

Finally, taking $0<R_2<e^{-2 \pi h} \epsilon_0$, $R_3 > e^{2 \pi h} R_0$, using \eqref{vizii3}, \eqref{vizii5} and \eqref{vizii4}, and that $z_n \to z^*$ as $n \to \infty$, we conclude that given any small neighborhood $\bar \V \subset W_E$ of $u_{1,E}(\C) \cup P_{2,E} \cup v_E(\R \times S^1) \cup P_{3,E}$, with $p_0 \not\in \bar \V$, we have $w_n(\C) \subset \bar \V$ for all large $n$. However, this contradicts the fact that $p_0\in w_n(\C)$ for all large $n$. This contradiction shows that there are no bubbling-off points for $\tilde w_n$.

We have concluded that the sequence $\tilde w_n$ with the reparametrization given either in \eqref{reparamwnn1} or in \eqref{reparamwnn2}, depending on the intersection conditions \eqref{naointersecta} and \eqref{intersecta}, respectively, has $C^\infty_{\rm loc}$-bounds and therefore, from elliptic regularity, we can extract a subsequence also denoted by $\tilde w_n$ so that $\tilde w_n \to \tilde w=(d,w):\C \to \R \times W_E$ in $C^\infty_{\rm loc}$ as $n\to\infty$, where $\tilde w$ is a finite energy $\tilde J_E$-holomorphic plane.  From \eqref{reparamwnn1} and \eqref{reparamwnn2}, we see that $\tilde w$ is non-constant. Moreover, $w(\C)$ is contained in $\V_E=\dot S_E \setminus (V_E \cup P_{3,E})$. In fact, by positivity and stability of intersections, $\tilde w$ cannot intersect $\tilde u_{1,E},\tilde u_{2,E},\tilde v_E,P_{2,E},P_{3,E}$ since this would imply intersections of $\tilde w_n$ with $\tilde u_{1,n},\tilde u_{2,n},\tilde v_n,P_{2,n},P_{3,n}$ for large $n$, a contradiction. Thus, from \eqref{reparamwnn1} and \eqref{reparamwnn2}, we obtain that $\tilde w$ is asymptotic to $P_{3,E}$ at $+\infty$, since there is no periodic orbit in $\dot S_E\setminus P_{3,E}$ with action $\leq T_{3,E}$ which is not linked to $P_{3,E}$, for $E>0$ sufficiently small, according to Theorem \ref{teolink}.

Arguing as in Lemma \ref{lemvizi}, see also \cite[Lemma 8.1]{convex}, we know that given any small open neighborhood $\V_4\subset W_E$ of $P_{3,E}$ we find $R_0>0$ so that $w_n(\C \setminus B_{R_0}(0))\subset \V_4$ for all large $n$. Hence if $\V_4$ is sufficiently small, then $p_0 \not\in \V_4$. It follows that if $\zeta_n\in \C$ is such that $w_n(\zeta_n) = p_0$, then $|\zeta_n|$ is uniformly bounded and we may assume that $\zeta_n \to z_0$ as $n \to \infty$. Since $\tilde w_n \to \tilde w$ in $C^\infty_{\rm loc}$ as $n \to \infty$, we must have $w(z_0)=p_0$.

The finite energy $\tilde J_E$-holomorphic plane $\tilde w=(d,w)$, which, up to extraction of a subsequence, is the $C^{\infty}_{\rm loc}$-limit of $\tilde w_n$ after suitable reparametrizations and $\R$-translations, is asymptotic to $P_{3,E}$ exponentially fast, see \eqref{eqasymptotics2} in Proposition \ref{propplanes2} below. In fact,  there exists a negative eigenvalue $\delta$ and a $\delta$-eigensection $e$ of the asymptotic operator $A_{P_{3,E}}$ so that in suitable Martinet's coordinates near $P_{3,E}$, we can express $\tilde w\left(e^{2 \pi (s+it)}\right)=(d(s,t),\vartheta(s,t),x(s,t),y(s,t)),s \gg 0,$  satisfying the exponential decays given in \eqref{eqasymptotics2}. The winding number of $e$ equals $1$ and, therefore,  $\wind_{\pi}(\tilde w)=\wind_\infty(\tilde w)-1 =0$. It follows that $w$ is an immersion transverse to $X_{\lambda_E}$ and that $\tilde w$ is an immersion. Since $\tilde w$ is an embedding near the boundary and is the limit of the embeddings $\tilde w_n$, it follows that $\tilde w$ is an embedding as well. Also, if $c\neq 0,$ the image of $\tilde w$ does not coincide with the image of $\tilde w_c=(d+c,w)$, see the proof of Proposition \ref{cylinderconclusion}-(iv). This implies, by Carleman's similarity principle, that intersections of $\tilde w$ with $\tilde w_c$ are isolated if $c \neq 0$ and, by positivity an stability of intersections, we must have $\tilde w(\C) \cap \tilde w_c(\C) = \emptyset, \forall c \neq 0$ since $\tilde w_n(\C) \cap \tilde w_{n,c}(\C) = \emptyset, \forall c \neq 0,\forall n,$ with $\tilde w_{n,c} = (d_n + c,w_n)$. We conclude that $w:\C \to W_E$ is also an embedding.

From Theorem 1.4 in \cite{props2} we  get the following uniqueness property: if $\tilde w_1=(d_1,w_1),\tilde w_2=(d_2,w_2)$ are finite energy $\tilde J_E$-holomorphic planes asymptotic to $P_{3,E}$ with the exponential decays \eqref{eqasymptotics2}, then either \begin{equation}\label{uniqplane} w_1(\C)=w_2(\C) \mbox{ or } w_1(\C) \cap w_2(\C) = \emptyset.\end{equation} In fact, this result assumes that the asymptotic limit $P_{3,E}$ is nondegenerate. Since $P_{3,E}$  may be degenerate, we cannot guarantee this assumption. However, its proof only makes use of the asymptotic behavior of the $\tilde J_E$-holomorphic plane $\tilde w$ at $+\infty$ given by the exponential estimates \eqref{eqasymptotics2} in Proposition \ref{propplanes2} below. Thus it also holds in our situation.

It follows that the $C^\infty_{\rm loc}$-limit $\tilde w=(d,w)$ of any subsequence of the sequence $\tilde w_n$, after suitable reparametrization and $\R$-translation, with the property that $p_0 \in w(\C)$, is unique.  The proof of Proposition \ref{propplanes} is now complete. \end{proof}

The following proposition is essentially Theorem 7.2 in \cite{convex}.

\begin{prop}\label{propplanes2}
Let $p_0 \in \V_E$ and $\tilde w=(d,w):\C \to  \R \times W_E$ be a finite energy $\tilde J_E$-holomorphic plane, which is asymptotic to $P_{3,E}=(x_{3,E},T_{3,E})$ at $+\infty$, and is the $C^\infty_{\rm loc}$-limit of the sequence of $\tilde J_n$-holomorphic planes $\tilde w_n=(d_n,w_n)$, satisfying $p_0 \in w_n(\C), \forall n$, as in Proposition \ref{propplanes}. Consider Martinet's coordinates $(\vartheta,x,y) \in S^1 \times \R^2$ on a small tubular neighborhood $\U \subset W_E$ of $P_{3,E}$, as explained in Appendix \ref{ap_basics}, where the contact form takes the form $\lambda_E \equiv g_E(d \vartheta + xdy)$ and $P_{3,E} \equiv S^1 \times \{0\}$. Let $A_{P_{3,E}}$ be the asymptotic operator associated to $P_{3,E}$, which in local coordinates assumes the form \eqref{operadorAP}. Then $w(e^{2\pi (s+it)})=(\vartheta(s,t),x(s,t),y(s,t))\in \U,\forall s$ sufficiently large, $d(s,t)=d(e^{2\pi (s+it)}),$ and  $\tilde w$ is represented by  $$(d(s,t),\vartheta(s,t),x(s,t),y(s,t)), (s,t)\in \R \times S^1, s\gg0,$$ where
\begin{equation}\label{eqasymptotics2} \begin{aligned}|D^\gamma(d(s,t) - (T_{3,E}s + a_0))| & \leq  A_\gamma e^{-r_0s},\\
|D^\gamma(\vartheta(s,t) - t-\vartheta_0) | & \leq  A_\gamma e^{-r_0s},\\
z(s,t):=(x(s,t),y(s,t))  &= e^{\int_{s_0}^s \mu(r) dr}(e(t)+R(s,t)),\\
|D^\gamma R(s,t)|,|D^\gamma (\mu(s)- \delta)| &\leq  A_\gamma
e^{-r_0s},
\end{aligned}
\end{equation}for all large $s$ and all $\gamma \in \N \times \N$, where $A_\gamma,r_0>0,\vartheta_0,a_0\in \R$ are suitable constants.  $\vartheta(s,t)$ is seen as a map on $\R$ and satisfies $\vartheta(s,t+1) = \vartheta(s,t)+1$. Here $\mu(s) \to \delta <0,$  where $\delta$ is an eigenvalue of $A_{P_{3,E}}$ and $e:S^1 \to \R^2$
is an eigensection of $A_{P_{3,E}}$ associated to $\delta$,
represented in these coordinates. Its winding number with respect to a global trivialization equals $1$.
\end{prop}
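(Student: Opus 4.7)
The proof follows the same strategy as Proposition \ref{asymptoticcylinder}, with minor adaptations for the case of planes (one positive puncture and no negative puncture), and with the additional observation about the winding number of the eigensection. First I would establish a neighborhood lemma analogous to Lemma \ref{lemvizi}: given an $S^1$-invariant neighborhood $W$ of the loop $t\mapsto x_{3,E}(T_{3,E}t)$ in $C^\infty(S^1,W_E)$, there exists $R_0 > 0$ such that for all $R\geq R_0$ and all large $n$, the loop $t \mapsto w_n(Re^{2\pi i t})$ is contained in $W$. The argument mirrors that of Lemma \ref{lemvizi}: any sequence violating this, reparametrized as cylinders over circles of radius $R_n \to \infty$, would have $C^\infty_{\rm loc}$-bounds (no bubbling occurs since the $d\lambda_n$-energy concentrates away from large $|z|$ by $\tilde w_n \to \tilde w$ and $E(\tilde w_n) = T_{3,n}$), and would converge to a trivial cylinder over some periodic orbit $Q$ geometrically distinct from $P_{3,E}$, with action $\leq T_{3,E}$ and not linked to $P_{3,E}$. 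Theorem \ref{teolink} (for $E > 0$ sufficiently small) rules this out.

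Next, in Martinet's coordinates $(\vartheta,x,y)$ on a small neighborhood $\U$ of $P_{3,E}$ where $\lambda_E = g_E(d\vartheta + xdy)$, the neighborhood lemma ensures that $w_n(\C \setminus B_{R_0}(0)), w(\C \setminus B_{R_0}(0)) \subset \U$ for all large $n$, so we may change variables via $z = e^{2\pi(s+it)}$ and write
\begin{equation*}
\tilde w(s,t) = (d(s,t),\vartheta(s,t),x(s,t),y(s,t)), \quad \tilde w_n(s,t) = (d^n,\vartheta^n,x^n,y^n), \quad s \gg 0.
\end{equation*}
Following the scheme of Proposition \ref{asymptoticcylinder}, the Cauchy-Riemann equations for $\tilde w_n$ reduce, after the similarity transformation $z = \T(m)\zeta$ from \eqref{eq_barraT}, to
\begin{equation*}
\zeta^n_s + J_0\zeta^n_t + S_n'(s,t)\zeta^n = 0,
\end{equation*}
and similarly for $\zeta$, with $S_n'(s,t) \to S_n^\infty(t)$ as $s \to \infty$ and $S_n^\infty \to S^\infty$ as $n\to\infty$. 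The corresponding self-adjoint operators $A_n^\infty$ on $W^{1,2}(S^1,\R^2)$ converge to $A^\infty$.

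The heart of the argument is to extract uniform exponential decay. Defining $\xi^n(s,t) = \zeta^n(s,t)/|\zeta^n(s)|$ and $\mu_n(s) = \langle -J_0 \xi^n_t - S_n' \xi^n, \xi^n\rangle$, the nondegeneracy of $P_{3,n}$ gives $\mu_n(s) \to \delta_n < 0$, where $\delta_n$ is a negative eigenvalue of $A^\infty_n$ with unit eigensection $e_n$. Applying Proposition \ref{propconvex85} (\cite{convex}, Lemmas 8.4 and 8.5), up to subsequence $\delta_n \to \delta < 0$, $e_n \to e'$ in $W^{1,2}$ with $A^\infty e' = \delta e'$, and for every $\epsilon > 0$ with $\delta + \epsilon < 0$ there exist $s_1, n_0$ such that $\mu_n(s) \leq \delta + \epsilon$ for $s\geq s_1$, $n\geq n_0$. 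This yields a uniform bound $|\zeta^n(s)| \leq e^{-r(s-s_0)}|\zeta^n(s_0)|$ with $r = -(\delta+\epsilon) > 0$, and passing to the $C^\infty_{\rm loc}$-limit gives the same exponential estimate for $|\zeta(s)|$. The full asymptotic expansions \eqref{eqasymptotics2} then follow by applying the results of \cite{props1} to $\tilde w$, as in \cite{convex}.

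The remaining claim is that the eigensection $e$ has winding number equal to $1$. The main obstacle here, and the subtle ingredient that distinguishes this proof, is to control the winding number in the limit without assuming nondegeneracy of $P_{3,E}$. Since $\tilde w_n$ is a plane in the stable finite energy foliation $\tilde \F_n$ asymptotic to $P_{3,n}$ with $CZ(P_{3,n}) = 3$, transverse to the Reeb vector field with $w_n$ an embedding, we have $\wind_\pi(\tilde w_n) = 0$, hence $\wind_\infty(\tilde w_n) = \wind_\pi(\tilde w_n) + 1 = 1$, and this common winding number equals the winding (with respect to the global trivialization of $\xi$) of the eigensection $e_n$ appearing in the asymptotic expansion of $\tilde w_n$. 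Since $e_n \to e'$ in $W^{1,2}(S^1,\R^2)$ and $e_n, e'$ are all unit, nowhere-vanishing sections (eigensections of self-adjoint operators on a circle), the winding number is preserved in the limit, so $e$ has winding number $1$. This concludes the proof.
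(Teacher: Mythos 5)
Your proof is correct and follows essentially the same strategy the paper uses for the cylinder case (Proposition \ref{asymptoticcylinder}): a neighborhood lemma in the spirit of Lemma \ref{lemvizi} (justified via linking through Theorem \ref{teolink}), the similarity transformation \eqref{eq_barraT} reducing the Cauchy--Riemann system to the normal form of \cite{props1}, and the uniform exponential decay extracted from Proposition \ref{propconvex85} and passed to the $C^\infty_{\rm loc}$-limit. The paper itself gives no proof of Proposition \ref{propplanes2} beyond citing \cite[Theorem 7.2]{convex}; your additional argument for the winding number -- using that each $\tilde w_n$ is embedded and transverse to $X_{\lambda_n}$ to get $\wind_\pi(\tilde w_n)=0$ and hence $\wind_\infty(\tilde w_n)=1$, then passing this to the limit through the $W^{1,2}$-convergence $e_n \to e'$ of nowhere-vanishing eigensections -- is a correct and welcome completion of a detail the paper leaves to the reference.
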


We denote by $\M_{3,E}$ the space of embedded finite energy $\tilde J_E$-holomorphic planes $\tilde w=(d,w):\C \to \R \times W_E$ asymptotic to $P_{3,E}$ at $+\infty$ and satisfying the exponential decays as in Proposition \ref{propplanes2}. In the terminology of \cite{hryn}, such planes are called fast.

We have seen in Proposition \ref{propplanes} that through any point $p_0\in \V_E$ we can find $\tilde w=(d,w) \in \M_{3,E}$ so that $p_0 \in w(\C)\subset \V_E$. Then it follows from the uniqueness property \eqref{uniqplane} that $w'(\C) \subset \V_E$ for all $\tilde w'=(d',w') \in \M_{3,E}$.

In \cite{props3}, a Fredholm theory with weights is developed in order to give a local description of spaces such as $\M_{3,E}$. Using the exponential asymptotic behavior of the $\tilde J_E$-holomorphic planes $\tilde w=(d,w)\in \M_{3,E}$ given as in Proposition \ref{propplanes2}, one can prove the following statement which is essentially Theorem 2.3 of \cite{hryn}.

\begin{theo}\label{fredholm} Let $\tilde w =(d,w) \in \M_{3,E}$ be a fast finite energy $\tilde J_E$-holomorphic plane. Then there exists  $\epsilon>0$ and an embedding $$\tilde \Phi=(a,\Phi): (-\epsilon,\epsilon) \times \C \to \R \times W_E,$$ so that \begin{itemize}
\item[(i)] $\tilde \Phi(0, \cdot) = \tilde w$.
\item[(ii)] For any $\tau \in (-\epsilon, \epsilon)$, the map $\tilde \Phi_\tau=(a_\tau,\Phi_\tau):= \tilde \Phi( \tau, \cdot ): \C \to \R \times W_E$ is contained $\M_{3,E}$, i.e., $\tilde \Phi_\tau$ is a fast embedded finite energy $\tilde J_E$-holomorphic plane asymptotic to $P_{3,E}$ satisfying $\Phi_\tau(\C)\subset \V_E$, for all $\tau$.
\item[(iii)] The map $$\Phi:(-\epsilon,\epsilon) \times \C \to W_E \setminus P_{3,E}$$ is an embedding.
\item[(iv)] If the sequence $\tilde w_n=(d_n,w_n)\in \M_{3,E}$ satisfies $\tilde w_n \to \tilde w$ in $C^\infty_{\rm loc}$ as $n\to \infty$, then for all large $n$ we find sequences $A_n,B_n \in \C$, with $A_n \to 1\in \C, B_n \to 0\in \C$, $\R \ni c_n \to 0,$ $(-\epsilon,\epsilon) \ni \tau_n \to 0$ so that $$\tilde w_n(z)= \tilde \Phi_{\tau_n,c_n}(A_n z+B_n):= (a_{\tau_n}(A_n z+B_n) + c_n, \Phi_{\tau_n}(A_nz+B_n)).$$ \end{itemize} \end{theo}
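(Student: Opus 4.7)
The plan is to adapt the proof of Theorem 2.3 of \cite{hryn} on the local structure of the space of fast finite-energy planes to our setting, where the asymptotic orbit $P_{3,E}$ may fail to be nondegenerate. The four essential inputs are: a Fredholm setup using exponentially weighted Sobolev spaces adapted to the asymptotic decay established in Proposition \ref{propplanes2}, an automatic transversality statement guaranteeing surjectivity of the linearized Cauchy--Riemann operator, the implicit function theorem on the resulting Banach manifold, and the uniqueness alternative \eqref{uniqplane} to upgrade existence of the local family to an embedded parametrization.

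First, fix a weight $0<r_0<|\delta|$, where $\delta<0$ is the asymptotic eigenvalue associated to $\tilde w$ by Proposition \ref{propplanes2}. Using cylindrical coordinates near the puncture and the exponential decay estimates \eqref{eqasymptotics2}, I would linearize the Cauchy--Riemann equation at $\tilde w$ and consider the linearization $L_{\tilde w}$ acting on sections of $\tilde w^* T(\R\times W_E)$ lying in the weighted Sobolev space $W^{1,p}_{-r_0}$ (decay like $e^{-r_0 s}$ at $+\infty$). The shifted asymptotic operator $A_{P_{3,E}}-r_0$ is nondegenerate, since $-r_0$ lies in the spectral gap just above $\delta$, so the framework of \cite{props3} applies and $L_{\tilde w}$ is Fredholm. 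A direct index count with this weight, using the winding of $\delta$ and the fact that no eigenvalue of $A_{P_{3,E}}$ crosses $0$ under the shift, yields $\operatorname{ind}(L_{\tilde w})=6$, which matches the six real parameters $(\tau,c,A,B)$ that appear in item (iv).

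Second, I would establish automatic transversality, i.e.\ surjectivity of $L_{\tilde w}$. The key input is the identity $\wind_\pi(\tilde w)=0$ established in the proof of Proposition \ref{propplanes} together with the winding-one property of the asymptotic eigensection. These force the normal Chern number of $\tilde w$ to lie in the regime where Wendl's criterion (as formulated for fast planes in \cite{hryn}; see also \cite{fols}) produces surjectivity. Hence $\ker L_{\tilde w}$ is a smooth $6$-dimensional subspace of the weighted Sobolev space, and elliptic regularity together with the asymptotic analysis of \cite{props3} shows every element retains the exponential decay \eqref{eqasymptotics2}. The implicit function theorem then yields a smooth chart from a neighborhood $U\subset\ker L_{\tilde w}$ into $\M_{3,E}$ at $\tilde w$. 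The four-dimensional orbit of biholomorphic reparametrizations $z\mapsto Az+B$ and the one-dimensional orbit of $\R$-translation in the target together form a $5$-dimensional free action on this chart; picking a smooth transverse slice and parametrizing it by $\tau\in(-\epsilon,\epsilon)$ produces the desired map $\tilde\Phi$, giving (i) and (ii).

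To obtain (iii), I would invoke the uniqueness property \eqref{uniqplane}: for $\tau_1\neq\tau_2$ in $(-\epsilon,\epsilon)$, either $\Phi_{\tau_1}(\C)=\Phi_{\tau_2}(\C)$ or the images are disjoint. Transversality of the chosen slice to the reparametrization action rules out the first possibility after shrinking $\epsilon$, and since each $\Phi_\tau$ is already an embedding (Proposition \ref{propplanes}), the joint map $\Phi:(-\epsilon,\epsilon)\times\C\to W_E\setminus P_{3,E}$ is an embedding. For (iv), the openness of the chart combined with the fact that $\tilde w_n\to\tilde w$ in $C^\infty_{\rm loc}$ forces $\tilde w_n$ to lie in the image of the chart for large $n$; reading off the chart coordinates, after accounting for the gauge group action, recovers precisely the parameters $(\tau_n,c_n,A_n,B_n)$ with the required asymptotics $A_n\to 1$, $B_n\to 0$, $\tau_n\to 0$. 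The hardest step is carrying out the Fredholm and automatic transversality analysis without assuming nondegeneracy of $P_{3,E}$: one must consistently work with the shifted operator $A_{P_{3,E}}-r_0$ and its spectral data in place of $A_{P_{3,E}}$ itself, exactly as in \cite{props3} and \cite{hryn}, where this technique was developed precisely to handle degenerate asymptotic limits in the presence of an isolated eigenvalue with controlled winding.
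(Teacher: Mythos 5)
Your proposal is correct and follows the same route the paper takes: the paper states Theorem~\ref{fredholm} as ``essentially Theorem~2.3 of \cite{hryn}'' invoking the weighted Fredholm theory of \cite{props3} and the exponential decay of Proposition~\ref{propplanes2}, and you have spelled out precisely those ingredients (a weight placed in the spectral gap above the asymptotic eigenvalue $\delta$, automatic transversality from the winding control $\wind_\pi(\tilde w)=0$, the implicit function theorem on the resulting six-dimensional kernel matching $(\tau,c,A,B)$, and the uniqueness alternative \eqref{uniqplane} to pass from the chart to the foliating embedding).
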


Theorem \ref{fredholm} provides a maximal smooth one parameter family of maps $\tilde w_{\tau,E}=(d_{\tau,E},w_{\tau,E})\in \M_{3,E},\tau \in (\tau_-,\tau_+),$ so that  $w_{\tau_1,E}(\C) \cap w_{\tau_2,E}(\C) = \emptyset$ for all $\tau_1 \neq \tau_2$. In fact this family cannot be compactified to an $S^1$-family of such maps since this would provide an open book decomposition with disk-like pages for $W_E$, with binding orbit $P_{3,E}$, which clearly contradicts the fact that there are orbits which are not linked to $P_{3,E}$, such as $P_{2,E}$ and $P_{3,E}'$.

Next we describe how the family $\{\tilde w_{\tau,E}\}$ breaks as $\tau \to \tau_\pm$. This is the content of the following proposition. We assume the  normalization \begin{equation}
\label{eq_normaliza}
\tau_-=0,\\ \tau_+=1,\\
X_{\lambda_E}|_{w_{\tau,E}} \cdot \tau > 0,
\end{equation} i.e., $\tau$ strictly increases in the direction of $X_{\lambda_E}$.

Let $P_{2,E}=(x_{2,E},T_{2,E})$ and $P_{3,E}=(x_{3,E},T_{3,E})$ be the periodic orbits of $\lambda_E$, let $\tilde u_{1,E}$ and $\tilde u_{2,E}$ be the rigid planes and let $\tilde v_E$ and $\tilde v_E'$ be the rigid cylinders in the statement of Proposition \ref{prop_step5}.

\begin{prop}\label{break}For $E>0$ sufficiently small, the following holds. If $\tau_n \in  (0,1)$ is a sequence satisfying $\tau_n \to 0^+$ as $n \to \infty$, then, after suitable reparametrizations and $\R$-translations of $\tilde w_{\tau_n,E}=(d_{\tau_n,E},w_{\tau_n,E}),$ $\tilde v_E$ and $\tilde u_{1,E}$, we have that $\tilde w_{\tau_n,E}$ converges to $\tilde v_E \odot \tilde u_{1,E}$ as $n \to \infty$ in the SFT sense \cite{sft2}, i.e.,
\begin{itemize}
\item [(i)]There exists $z^*\in \D$ so that $\tilde C_n=(c_n,C_n):\R \times S^1 \to \R \times W_E$, defined by $\tilde C_n(s,t)=\tilde w_{\tau_n,E}\left(z^* +e^{2 \pi(s+it)}\right)$, is such that $\tilde C_n \to \tilde v_E$ in $C^\infty_{\rm loc}$ as $n \to \infty$.
\item [(ii)] There exist sequences $z_n \to z^*$, $\delta_n \to 0^+$ and $c_n \in \R$ so that $\tilde P_n=(p_n,P_n): \C \to \R \times W_E$, defined by $\tilde P_n(z)=(d_{\tau_n,E}(z_n +\delta_n z)+c_n,w_{\tau_n,E} (z_n + \delta_n z))$, is such that $\tilde P_n \to \tilde u_{1,E}$ in $C^\infty_{\rm loc}$ as $n \to \infty$.
\item [(iii)] Given an $S^1$-invariant neighborhood $\W_3$ of the loop $S^1 \ni t \mapsto x_{3,E}(T_{3,E}t)$ in $C^\infty(S^1,W_E)$, there exists $R_0\gg 0$  such that the loop $t \mapsto \!w_{\tau_n,E}\!\left(z^* + Re^{2 \pi it}\right)$ is contained in $\W_3$ for all $R\geq R_0$ and all large $n$.
\item [(iv)] Given an $S^1$-invariant neighborhood $\W_2$ of the loop $S^1 \ni t \mapsto x_{2,E}(T_{2,E}t)$ in $C^\infty(S^1,W_E)$, there exist $\epsilon_1>0$ small and $R_1\gg 0$ such that the loop $t \mapsto w_{\tau_n,E}\left(z_n + \rho e^{2 \pi it}\right)$ is contained in $\W_2$ for all $0<R_1 \delta_n \leq \rho \leq \epsilon_1$ and all large $n$.
\end{itemize}
In particular, given any neighborhood $\V_1\subset W_E$ of $u_{1,E}(\C) \cup v_E(\R \times S^1) \cup P_{2,E}\cup P_{3,E}$, we have $w_{\tau_n,E}(\C) \subset \V_1$ for all large $n$. A similar statement holds for any sequence $\tau_n \to 1^-$ with $\tilde u_{1,E}$ replaced with $\tilde u_{2,E}$. The case $S_E'$ is completely analogous.
\end{prop}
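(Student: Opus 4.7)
The plan is to adapt the bubbling and soft-rescaling analysis of Section \ref{sec_5-iii)} to the one-parameter family $\{\tilde w_{\tau,E}\}_{\tau \in (0,1)}$ itself, now for the fixed almost complex structure $\tilde J_E$. The guiding heuristic is that if $\tilde w_{\tau_n,E}$ admitted uniform $C^\infty_{\rm loc}$-bounds (after a suitable reparametrization and $\R$-translation) with no bubbling as $\tau_n \to 0^+$, the $C^\infty_{\rm loc}$-limit would be another element of $\M_{3,E}$ with image inside $\V_E$, and by Theorem \ref{fredholm} this would contradict the maximality of $\{\tilde w_{\tau,E}\}$. Hence a bubble must form, and SFT compactness will identify it.

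First I would renormalize. Fix the $2$-sphere $N_E^\delta \subset \dot S_E$ crossed transversally by $V_E$, and reparametrize $\tilde w_{\tau_n,E}$ so that $w_{\tau_n,E}(-i)\in N_E^\delta$, $w_{\tau_n,E}(z'_n)\in N_E^\delta$ for some $z'_n\in \partial\D$ with $\Im z'_n \ge 0$, and $d_{\tau_n,E}(2)=0$, in the spirit of \eqref{reparamwnn2}. Iterated applications of Ekeland--Hofer's Lemma (Lemma \ref{lemtop}) then yield a finite bubbling set $\Gamma \subset \overline \D$ together with a non-constant finite energy $\tilde J_E$-holomorphic limit $\tilde C=(c,C):(\C\setminus \Gamma)\to \R\times W_E$ with $\tilde w_{\tau_n,E}\to \tilde C$ in $C^\infty_{\rm loc}$. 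The linking restriction Theorem \ref{teolink}, the action budget $\int C^*d\lambda_E \le T_{3,E}$, and Proposition \ref{propintersect} (whose conclusions would produce self-intersections of $C$ incompatible with embeddedness of $w_{\tau_n,E}$ whenever $\#\Gamma\ge 2$ or the asymptotic limit at a negative puncture is a multiply covered orbit) force $\#\Gamma=1$ and the negative asymptotic limit to be $P_{2,E}$; moreover $C(\C\setminus\Gamma) \subset \dot S_E \setminus P_{3,E}$ since $w_{\tau_n,E}(\C)\subset \dot S_E$ by Proposition \ref{propplanes}. The uniqueness result Proposition \ref{propunique} then identifies $\tilde C$ with $\tilde v_E$, yielding (i). Item (iii) follows from an Arzel\`a--Ascoli argument along large circles as in Lemma \ref{lemvizi} and Claim 0 in the proof of Proposition \ref{propplanes}.

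Next I would perform soft rescaling at the unique bubbling point $z^*\in \overline\D$, exactly as in Claim I of Section \ref{sec_5-iii)}: choose $z_n\to z^*$ minimizing $d_{\tau_n,E}$ on a small disk around $z^*$ and $\delta_n \to 0^+$ satisfying the analog of \eqref{soft2}, and set $\tilde P_n(z)=(d_{\tau_n,E}(z_n+\delta_n z)+c_n,w_{\tau_n,E}(z_n+\delta_n z))$ with $c_n$ chosen so that $p_n$ attains value zero at a fixed point. By construction $\tilde P_n$ has uniform $C^\infty_{\rm loc}$-bounds, and its limit $\tilde P$ is a non-constant finite energy $\tilde J_E$-holomorphic plane whose positive asymptotic limit, by the same action/linking constraints, must be $P_{2,E}$. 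By the uniqueness of fast finite energy planes asymptotic to the hyperbolic orbit $P_{2,E}$ (Proposition \ref{propunique}), $\tilde P$ coincides, up to reparametrization and $\R$-translation, with either $\tilde u_{1,E}$ or $\tilde u_{2,E}$. The neck estimate Claim II of Section \ref{sec_5-iii)}, applied to the annuli $\{z_n+\rho e^{2\pi i t}: R_1\delta_n \le \rho \le \epsilon_1\}$, provides (iv).

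The hard part is distinguishing $\tilde u_{1,E}$ from $\tilde u_{2,E}$, since the uniqueness result alone cannot do this. Here I would invoke the normalization \eqref{eq_normaliza}: because $\tau$ strictly increases along $X_{\lambda_E}$ and the Hamiltonian (equivalently Reeb) vector field enters $\dot S_E$ across $U_{1,E}$ but exits across $U_{2,E}$, the direction $\tau_n \to 0^+$ forces the disks $w_{\tau_n,E}(\C)$ to accumulate on $U_{1,E}$ from within $\dot S_E$. Making this rigorous reduces to comparing the co-orientation of the rescaled loops with the co-orientation of $U_{1,E}$ versus $U_{2,E}$ in $\partial S_E$, and this is where I expect the bulk of the technical work. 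Once $\tilde P=\tilde u_{1,E}$ is established, the ``in particular'' statement follows by covering any prescribed neighborhood $\V_1$ of $u_{1,E}(\C)\cup v_E(\R\times S^1)\cup P_{2,E}\cup P_{3,E}$ by three regions: a macroscopic one close to $\tilde v_E$ (by (i) and (iii) together with the asymptotics \eqref{eqasymptotics}), a microscopic one close to $\tilde u_{1,E}$ (by (ii) and the exponential decay of $\tilde u_{1,E}$ near $P_{2,E}$), and a neck region close to $P_{2,E}$ (by (iv)). The case $\tau_n\to 1^-$ is symmetric with $\tilde u_{1,E}$ replaced by $\tilde u_{2,E}$, and the case $S_E'$ is verbatim.
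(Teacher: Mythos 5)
Your proposal is correct and follows essentially the same approach as the paper's own proof, which is itself only a sketch that defers to the arguments of Proposition \ref{propplanes}: reparametrize as in \eqref{reparamwnn2}, run the Ekeland--Hofer bubbling and soft-rescaling analysis, and invoke the linking/action constraints (Theorem \ref{teolink}, Proposition \ref{propintersect}) together with the uniqueness results (Proposition \ref{propunique}) to identify the limits. Your flagging of the identification of the bubble with $\tilde u_{1,E}$ rather than $\tilde u_{2,E}$ as the delicate step is consistent with the paper, which likewise dispatches it briefly via the normalization \eqref{eq_normaliza} without further elaboration.
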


\begin{proof}The proof follows the same lines of the proof of Proposition \ref{propplanes}, so we only sketch the main steps. Now the almost complex structure $\tilde J_E=(\lambda_E,J_E)$ is fixed while there we had $\tilde J_n \to \tilde J_E$ in $C^\infty$ as $n\to \infty$. However, this plays no important role in the analysis.

Let $N^\delta_E, E,\delta>0$ small, be the embedded $2$-sphere defined in local coordinates $(q_1,q_2,p_1,p_2)$ by $N^\delta_E= \{q_1+p_1 = \delta\} \cap K^{-1}(E)$.

If there exists $\delta_0>0$ such that $w_{\tau_n,E}(\C) \cap N^{\delta_0}_E=\emptyset$ for all large $n$, then we can suitably reparametrize $\tilde w_{\tau_n,E}$ and translate it in the $\R$-direction as in \eqref{reparamwnn1} so that one has $C^\infty_{\rm loc}$-bounds and $\tilde w_{\tau_n,E} \to \tilde w \in \M_{3,E}$ in $C^\infty_{\rm loc}$ as $n \to \infty$, a contradiction with the maximality of the family $\tilde w_{\tau,E}, \tau \in (0,1)$.

 We have concluded that $w_{\tau_n,E}(\C)$ intersects $N^\delta_E$ for $\delta>0$ arbitrarily small. In this case, we suitably rescale $\tilde w_{\tau_n,E}$ and translate it in the $\R$-direction as in \eqref{reparamwnn2}. Following all the steps from this point in the proof of Proposition \ref{propplanes}, we get all the other desired conclusions.  In the case at hand,  since the family $\tilde w_{\tau,E}$ is not compact, we show that there exists precisely one bubbling-off point $z^* \in \D$ and $\tilde w_{\tau_n,E}$ converges in $C^\infty_{\rm loc}(\C \setminus \{z^*\}, \R \times W_E)$ as $n \to \infty$ to a cylinder asymptotic to $P_{3,E}$ at $+\infty$ and to $P_{2,E}$ at $z^*$. From uniqueness of such cylinders, see Appendix \ref{appunique}, this must coincide with $\tilde v_E$ up to reparametrization and $\R$-translation, proving (i). Performing a soft rescaling of $\tilde w_{\tau_n,E}$ near $z^*$ we find  sequences $z_n \to z^*$, $\delta_n \to 0^+$ and $c_n\in\R$ so that $\tilde P_n$, defined as in (ii), converges to a plane asymptotic to $P_{2,E}$. Again, from the uniqueness of such planes, it must coincide with  $\tilde u_{1,E}$, after  a suitable reparametrization and $\R$-translation. This follows from the normalization \eqref{eq_normaliza}, proving (ii). From a delicate analysis near $P_{3,E}$ and $P_{2,E}$, similar to Claims 0 and II in the proof of Proposition \ref{propplanes}, we prove (iii) and (iv). If $\tau_n \to 1^-$, then $\tilde u_{1,E}$ is necessarily replaced with $\tilde u_{2,E}$.
\end{proof}

This finishes the proof of Proposition \ref{prop_step5iiib}.

\begin{figure}[h!!]
  \centering
  \includegraphics[width=0.35\textwidth]{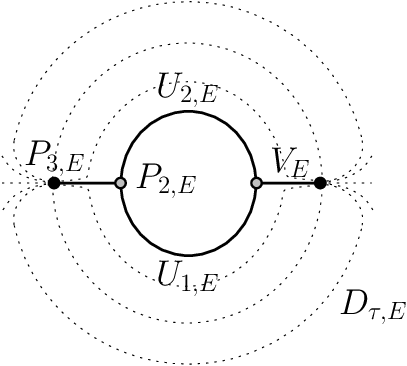}
  \caption{The one parameter family of planes $D_{\tau,E}=w_{\tau,E}(\C)$, $\tau \in (0,1),$ with binding orbit $P_{3,E}$, finishes the construction of the $2-3$ foliation $\F_E$ adapted to $S_E$.}
  \label{fig_completa}
\hfill
\end{figure}

\appendix

\section{Basics on pseudo-holomorphic curves in symplectizations}\label{ap_basics} Let $M$ be a $3$-manifold. A $1$-form $\lambda$ on $M$ is called a
contact form if $\lambda \wedge d\lambda$ never vanishes. The contact structure associated to $\lambda$ is the non-integrable tangent plane distribution $\xi = \ker \lambda\subset TM$. The Reeb vector field $X_\lambda$ associated to $\lambda$ is determined by $i_{X_\lambda}\lambda=1$ and $i_{X_\lambda} d \lambda =0$. The pair $(M,\xi)$ is called a contact manifold and $\lambda$ is a defining contact form for $\xi$. In this case, $\xi$ is called co-oriented by $\lambda$.

Let $\{\varphi_t, t\in \R \}$ be the flow of $\dot w = X_{\lambda}
\circ w$, called the Reeb flow of $\lambda$, assumed to be complete. Since the Lie
derivative $\LL_{X_\lambda} \lambda =i_{X_\lambda} d\lambda + d
i_{X_\lambda} \lambda = 0$, $\varphi_t$ preserves $\lambda$ and, in
particular, preserves the contact structure $\xi,$ i.e.,
$\varphi_{t}^*\xi = \xi, \forall t$. A periodic orbit of the
Reeb flow is a pair $P=(w,T)$, where $w:\R \to M$  satisfies
$w(t)=\varphi_t(w(0))$ and $w(t+T) = w(t),\forall t.$ The period
$T$ of $P$ is assumed to be positive and it coincides with the action of $\lambda$ along $w$, i.e., $$T= \int_{[0,T]} w^*\lambda=: \int_P \lambda.$$ Sometimes we write $w(\R)=P
\subset M$ as an abuse of notation. Given $c\in \R$, we identify
$(w,T)=P\sim P_c=(w_c, T)$, where $w_c(\cdot ) = w(\cdot+c)$, and
denote by $\P(\lambda)$ the set of periodic orbits of
$X_{\lambda}$ modulo this identification. We say that $P=(w,T)$ is
simple if $T$ is the least positive period of $w$. We say that $P$ is unknotted if it is simple and is the boundary of
an embedded disk $D\subset M$. We say that $P$ is nondegenerate
if the linear map $D\varphi_T(w(0))|_{\xi_{w(0)}}: \xi_{w(0)} \to \xi_{w(0)}$
does not have $1$ as an eigenvalue. Otherwise $P$ is said to be
degenerate. The contact form $\lambda$ is called nondegenerate if all its
periodic orbits are nondegenerate.

An overtwisted disk for $\xi$ is an embedded disk $D\subset M$ so that $T_z \partial D \subset \xi_z$ and $T_z D \neq \xi_z, \forall z \in \partial D$. The contact structure $\xi = \ker \lambda$ is called overtwisted if it admits an overtwisted disk. Otherwise, it is called tight.

Given a trivial knot $K\subset M$  transverse to $\xi$, we define its self-linking number $\sl(K)$ in the following way: let $D \subset M$ be an embedded disk satisfying $\partial D = K$ and let $Z$ be a non-vanishing section of $\xi|_D$. Use $Z$ to slightly perturb $K$ to a trivial knot $K'$, disjoint from $K$ and also transverse to $\xi$. We can assume $K'$ is transverse to $D$ as well. We define  $$\sl(K)= \mbox{  algebraic intersection number } K' \cdot D\in \Z.$$ Here $K$ is oriented by $\lambda$, $D$ has the orientation induced by $K$ and $M$ is oriented by $\lambda \wedge d\lambda$. The knot $K'$ inherits the orientation of $K$. This definition does not depend on the choices of $D$ and $Z$ if, for instance, $\xi$ is trivial.

We consider the particular case where
$M=S^3:=\{w=(x_1,x_2,y_1,y_2) \in \R^4: |w|=1\}$, equipped with
a contact form of the type
\begin{equation}\label{contact} \lambda = f \lambda_0|_{S^3},\end{equation} where
$$\lambda_0 =\frac{1}{2} \sum_{i=1}^2 y_i dx_i -x_i dy_i,$$ and
$f: S^3 \to  (0,\infty)$ is a smooth function. Notice that
$$d\lambda_0 = \omega_0:=\sum_{i=1}^2 dy_i \wedge dx_i$$ is the canonical
symplectic form on $\R^4$. By a Theorem of Bennequin, see \cite[Theorem 1]{ben}, the
contact structure $$\xi_0 := \ker \lambda_0|_{S^3} = \ker
\lambda$$ is tight and therefore $\lambda$ is also called tight.
By a classification theorem of Y. Eliashberg, see \cite[Theorem 2.1.1]{eli}, any tight
contact form on a manifold diffeomorphic to $S^3$ is of the form \eqref{contact} up to
diffeomorphism.

Let $S_f = \{ \sqrt{f(w)}w: w\in S^3 \}$. Then $S_f$ is a
star-shaped hypersurface with respect to $0 \in \R^4$ and
$\lambda_f:= \lambda_0|_{S_f}$ is a tight contact form on $S_f$. Its
Reeb vector field $X_{\lambda_f}$ satisfies $\psi_{f*} X_\lambda =
X_{\lambda_f},$ where $\psi_f:S^3 \to S_f$ is the diffeomorphism
given by $\psi_f(w)=\sqrt{f(w)}w, w\in S^3$.

Let $H:\R^4 \to \R$
be a Hamiltonian function such that $S_f = H^{-1}(0),$ where $0$
is a regular value of $H$. Then the Hamiltonian vector field
$X_H$, defined by $i_{X_H} \omega_0 = -dH$, is parallel to
$X_{\lambda_f}$ when restricted to $S_f$, since both vector fields
are sections of the line bundle $\ker \omega_0|_{S_f} \to S_f$.

Now let $(M,\xi=\ker \lambda)$ be a co-oriented closed contact manifold of dimension $3$. The contact structure $\xi$ is a symplectic vector
bundle when equipped with $d\lambda|_{\xi}$. A compatible
complex structure on $\xi$ is a smooth bundle map $J: \xi \to
\xi,J^2 = -\text{Id},$ over the identity, such that  $d\lambda
(\cdot, J \cdot)$ is a positive definite inner product on $\xi$.
The space of such $J's$ is non-empty and contractible in the
$C^\infty$-topology and will be denoted by $\J(\lambda)$.

Let us identify $S^1 = \R / \Z$. Let  $J\in \J(\lambda)$ be a
compatible complex structure on $\xi$.  Let $P=(w,T) \in \P(\lambda)$ be
a periodic orbit and let $w_T:S^1 \to M$ be given by $w_T(t) =
w(Tt)$ $\forall t$. Let $\nabla$ be a symmetric connection on
$M$ and consider the operator $A_P:W^{1,2}(S^1, w_T^* \xi)
\subset L^2(S^1, w_T^* \xi) \to L^2(S^1, w_T^* \xi)$
$$A_P \cdot \eta (t)= J(w_T(t))\cdot (- \nabla_t \eta + T
\nabla_{\eta} X_{\lambda})(t),$$ where $\eta \in W^{1,2}(S^1,
w_T^* \xi)$ is a section of the contact structure along $P$.
Here $\nabla_t$ stands for the covariant derivative in the
direction $\dot w_T=T X_{\lambda}$. One easily checks that $A_P$
does not depend on $\nabla$. Consider the following $L^2$-inner
product \begin{equation} \label{innerp}\left< \eta_1, \eta_2
\right> = \int_{S^1} d\lambda_{w_T(t)} \left(\eta_1(t), J_{w_T(t)}
\cdot \eta_2(t)\right) dt, \,\,\eta_1,\eta_2 \in L^2(S^1, w_T^*
\xi).\end{equation}

We assume that $\xi$ is trivial and fix a global symplectic trivialization of the contact structure
\begin{equation}\label{eq_trivpsi} \Psi: \xi=\ker \lambda \to M \times \R^2.\end{equation}

\begin{theo}[Hofer-Wysocki-Zehnder \cite{props2}]\label{op_ass}
With respect to \eqref{innerp}, $A_P$ is an unbounded self-adjoint closed operator. Its spectrum
 $\sigma(A_P)$ is discrete,
consists of real eigenvalues accumulating only at  $\pm \infty$.
Moreover,
\begin{itemize} \item[(i)] Given $\lambda \in \sigma(A_P)$ and a non-vanishing $v$
satisfying $A_P \cdot v = \lambda v$, there is a well-defined
winding number $\wind(\lambda,v,\Psi)$ with respect to $\Psi$.
This number does not depend on $v$ or on $\Psi$, so we simply
denote it by $\wind(\lambda)$. \item[(ii)] $\lambda_1 \leq
\lambda_2 \Rightarrow \wind(\lambda_1) \leq \wind(\lambda_2).$
\item[(iii)] Given $k \in Z$, there exist precisely two
eigenvalues $\lambda_1,\lambda_2\in \sigma(A_P)$, counting
multiplicities, such that $\wind(\lambda_1) = \wind(\lambda_2) =
k$. \item[(iv)] If $\lambda_1 < \lambda_2 \in \sigma(A_P)$ satisfy
$\wind(\lambda_1) = \wind(\lambda_2)$ and $v_1,v_2$ are
non-vanishing $\lambda_1,\lambda_2$-eigensections of $A_P$,
respectively, then $v_1$ and $v_2$ are pointwise linearly
independent. \item[(iv)] $0 \in \sigma(A_P) \Leftrightarrow P$ is
degenerate.\end{itemize}
\end{theo}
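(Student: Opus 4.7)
\medskip

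\noindent\textbf{Proof proposal.} The plan is to reduce the geometric operator $A_P$ to a standard first-order self-adjoint operator on the trivial $\R^2$-bundle over $S^1$, then exploit the scalar ODE for the angular coordinate of an eigensection to extract all the spectral information. First, using the global symplectic trivialization $\Psi$ of $\xi$ from \eqref{eq_trivpsi} along the loop $w_T: S^1 \to M$, and choosing the Levi-Civita connection for the metric $d\lambda(\cdot, J\cdot)$, the operator $A_P$ becomes the operator
\[
L_S \eta = -J_0 \dot \eta - S(t)\eta
\]
on $L^2(S^1, \R^2)$, where $J_0$ is the standard complex structure, $S(t)$ is a loop of symmetric $2\times 2$ matrices determined by the linearized Reeb flow, and the inner product \eqref{innerp} becomes the standard $L^2$-inner product. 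Symmetry of $L_S$ is a direct integration by parts using $J_0^t = -J_0$. Since $L_S$ is a first-order elliptic operator on a compact one-manifold with smooth coefficients, it has dense domain $W^{1,2}(S^1,\R^2)$, is essentially self-adjoint, and has compact resolvent by Rellich's theorem; this yields the claimed discrete real spectrum accumulating only at $\pm \infty$.

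For item~(i), an eigensection $\eta$ of $L_S$ with eigenvalue $\lambda$ solves the linear ODE $\dot\eta = J_0(\lambda I + S(t))\eta$, so by uniqueness $\eta$ vanishes nowhere unless $\eta\equiv 0$. Identifying $\R^2\cong\C$ with $J_0=i$ and writing $\eta = r e^{i\theta}$, a direct computation gives the fundamental identity
\begin{equation}\label{eq:thetadot}
\dot\theta(t) = \lambda + \frac{\langle S(t)\eta(t),\eta(t)\rangle}{|\eta(t)|^2}.
\end{equation}
The winding number $\wind(\lambda,\eta,\Psi) = \theta(1)-\theta(0) \in 2\pi\Z$ is then well-defined. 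To see it is independent of the non-vanishing $\lambda$-eigensection $\eta$, observe that if $\eta_1,\eta_2$ are two such, either the eigenspace is one-dimensional (then $\eta_2 = c\eta_1$ for $c\in\R$ and the windings agree), or it is two-dimensional, in which case every solution of the ODE is $1$-periodic and one checks directly from \eqref{eq:thetadot} that $\theta$ has a common average increment, hence a common winding since $\wind$ is a homotopy invariant along a continuous family of non-vanishing solutions. Trivialization independence uses that $M\simeq S^3$ is simply connected with $\xi$ trivial, so any two global symplectic trivializations of $\xi$ differ by a map $M\to\mathrm{Sp}(2,\R)$ that is null-homotopic (since $[S^3,\mathrm{Sp}(2,\R)]=[S^3,S^1]=H^1(S^3,\Z)=0$), hence contributes zero winding along $P$.

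For monotonicity (ii) and the counting (iii), the plan is to track the rotation of the fundamental solution as $\lambda$ varies. For each $\lambda\in\R$ let $\Phi_\lambda(t)\in\mathrm{Sp}(2,\R)$ solve $\dot\Phi_\lambda = J_0(\lambda I + S(t))\Phi_\lambda$ with $\Phi_\lambda(0)=I$; then $\lambda\in\sigma(A_P)$ iff $1$ is an eigenvalue of $\Phi_\lambda(1)$. Fixing any $v_0\in\R^2\setminus\{0\}$ and writing $\Phi_\lambda(t)v_0 = r_\lambda(t) e^{i\theta_\lambda(t)}$, the formula \eqref{eq:thetadot} shows $\partial_\lambda \dot\theta_\lambda\equiv 1$, so the total rotation $\theta_\lambda(1)-\theta_\lambda(0)$ is a strictly increasing, real-analytic, unbounded function of $\lambda$. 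Monotonicity of $\wind$ on $\sigma(A_P)$ follows immediately from this and a continuity-of-the-eigensection argument along any path of eigenvalues, giving (ii). For (iii), the key observation is that the rotation $\theta_\lambda(1)-\theta_\lambda(0)$ passes through each value in $2\pi\Z$ exactly at those $\lambda$ where $\Phi_\lambda(1)$ has an eigenvalue on the unit circle, and a standard degree/index argument at each such crossing (bifurcation at $\pm 1$ in $\mathrm{Sp}(2,\R)$) shows that a full increment of $2\pi$ in the rotation is accumulated over exactly two eigenvalues (counted with multiplicity). This counting step, coupled with the monotonicity, delivers (iii); I expect this to be the main obstacle since it requires a careful Maslov-type analysis of the path $\Phi_\lambda(1)$ through the set $\{M\in\mathrm{Sp}(2,\R):\det(M-I)=0\}$, but the computation is standard once framed via \eqref{eq:thetadot}.

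Item (iv), pointwise linear independence, will follow from a Sturm-type argument: if $\lambda_1<\lambda_2$ share winding and $v_1(t_0)\parallel v_2(t_0)$ at some $t_0$, then subtracting an appropriate multiple produces a nontrivial section vanishing at $t_0$, contradicting the no-vanishing property of ODE solutions; alternatively, comparing the angular equations \eqref{eq:thetadot} for $v_1$ and $v_2$ with a common initial angle forces $\theta_2(1)-\theta_2(0)>\theta_1(1)-\theta_1(0)$ strictly by the inequality $\lambda_2>\lambda_1$, contradicting equal windings. Finally, the equivalence $0\in\sigma(A_P)\Leftrightarrow P$ degenerate is immediate: $A_P\eta=0$ translates, via the identification $\xi|_P\simeq w_T^*\xi$ and the relation between $\nabla X_\lambda$ and the linearized Reeb flow, into $\eta$ being a nowhere-vanishing $1$-periodic section in the kernel of $D\varphi_T(w(0))|_\xi-\mathrm{Id}$, i.e., $1$ being an eigenvalue of the linearized Poincar\'e map.
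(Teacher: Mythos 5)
The theorem you are proving is quoted in the paper without proof; it is a summary of results of Hofer--Wysocki--Zehnder (\cite{props2}), so there is no "paper proof" to compare against. Evaluating the proposal on its own terms, the overall strategy (reduce to $L_S\eta=-J_0\dot\eta-S\eta$, use the angular ODE \eqref{eq:thetadot}, exploit monodromy/Sturm comparison) is the right one and matches the classical argument, and the self-adjointness, spectral discreteness, item~(i), and the final degeneracy equivalence are handled adequately. But there are genuine gaps in the remaining items.

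For item~(iv) (pointwise independence), your first argument fails outright: since $v_1$ and $v_2$ satisfy the ODEs $\dot v_i = J_0(\lambda_i I+S)v_i$ with \emph{different} $\lambda_i$, the combination $v_2-cv_1$ does not solve any linear first-order system, so nothing prevents it from vanishing at a single point. Your second argument is on the right track but skips the essential step: the inequality $\lambda_2>\lambda_1$ does \emph{not} give $\dot\theta_2>\dot\theta_1$ pointwise, because the terms $\langle S v_i,v_i\rangle/|v_i|^2$ in \eqref{eq:thetadot} differ. What makes the comparison work is that at any moment $t$ where $v_1(t)\parallel v_2(t)$ (equivalently $\Delta:=\theta_2-\theta_1\in\pi\Z$) those $S$-terms cancel, so $\dot\Delta(t)=\lambda_2-\lambda_1>0$ there; a Sturm-type restart argument at each such crossing then shows $\Delta$ can never return to its initial value, giving $\Delta(1)-\Delta(0)>0$ and hence strictly larger winding. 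This cleaned-up comparison in fact delivers both (iv) and (ii) directly, which is more robust than your proposed route for (ii): there you argue from $\partial_\lambda\dot\theta_\lambda\equiv 1$ with a \emph{fixed} initial vector $v_0$, but the eigensection's initial direction varies with $\lambda$, and you give no link between the rotation of $\Phi_\lambda(t)v_0$ and the winding of the (different) eigensection.

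The most serious gap is item~(iii). You frame the counting correctly as a Maslov-type analysis of $\lambda\mapsto\Phi_\lambda(1)$ crossing $\{\det(M-I)=0\}\subset\mathrm{Sp}(2,\R)$, and you acknowledge it is the main obstacle, but you do not carry it out; "a standard degree/index argument... shows that a full increment of $2\pi$ is accumulated over exactly two eigenvalues" is a promissory note, not a proof. What is actually needed is, for instance, the observation that the total rotation $\theta_\lambda^{v_0}(1)-\theta_\lambda^{v_0}(0)$ over all $v_0\in S^1$ sweeps a closed interval of length strictly less than $\pi$, that this interval moves strictly monotonically and properly through $\R$ as $\lambda$ ranges over $\R$, and that $\lambda$ is an eigenvalue precisely when the interval contains a point of $2\pi\Z$; combining this with the regularity of the crossings and a multiplicity count at nongeneric crossings is what actually produces "exactly two per winding $k$, with multiplicity." As written, (iii) is not established, and since (iii) underlies the very definition of $\wind^{<0}(A_P)$ and $\wind^{\geq 0}(A_P)$ and hence of the Conley--Zehnder index used throughout the paper, this gap is not cosmetic.
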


Let $$\begin{aligned} \wind^{< 0}(A_P) := & \max\{\wind(\lambda),
\lambda \in \sigma(A_P) \cap (-\infty,0)\},\\  \wind^{\geq 0}(A_P)
:= & \min\{\wind(\lambda), \lambda \in \sigma(A_P) \cap
[0,\infty)\}, \\ p := & \wind^{\geq 0}(A_P)-\wind^{< 0}(A_P).
\end{aligned}$$ One can check that $p\in \{0,1\}$.

We define the Conley-Zehnder index of $P$ by
\begin{equation}\label{CZ} CZ(P)=2 \wind^{< 0}(A_P) + p= \wind^{< 0}(A_P)+ \wind^{ \geq 0} (A_P).\end{equation} The Conley-Zehnder index $CZ(P)$ can also be defined in terms of the linearized dynamics over $P$. This is discussed in Section \ref{sec_step2}.

Let $P=(w,T)\in \P(\lambda)$ be a simple periodic orbit and let
$w_T:S^1 \to M$ be given by $w_T(t) =
w(Tt)$ $\forall t$. Then there exist $\U\subset M$ and
$\U_0 \subset S^1 \times \R^2$ neighborhoods of $P$ and  $S^1
\times \{0\}$, respectively, and a diffeomorphism $\varphi: \U \to
\U_0$ satisfying:
\begin{itemize} \item $\varphi(w_T(t)) = (t,0,0) \in S^1 \times \{0\}$. \item There exists $g\in C^\infty(\U_0, (0,\infty))$
so that $\lambda = \varphi^* (g\cdot (d\vartheta + x dy)),$ where
$\vartheta$ is the coordinate on $S^1$ and $(x,y)$ are coordinates
on $\R^2$. Moreover $$g(\vartheta,0,0) = T \mbox{ and }
dg(\vartheta,0,0)=0, \forall \vartheta \in S^1.$$ \item In these
coordinates $\xi = \text{span} \{ v_1:=\partial_x ,
v_2:=\partial_y - x\partial_\vartheta\}$. Given $J\in \J(\lambda)$
we can choose $\varphi$ so that $(\varphi_* J)|_{S^1 \times \{0\}}
v_1= v_2$. Using the symplectic trivialization induced by
$\left\{\frac{v_1}{\sqrt{g}},\frac{v_2}{\sqrt{g}}\right\}$, the asymptotic operator $A_P$ gets the local form
\begin{equation}\label{operadorAP} A_P \cdot \eta (t) = -J_0\dot \eta (t)- S(t)\eta(t),t\in S^1,\end{equation} where
$$J_0 = \left( \begin{array}{cc}0 & -1 \\ 1 & 0 \end{array} \right)$$ and
$S(t), t\in S^1,$ is a smooth loop of symmetric matrices which is
related to the linearized dynamics over $P$ restricted to the
contact structure.
\end{itemize}

The coordinates $(\vartheta,x,y)$ near $P$ described above are referred as Martinet's coordinates and the diffeomorphism $\varphi:\U \to \U_0$ is called Martinet's tube.

The symplectization of  $(M, \lambda)$ is the pair
$(\R \times M, d(e^a \lambda)),$ where $a$ is the
$\R$-coordinate. Given $J \in \J(\lambda)$, we have the almost
complex structure $\widetilde J: T(\R \times M) \to T(\R \times
M), \widetilde J^2 = -\text{Id}$, determined by
$$\widetilde J|_{\xi} = J \mbox { and } \widetilde J \partial a=
X_\lambda.$$ Equivalently,  \begin{equation}\label{Jtil}\widetilde J_{(a,w)}(h,k) =
(-\lambda_w(k),J_w \cdot \pi_w (k) + h X_\lambda(w)),\end{equation} where
$(h,k)\in T_{(a,w)}(\R \times M)$ and $\pi:TM \to \xi$ is the
projection along $X_\lambda$, i.e., $\pi_w(k) = k -
\lambda_w(k) X_\lambda(w), \forall k\in T_w M$ and $w\in M$.
Notice that $\widetilde J$ is $\R$-invariant, i.e., $T_c^*
\widetilde J = \widetilde J$ for all $c\in \R,$ where \begin{equation}\label{Tc}T_c(a,w) =(a+c,w), \forall  (a,w)\in \R \times M.\end{equation}

Let $(\Sigma,j)$ be a compact and connected Riemann surface and $\Gamma \subset \Sigma\setminus \partial \Sigma$
be a finite set. Let $\dot \Sigma = \Sigma \setminus \Gamma.$ We
consider finite energy pseudo-holomorphic curves in the symplectization $\R \times M$, i.e., maps
$\tilde u=(a,u):\dot \Sigma  \to \R \times M$ satisfying
$$d \tilde u \circ j = \widetilde J(\tilde u) \circ d \tilde u,$$ and
having finite energy  $0< E(\tilde u) < \infty,$ where
\begin{equation}\label{energiafinita} E(\tilde u) = \sup_{\psi
\in \Lambda} \int_{\dot \Sigma} \tilde u^* d\lambda_\psi,
\end{equation} $\Lambda =  \{ \psi \in C^\infty(\R,[0,1]): \psi' \geq
0\}$ and $\lambda_\psi(a,w) =  \psi(a)\lambda(w), \forall (a,w)\in
\R \times M$.  The curve $\tilde u$ is also called a finite energy $\widetilde J$-holomorphic curve.

Given $z_0\in \dot \Sigma$, there exists a
neighborhood $U_0\subset \dot \Sigma$ of $z_0$ and a
bi-holomorphism $\phi_{z_0}:(\D,i) \to (U_0,j),
\phi_{z_0}(0)=z_0,$ such that $\phi^*j = i$, where $\D \subset \C$
is the closed unit disk centered at $0$ and $i$ is the canonical
complex structure on $\C$. In coordinates $s +it\in\D$, the map
$\tilde u \circ \phi_{z_0}(s,t) = (a(s,t),u(s,t))$ satisfies
$$\begin{aligned} a_s(s,t)
- \lambda (u_t(s,t)) = & 0 \\ a_t(s,t)+ \lambda(u_s(s,t)) = & 0 \\
\pi u_s(s,t) + J(u(s,t))\pi u_t(s,t) = & 0.
\end{aligned}
$$

The points in $\Gamma$ are called punctures. Given $z_0 \in
\Gamma$, let $\phi_{z_0}:\D \to U_0$ be as above. There are
positive exponential coordinates $[0,\infty) \times \R / \Z \ni
(s,t) \mapsto \phi_{z_0}(e^{-(s+it)})$  near $z_0$ and we may
consider the map $(s,t) \mapsto \tilde u \circ
\phi_{z_0}(e^{-(s+it)})$ also denoted by $\tilde
u(s,t)=(a(s,t),u(s,t)).$ Negative exponential coordinates near
$z_0$ are defined similarly with $(s,t) \in (-\infty,0] \times \R
/ \Z$ and we obtain a map $(s,t) \mapsto \tilde u \circ
\phi_{z_0}(e^{s+it})$ also denoted by $\tilde
u(s,t)=(a(s,t),u(s,t))$.

We say that a puncture $z_0 \in \Gamma$ is removable if $\tilde
u=(a,u)$ can be smoothly extended over $z_0$, otherwise we say it is
non-removable. We assume from now on that all punctures are non-removable. It is well known that a finite energy pseudo-holomorphic curve is
asymptotic to periodic orbits at non-removable punctures.

\begin{theo}[Hofer \cite{93}]\label{Ho93} Let $z_0\in \Gamma$ be a non-removable puncture and $(s,t)\in
[0,\infty) \times S^1$ be positive exponential coordinates near
$z_0$. Given a sequence $s_k \to \infty$, there exists a
subsequence still denoted $s_k$ and a periodic orbit $P= (w,T),$
such that $u(s_k, \cdot) \to w(\epsilon T\cdot)$ in $C^\infty(S^1,
M)$ as $s_k \to \infty$. Either $\epsilon=+1$ or $\epsilon=-1$
and this sign  does not depend on the sequence $s_k$.
\end{theo}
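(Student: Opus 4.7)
\medskip

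\textbf{Proof proposal for Theorem \ref{Ho93}.} The plan is to follow Hofer's original strategy from \cite{93}: first establish uniform $C^0$-bounds on $\nabla\tilde u$ along the cylindrical end $[0,\infty)\times S^1$, then use bounded energy to force the loops $t\mapsto u(s_k,t)$ to subconverge to a reparametrized periodic orbit. Concretely, I would begin by proving that $\sup_{(s,t)\in[S_0,\infty)\times S^1}|\nabla \tilde u(s,t)|_{\tilde J}<\infty$ for some $S_0\geq 0$. The argument is by contradiction: if there were points $z_n=(s_n,t_n)$ with $s_n\to+\infty$ and $|\nabla \tilde u(z_n)|_{\tilde J}\to\infty$, apply Ekeland--Hofer's Lemma \ref{lemtop} with $f=|\nabla \tilde u|_{\tilde J}$ and a sequence $\epsilon_n\to 0^+$ with $\epsilon_n|\nabla\tilde u(z_n)|_{\tilde J}\to\infty$ (exactly as in the proof of Proposition \ref{propbounds}). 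Rescaling via $\tilde v_n(z):=(a(z_n+z/R_n)-a(z_n),u(z_n+z/R_n))$ with $R_n=|\nabla\tilde u(z_n)|_{\tilde J}$ produces $\tilde J$-holomorphic maps converging in $C^\infty_{\rm loc}$ to a non-constant finite energy plane $\tilde v:\C\to\R\times M$ with $E(\tilde v)\leq E(\tilde u)$ and, more importantly, vanishing $d\lambda$-area over $\C$ since the small balls $B_{\epsilon_n}(z_n)$ eventually escape every fixed compact in the cylindrical end. This forces $\tilde v$ to be a cover of a cylinder over a periodic orbit, which is impossible for a plane. Hence $|\nabla\tilde u|_{\tilde J}$ is bounded near the puncture.

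Next I would analyze the mass at the puncture. Finite energy and the $\R$-invariance of the integrand give
\[
m:=\lim_{s\to\infty}\int_{\{s\}\times S^1}u^*\lambda
\]
exists, and non-removability of $z_0$ implies $m\neq 0$ (indeed, $m=0$ together with the gradient bound would give $C^\infty$-bounds allowing removal of the singularity, by standard elliptic regularity as in \cite{props2}). Set $\epsilon={\rm sign}(m)\in\{\pm1\}$ and $T=|m|>0$; note $\epsilon$ is defined intrinsically from $\tilde u$ at $z_0$ and so is independent of the sequence $s_k$. From the Cauchy--Riemann equations, $a_s=\lambda(u_t)$, so a further computation shows $a(s,t)/s\to m$ uniformly in $t$ as $s\to\infty$, confirming that $\epsilon=+1$ corresponds to positive punctures ($a\to+\infty$) and $\epsilon=-1$ to negative ones.

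With the gradient bound in hand, Arzel\`a--Ascoli applied to the loops $\gamma_k:t\mapsto u(s_k,t)$ yields, after passing to a subsequence, a $C^0$-limit loop $\gamma_\infty:S^1\to M$; elliptic bootstrapping upgrades the convergence to $C^\infty$. The key remaining step is to identify $\gamma_\infty$ as a parametrized Reeb orbit. For this I would use that
\[
\int_{[s_k,s_k+1]\times S^1}\tilde u^*d\lambda\;\longrightarrow\;0
\]
as $k\to\infty$, which forces the limit cylinder obtained by the translated maps $\tilde u(s+s_k,t)-(a(s_k,0),0)$ to have vanishing $d\lambda$-energy, hence to be a cylinder over a Reeb orbit by the characterization of finite energy cylinders with zero $d\lambda$-energy (see \cite[Theorem 6.11]{props2}, used repeatedly in Sections \ref{sec_5-i)} and \ref{sec_5-ii)}). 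Combined with $\int_{\{s_k\}\times S^1}u^*\lambda\to m=\epsilon T$, this identifies $\gamma_\infty(t)=w(\epsilon T t)$ for a periodic orbit $P=(w,T)\in\P(\lambda)$.

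\emph{Main obstacle.} The delicate step is the gradient bound at the puncture: one must rule out bubbling-off sequences $z_n$ with $s_n\to\infty$, and the standard contradiction requires showing that the rescaled bubble captures a definite amount of energy in a region where the $d\lambda$-area is actually available, which is exactly where Ekeland--Hofer's Lemma is essential (to replace $z_n$ by a nearby point controlling the gradient on a ball of radius $\epsilon_n$, so that the bubble sees a well-defined non-constant limit). The sign-independence of $\epsilon$ and uniqueness of $T$ follow once $m$ is identified as an intrinsic invariant of the puncture, but proving that the \emph{orbit} $P$ is independent of the subsequence is a separate (and subtler) statement, not asserted here.
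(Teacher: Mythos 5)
The paper does not prove this statement: Theorem \ref{Ho93} is part of the background material in Appendix \ref{ap_basics}, attributed to Hofer \cite{93} and stated without proof, so there is no in-paper argument to compare against. Your sketch is consistent with the standard proof (Hofer's 1993 paper and the subsequent treatment in \cite{props1,props2}), and the four-step structure you lay out (gradient bound by bubbling-off, identification of the mass $m$, Arzel\`a--Ascoli plus bootstrapping, and the zero-$d\lambda$-energy cylinder classification) is the right one.

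Two places deserve sharpening. First, in the bubbling step you assert vanishing $d\lambda$-area of the bubble \emph{because} $B_{\epsilon_n}(z_n)$ escapes every fixed compact; this is not by itself sufficient. What you actually need is the tail estimate $\int_{[R,\infty)\times S^1}u^*d\lambda\to 0$ as $R\to\infty$ (immediate from finiteness of the total $d\lambda$-area and nonnegativity of the integrand), combined with $B_{\epsilon_n}(z_n)\subset[s_n-1,\infty)\times S^1$ for large $n$. Second, ruling out the limiting plane: rather than invoking ``a cover of a cylinder over a periodic orbit, which is impossible for a plane,'' the cleaner and standard statement is that any non-constant finite energy plane must have strictly positive $d\lambda$-area (indeed, bounded below by the minimal period of $\lambda$), so a plane with vanishing $d\lambda$-area is constant, contradicting the normalization $|\nabla\tilde v(0)|=1$. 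This is exactly the dichotomy the paper appeals to elsewhere (cf.\ the proof of Proposition \ref{propbounds} and the use of \cite[Theorem 6.11]{props2}). With those two clarifications the argument is complete, and your closing remark that uniqueness of the asymptotic limit $P$ (not just of $\epsilon$ and $T$) is a separate statement is correct and appropriate, since the theorem as stated allows $\lambda$ to be degenerate.
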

We say that the non-removable puncture $z_0 \in \Gamma$ is
positive or negative if $\epsilon=+1$ or $\epsilon = -1$,
respectively, where $\epsilon$ is as in Theorem \ref{Ho93}. This induces the decomposition $\Gamma = \Gamma_+ \cup \Gamma_-$ according to the signs of the punctures.

One has exponential convergence if the asymptotic orbit in Theorem
\ref{Ho93} is nondegenerate, as stated in the following theorem.

\begin{theo}[Hofer, Wysocki, Zehnder \cite{props1}]\label{asymptotic} Let $z_0\in \Gamma$
be a positive puncture of the finite energy pseudo-holomorphic curve $\tilde u=(a,u)$
and $(s,t)\in [0,+\infty) \times S^1$ be positive exponential coordinates  near
$z_0$. Let $P=(w,T)$ be as in Theorem \ref{Ho93} and $k$ be a
positive integer such that $T=kT_0$, where $T_0$ is the least
positive period of $w$. Let $(\vartheta, x,y)\in \U_0\supset S^1
\times \{0\} \equiv P$ be Martinet's coordinates in a neighborhood $\U\subset
M$ of $P_0=(w,T_0)$ given by Martinet's tube $\varphi:\U \to
\U_0$. Assume $P$ is nondegenerate. Then the map
$\varphi \circ u(s,t)=(\vartheta(s,t),x(s,t),y(s,t))\in \U_0$ is
defined for all large $s$, and either $(x(s,t),y(s,t)) \equiv 0$
or there are constants $s_0,A_\gamma,r_0>0, a_0\in \R,$ such that
\begin{equation}\label{uasymptotics} \begin{aligned}|D^\gamma(a(s,t) - (Ts + a_0))| & \leq  A_\gamma e^{-r_0s},\\
|D^\gamma(\vartheta(s,t) - kt) | & \leq  A_\gamma e^{-r_0s},\\
(x(s,t),y(s,t)) & =  e^{\int_{s_0}^s \mu(r) dr}(e(t)+R(s,t)),\\
|D^\gamma R(s,t)|,|D^\gamma (\mu(s)- \mu)| & \leq  A_\gamma
e^{-r_0s},
\end{aligned}
\end{equation}for all large $s$ and $\gamma \in \N \times \N$.  $\vartheta(s,t)$ is seen as
a map on the universal cover $\R$ of $S^1$. Here $\mu(s) \to \mu <0,  \mu \in \sigma(A_P),$ and $e:S^1 \to \R^2$
corresponds to an  eigensection of $A_P$ associated to $\mu$,
represented in coordinates induced by $\varphi$. A similar statement holds if $z_0 \in \Gamma$ is a negative puncture. In this case, we use negative exponential coordinates near $z_0$, $e^{-r_0s}$ is replaced with $e^{r_0s}$ in \eqref{uasymptotics},  $s \to -\infty$ and the eigenvalue $\mu$ of $A_P$ is positive.
\end{theo}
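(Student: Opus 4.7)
The plan is to proceed in two large stages: first establish \emph{full} (non-subsequential) $C^0$-convergence to the orbit $P$, and then, working in Martinet's tube $\varphi:\U\to\U_0$, derive the exponential decay estimates \eqref{uasymptotics} by a careful spectral analysis of the linearized Cauchy--Riemann equation near $S^1\times\{0\}$.

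First I would use Theorem \ref{Ho93} to extract from every sequence $s_n\to+\infty$ a subsequence along which $u(s_n,\cdot)\to w(T\cdot)$ in $C^\infty(S^1,M)$, with the sign of the puncture fixed. Nondegeneracy of $P$ lets us upgrade this to full convergence: since the Reeb flow has no nearby periodic orbits other than covers of $P_0$ at comparable action, and since $\tilde u$ has finite energy, a standard bubbling-off / compactness argument excludes drifting between different loop reparametrizations. Once full $C^\infty$-convergence of the loops $u(s,\cdot)$ to $w(T\cdot)$ is established, $u(s,t)$ eventually lies in $\U$ and one writes $\varphi\circ u(s,t)=(\vartheta(s,t),x(s,t),y(s,t))$ on $[s_0,\infty)\times S^1$, with $z(s,t):=(x(s,t),y(s,t))\to 0$ and $\vartheta(s,t)-kt$ converging to a constant in $C^\infty(S^1)$.

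Next I would rewrite the Cauchy--Riemann system in these coordinates. Using $\lambda=g(d\vartheta+xdy)$ with $g(\vartheta,0,0)=T_0$ and $dg|_{S^1\times\{0\}}=0$, and a trivialization of $\xi$ along $P$ induced by $\{v_1,v_2\}$ in which $A_P$ takes the form \eqref{operadorAP}, the $(\pi u_s+J(u)\pi u_t=0)$ equation yields
\begin{equation*}
z_s+J_0 z_t+S(t)z=r(s,t),
\end{equation*}
with $\|r(s,\cdot)\|_{C^k}\to 0$ faster than any power of $\|z(s,\cdot)\|_{C^k}$ because the nonlinearity vanishes to higher order along $P$. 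Here $-J_0\partial_t-S(t)$ is precisely $A_P$. Introducing $\xi(s,t)=z(s,t)/\|z(s,\cdot)\|_{L^2}$ and $\mu(s)=\tfrac12 \tfrac{d}{ds}\log\|z(s,\cdot)\|_{L^2}^2=\langle A_P\xi,\xi\rangle+o(1)$, the dichotomy of Hofer--Wysocki--Zehnder asserts that either $z\equiv 0$ for all large $s$, or $\xi(s,\cdot)\to e(\cdot)$ and $\mu(s)\to\mu\in\sigma(A_P)$. The proof of this dichotomy is where the main work lies: one shows a uniform $W^{1,2}(S^1)$-bound on $\xi$, then uses the spectral gap of $A_P$ (which is discrete thanks to self-adjointness) together with an ODE-type argument on the Fourier coefficients of $\xi$ in the eigenbasis of $A_P$ to show that the mass of $\xi$ concentrates onto a single eigenspace. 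The sign of $\mu$ is then forced by integrability: at a positive puncture, $\int_{s_0}^\infty \mu(r)dr=-\infty$ so $\mu<0$; at a negative puncture, the reverse.

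Once the asymptotic eigenpair $(\mu,e)$ has been identified with exponent $\mu$ strictly separated from $0$ by the spectral gap, I would close the argument by bootstrapping to the full estimates \eqref{uasymptotics}. Pick $r_0>0$ with $-\mu>r_0$ and $-\mu-r_0$ avoiding $\sigma(A_P)$; then the remainder $R(s,t):=e^{-\int_{s_0}^s\mu(r)dr}z(s,t)-e(t)$ satisfies a linear-type equation whose associated operator has no eigenvalues in $[-\mu-r_0,-\mu+r_0]$, and a standard weighted Sobolev / semigroup estimate for $A_P$ gives $\|R(s,\cdot)\|_{L^2}\le Ce^{-r_0 s}$. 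Elliptic regularity applied to $(\tilde J$-holomorphic equation)$+$(exponential decay in $L^2$) upgrades this to $C^\infty$-bounds $|D^\gamma R|\le A_\gamma e^{-r_0 s}$ and $|D^\gamma(\mu(s)-\mu)|\le A_\gamma e^{-r_0 s}$. Feeding these back into the equations $a_s=\lambda(u_t)$ and into the formula for $\vartheta_s,\vartheta_t$ yields $|D^\gamma(a(s,t)-Ts-a_0)|\le A_\gamma e^{-r_0 s}$ and $|D^\gamma(\vartheta(s,t)-kt)|\le A_\gamma e^{-r_0 s}$. The main obstacle is the dichotomy step: proving that $\xi(s,\cdot)$ actually concentrates on a single eigenspace of $A_P$ rather than oscillating between several, and that the limit rate $\mu$ is an eigenvalue of the correct sign. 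This is essentially the content of Section 2 and Section 3 of \cite{props1} and requires the nondegeneracy hypothesis in a crucial way. The negative-puncture case is verbatim with $s\to-\infty$ and all exponents of opposite sign.
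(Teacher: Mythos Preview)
The paper does not supply its own proof of this theorem: it is quoted as a known result of Hofer, Wysocki and Zehnder (hence the attribution in the theorem heading), imported from \cite{props1} as background material in Appendix~\ref{ap_basics}. Your outline is a faithful sketch of the original argument in \cite{props1} --- the reduction to Martinet's tube, the linearized equation $z_s+J_0z_t+S(t)z=r(s,t)$ with $A_P=-J_0\partial_t-S(t)$, the dichotomy on $\xi=z/\|z\|$ and $\mu(s)$, and the bootstrapping to \eqref{uasymptotics} --- so there is nothing to compare against in the present paper beyond noting that you are reproducing the cited source rather than an argument of the authors.
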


The periodic orbit $P=(w,T)$ given in Theorem \ref{asymptotic} is called the asymptotic limit of $\tilde u$ at the puncture $z_0 \in \Gamma$. In a more general situation as in Theorem \ref{Ho93}, we say that the periodic orbit $P=(w,T)$ is an asymptotic limit of $\tilde u$ at $z_0$.

In some cases, the asymptotic behavior of $\tilde u$ near $z_0$ might be as in Theorem \ref{asymptotic}, even if $P$ is degenerate. In such cases, the eigenvalue $0\in \sigma(A_P)$ plays no role on the asymptotics of $\tilde u$ near $z_0$ and we say that $\tilde u$ has exponential decay near the puncture $z_0$.

Let $\tilde u=(a,u): \dot \Sigma \to \R \times M$ be a finite energy $\widetilde J$-holomorphic curve and let us assume that $\tilde u=(a,u)$ has exponential decay at all of its punctures.  We define its Conley-Zehnder index by $$CZ(\tilde u) = \sum_{z\in \Gamma_+}CZ(P_z)- \sum_{z\in \Gamma_-}CZ(P_z),$$ where  $P_z$ is the asymptotic limit of $\tilde u$ at $z\in \Gamma$.

The Fredholm index of $\tilde u$ is defined by $$\text{Fred}(\tilde u) = CZ(\tilde u)-\chi(\Sigma) + \# \Gamma,$$ where $\chi(\Sigma)$ is the Euler characteristic of $\Sigma$.

The second alternative in Theorem \ref{asymptotic}, where we have the exponential asymptotic behavior of $\tilde u$ as in \eqref{uasymptotics}  for a suitable eigenvalue $\mu$ of $A_P$, implies that $\pi \circ du$ does not vanish near $z_0$. In this case, since $\pi \circ du$ satisfies a Cauchy-Riemann equation, the set where $\pi \circ du$ vanishes must be finite and we can associate to each zero of $\pi \circ du$  a local degree which is always positive. Denote by $\wind_\pi(\tilde u)$ the sum of such local degrees. We also have a well-defined winding number for each puncture as follows: let $z\in \Gamma$, $P$ be the asymptotic limit of $\tilde u$ at $z$ and $e:S^1 \to \R^2$ be the eigensection of $A_P$ describing the behavior of $\tilde u$ near $z$,  in local coordinates as in \eqref{uasymptotics}, with respect to the global trivialization $\Psi$ defined in \eqref{eq_trivpsi}. We define the winding number of $\tilde u$ at $z\in \Gamma$ by $\wind_\infty(z) := \wind\left(t\mapsto e(t)\in \R^2,t\in[0,1]\right)$. The winding number of $\tilde u$ is thus defined by $$\wind_\infty(\tilde u) = \sum_{z\in \Gamma_+} \wind_\infty(z) - \sum_{z\in \Gamma_-} \wind_\infty(z),$$ and it does not depend on $\Psi$. In \cite{props2}, it is proved that \begin{equation}\label{windpi}0\leq \wind_\pi(\tilde u) = \wind_\infty(\tilde u) - \chi(\Sigma) + \# \Gamma.  \end{equation}

If $\Sigma = S^2$, then \begin{equation}\label{casoS2} \begin{aligned}  \text{Fred}(\tilde u)& = CZ(\tilde u) - 2 + \# \Gamma, \\ 0\leq \wind_\pi(\tilde u) & = \wind_\infty(\tilde u) - 2 + \# \Gamma.\end{aligned}\end{equation}

\section{Linking properties}\label{ap_link}

In this appendix we prove the following theorem.

\begin{theo}\label{teolink}For all $E>0$ sufficiently small, the following holds: let $Q_E=(y_E,t_E) \subset \dot S_E$ be an unknotted periodic orbit of $\lambda_E$ with $CZ(Q_E)=3$ and $\sl(Q_E)=-1$.  Let $Q_E'=(y_E',t_E') \subset \dot S_E$  be a periodic orbit of $\lambda_E$ which is geometrically distinct from $Q_E$ and $t_E'\leq t_E$. Then $Q_E'$ is linked to $Q_E$, meaning that $0 \neq [y_E']\in H_1(W_E \setminus Q_E, \Z)$. A similar statement holds for $\dot S_E'$.
\end{theo}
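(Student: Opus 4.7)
The plan is to argue by contradiction, using the strict convexity at $E=0$ to reduce the problem to the already-understood case of dynamically convex contact $3$-spheres. Suppose the conclusion fails. Then there exist sequences $E_n \to 0^+$, unknotted periodic orbits $Q_{E_n}=(y_{E_n},t_{E_n}) \subset \dot S_{E_n}$ of $\lambda_{E_n}$ with $CZ(Q_{E_n})=3$ and $sl(Q_{E_n})=-1$, and geometrically distinct periodic orbits $Q'_{E_n}=(y'_{E_n},t'_{E_n}) \subset \dot S_{E_n}$ with $t'_{E_n} \leq t_{E_n}$ that are not linked to $Q_{E_n}$ in $W_{E_n}$.

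My first step is to control the locations and actions of these orbits uniformly in $n$. Since $P_{2,E_n} \subset \partial S_{E_n}$ is by Proposition \ref{prop_EM}-i) the unique periodic orbit of $\lambda_{E_n}$ with Conley-Zehnder index $2$, we have $CZ(Q'_{E_n}) \geq 3$. By Proposition \ref{prop_EM}-ii) there is a neighborhood $\widetilde U_3$ of $p_c$, independent of $n$, such that no periodic orbit of $\lambda_{E_n}$ in $W_{E_n}\setminus P_{2,E_n}$ with $CZ \leq 3$ meets $\widetilde U_3$; hence $Q_{E_n}$ and $Q'_{E_n}$ lie in the complement of $\widetilde U_3$, a compact region on which $\lambda_{E_n} \to \bar\lambda_0$ in $C^\infty$ by Proposition \ref{prop_step1}. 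Combining the uniform positive lower bound for $\dot\eta$ outside $\widetilde U_3$ (as in the proof of Proposition \ref{prop_fracconvexo}) with $CZ(Q_{E_n})=3$ yields a uniform upper bound on $t_{E_n}$, while Lemma \ref{lemamin} gives a uniform positive lower bound on $t'_{E_n}$. Arzel\`a-Ascoli then produces subsequences with $Q_{E_n}\to Q_0$ and $Q'_{E_n}\to Q'_0$ in $C^\infty$, where $Q_0,Q'_0$ are periodic orbits of $\bar\lambda_0$ lying at uniformly positive distance from $p_c$ in $\dot S_0$. Lower semicontinuity of the Conley-Zehnder index in the nondegenerate limit forces $CZ(Q_0)=3$, $CZ(Q'_0)\geq 3$; the embedded unknotted circle $Q_0$ still satisfies $sl(Q_0)=-1$, and the two limit orbits are geometrically distinct (the case $Q_0=Q'_0$ as a subset is excluded since any two nearby simple Reeb orbits in the limit would coincide with multiplicity, which is incompatible with $Q_{E_n},Q'_{E_n}$ being geometrically distinct simple orbits of comparable action).

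Next I reduce to a strictly convex tight contact $3$-sphere. Choose $\delta>0$ small enough that $Q_0 \cup Q'_0 \subset \dot S_0 \setminus \overline{B_\delta(p_c)}$ and apply Ghomi's extension theorem, exactly as in the proof of Proposition \ref{prop_ghomi}, to complete $\dot S_0 \setminus B_\delta(p_c)$ to a smooth strictly convex hypersurface $\tilde S_0 \subset \R^4$ diffeomorphic to $S^3$. The induced contact form $\tilde\lambda_0$ on $\tilde S_0$ is dynamically convex by the theorem of Hofer-Wysocki-Zehnder for strictly convex energy levels in $\R^4$ \cite{convex}, and agrees with $\bar\lambda_0$ on the common region, so $Q_0$ and $Q'_0$ remain periodic orbits. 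By Hryniewicz's theorem on disk-like global surfaces of section on dynamically convex tight contact $3$-spheres, the unknotted orbit $Q_0$ with $CZ(Q_0)=3$ and $sl(Q_0)=-1$ bounds a disk-like global surface of section $D\subset \tilde S_0$ transverse to the Reeb vector field; every other periodic orbit, and in particular $Q'_0$, intersects $D$, and each such intersection is positive by transversality. Therefore $Q'_0$ is linked to $Q_0$ in $\tilde S_0 \cong S^3$.

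Finally, to derive a contradiction, I transfer the linking to $W_{E_n}$. A closed tubular neighborhood $N \subset \R^4$ of $Q_0 \cup Q'_0$, chosen thin enough to be disjoint from both $\overline{B_\delta(p_c)}$ and the extension region of $\tilde S_0$, embeds simultaneously into $\tilde S_0$ and into $W_{E_n}$ for all large $n$ via the $C^\infty$-convergence of the hypersurfaces. The linking number of two disjoint embedded circles in $S^3$ can be read off any such regular neighborhood (for instance, as the intersection number of $Q'$ with any Seifert surface for $Q$ that can be chosen transverse to the boundary collar of $N$); hence the linking of $Q_0$ with $Q'_0$ computed in $\tilde S_0$ equals the linking of $Q_{E_n}$ with $Q'_{E_n}$ computed in $W_{E_n}$ for all sufficiently large $n$. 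The former is nonzero, contradicting the assumption. The symmetric case on $\dot S'_E$ follows from the involution $T$ introduced in Section \ref{sec_proof_main}. The main obstacle I anticipate is the careful invocation of Hryniewicz's disk-like global surface of section theorem in the possibly degenerate $C^\infty$ dynamically convex setting, together with the topological step of identifying the linking data across the two distinct contact $3$-spheres $\tilde S_0$ and $W_{E_n}$; both difficulties can be handled by the arguments outlined above, but each requires some care.
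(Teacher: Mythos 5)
Your proposal reproduces the paper's strategy for one of the two cases, but it omits the other, and the omission is caused by a misreading of the index-vs-neighborhood estimate.

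You write that ``By Proposition \ref{prop_EM}-ii) there is a neighborhood $\widetilde U_3$ of $p_c$, independent of $n$, such that no periodic orbit of $\lambda_{E_n}$ in $W_{E_n}\setminus P_{2,E_n}$ with $CZ\leq 3$ meets $\widetilde U_3$; hence $Q_{E_n}$ and $Q'_{E_n}$ lie in the complement of $\widetilde U_3$.'' The relevant statement is actually Proposition \ref{prop_mu}-i) (not \ref{prop_EM}-ii)), and it says: if a periodic orbit enters $\widetilde U_M$, then its Conley-Zehnder index is \emph{greater than} $M$. Its contrapositive rules out orbits with $CZ\leq M$ from $\widetilde U_M$. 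For $Q_{E_n}$ this applies because $CZ(Q_{E_n})=3$. But all you know about $Q'_{E_n}$ is that $CZ(Q'_{E_n})\geq 3$ --- and the hypothesis $t'_{E_n}\leq t_{E_n}$ bounds only the period, not the index. A periodic orbit of bounded action can still pass arbitrarily close to the saddle-center $p_c$, and in doing so acquires arbitrarily large Conley-Zehnder index (this is precisely the content of Lemma \ref{lem2}). So there is no a priori reason for $Q'_{E_n}$ to avoid $\widetilde U_3$. Consequently the Arzel\`a-Ascoli step does not in general produce a limit periodic orbit $Q'_0\subset\dot S_0$ of $\bar\lambda_0$: if $Q'_{E_n}$ spirals into a fixed annulus around $p_c$ with action bounded away from zero, the $C^0$-limit is not a Reeb orbit of $\bar\lambda_0$ at all, but a $p_0$-cover of the closed curve $\bar\gamma_0=\gamma_0\cup\{p_c\}$ determined by a homoclinic orbit $\gamma_0$ to $p_c$.

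This second scenario is exactly what the paper's proof treats as a separate case, and it requires a genuinely different argument: one regularizes $S_0$ near $p_c$ to a smooth hypersurface $S_\epsilon$ (Lemma \ref{lemSeps}), checks that the regularized Reeb flow is still dynamically convex, and constructs a homeomorphism $h:S_0\to S_\epsilon$ carrying $\bar\gamma_0$ to an honest closed characteristic $P_0\subset S_\epsilon$ while fixing a neighborhood of $Q_0$ (Lemma \ref{lemhomoc}); only then can Corollary 6.7 of \cite{convex} be applied to conclude $P_0$ (hence $\bar\gamma_0$, hence eventually $Q'_{E_n}$) is linked to $Q_0$. This is Proposition \ref{prophomoc}, and none of it is present in, nor follows from, your argument. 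The case you \emph{do} treat --- $Q'_{E_n}$ staying uniformly away from $p_c$ --- matches the paper, including the Ghomi extension, the appeal to dynamical convexity for strictly convex spheres, and the transfer of linking numbers via $C^\infty$-convergence of the hypersurfaces. But as written, your proof only covers that case, and the justification offered for ignoring the other case is incorrect.
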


\begin{proof} Arguing indirectly, we assume there exists a sequence $E_n \to 0^+$ as $n \to \infty$ so that $Q_{E_n}'=(y_{E_n}',t_{E_n}') \subset \dot S_{E_n}$ is geometrically distinct from and not linked to $Q_{E_n}=(y_{E_n},t_{E_n})\subset \dot S_{E_n}$, with $t_{E_n}' \leq t_{E_n}$. From Proposition \ref{prop_mu}-i),  since $CZ(Q_{E_n})=3$, $Q_{E_n}$ does not intersect a fixed and small neighborhood $\tilde U_3\subset \U_{E^*}$ of $p_c$, where $E^*>0$ is fixed according to Proposition \ref{prop_step1}. From  Proposition \ref{etabarra}, we find a constant $\bar \eta>0$ so that the argument $\eta(t)$ of any non-vanishing transverse linearized solution of the Reeb flow along a segment of trajectory outside $\tilde U_3$ satisfies $\dot \eta >\bar \eta$, if $E\geq 0$ is sufficiently small. This follows from the convexity assumption on the critical level given by Hypothesis 2. Hence, since $CZ(Q_{E_n})=3$ and $Q_{E_n}'\subset \dot S_{E_n}$,  we find a constant $c>1$ such that \begin{equation}\label{eq_ten}\frac{1}{c}<t_{E_n}'\leq t_{E_n}<c, \forall n.\end{equation}
The first inequality in \eqref{eq_ten} is explained as follows: arguing indirectly we assume that $t_{E_n}' \to 0$ as $n \to \infty$. If there exists $\delta_0>0$ such that in local coordinates $Q_{E_n}'$ does not intersect $B_{\delta_0}(0)$, $\forall n$, then by  Arzel\`a-Ascoli theorem we find a constant Reeb orbit of $\bar \lambda_0$ on $\dot S_0=S_0 \setminus \{p_c\}$, where $\bar\lambda_0$ is as in Proposition \ref{prop_step1}. This is a contradiction and we conclude that $Q_{E_n}'$ must intersect $B_{\delta_0}(0)$. Now since $Q_{E_n}'$ is not a cover of $P_{2,E_n}$, $Q_{E_n}$ must contain a branch $q_n$ inside $B_{2 \delta_0}(0)\setminus B_{\delta_0}(0)$, from $\partial B_{\delta_0}(0)$ to $\partial B_{2\delta_0}(0)$, for all $n$. This follows from the local flow of $K$ near the saddle-center equilibrium point. Therefore we find a uniform constant $\epsilon>0$ so that $$t_{E_n}'=\int_{Q_{E_n}'} \lambda_{E_n}>\int_{q_n} \lambda_{E_n} > \epsilon, \forall n \mbox{ large}, $$ again a contradication.

Thus using Arzel\`a-Ascoli theorem we may extract a subsequence, also denoted by $E_n$,  so that $Q_{E_n} \to Q_0=(y_0,t_0)$, $t_{E_n} \to t_0>0$, where $Q_0$ is a periodic orbit of $\bar \lambda_0$ sitting in $\dot S_0$. Moreover,  $CZ(Q_0)\leq \liminf_{n \to \infty} CZ(Q_{E_n})\leq 3$ and since $\bar \lambda_0$ is dynamically convex, see Proposition \ref{prop_ghomi}, we get $CZ(Q_0)=3$. $Q_0$ must be simple, otherwise we would have $CZ(Q_0) \geq 5$. This implies that $Q_0$ is unknotted and $\sl(Q_0)=-1$, since $Q_{E_n} \to Q_0$ in $C^\infty$ as $n\to\infty$ and the same properties hold for each $Q_{E_n}$.

Now we study the compactness of the sequence $Q_{E_n}'$. Assume that $Q_{E_n}'$ stays away from $p_c$. From \eqref{eq_ten}, we can extract a subsequence, still denoted by $E_n$, so that $Q_{E_n}' \to Q_0'$ in $C^\infty$ as $n\to\infty$, where $Q_0'\subset \dot S_0$ is a periodic orbit of $\bar \lambda_0$. If $Q_0'$ is a $p$-cover of $Q_0$, then $p=1$ from \eqref{eq_ten}. In this case, since $CZ(Q_{E_n})=CZ(Q_0)=3$, $Q_{E_n}'$ must be linked to $Q_{E_n}$ for large $n$, a contradiction. Hence $Q_0'$ is geometrically distinct from $Q_0$. Since $Q_{E_n}'$ is not linked to $Q_{E_n}$, necessarily $Q_0'$ is not linked to $Q_0$ as well. Now take a small neighborhood $U_0\subset \U_{E^*}$ of $p_c$ not intersecting $Q_0$ and $Q_0'$. The set $S_0 \setminus U_0$ can be smoothly extended to a strictly convex sphere-like hypersurface $\tilde S_0\subset \R^4$ by a theorem of M. Ghomi, see \cite{ghomi}. We end up with a strictly convex hypersurface admitting geometrically distinct closed characteristics $Q_0$ and $Q_0'$ which are not linked. However, this contradicts Corollary 6.7 from \cite{convex}.

Now assume that $Q_{E_n}'$ gets closer to $p_c$ as $n \to \infty$. Then we can find $\delta>0$ small such that $Q_{E_n}' \cap A_\delta \neq \emptyset$ for all $n$ large, where $A_\delta\subset \U_{E^*}$ is an annulus centered at $p_c$ determined in local coordinates $(q_1,q_2,p_1,p_2)$ by $A_\delta = B_\delta(0) \setminus B_{\frac{\delta}{2}}(0)$. Let $\gamma_\delta^s$ be the intersection of the local stable manifold of $p_c$ with $A_\delta$  given  by $\left\{p_1=q_2=p_2=0,  \frac{\delta}{2} \leq q_1<  \delta\right\}$ and let $\gamma_\delta^u$ be the intersection of the local unstable manifold of $p_c$ with $A_\delta$  given  by $\left\{q_1=q_2=p_2=0,  \frac{\delta}{2}\leq p_1<  \delta\right\}$. Each segment of $Q_{E_n}'$ intersecting $A_\delta$ gets either closer to $\gamma_\delta^s$ or to $\gamma_\delta^u$ as $n \to \infty$. Since $\int_{\gamma_\delta^s} \bar \lambda_0 , \int_{\gamma_\delta^u} \bar \lambda_0 > c_1>0$ for some constant $c_1>0$, and $\lambda_{E_n} \to \bar \lambda_0$ in $A_\delta$, see Proposition \ref{prop_step1}, we conclude that each component $\gamma_n$ of $Q_{E_n}' \cap A_\delta$ satisfies $\int_{\gamma_n} \lambda_{E_n} >c_1$ for all large $n$. Since the action of $Q_{E_n}'$ is uniformly bounded, see \eqref{eq_ten}, the number of components of $Q_{E_n}' \cap A_\delta$ is uniformly bounded in $n$. We may assume that the number of such components is constant equal to $2p_0\geq 2$, $p_0$ of them close to $\gamma_\delta^s$ and $p_0$ close to $\gamma_\delta^u$. This implies that $Q_{E_n}'$ has exactly $p_0$ components outside $B_\delta(0)$. Again from \eqref{eq_ten} and from the fact that the Liouville vector fields $Y_{E_n}$ and $\bar X_0$ coincide outside $B_\delta(0)$ and near $\dot S_0$ for all large $n$, see Proposition \ref{prop_step1},  we can use Arzel\`a-Ascoli theorem in order to find a subsequence of $E_n$ and a homoclinic orbit $\gamma_0\subset \dot S_0$ to $p_c$ so that, after a suitable reparametrization, $Q_{E_n}'$ converges in $C^0$ to a $p_0$-cover of the simple closed curve $\bar \gamma_0:=\gamma_0\cup\{p_c\}\subset S_0$ as $n \to \infty$. From the assumptions on $Q_{E_n}$ and $Q_{E_n}'$, $\bar \gamma_0$ is not linked to $Q_0$ in $S_0$. This contradicts Proposition \ref{prophomoc} proved below and finishes the proof of Theorem \ref{teolink}.
\end{proof}

\begin{prop}\label{prophomoc}Assume $p_c$ admits a homoclinic orbit $\gamma_0 \subset \dot S_0$. Let $Q_0\subset \dot S_0$ be an unknotted periodic orbit with $CZ(Q_0)=3$ and $\sl(Q_0)=-1.$  Then the closed curve $\bar \gamma_0 = \gamma_0 \cup \{p_c\}\subset S_0$ is linked to $Q_0$ in $S_0$. \end{prop}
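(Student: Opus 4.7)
The plan is to reduce the problem to a well-studied linking property on a strictly convex $3$-sphere, by using Ghomi's extension theorem to replace $S_0$ by a smooth strictly convex $\tilde S_0 \cong S^3$ containing $Q_0$, and then invoking the open book decomposition of $\tilde S_0$ with binding $Q_0$ to compute the linking of a suitable perturbation of $\bar \gamma_0$ with $Q_0$ via algebraic intersection with a page.

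First I would fix a small open neighborhood $U_0 \subset \R^4$ of $p_c$ with $U_0 \cap Q_0 = \emptyset$ and apply Ghomi's theorem \cite{ghomi}, as in the proof of Proposition \ref{prop_ghomi}, to extend $S_0 \setminus U_0$ smoothly to a strictly convex hypersurface $\tilde S_0 \subset \R^4$ diffeomorphic to $S^3$. The orbit $Q_0 \subset \tilde S_0$ remains a simply covered Reeb orbit of $\lambda_0|_{\tilde S_0}$ with unchanged invariants $CZ(Q_0) = 3$ and $\sl(Q_0) = -1$. By the main theorem of Hofer, Wysocki and Zehnder \cite{convex}, $Q_0$ is then the binding of an open book decomposition of $\tilde S_0$ with disk-like pages $\{D_\tau\}_{\tau \in S^1}$, each a global surface of section for the Reeb flow; in particular every page is transverse to the Reeb vector field with consistent orientation, there is a closed $1$-form $d\tau$ on $\tilde S_0 \setminus Q_0$ with $d\tau(X_{\lambda_0|_{\tilde S_0}}) > 0$ everywhere, and for any smooth loop $\tilde \gamma \subset \tilde S_0 \setminus Q_0$ one has $\link(\tilde \gamma, Q_0) = \tilde \gamma \cdot D_{\tau_0} = \frac{1}{2\pi}\oint_{\tilde \gamma} d\tau$ for any generic $\tau_0$.

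Next I would construct a smooth loop $\tilde \gamma = \gamma_0^{\mathrm{ext}} \cup \alpha \subset \tilde S_0 \setminus Q_0$, where $\gamma_0^{\mathrm{ext}} := \gamma_0 \setminus U_0$ is the portion of the homoclinic outside $U_0$ and $\alpha \subset \tilde S_0 \cap U_0$ is a smooth arc joining the entry/exit points of $\gamma_0$ at $\partial U_0$. Because the modification is supported inside the contractible set $U_0 \subset \R^4 \setminus Q_0$, the loop $\tilde \gamma$ is isotopic to $\bar \gamma_0$ in $\R^4 \setminus Q_0$, so $\link(\bar \gamma_0, Q_0) = \link(\tilde \gamma, Q_0)$. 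Since $U_0 \cap Q_0 = \emptyset$, the cap $\tilde S_0 \cap U_0$ is simply connected, and hence $\int_\alpha d\tau$ depends only on the endpoints and equals $\tau_\alpha(q_{\mathrm{out}}) - \tau_\alpha(q_{\mathrm{in}})$ for a lift $\tau_\alpha$ of $\tau$ to the cap. Outside $U_0$, $\gamma_0^{\mathrm{ext}}$ is up to reparametrization a Reeb trajectory on $\tilde S_0$, so every transverse intersection with $D_{\tau_0}$ is positive and $\int_{\gamma_0^{\mathrm{ext}}} d\tau > 0$.

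The main obstacle is to prove that the resulting integer $\link(\bar \gamma_0, Q_0)$ is strictly positive. I would address this by choosing $U_0$ small and the Ghomi extension $\tilde S_0$ with uniformly bounded geometry near the cap, so that $d\tau(X_\lambda) \ge c > 0$ away from $Q_0$ and $|\tau_\alpha(q_{\mathrm{out}}) - \tau_\alpha(q_{\mathrm{in}})|$ stays bounded as $U_0$ shrinks. Then $\int_{\gamma_0^{\mathrm{ext}}} d\tau \ge c \cdot T(\gamma_0^{\mathrm{ext}})$, where $T(\gamma_0^{\mathrm{ext}})$ is the Reeb-time length of $\gamma_0^{\mathrm{ext}}$; this length tends to $+\infty$ as $U_0 \to \{p_c\}$ because the Hamiltonian flow slows down logarithmically near the hyperbolic saddle directions of $p_c$, forcing $\gamma_0$ to spend arbitrarily long Reeb time outside any neighborhood of $p_c$. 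The identity
\begin{equation*}
2\pi \link(\bar \gamma_0, Q_0) = \int_{\gamma_0^{\mathrm{ext}}} d\tau + \bigl(\tau_\alpha(q_{\mathrm{out}}) - \tau_\alpha(q_{\mathrm{in}})\bigr)
\end{equation*}
then forces $\link(\bar \gamma_0, Q_0) \ge 1$ for all sufficiently small $U_0$. Implementing the uniform-geometry estimate on $d\tau$ and on $\tau_\alpha|_{\text{cap}}$, which requires a compatible choice of extensions $\tilde S_0$ as $U_0$ shrinks together with a careful analysis of how the open book of $\tilde S_0$ behaves near the cap, is the principal technical difficulty; once achieved, the proposition follows.
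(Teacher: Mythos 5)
There is a genuine gap at the crux of your argument, and it is not merely a matter of technical implementation. You claim that the Reeb-time length $T(\gamma_0^{\mathrm{ext}})$ of the truncated homoclinic tends to $+\infty$ as $U_0 \to \{p_c\}$, on the grounds that the Hamiltonian flow slows down near the saddle, forcing $\gamma_0$ to spend arbitrarily long Reeb time outside $U_0$. This conflates Hamiltonian time with Reeb time. Along the stable branch of $\gamma_0$ one has in local coordinates $p_1=q_2=p_2=0$ and $q_1(t)=q_1(0)e^{-\bar\alpha t}$, so the Hamiltonian vector field decays exponentially in Hamiltonian time. Since $\lambda$ is a bounded $1$-form, the density $\lambda(\dot\gamma_0(t))\,dt$ of Reeb time is exponentially small and $\int_{\gamma_0}\lambda<\infty$. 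Indeed, with the local Liouville field $\bar X_0(z)=\tfrac{1}{2}(z-z_0)$, $z_0=\tfrac{1}{\sqrt 2}(\delta_0,0,\delta_0,0)$, constructed in the proof of Proposition~\ref{prop_step1}, one computes, as in \eqref{expr2},
$$i_{\bar X_0}\omega_0(X_K)=I_1\partial_{I_1}K+I_2\partial_{I_2}K+\tfrac{\delta_0\bar\alpha}{2\sqrt 2}(q_1+p_1)=-\bar\alpha I_1+\bar\omega I_2+\tfrac{\delta_0\bar\alpha}{2\sqrt 2}(q_1+p_1),$$
which on the stable and unstable branches reduces to $\tfrac{\delta_0\bar\alpha}{2\sqrt 2}\,q_1(t)$ and $\tfrac{\delta_0\bar\alpha}{2\sqrt 2}\,p_1(t)$ respectively, both exponentially decaying and hence integrable. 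Consequently $\int_{\gamma_0^{\mathrm{ext}}} d\tau$ stays uniformly bounded as $U_0$ shrinks, the inequality $\int_{\gamma_0^{\mathrm{ext}}} d\tau \ge c\cdot T(\gamma_0^{\mathrm{ext}})\to\infty$ fails, and the open-book integral cannot be forced to exceed $2\pi$. There is no way to conclude $\link(\bar\gamma_0,Q_0)\ge 1$ from this angle, regardless of how carefully one controls the extension $\tilde S_0$ and the cap term $\tau_\alpha$.

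The paper's proof sidesteps the difficulty by a regularization of a different kind. Rather than extending $S_0\setminus U_0$ with Ghomi's theorem and then closing the homoclinic arc by an arbitrary cap $\alpha$ -- a step that severs $\bar\gamma_0$ from the Reeb dynamics inside $U_0$ -- one replaces the local Hamiltonian by $K_\epsilon=K+\epsilon f(q_1+p_1)$ for a cutoff $f$ supported near $p_c$. The level $S_\epsilon:=K_\epsilon^{-1}(0)$ coincides with $S_0$ away from $p_c$ but near $p_c$ lies on the regular sublevel $K=-\epsilon$, which avoids the origin. A homeomorphism $h$, equal to the identity away from $p_c$ and smooth on $\dot S_0$, carries $\bar\gamma_0$ to a genuine \emph{closed characteristic} $P_0\subset S_\epsilon$ (Lemma~\ref{lemhomoc}): the Reeb dynamics on $S_\epsilon$ itself joins the stable and unstable branches of the homoclinic across the regularized corner. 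Since $S_\epsilon$ carries a dynamically convex tight contact form (Lemma~\ref{lemSeps}) and $Q_0$ survives unchanged with $CZ(Q_0)=3$ and $\sl(Q_0)=-1$, Corollary 6.7 of \cite{convex} applies directly to the pair $(Q_0,P_0)$ and gives the linking with no asymptotic estimate whatsoever. The contrast is instructive: your route must prove linking quantitatively through an open-book integral, whereas the paper reduces to the existing qualitative linking theorem for geometrically distinct periodic orbits by first converting $\bar\gamma_0$ into one.
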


In order to prove Proposition \ref{prophomoc} we start by regularizing $S_0$ near $p_c$ to obtain a dynamically convex tight Reeb flow on a smooth sphere-like hypersurface so that $Q_0$ and $\bar \gamma_0$ correspond to periodic orbits. Then we apply Corollary 6.7 from \cite{convex} to get the desired linking property.

Fix $\delta_1>0$ small and choose a cut-off symmetric function $f:\R \to [0,1]$ so that $f'(x)\leq0 \, \forall x\geq 0,$ $f(x)=1$ if $0\leq x \leq \frac{\delta_1}{2}$, and $f(x)=0$ if $x \geq \delta_1.$ We consider $S_0$ as a subset of $\R^4$. Fix a small neighborhood $U_0\subset \R^4$ of $p_c$ not intersecting $Q_0$ and let $S_\epsilon\subset \R^4$ be the hypersurface which coincides with $S_0$ outside $U_0$ and inside $U_0$ corresponds in the local coordinates $z=(q_1,q_2,p_1,p_2)$ to the set $K_\epsilon^{-1}(0)$ where $$K_\epsilon(z) = \bar K(I_1,I_2)+\epsilon f(q_1+p_1),$$
$\epsilon>0$ small. Then $S_\epsilon$ is diffeomorphic to the $3$-sphere and in local coordinates it projects on $\{q_1,p_1 \geq 0\}$, not containing the origin. Moreover $S_\epsilon \to S_0$ in $C^0$ as $\epsilon \to 0$ and $S_\epsilon \to S_0$ in $C^\infty$ as $\epsilon \to 0$ outside any fixed neighborhood of $p_c$. Note that $S_\epsilon$  coincides with $S_0$ in the set $\{q_1+p_1 \geq \delta_1\}$.

\begin{lem}\label{lemSeps}For all $\epsilon>0$ small, the Hamiltonian flow on $S_\epsilon$ is equivalent to the Reeb flow of a dynamically convex tight contact form on $S^3$.
\end{lem}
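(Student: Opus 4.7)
The proof naturally splits into two parts: (i) showing $S_\epsilon$ is of contact type with tight induced contact form, and (ii) establishing dynamical convexity. For (i) I will construct a Liouville vector field $Y_\epsilon$ transverse to $S_\epsilon$ by replicating the interpolation scheme of Section \ref{sec_prop_step1}. Outside a fixed small neighborhood of $p_c$ one has $S_\epsilon = S_0$, so the Liouville field $\bar X_0$ from Proposition \ref{prop_step1} is already transverse. Near $p_c$, in local coordinates, $S_\epsilon = K_\epsilon^{-1}(0)$ coincides with $K^{-1}(-\epsilon)$ in the innermost region $\{q_1+p_1 \leq \delta_1/2\}$ and with $K^{-1}(0)$ outside $\{q_1+p_1 \leq \delta_1\}$, with a smooth interpolation in between. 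The construction of Proposition \ref{propYtransversal}-(ii), which produces a Liouville field transverse to $K^{-1}(E)$ for small $E<0$, adapts directly to the present setting and can be patched with $\bar X_0$ outside to give $Y_\epsilon$. Then $\lambda_\epsilon := i_{Y_\epsilon}\omega_0|_{S_\epsilon}$ is a contact form whose Reeb flow is a reparametrization of the Hamiltonian flow of $K_\epsilon$; since $S_\epsilon$ smoothly bounds a compact domain in $(\R^4,\omega_0)$, the contact structure $\ker\lambda_\epsilon$ is weakly fillable, hence tight by Eliashberg's classification on $S^3$.

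For (ii) I argue by contradiction: suppose there exist $\epsilon_n \to 0^+$ and periodic orbits $P_n = (w_n,T_n) \subset S_{\epsilon_n}$ with $CZ(P_n) \leq 2$. If $P_n$ were contained in $\{q_1+p_1 \geq \delta_1\}$, where $S_{\epsilon_n}$ and $S_0$ coincide and the Hamiltonian vector fields agree, then $P_n$ would be a periodic orbit in $\dot S_0$, and Proposition \ref{prop_ghomi} would force $CZ(P_n) \geq 3$. Hence $P_n$ must enter $\{q_1+p_1 < \delta_1\}$, and in its innermost part $\{q_1+p_1 \leq \delta_1/2\}$ the orbit is a trajectory of $K$ on the level $K^{-1}(-\epsilon_n)$. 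On this negative-energy level the local saddle-center structure forces $q_1p_1 \geq \epsilon_n/\alpha + O(\epsilon_n^2)$, so any trajectory stays at distance of order $\sqrt{\epsilon_n}$ from the stable/unstable axes of the saddle, and a direct integration of $\dot q_1 = -\bar\alpha q_1$, $\dot p_1 = \bar\alpha p_1$ shows that the time spent by such a trajectory inside $\{q_1+p_1 \leq \delta_1/2\}$ grows at least like $c\log(1/\epsilon_n)$ as $\epsilon_n \to 0^+$.

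Now the rotation estimate of Lemma \ref{lem1} applies uniformly to trajectories on $K^{-1}(-\epsilon_n)$ with $q_1p_1 \neq 0$ near $p_c$, since its proof depends only on the local normal form \eqref{Kloc} and on the bound $b^2\cosh(2\bar\alpha t) \leq 2\delta^2$ which is valid along any trajectory staying inside $B_\delta(0)$, regardless of the sign of the energy. It yields transverse angular variation at least $(\omega/2)(t^+-t^-) - \pi$ on each passage through the modified region. Combined with Proposition \ref{etabarra} giving a uniform positive lower bound on the rotation rate in the convex region outside, the total winding of a non-vanishing transverse linearized solution over one period of $P_n$ grows without bound as $\epsilon_n \to 0^+$, which via the formula \eqref{CZ} forces $CZ(P_n) \to \infty$, contradicting $CZ(P_n) \leq 2$. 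The principal obstacle is the technical verification that the rotation estimate and the logarithmic time estimate survive on the transition annulus $\{\delta_1/2 < q_1+p_1 < \delta_1\}$, where $K_\epsilon$ differs from both $K$ and $K+\epsilon$; this is handled by choosing the cut-off $f$ and the constant $\delta_1$ so that the transit through the annulus contributes a uniformly bounded time while the lengthy passage occurs in the inner region where $K_\epsilon = K+\epsilon$ and the exact local analysis of Section \ref{sec_step2} applies.
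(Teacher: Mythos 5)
Your construction of the Liouville vector field in part (i) follows the paper's scheme and is correct: outside a neighborhood of $p_c$ one reuses $\bar X_0$, while near $p_c$ the fact that $S_\epsilon$ locally coincides with $K^{-1}(-\epsilon)$ lets you invoke Proposition \ref{propYtransversal}-ii), and tightness follows from weak fillability.

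The dynamical convexity argument in part (ii), however, has a genuine gap, and it occurs at exactly the step you flag as "the principal obstacle." Your dichotomy is: either $P_n \subset \{q_1+p_1 \geq \delta_1\}$ (and Proposition \ref{prop_ghomi} applies directly), or $P_n$ enters $\{q_1+p_1 < \delta_1\}$, from which you conclude it enters the inner region $\{q_1+p_1 \leq \delta_1/2\}$ and accumulates a logarithmically large rotation. But the second implication does not follow: a periodic orbit may enter the transition annulus $\{\delta_1/2 < q_1+p_1 < \delta_1\}$ without ever reaching the inner region, and nothing in your argument handles this case. Moreover, even for orbits that do reach the inner region, the claimed lower bound of order $\log(1/\epsilon_n)$ on the transit time is not correct. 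On $K^{-1}(-\epsilon_n)$ the invariant $I_1 = q_1p_1$ ranges over the whole interval $[\epsilon_n/\alpha + O(\epsilon_n^2),\ (\delta_1/4)^2]$, and the transit time through $\{q_1+p_1 \leq \delta_1/2\}$ is of order $\log(\delta_1/\sqrt{I_1})$, which is bounded for orbits with $I_1$ bounded away from its minimum. So the passage only produces unbounded rotation for those orbits whose $I_1$ tends to $0$ with $\epsilon_n$, and this is not forced merely by $P_n$ entering $\{q_1+p_1 < \delta_1\}$.

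The paper's argument avoids both difficulties by using a different dichotomy: fix a small neighborhood $U_{2\pi} \subset U_*$ of $p_c$, constructed via Lemma \ref{lem2}, so that any trajectory segment in $U_*$ intersecting $U_{2\pi}$ has transverse rotation $\Delta\eta > 2\pi$ (this absorbs your logarithmic-time idea but localizes it correctly), while outside $U_{2\pi}$ one has $\dot\eta > \bar\eta > 0$ uniformly, because $S_\epsilon \to S_0$ in $C^\infty$ there and the Hessian condition gives a uniform positive rotation rate. Then $CZ(P_n) \leq 2$ forces $P_n$ to avoid $U_{2\pi}$, whence $T_n$ is uniformly bounded; Arzel\`a-Ascoli produces a limit periodic orbit $\bar P_0 \subset S_0 \setminus U_{2\pi}$ with $CZ(\bar P_0) \leq 2$, contradicting Proposition \ref{prop_ghomi}. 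The compactness step is precisely what is missing from your proof: without it there is no way to dispose of orbits that remain in the perturbed region but stay uniformly away from $p_c$.
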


\begin{proof}Let $\tilde S_\epsilon= \varphi^{-1}(S_\epsilon \cap U)\subset \{q_1\geq 0,p_1 \geq 0\}, 0\not \in \tilde S_\epsilon,$ where $\varphi:V \to U$ is the symplectic diffeomorphism established in Hypothesis 1. Let $\delta_0\gg \bar \delta_0 >0$ be as in Proposition \ref{propYtransversal}-ii). We may assume that $\delta_1> 4\delta_0$ and, if $z\in B_{\delta_0}(0) \subset V,$ then $q_1+p_1<\delta_1/2$, where $\delta_1>0$ is given above.

Let $Y$ be the Liouville vector field on $V$ constructed in Section \ref{sec_prop_step1}, see equation \eqref{defi_Y}. As mentioned in the proof of Proposition \ref{prop_step1}, $Y$ induces a Liouville vector field, denoted here by $\bar X_0$ and defined near $S_0 \subset \R^4$, by patching $\varphi_* Y$ together with $X_0=\frac{w-w_0}{2},w_0=\varphi(z_0),z_0=\frac{1}{\sqrt{2}}(\delta_0,0,\delta_0,0).$ Proposition \ref{propYtransversal}-i) and Remark \ref{remconvexity} show that $\bar X_0$ is transverse to $\dot S_0$.

From Proposition \ref{propYtransversal}-i), $Y$ is positively transverse to $\tilde S_\epsilon$ outside $B_{\bar \delta_0}(0)$ if $\epsilon>0$ is sufficiently small. This follows from the fact that $\tilde S_\epsilon \to \varphi^{-1}(S_0\cap U)$ in $C^\infty$ outside $B_{\bar \delta_0}(0)$ as $\epsilon \to 0^+$. Hence $\bar X_0$ is transverse to $S_\epsilon$  outside $\varphi(B_{\bar \delta_0}(0))$ if $\epsilon>0$ is sufficiently small. Now observe that, from the choices of $\bar \delta_0, \delta_0$ and $\delta_1$, we have $z\in  B_{\bar \delta_0}(0)\cap  \tilde S_\epsilon \Rightarrow z\in K^{-1}(-\epsilon)$. From Proposition \ref{propYtransversal}-ii), this implies that $Y$ is positively transverse to $\tilde S_\epsilon$ if $z \in B_{\bar \delta_0}(0) \cap \tilde S_\epsilon$. Therefore $\bar X_0$ is transverse to $S_\epsilon$ for all $\epsilon>0$ small and then $\lambda_\epsilon:= i_{\bar X_0} \omega_0|_{S_\epsilon}$ is a contact form on $S_\epsilon$. Since $S_\epsilon$ bounds a symplectic manifold, $\lambda_\epsilon$ is tight, see \cite{eli}.

To see that $\lambda_\epsilon$ is dynamically convex we use the frame $\{X_1,X_2\}$, defined in \eqref{mfX}, to analyze the linearized flow as in the proof of Proposition \ref{prop_step2} in Section \ref{sec_step2}.  Using Lemma \ref{lem2} with $\W_0:=\varphi(B_{\delta_0}(0))$, we find small neighborhoods $U_{2\pi} \subset U_* \subset \W_0$ of $p_c$ such that the argument variation $\Delta \eta$ of a non-vanishing transverse linearized solution along a segment of trajectory in $U_*\cap S_\epsilon\subset H^{-1}(-\epsilon)$ which intersects $U_{2\pi}$ satisfies \begin{equation}\label{eeqdelta}\Delta \eta > 2 \pi.\end{equation} Since $S_\epsilon \to S_0$ in $C^\infty$ outside $U_{2\pi}$ as $\epsilon \to 0^+$, we can assume that \begin{equation}\label{eeqeta} \dot \eta >\bar \eta >0\end{equation} outside $U_{2\pi}$ for some $\bar \eta>0$ and all $\epsilon>0$ small. Here we use the parametrization induced by $K_\epsilon$ and  by $H$ outside $U$ and also when $q_1+p_1\geq \delta_1$. So, arguing indirectly, assume there exists a sequence $\epsilon_n \to 0^+$ and periodic orbits $P_n \subset S_{\epsilon_n}$ with \begin{equation} \label{eeqcz} CZ(P_n) \leq 2\end{equation} and periods $T_n>0,\forall n$. From \eqref{eeqdelta} and \eqref{eeqeta} we have $P_n \cap U_{2\pi} = \emptyset$ for all large $n$. From \eqref{eeqeta} and \eqref{eeqcz}, $T_n$ is uniformly bounded so, by Arzel\`a-Ascoli theorem, we can extract a subsequence still denoted by $P_n$ so that $P_n \to \bar P_0$ in $C^\infty$ as $n\to \infty$, where $\bar P_0$ is a periodic orbit contained in $S_0\setminus U_{2 \pi}.$ From \eqref{eeqcz},  $CZ(\bar P_0) \leq 2$. This contradicts Proposition \ref{prop_ghomi} and finishes the proof of the lemma.
\end{proof}

\begin{lem}\label{lemhomoc}In the hypotheses of Proposition \ref{prophomoc}, we have: for each $\epsilon>0$ small, there exists a homeomorphism $h: S_0 \to S_\epsilon$ satisfying the following properties:
 \begin{itemize}
 \item[(i)] $h$ is the identity map outside a small neighborhood of $p_c$ in $S_0$;
 \item[(ii)] $h|_{\dot S_0}$ is a smooth diffeomorphism;
 \item[(iii)] $h(\bar \gamma_0)= P_0,$ where $P_0\subset S_\epsilon \setminus Q_0$ is a closed characteristic of $S_\epsilon$, which is linked to $Q_0$.
 \end{itemize}
\end{lem}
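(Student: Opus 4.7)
The plan is to follow the classical picture of a saddle--center homoclinic bifurcation. In the normal form coordinates of Hypothesis 1, $S_\epsilon$ coincides with the shifted level set $K^{-1}(-\epsilon)$ inside the region $\{q_1+p_1\le \delta_1/2\}$, on which the flow is completely integrable with first integrals $I_1 = q_1 p_1$ and $I_2=(q_2^2+p_2^2)/2$; a trajectory entering this region close to the stable direction $W^s_{\mathrm{loc}}$ evolves by $q_1(t)=q_1(0)e^{-\bar\alpha t}$, $p_1(t)=p_1(0)e^{\bar\alpha t}$, $(q_2+ip_2)(t)=(q_2(0)+ip_2(0))e^{-i\bar\omega t}$, and exits close to $W^u_{\mathrm{loc}}$ after an explicit transit time. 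Fix a small $2$-disk $\Sigma\subset S_0$ transverse to $W^s_{\mathrm{loc}}$ through which $\gamma_0$ passes just before entering this integrable region. For $\epsilon>0$ sufficiently small, $\Sigma\subset S_\epsilon$ as well, and composing the explicit integrable transit through the neighborhood of $p_c$ with the outer return along a $C^\infty$-small perturbation of $\gamma_0$ defines a Poincar\'e return map $\Pi_\epsilon$ on a small disk about $\gamma_0\cap \Sigma$. The exponential contraction along $W^s_{\mathrm{loc}}$ makes $\Pi_\epsilon$ a contraction on a suitable subset, producing a unique fixed point converging to $\gamma_0\cap \Sigma$ as $\epsilon\to 0^+$; this fixed point lies on the desired closed characteristic $P_0\subset S_\epsilon\setminus Q_0$, which converges to $\bar\gamma_0$ in Hausdorff distance as $\epsilon\to 0^+$.

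Next I would construct $h$. Outside a small neighborhood $\mathcal N\subset \R^4$ of $p_c$ we have $S_0=S_\epsilon$, so setting $h:=\mathrm{id}$ there immediately satisfies (i) and makes $h$ smooth in this region. Inside $\mathcal N$, $S_0\cap \mathcal N$ is a topological closed $3$-ball with the unique singularity $p_c$ in its interior, and $S_\epsilon\cap \mathcal N$ is a smooth closed $3$-ball; the two share the common boundary $2$-sphere $N_0^{\delta_1} = N_\epsilon^{\delta_1}$ (where $f\equiv 0$). Since $P_0\to \bar\gamma_0$ as $\epsilon\to 0^+$, the endpoints of $P_0\cap \mathcal N$ on $\partial\mathcal N$ lie close to those of $\bar\gamma_0\cap \mathcal N$, and an ambient isotopy supported in a thin collar of $\partial \mathcal N$ moves the former onto the latter while being the identity off the collar. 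Both arcs are then tamely embedded in a $3$-ball with the same boundary points; note that $\bar\gamma_0\cap \mathcal N$ is topologically a tame arc (two smooth half-arcs meeting at the single point $p_c$, which is a manifold point of $\bar\gamma_0$ although a singular point of $S_0$), hence by standard $3$-dimensional topology it is ambient isotopic rel boundary to $P_0\cap \mathcal N$ inside the ball. Choosing this isotopy smooth off $p_c$ and patching with the identity on $S_0\setminus \mathcal N$ yields a homeomorphism $h:S_0\to S_\epsilon$ satisfying (i), (ii), and $h(\bar\gamma_0)=P_0$.

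For the linking in (iii), Lemma \ref{lemSeps} endows $S_\epsilon$ with a dynamically convex tight contact form $\lambda_\epsilon$ whose Reeb flow parametrizes the Hamiltonian flow on $S_\epsilon$. Since $Q_0\subset S_\epsilon$ is unknotted, simple, with $CZ(Q_0)=3$ and $\sl(Q_0)=-1$, the main results of Hofer--Wysocki--Zehnder \cite{convex} together with Hryniewicz's criterion \cite{hryn} ensure that $Q_0$ is the binding of an open book decomposition of $S_\epsilon$ by disk-like pages adapted to $\lambda_\epsilon$, each of which is a global surface of section for the Reeb flow. Because $P_0$ is geometrically distinct from $Q_0$, it must intersect every such page transversally and with strictly positive intersection number, so $[P_0]\neq 0$ in $H_1(S_\epsilon\setminus Q_0,\Z)$, i.e., $P_0$ is linked to $Q_0$.

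The main obstacle lies in the construction of $h$: one must ensure that $h$ is genuinely continuous at $p_c$, a smooth diffeomorphism on $\dot S_0$, and sends $\bar\gamma_0$ exactly onto $P_0$. The delicate point is the asymptotic mismatch near $p_c$, since $\bar\gamma_0$ approaches $p_c$ along two distinct tangent directions in infinite time while $P_0$ passes smoothly near $p_c$ in finite time; this forces $h$ to fail to be smooth precisely at $p_c$, yet the topological matching goes through because $\bar\gamma_0\cap \mathcal N$ is nonetheless a tamely embedded arc in a $3$-ball.
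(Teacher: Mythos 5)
Your proposal is a genuinely different route from the paper's: you first manufacture $P_0$ by a Poincar\'e return map argument and only afterwards build $h$ by a topological isotopy so that $h(\bar\gamma_0)=P_0$. The paper does the opposite. It defines $h$ by an explicit formula in the local chart (keeping $(q_2,p_2)$ fixed and sliding $(q_1,p_1)$ along the backward $X_{K_\epsilon}$-flow from the reference section $\{p_1=2\delta_1\}$ to the line through $(q_1,p_1)$ in the direction $(1,1)$), and then $P_0:=h(\bar\gamma_0)$ is automatically a closed characteristic because $h$ preserves the invariant plane $\{q_2=p_2=0\}$ and sends $\bar\gamma_{0,\mathrm{loc}}$ onto the $X_{K_\epsilon}$-arc of $\{q_2=p_2=0\}\cap\tilde S_\epsilon$ joining the two points where $\gamma_0$ crosses $\{q_1+p_1=\delta_1\}$. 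The explicit formula also makes properties (i) and (ii) of $h$ transparent, whereas your route has to establish them separately.

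There is a genuine gap in your construction of $P_0$: the assertion that $\Pi_\epsilon$ is a contraction because of exponential contraction along $W^s_{\mathrm{loc}}$. This cannot hold: $\Pi_\epsilon$ is the Poincar\'e return map of a Hamiltonian flow on a $3$-dimensional energy level, hence an area-preserving disk map, and such a map is never a contraction. Concretely, the inner transit on $K_\epsilon^{-1}(0)$ from $\{q_1=\delta_1\}$ to $\{p_1=\delta_1\}$ preserves $I_2$ and rotates $(q_2,p_2)$ by the angle $\bar\omega T(I_2)$ with transit time $T(I_2)\sim\log\bigl(1/(\omega I_2+\epsilon)\bigr)$ — a twist map, not a contraction — and composing with the (also area-preserving) outer return yields a map whose eigenvalues at any fixed point come in pairs $\lambda,1/\lambda$. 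What actually produces $P_0$ is the invariance of the plane $\{q_2=p_2=0\}$ under $X_{K_\epsilon}$: the outer piece of $\gamma_0$ crosses $\{q_1+p_1=\delta_1\}$ at two points with $q_2=p_2=0$, and the arc of $\{q_2=p_2=0\}\cap\tilde S_\epsilon$ joining them is an $X_{K_\epsilon}$-trajectory, so the concatenation closes up with no fixed-point theorem needed. Two secondary points: the outer dynamics is not ``a $C^\infty$-small perturbation of $\gamma_0$'' but is literally unchanged, since $S_\epsilon$ coincides with $S_0$ outside a neighborhood of $p_c$; and the ambient-isotopy construction of $h$ between the singular ball $S_0\cap\mathcal{N}$ and the smooth ball $S_\epsilon\cap\mathcal{N}$ is plausible topologically but leaves the smoothness of $h$ off $p_c$, the continuity of $h^{-1}$ at $p_c$, and the rel-boundary matching unverified — precisely the points that the paper's explicit formulas for $h$ and $h^{-1}$ settle directly.
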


\begin{proof}To define $h:S_0 \to S_\epsilon$, we first require that $h$ coincides with the identity map outside $U_0$ and also at the points $w=\varphi(q_1,q_2,p_1,p_2) \in S_0 \cap U_0$ satisfying $\{q_1+p_1\geq \delta_1\}$ in local coordinates. At the remaining points $w\in S_0 \cap U_0$, which  satisfy $q_1,p_1\geq 0, 0\leq q_1+p_1 \leq \delta_1$, we define $h$ in local coordinates as follows. Let $\bar q_1 \geq 0$ be such that $q_1p_1 = \bar q_1 2 \delta_1$.  The negative Hamiltonian flow of $K_\epsilon$ through $(\bar q_1, q_2,2\delta_1, p_2)$ projected into the $(q_1,p_1)$-plane hits the line $\{(q_1,p_1) + \lambda(1,1), \lambda \in \R\}$ at a single point $(\tilde q_1,\tilde p_1)$. This follows from the fact that $X_{K_\epsilon}$ is always linearly independent to the vector $(1,0,1,0)$ for any $\epsilon>0$ small and $q_1+p_1 \geq 0, (q_1,p_1)\neq (0,0)$. We let $h(w):= \varphi(\tilde q_1,q_2,\tilde p_1,p_2)$. Clearly $h(w)\in S_\epsilon$. Notice that $h$ fixes the coordinates $(q_2,p_2)$.  The maps defined above are smooth in coordinates $(q_1,p_1)$ and this implies that $h$ is continuous in $S_0$ and smooth in $\dot S_0=S_0\setminus \{p_c\}$.

Its inverse $h^{-1}:S_\epsilon \to S_0$ is the map defined in the following way: if $w\in S_\epsilon$ and $w\not\in U_0$ or if in local coordinates $w$ is such that $q_1 + p_1 \geq \delta_1$, then $h^{-1}(w)=w$. If $w\in S_\epsilon \cap U_0$ and in local coordinates is such that $0<q_1+p_1 \leq \delta_1$, then the positive flow of $K_\epsilon$ through $z=\varphi^{-1}(w)=(q_1,q_2,p_1,p_2)$ hits the hyperplane $\{p_1 = 2 \delta_1\}$ at a point $(\bar q_1,\bar q_2,2\delta_1, \bar p_2)$ satisfying $\bar q_1\geq 0$. Let $\lambda\in\R$ be the root of the equation $\bar q_1 2 \delta_1=(q_1-\lambda)(p_1-\lambda)$ given by $$\lambda = \frac{q_1+p_1 - \sqrt{(q_1-p_1)^2 + 8\delta_1 \bar q_1}}{2}\geq 0.$$ We define $\tilde z=z - \lambda(1,0,1,0)$ and let $h^{-1}(w)= \varphi(\tilde z).$ The map $(q_1,p_1) \mapsto (\bar q_1,2\delta_1)$ is smooth since it is the restriction of a smooth flow map. The map $(\bar q_1,2\delta_1) \mapsto \tilde z$ is smooth except if both equalities $q_1=p_1$ and $\bar q_1=0$ hold. However, this point is mapped to $0$ and  $(\bar q_1,2\delta_1) \mapsto \tilde z$ is only continuous at this point. This implies that $h^{-1}$ is also continuous and therefore $h$ is a homeomorphism and $h$ restricts to a smooth diffeomorphism at $\dot S_0$.

Observe that $h(Q_0)=Q_0$, since $U_0$ was chosen so that $Q_0 \subset S_0 \setminus U_0$. Now for each $ w\in \bar \gamma_0$ outside $U_0$  or  which satisfies $q_1+p_1 \geq \delta_1$  in local coordinates, we have $h(w)=w$. In local coordinates,  $\bar \gamma_0$  is given by $\bar \gamma_{0,\text{loc}}:=\{q_1p_1=0, q_2=p_2=0,0\leq q_1+p_1\leq \delta_1\}$. We can assume that $\bar \gamma_0\cap \{0 \leq q_1+p_1 \leq \delta_1\} = \bar \gamma_{0,\text{loc}}$. By construction of $h$ and by the symmetry of $X_{K_\epsilon}$ with respect to $\{q_1=p_1\}$, we see that $\bar \gamma_{0,\text{loc}}$ is mapped into a trajectory of $X_{K_\epsilon}$ which, in local coordinates, is contained in $\{q_2=p_2=0\} \cap \tilde S_\epsilon$, starts at $\{q_1=\delta_1,q_2=p_1=p_2=0\}$ and ends at $\{p_1=\delta_1,q_1=q_2=p_2=0\}$. This segment of trajectory is symmetric with respect to $\{q_1=p_1\}$ and its internal points are contained in $\{q_1>0,p_1>0\}$.  Thus $P_0:=h(\bar \gamma_0)$ is a closed characteristic of $S_\epsilon$. Now $Q_0$ is also a closed characteristic of $S_\epsilon$ which is geometrically distinct from $P_0$ by construction. From Lemma \ref{lemSeps}, if $\epsilon>0$ is small enough then the Hamiltonian flow on $S_\epsilon$ is equivalent to the Reeb flow of a dynamically convex tight contact form on $S^3$. Since $Q_0$ is unknotted, $\sl(Q_0)=-1$ and $CZ(Q_0)=3$, we can use Corollary 6.7 from \cite{convex} to conclude that $P_0$ is linked to $Q_0$.
\end{proof}

Proposition \ref{prophomoc} is a direct consequence of Lemma \ref{lemhomoc}.

\section{Uniqueness and intersections of pseudo-holomorphic curves}\label{appunique}

In this appendix we use Siefring's intersection theory \cite{Si2} to prove uniqueness of rigid planes and rigid cylinders in $W_E$ and we obtain some intersection properties of pseudo-holomorphic semi-cylinders asymptotic to $P_{2,E}$. We only treat the case  $S_E$ since the case  $S_E'$ is completely analogous. We start with uniqueness.

In Proposition \ref{prop_step3}, we prove that, for all $E>0$ sufficiently small, there exists a pair of finite energy $\tilde J_E$-holomorphic planes $\tilde u_{1,E}=(a_{1,E},u_{1,E}),\tilde u_{2,E}=(a_{2,E},u_{2,E}):\C \to \R \times W_E$, both asymptotic to $P_{2,E}$ at $+\infty$, so that $u_{1,E}(\C)=U_{1,E},$ $u_{2,E}(\C)=U_{2,E}$ are the hemispheres of $\partial S_E$. Moreover, we prove in Proposition \ref{prop_step5}-ii), the existence of a finite energy $\tilde J_E$-holomorphic cylinder $\tilde v_E=(b_E,v_E):\R \times S^1 \to \R \times W_E$, asymptotic to $P_{3,E}$ and $P_{2,E}$ at $s=+\infty$ and $s=-\infty$, respectively, and so that $v_E(\R \times S^1)\subset \dot S_E\setminus P_{3,E}$. The asymptotic behavior of $\tilde v_E$ at $s=+\infty$ is given in Proposition \ref{asymptoticcylinder}.

\begin{prop}\label{propunique}Up to reparametrization and $\R$-translation, $\tilde u_{1,E}$ and $\tilde u_{2,E}$ are the unique finite energy $\tilde J_E$-holomorphic planes asymptotic to $P_{2,E}$ at $+\infty$. In the same way, $\tilde v_E$ is the unique finite energy $\tilde J_E$-holomorphic cylinder with projected image in $\dot S_E\setminus P_{3,E}$, which is asymptotic to $P_{3,E}$ and to $P_{2,E}$ at $s=+\infty$ and $s=-\infty$, respectively, and so that its asymptotic behavior at $s=+\infty$ is described by a negative eigenvalue of $A_{P_{3,E}}$ with winding number $1$ as in Proposition \ref{asymptoticcylinder}.\end{prop}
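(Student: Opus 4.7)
The approach I would take is to apply Siefring's intersection theory for finite energy pseudo-holomorphic curves in symplectizations. Recall that to any pair $\tilde u, \tilde v$ of finite energy curves in $\R \times W_E$ with nondegenerate asymptotic behavior one associates a generalized intersection number $\tilde u \ast \tilde v \in \Z$ which is (i) invariant under homotopies through asymptotically cylindrical maps preserving the asymptotic limits and the leading eigenfunctions at the punctures, (ii) nonnegative whenever $\tilde u$ and $\tilde v$ do not share a common component, and (iii) decomposes as a sum of local intersection contributions (all $\geq 0$ by positivity of intersections) plus asymptotic contributions at each puncture pair sharing an underlying periodic orbit, with the asymptotic contribution computed from winding numbers of eigenfunctions of $A_{P_{2,E}}$ or $A_{P_{3,E}}$. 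The plan is to show that a hypothetical competitor $\tilde u$ (respectively $\tilde v$) has vanishing Siefring intersection number with the known rigid plane $\tilde u_{i,E}$ (respectively the known rigid cylinder $\tilde v_E$), forcing coincidence of images, and then to upgrade this to coincidence up to reparametrization and $\R$-translation using standard somewhere-injective arguments.

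For the plane case, let $\tilde u=(a,u)$ be any finite energy $\tilde J_E$-holomorphic plane asymptotic to $P_{2,E}$. Since $P_{2,E}$ is hyperbolic, the asymptotic operator $A_{P_{2,E}}$ has precisely two extremal negative eigenvalues, whose corresponding eigensections point into the two distinct local stable directions at $P_{2,E}$; call these directions $D_1,D_2$, associated with $\tilde u_{1,E}, \tilde u_{2,E}$ respectively. The leading asymptotic expansion of $\tilde u$ at $+\infty$, as in Theorem \ref{asymptotic}, picks out one of $D_1$ or $D_2$; assume without loss of generality $D_1$. First I would show that $\tilde u \ast \tilde u_{1,E}=0$: the honest intersection contribution is $\geq 0$, and Siefring's relative asymptotic formula at the common puncture (applied to the extremal negative eigenvalue with winding $\wind^{<0}(A_{P_{2,E}})=1$) shows the asymptotic contribution vanishes. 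The homotopy invariance of $\ast$ then reduces the computation to the Fredholm-index data for a plane asymptotic to $P_{2,E}$, which gives zero. Consequently the two planes have disjoint honest intersection set, and the relative asymptotic expansion forces $u(\C)\subset u_{1,E}(\C)$ in a neighborhood of $P_{2,E}$; combined with the fact that both planes are embedded and transverse to $X_{\lambda_E}$ (from $\wind_\pi=0$), a standard connectedness argument yields $u(\C)=u_{1,E}(\C)$, hence $\tilde u$ equals $\tilde u_{1,E}$ up to reparametrization and $\R$-translation.

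The cylinder case follows the same template. Let $\tilde v=(b,v)$ be a second finite energy cylinder satisfying the stated hypotheses. At $+\infty$ the eigendirection of $A_{P_{3,E}}$ is prescribed (with winding $1$, extremal negative), and at $-\infty$ hyperbolicity of $P_{2,E}$ again leaves only two possible eigendirections. If $\tilde v$ were to approach $P_{2,E}$ through the opposite direction to $\tilde v_E$, then $v(\R\times S^1)$, being contained in $\dot S_E\setminus P_{3,E}$, would have to intersect one of the hemispheres $U_{1,E}, U_{2,E}$ by continuity; by positivity and stability of intersections this would force $v_n$ to intersect $u_{1,n}(\C) \cup u_{2,n}(\C)$ for the approximating sequence, contradicting the construction. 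Hence $\tilde v$ and $\tilde v_E$ share asymptotic eigendirections at both punctures, and by the same asymptotic computation $\tilde v \ast \tilde v_E=0$. Embeddedness of $\tilde v_E$ and $v_E$, together with transversality of $v_E$ to the Reeb field, then upgrades this to $v(\R\times S^1)=v_E(\R\times S^1)$ and the desired identification modulo reparametrization and $\R$-translation.

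The main obstacle will be the clean bookkeeping of the asymptotic intersection contribution when two curves share an asymptotic limit \emph{through the same leading eigendirection}: the intersection number then depends on the second, strictly smaller, eigenvalue appearing in the relative expansion and its winding relative to the first. The key technical ingredient is Siefring's relative asymptotic formula, which gives that the difference between $\tilde u$ and $\tilde u_{i,E}$ (after asymptotic normalization) decays like $e^{\mu' s}$ for some $\mu' < \mu$ and is asymptotic to an eigensection of the corresponding eigenvalue. Verifying that the resulting winding contribution exactly cancels the topological count, for both the extremal negative eigenvalue at $P_{2,E}$ and the winding-$1$ eigenvalue at $P_{3,E}$, is where the extremality hypotheses in Propositions \ref{asymptoticcylinder} and \ref{propplanes2} are essential and where the argument is most delicate.
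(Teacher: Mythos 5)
Your overall framework is the right one, and it matches the paper's: both cases are handled by Siefring's generalized intersection number, showing it vanishes from topological/winding data and then extracting a contradiction from a competitor curve. But there are three concrete gaps where the argument as you describe it would not close.

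First, your description of the asymptotic operator at $P_{2,E}$ is off. You assert that $A_{P_{2,E}}$ has ``precisely two extremal negative eigenvalues'' giving two stable directions $D_1,D_2$. In fact, because $CZ(P_{2,E})=2$ and $P_{2,E}$ is hyperbolic, there is a \emph{single} negative eigenvalue $\delta$ with $\wind(\delta)=\wind^{<0}(A_{P_{2,E}})=1$, and its eigenspace is one-dimensional. The two planes $\tilde u_{1,E}$ and $\tilde u_{2,E}$ correspond to the two signs $\pm$ of the eigensection, not to two distinct eigenvalues. Since the leading eigensection $e_+$ of any competitor $\tilde u$ must have winding $1$ and hence be a $\delta$-eigensection, it equals $c e_1$ or $c e_2$ for some $c>0$. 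This is the correct dichotomy feeding into the remainder of the argument.

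Second, and more importantly, the deduction ``$\tilde u\ast\tilde u_{1,E}=0$ $\Rightarrow$ disjoint honest intersection $\Rightarrow$ $u(\C)\subset u_{1,E}(\C)$ near $P_{2,E}$ $\Rightarrow$ $u(\C)=u_{1,E}(\C)$'' is not a valid chain. Disjointness of the lifted curves does not yield an inclusion of projected images. The paper's argument runs in the opposite logical direction and by contradiction: one first invokes the dichotomy theorem of Hofer--Wysocki--Zehnder (images coincide or are disjoint); one assumes images are distinct, hence disjoint; then one invokes Siefring's positivity result (Theorem 2.5 in \cite{Si2}) asserting that two curves with disjoint images, asymptotic to the same orbit \emph{through the same direction}, necessarily have strictly positive generalized intersection number. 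This contradicts $\tilde u\ast\tilde u_{1,E}=0$, forcing the images to coincide. The positivity theorem, not a ``winding cancellation'' computation or a ``relative asymptotic expansion forces inclusion'' step, is the missing punchline of your argument; without it, vanishing of the intersection number by itself does not yield coincidence.

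Third, for the cylinder case you argue that $\tilde v$ must approach $P_{2,E}$ through the same direction as $\tilde v_E$ by invoking positivity of intersections with the hemispheres for an approximating sequence $v_n$. But the proposition is a uniqueness statement about \emph{arbitrary} finite energy cylinders with the stated asymptotics and projected image in $\dot S_E\setminus P_{3,E}$; a competitor $\tilde v$ need not come from any approximating sequence, so you cannot quote $v_n$. The correct (and simpler) observation is that because both $v(\R\times S^1)$ and $v_E(\R\times S^1)$ lie in $\dot S_E$, both leading eigensections at $P_{2,E}$ point into $\dot S_E$, hence they are positive multiples of each other. From there the same contradiction via Siefring's positivity theorem and the vanishing of the generalized intersection number (Corollary~5.9 in \cite{Si2}) goes through.
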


\begin{proof}  Let us start with the case of the rigid planes. Arguing indirectly, we assume that $\tilde u=(a,u):\C \to \R \times W_E$ is a finite energy $\tilde J_E$-holomorphic plane asymptotic to $P_{2,E}$ at $+\infty$, which does not coincide with $\tilde u_{1,E}$ and $\tilde u_{2,E}$ up to reparametrization and $\R$-translation. This is equivalent to saying that \begin{equation}\label{nigual} u(\C) \neq u_{1,E}(\C) \mbox{ and } u(\C) \neq u_{2,E}(\C).\end{equation} Since the hyperbolic orbit $P_{2,E}$ is unknotted, $CZ(P_{2,E})=2$ and $\pi_2(S^3)$ vanishes, we have from Theorem 1.3 in \cite{props2} that \begin{equation}\label{uint} u(\C)\cap P_{2,E}=\emptyset\end{equation} and $u$ is an embedding. Equations \eqref{nigual}, \eqref{uint} and  Theorem 1.4 from \cite{props2} imply that \begin{equation}\label{disjunto} u(\C) \cap u_{1,E}(\C) = \emptyset \mbox{ and } u(\C) \cap u_{2,E}(\C) = \emptyset.\end{equation}

Denote by $\wind(e_+)$ the winding number of the asymptotic eigensection $e_+$ of $A_{P_{2,E}}$, which describes $\tilde u$ near $+\infty$, computed with respect to a global trivialization of $\xi$. See Appendix \ref{ap_basics} for definitions. Since $e_+$ is associated to a negative eigenvalue of $A_{P_{2,E}}$ and $CZ(P_{2,E}) =2$, we have that $\wind(e_+)\leq \wind^{<0}(A_{P_{2,E}})=1$. Then we obtain $$0\leq \wind_{\pi}(\tilde u) = \wind_\infty(\tilde u)-1 = \wind(e_+) -1 \leq 0,$$ which implies that \begin{equation}\label{gaps} \begin{aligned} & \wind_\infty(\tilde u) =  \wind(e_+) =1, \\ & \wind_\pi(\tilde u) = 0,\\ & d_0(\tilde u):=  \wind^{<0}(A_{P_{2,E}}) - \wind(e_+)=0.\end{aligned}\end{equation}

In \cite{Si2}, R. Siefring introduces a generalized intersection number between pseudo-holomorphic curves $\tilde u$ and $\tilde v$, denoted by $[\tilde u] *[\tilde v]$, which counts the actual algebraic intersection number  plus an asymptotic intersection number at their punctures.

Recall that $u_{1,E}$ and $u_{2,E}$ don't intersect $P_{2,E}$. Moreover, the eigensections $e_1$ and $e_2$ of $A_{P_{2,E}}$ describing $\tilde u_{1,E}$ and $\tilde u_{2,E}$ at $+\infty$, respectively, also have winding number $1$, and therefore, $d_0(\tilde u_{1,E})=d_0(\tilde u_{2,E})=0$.

Using equations \eqref{uint}, \eqref{gaps} and Corollary 5.9 from \cite{Si2} (see conditions (1) and (3)), we obtain \begin{equation} \label{genint0} [\tilde u]*[\tilde u_{1,E}]=0 \mbox{ and } [\tilde u]*[\tilde u_{2,E}]=0.\end{equation}

\begin{definition}[Siefring~\cite{Si2}]
Let $\tilde u,\tilde v$ be finite energy $\tilde J_E$-holomorphic curves asymptotic to the same nondegenerate periodic orbit $P\in \P(\lambda_E)$ at certain  punctures of $\tilde u$ and $\tilde v$, respectively. We say that $\tilde u$ and  $\tilde v$ approach $P$ in the same direction at these punctures if $\eta_+=c \eta_-$ for a positive constant $c$, where $\eta_+$ and $\eta_-$ are the eigensections of the asymptotic operator $A_{P}$ which describe $\tilde u$ and $\tilde v$, respectively, near the respective punctures, as in Theorem \ref{asymptotic}. In case $\eta_+=c \eta_-$ with $c<0,$ we say that $\tilde u$ and $\tilde v$ approach $P$ through opposite directions.
\end{definition}

Since $\wind(e_+)=\wind(e_1)=\wind(e_2)=1$ and there exists only one negative eigenvalue $\delta$ of $A_{P_{2,E}}$  with winding number equal to $1$, we conclude that $e_+,e_1$ and $e_2$ are all $\delta$-eigensections. Since the eigenspace of $\delta$-eigensections  is one-dimensional, we find a positive constant $c>0$ such that either $e_+ = c e_1$ or $e_+=c e_2$. Suppose without loss of generality that $e_+ = c e_1, c>0$. We conclude that \begin{equation}\label{samedirection} \tilde u \mbox{ and } \tilde u_{1,E} \mbox{ approach } P_{2,E} \mbox{ in the same direction.}\end{equation} Using \eqref{disjunto}, \eqref{samedirection} and Theorem 2.5 from \cite{Si2}, we conclude that $[\tilde u]*[\tilde u_{1,E}]>0,$ contradicting \eqref{genint0}. We have proved that $\tilde u_{1,E}$ and $\tilde u_{2,E}$ are the unique finite energy $\tilde J_E$-holomorphic planes asymptotic to $P_{2,E}$, up to reparametrization and $\R$-translation, and they approach $P_{2,E}$ through opposite directions.

Now we deal with the cylinder case. Again we argue indirectly and assume the existence of a finite energy $\tilde J_E$-holomorphic cylinder $\tilde v=(b,v):\R \times S^1 \to \R \times W_E$ which is asymptotic to $P_{3,E}$ at $s=+\infty$, asymptotic to $P_{2,E}$ at $s=-\infty$, $v(\R \times S^1) \subset \dot S_E\setminus P_{3,E}$, and that \begin{equation}\label{diferente} v(\R \times S^1) \neq v_E(\R \times S^1). \end{equation} This implies, by Carleman's similarity principle, that the images of $v_E$ and $v$ do not coincide near any neighborhood of $s=-\infty$.

We also assume that there exists an eigensection $e_+$ of $A_{P_{3,E}}$ with winding number equal to $1$ associated to a negative eigenvalue, which describes $\tilde v$ near $s=+\infty$ as in Theorem \ref{asymptotic}.

Denote by $e_-$ the eigensection of $A_{P_{2,E}}$ associated to a positive eigenvalue which describes $\tilde v$ near $s=-\infty$ as in Theorem \ref{asymptotic}. Since $CZ(P_{2,E})=2$, we have $\wind(e_-)\geq \wind^{ \geq 0}(P_{2,E}) =1$. Moreover, we have $$0\leq \wind_\pi(\tilde v) = \wind_\infty(\tilde v)= \wind(e_+) - \wind(e_-) = 1 - \wind(e_-)\leq 0.$$ We conclude that $\wind_\pi(\tilde v)=0$ and $\wind(e_+)=\wind(e_-) =1$. Denote by $e_{1+}$ the eigensection of $A_{P_{3,E}}$ which describes $\tilde v_E$ near $+\infty$ and by $e_{1-}$ the eigensection of $A_{P_{2,E}}$ which describes $\tilde v_E$ near $-\infty$. We also have $\wind(e_{1+})=\wind(e_{1-})=1$, see Proposition \ref{cylinderconclusion}.  Since there exists only one positive eigenvalue of $A_{P_{2,E}}$ with winding number $1$ and its eigenspace is one-dimensional, we conclude that there exists a constant $c$ so that $e_-=ce_{1-}$. Since $v(\R \times S^1),v_E(\R \times S^1) \subset \dot S_E$ we conclude that $c>0$, i.e., $\tilde v$ and $\tilde v_E$ approach $P_{2,E}$ in the same direction. \eqref{diferente} and Theorem 2.5 from \cite{Si2} imply that \begin{equation}\label{genint1} [\tilde v]*[\tilde v_E]>0.\end{equation}

Now since $v(\R\times S^1) \cap (P_{2,E} \cup P_{3,E})=\emptyset$ we can apply Corollary 5.9 from \cite{Si2} to conclude that $[\tilde v]*[\tilde v_E]=0$, contradicting \eqref{genint1}. This proves that $\tilde v_E$ is the unique finite energy $\tilde J_E$-holomorphic cylinder with the properties given in the statement of this proposition. \end{proof}

\begin{remark}This type of uniqueness property of pseudo-holomorphic curves, as proved in Proposition \ref{propunique}, can be found in \cite{HMS}.\end{remark}

Now we prove the following intersection properties of semi-cylinders asymptotic to covers of $P_{2,E}.$

\begin{prop}\label{propintersect}The following assertions hold: \begin{itemize}
\item[i)] Let $\tilde w=(d,w):(-\infty,0] \times S^1 \to \R \times W_E$ be a somewhere injective finite energy $\tilde J_E$-holomorphic semi-cylinder, which is asymptotic to a $p_0$-cover of $P_{2,E}$ at its negative puncture $-\infty,$ with $p_0 \geq 2$. Assume also that $w((-\infty,0] \times S^1) \subset \dot S_E$. Then $w$ admits self-intersections, i.e., there exist $z_1\neq z_2 \in (-\infty,0] \times S^1$ so that $w(z_1)=w(z_2)$. In particular, there exists $c\in \R$  so that $\tilde w(z_1)=\tilde w_c(z_2)$, where $\tilde w_c:=(d+c,w)$.
\item[ii)] Let $\tilde w_1=(d_1,w_1),\tilde w_2=(d_2,w_2):(-\infty,0] \times S^1 \to \R \times W_E$ be  finite energy $\tilde J_E$-holomorphic semi-cylinders, which are both asymptotic to $P_{2,E}$ at their negative punctures $-\infty.$  Assume  that $w_1((-\infty,0]\times S^1),$ $w_2((-\infty,0] \times S^1) \subset \dot S_E$. Then $w_1$ intersects $w_2$, i.e., there exist $z_1,z_2\in (-\infty,0] \times S^1$ so that $w_1(z_1)=w_2(z_2)$. In particular, there exists $c\in \R$ so that $\tilde w_1(z_1)=\tilde w_{2,c}(z_2)$, where $\tilde w_{2,c}:=(d_2+c,w_2)$.

\end{itemize}
\end{prop}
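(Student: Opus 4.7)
The plan is to use the exponential asymptotic formula of Theorem~\ref{asymptotic} at the negative puncture together with positivity of intersections of pseudo-holomorphic curves, in the form of Siefring's intersection theory for punctured curves. In both parts, a self-intersection $w(z_1)=w(z_2)$ (respectively $w_1(z_1)=w_2(z_2)$) is witnessed by an actual intersection of $\tilde w$ with an $\R$-translate $\tilde w_c$ (respectively of $\tilde w_1$ with $\tilde w_{2,c}$) for $c=d(z_1)-d(z_2)$. It therefore suffices to force such an intersection, which I plan to do by computing a generalized homotopy-invariant intersection number and separating its geometric and asymptotic contributions.

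For part~(i), positive hyperbolicity of $P_{2,E}$ with $CZ(P_{2,E})=2$ (Proposition~\ref{prop_p2e}) gives $\wind^{\geq 0}(A_{P_{2,E}^{p_0}})=p_0$, so the extremal asymptotic eigensection $e:S^1\to\R^2\setminus\{0\}$ describing $\tilde w$ near $-\infty$ has winding number $p_0\geq 2$ with respect to a global trivialization. In Martinet coordinates for $P_{2,E}$, the semi-cylinder has the form
\[
\tilde w(s,t) \sim \bigl(p_0 T_0 s + a_0,\, p_0 t + \vartheta_0,\, e^{\int_{s_0}^s \mu(r)\,dr}(e(t)+R(s,t))\bigr)
\]
as $s\to-\infty$, so an approximate self-intersection requires both $p_0(t_1-t_2)\in\Z$ and $e(t_1)\approx e(t_2)$. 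Applying Siefring's decomposition $[\tilde w]\ast[\tilde w_c]=\iota(\tilde w,\tilde w_c)+i_\infty(\tilde w,\tilde w_c)$ together with the homotopy invariance of the left-hand side in $c$, I plan to show that the asymptotic index $i_\infty$ at the paired punctures, both asymptotic to the $p_0$-cover $P_{2,E}^{p_0}$, is strictly dominated by the homotopy invariant $[\tilde w]\ast[\tilde w_c]$ when $p_0\geq 2$; the difference must be accounted for by geometric intersections $\iota>0$, producing the desired pair $(z_1,z_2)$.

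For part~(ii), $\wind^{\geq 0}(A_{P_{2,E}})=1$ and the corresponding eigenspace is one-dimensional, so the eigensections $e_1,e_2$ describing $\tilde w_1,\tilde w_2$ at the negative punctures satisfy $e_2=c\,e_1$ for some $c\in\R\setminus\{0\}$. The key geometric step is to analyse Martinet coordinates: the hemispheres $U_{1,E},U_{2,E}$ built from the rigid planes $\tilde u_{1,E},\tilde u_{2,E}$ of Proposition~\ref{prop_step3} correspond to opposite ends of the local boundary of $\dot S_E$, so the containment $w_i((-\infty,0]\times S^1)\subset\dot S_E$ forces both eigensections to point into the $\dot S_E$-half-space, giving $c>0$. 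Thus $\tilde w_1$ and $\tilde w_2$ approach $P_{2,E}$ through proportional eigensections with positive constant, and Siefring's positivity of the asymptotic intersection index for curves approaching a common orbit in the same direction forces $\tilde w_1\cap\tilde w_{2,c}\neq\emptyset$ for some $c$, yielding the required $(z_1,z_2)$.

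The main obstacle will be the Siefring calculation in part~(i), where the multiply covered asymptotic limit requires a careful treatment of the spectral covering data of $A_{P_{2,E}^{p_0}}$ to confirm the strict inequality between $[\tilde w]\ast[\tilde w_c]$ and $i_\infty$ when $p_0\geq 2$. The subtle point in part~(ii) is the geometric identification in Martinet coordinates of ``pointing into $\dot S_E$'' with positive proportionality $c>0$, which requires relating the Martinet frame near $P_{2,E}$ to the direction transverse to the hemispheres $U_{1,E},U_{2,E}$ spanned by the explicit planes of Proposition~\ref{prop_step3}.
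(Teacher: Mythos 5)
Your geometric setup is right in both parts, and in part~(ii) you correctly identify that one-dimensionality of the extremal positive eigenspace of $A_{P_{2,E}}$ plus the constraint $w_i\subset\dot S_E$ forces the asymptotic eigensections $e_1,e_2$ to be positively proportional. However, the central tool you invoke — Siefring's generalized intersection pairing $[\tilde u]*[\tilde v]=\iota+i_\infty$ and its homotopy invariance — is developed in~\cite{Si2} for asymptotically cylindrical maps whose domains are \emph{punctured closed} Riemann surfaces. The domain here is a half-cylinder $(-\infty,0]\times S^1$ with an honest boundary circle $\{0\}\times S^1$. The intersection pairing and asymptotic intersection index are not defined for maps with boundary, and the homotopy invariance you want to exploit fails (or at least requires boundary contributions you have not controlled). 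This is a genuine gap: your planned computation in part~(i), that $i_\infty$ is ``strictly dominated by'' $[\tilde w]*[\tilde w_c]$ for $p_0\geq 2$, never gets off the ground because neither quantity is defined in this setting. A further subtlety in~(i): $\tilde w$ and its $\R$-translate $\tilde w_c$ have \emph{identical} asymptotic eigensections, so they approach $P_{2,E}^{p_0}$ in the same direction; the same-direction case is precisely the one in which the asymptotic index is delicate, and the winding-count bookkeeping you sketch does not directly resolve it.

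The paper avoids Siefring's global theory for part~(i) entirely and instead runs a short direct topological argument in Martinet coordinates $(\vartheta,x,y)$: the asymptotic eigensection $e_{p_0}$ is the extremal one, it is $e_1$ covered $p_0$ times and has winding $p_0$, so near $s=-\infty$ the slice $F_0\cap\{\vartheta=t\}$ consists of $p_0$ disjoint branches, all tangent to $e_1\sim\partial/\partial_x$, lying in $\{x>0\}$, and varying smoothly in $t$. Assuming no self-intersection, disjointness gives the branches a $t$-independent order in the slice $\R^2$. But $F_0$ comes from a single connected cylinder and $p_0\geq 2$, so traversing $\vartheta$ once must cyclically permute the branches nontrivially, contradicting order preservation. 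This is both elementary and watertight for a domain with boundary.

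For part~(ii) the paper follows your route conceptually, but uses the correct tool for half-cylinders: Theorem~2.2 of~\cite{Si1}, which gives the relative asymptotic formula for the \emph{difference} $W_1-W_2$ of two half-cylinders asymptotic to the same orbit. Either the difference vanishes near $-\infty$ or it decays with an eigenvalue $\delta'>\delta^+$, hence with winding $>1$, and both cases produce intersections for $|s|$ large. If you replace your appeal to the intersection pairing of~\cite{Si2} (again, undefined on a domain with boundary) with this local relative-asymptotics statement from~\cite{Si1}, your argument for~(ii) closes.
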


\begin{proof}
The asymptotic operator $A_{P_{2,E}}$ admits two eigenvalues $\delta^-<0$ and $\delta^+>0$  with winding number $1$, whose eigenspaces are both one-dimensional and contain a pair of $\delta^-$-eigensections $e_1,e_2$ and a pair of $\delta^+$-eigensections $e_-,e_-'$, respectively, which describe, in this order, the asymptotic behavior of the rigid planes $\tilde u_{1,E}, \tilde u_{2,E}$ near the puncture $+\infty$ and  of the rigid cylinders $\tilde v_{E}, \tilde v_{E}'$ near the negative puncture $-\infty$, as in Theorem \ref{asymptotic}. From Theorem \ref{op_ass}-(iv),  the  $\delta^-$-eigensections $e_-$ and $e_-'$ are transverse to $\partial S_E=u_{1,E}(\C) \cup P_{2,E} \cup u_{2,E}(\C)$ at $P_{2,E}$ and, moreover, $e_-$ points inside $\dot S_E$ and $e_-'$ points inside $\dot S_E'$.

%Recall that the asymptotic operator $A_{P_{2,E}}$ admits an eigensection $e_1$ associated to the least positive eigenvalue $\delta^+$ which is transverse to $\partial S_E = U_{1,E} \cup P_{2,E}\cup U_{2,E}$ at $P_{2,E}$ and it points inside $\dot S_E$. With respect to a global trivialization of the contact structure $\xi$, its winding number equals $1$. Moreover, it describes the behavior of the rigid cylinder $\tilde v_E$, which projects in $\dot S_E$ and connects $P_{3,E}$ to $P_{2,E}$, near its negative puncture asymptotic to $P_{2,E}$.

The periodic orbit $P_{2,E}^{p_0}=(x_{2,E},p_0 T_{2,E})$ is also hyperbolic and has Conley-Zehnder index  $CZ(P_{2,E}^{p_0})=2p_0$. Its asymptotic operator $A_{P_{2,E}^{p_0}}$ admits an eigensection $e_-^{p_0}$ associated to the least positive eigenvalue $\delta^{p_0} := p_0 \delta^+$ with winding number $p_0$. In fact, one can check that $e_-^{p_0}$ can be chosen to be equal to $e_-$ covered $p_0$ times.  All the  eigensections of $A_{P_{2,E}^{p_0}}$ associated to positive eigenvalues $\mu > \delta^{p_0}$ have winding number $>p_0$. Then, since we are assuming that $w((-\infty,0] \times S^1) \subset \dot S_E$, it follows that $\tilde w$ is described by $e_-^{p_0}$ near $s=-\infty$.

To see that $w$ self-intersects, we argue indirectly assuming it doesn't. Consider Martinet's coordinates $(\vartheta,x,y)\in S^1 \times \R^2$ near $P_{2,E}\equiv S^1 \times \{0\},$ so that in these coordinates we have $\partial_x \sim e_-$, where $\sim$ means a positive multiple of each other. Therefore, with respect to the frame $\{\partial_x,\partial_y\}$, the winding number of $e_-$ is equal to zero. From the asymptotic description of $\tilde w$ near $-\infty$, we have that the image $F_0:=\{w(s,t),(s,t)\in (-\infty,s_0] \times S^1\}, s_0 \ll 0,$ is a connected strip and the intersection of $F_0$ with the plane $\{\vartheta =t\}$ contains $p_0$ disjoint branches of curves converging to $(t,0,0)$ as $s \to -\infty$, and all of them tangent to $\partial_x$ at $(t,0,0)$, for each $t\in S^1$. Hence, in these coordinates, $F_0 \subset \{x > 0\}$. These $p_0$ branches vary smoothly in $t\in S^1$ and, since we are assuming they are disjoint, it follows that  they admit a natural order in $\R^2 \equiv \{\vartheta =t\}$ which is independent of $t$. This is not possible since $F_0$ is connected and $p_0 \geq 2$, so the branches should non-trivially permute when going through each period of $P_{2,E}$, implying intersections of the branches. This contradiction shows that $w$ must self-intersect. This proves i).

To prove ii), we use Siefring's description of the difference of two distinct pseudo-holomorphic semi-cylinders \cite{Si1}. After a change of coordinates in the domain, which converges to the identity as $s \to -\infty$, we can write $$w_i(s,t) = \exp_{x_{2,E}(T_{2,E}t)}e^{\delta_i s}(\eta_i(t)+r_i(s,t)), s \ll 0,$$ $i=1,2$, where $\delta_i>0$ is an eigenvalue of the asymptotic operator $A_{P_{2,E}}$, $\eta_i$ is a $\delta_i$-eigensection and $r_i(s,t)$ decays exponentially fast with all of its derivatives as $s\to -\infty$, $i=1,2$. Here $\exp$ denotes the exponential map associated to the usual metric on $W_E$ induced by $\tilde J_E=(\lambda_E, J_E)$. Since $w_i((-\infty,0] \times S^1) \subset \dot S_E$, we have that $\delta_i=\delta^+,i=1,2,$ where $\delta^+$ is the least positive eigenvalue of $A_{P_{2,E}}$. Its winding number $\wind(\delta^+)$ is  equal to $1$, when computed with respect to a global trivialization of $\xi$. Since $\delta^+$ is the only positive eigenvalue with winding number $1$, since its eigenspace is one-dimensional and since $w_i((-\infty,0] \times S^1) \subset \dot S_E$, $i=1,2,$ we conclude that $\tilde w_1$ and $\tilde w_2$ approach $P_{2,E}$ through the same direction, i.e., $\eta_1(t)=c\eta_2(t),\forall t,$ for a positive constant $c>0$. After a translation in the domain, we can assume that $c=1 \Rightarrow \eta_1(t)=\eta_2(t), \forall t.$

Let $W_i(s,t):=e^{\delta_i s}(\eta_i(t)+r_i(s,t))$, $i=1,2$. From Theorem 2.2 in \cite{Si1}, adapted to negative punctures, we obtain that either $W_1(s,t)\equiv W_2(s,t)$ or there exists a positive eigenvalue $\bar\delta$ of $A_{P_{2,E}}$ and a $\bar\delta$-eigensection $\bar e$ such that $W_1(s,t)-W_2(s,t)=e^{\bar\delta s}(\bar e(t)+ r(s,t))$, where $r(s,t)$ decays exponentially fast with all of its derivatives as $s\to -\infty$. In the second case, $$e^{\delta^+ s}(r_1(s,t)-r_2(s,t))=W_1(s,t)-W_2(s,t)=e^{\bar\delta s}(\bar e(t)+ r(s,t)), s \ll 0,$$ and, from the exponential decay of $r_i,$ $i=1,2$, we conclude that $\bar\delta>\delta^+$, which implies $\wind(\bar\delta) > \wind(\delta^+)=1.$ Therefore, in both cases, we find infinitely many intersections of $w_1$ with $w_2$, for $|s|$ arbitrarily large.  \end{proof}

\noindent {\it Acknowledgements.} This project started many years ago while PS was visiting Courant Institute as a post-doc under the supervision of Prof. H. Hofer. PS would like to thank the institute for hospitality and H. Hofer for fruitful conversations about this question. NP was supported by FAPESP 2009/18586-6 and 2014/08113-1. PS was partially supported by CNPq 301715/2013-0 and by FAPESP 2011/16265-8 and 2013/20065-0.

\end{document}